\documentclass{article}

\usepackage{latexsym,amsfonts,amsmath,yfonts}

\usepackage[titletoc,title]{appendix}

\usepackage{bbm}

\usepackage{pgfpages}


\renewcommand{\title}[1]{\centerline{\Large #1}\par\medskip}
\renewcommand{\author}[1]{\medskip\centerline{\large #1}\par \medskip}

\setcounter{footnote}{0}



\setcounter{page}{1}
\setcounter{section}{0}
\setcounter{equation}{0}
\setcounter{figure}{0}



\newtheorem{theorem}{Theorem}[section]
\newtheorem{lemma}[theorem]{Lemma}
\newtheorem{corollary}[theorem]{Corollary}

\newtheorem{remark}[theorem]{Remark}

\newtheorem{definition}[theorem]{Definition}

\newenvironment{proof}{\removelastskip\par\medskip
\noindent{\em Proof.} \rm}{\penalty-20\null\hfill$\square$\par\medbreak}

\renewcommand{\theequation}{\arabic{section}.\arabic{equation}}

\def\QED{
  \unskip\kern1pc \vadjust{\vskip-\baselineskip}\unhcopy\QEDbox 
  {\pretolerance=500\tolerance=800\parfillskip=0.4pt\smallbreak}}

\newbox\QEDbox \setbox\QEDbox=
\hbox{\penalty -200
\skip0 = 0pt plus 0.05 \hsize	  
      \hskip \skip0 \penalty    0
      \hskip \skip0 \penalty  200 \skip0 = .1\hsize
      \hskip \skip0 \penalty  283
      \hskip \skip0 \penalty  346
      \hskip \skip0 \penalty  400
      \hskip \skip0 \penalty  447
      \hskip \skip0 \penalty  490
      \hskip \skip0 \penalty  529
      \hskip \skip0 \penalty  567
      \hskip \skip0 \penalty  600
      \hskip \skip0 \penalty  632
      \hskip 5\hsize 
      \vadjust{}\hfil
      \rlap{$\sqcup$}$\sqcap$}


\newcommand{\sef}[1]{\eqref{#1}}
\newcommand{\abs}[1]{{\vert #1 \vert}}
\def\babs #1{\bigl\vert #1 \bigr\vert}
\def\gb #1{\bigl( #1 \bigr)}

\def\pa #1#2{\langle #1, #2 \rangle}

\def\DOM #1{\mathcal{D}\bigl( #1 \bigr)}



\def\mcap
{\mathop {\cap}}
\def\mcup
{\mathop {\cup}}

\def\Mcup
{\mathop {\bigcup}}


\def\what{\widehat}
\def\wbar{\overline}
\def\clo{\overline}

\def\novar{\,\cdot\,}
\def\da{\downarrow}

\def\BR{{\mathbb R}}
\def\BC{{\mathbb C}}

\def\BN{{\mathbb N}}

\def\F{\mathcal{F}}

\def\SB{\mathcal{E}}

\def\SP{\mathcal{P}}

\def\Re{\mathrm{Re}\,}

\def\({{\rm (}}
\def\){{\rm )}}

\def\Sbar{\wbar{S}}
\def\DQ{Q^{\diamond}}
\def\NBV{{\rm NBV}}


\def\al
{\alpha}
\def\be
{\beta}
\def\de
{\delta}
\def\ep{\epsilon}
\def\ga
{\gamma}

\def\la
{\lambda}

\def\om
{\omega}
\def\ph
{\varphi}
\def\si
{\sigma}
\def\th
{\theta}
\def\ze
{\zeta}

\def\De
{\Delta}
\def\Ga
{\Gamma}
\def\La
{\Lambda}
\def\Om
{\Omega}

\def\DC{C^{\diamond}}

\def\DY{Y^\diamond}

\def\DQ{Q^\diamond}
\def\dy{y^\diamond}

\def\dq{q^{\diamond}}
\def\du{u^{\diamond}}
\def\bast{\star}

\def\BX{X^{\odot\ast}}

\begin{document}

\title{Twin semigroups and delay equations}

\bigskip
\author{O. Diekmann and S.M. Verduyn Lunel}

\begin{center}
{\em Dedicated, with considerable but finite delay,  to John Mallet-Paret\\ on the occasion of his sixtieth birthday}
\end{center}

\begin{abstract}
\noindent
In the standard theory of delay equations, the fundamental solution does not `live' in the state space. To eliminate this age-old anomaly, we enlarge the state space. As a consequence, we lose the strong continuity of the solution operators and this, in turn, has as a consequence that the Riemann integral no longer suffices for giving meaning to the variation-of-constants formula. To compensate, we develop the Stieltjes-Pettis integral in the setting of a norming dual pair of spaces. Part I provides general theory, Part II deals with ``retarded'' equations, and in Part III we show how the Stieltjes integral enables incorporation of unbounded perturbations corresponding to neutral delay equations.
\end{abstract}

\section{Introduction}\label{sec:1}

A delay equation is a rule for extending a function of time towards the future on the basis of the (assumed to be) known past. The shift along the extended function (i.e., the introduction of current-time-specific past) defines a dynamical system. Delay equations come in two kinds: delay differential equations (DDE) \cite{Diek95,HVL93} and renewal equations (RE) \cite{DGMT98, DGHKMT01,DGG07}.

From a PDE oriented semigroup perspective, delay equations are eccentric: one first constructively defines the semigroup and only then determines the generator, in order to relate to an abstract ODE. Subsequently the development of the qualitative theory can, in principle, follow the well-established path of ODE theory, with the variation-of-constants formula as the key instrument to relate solution operators corresponding to (slightly) different rules for extension to each other. Concerning the function space that serves as the state space, this entails two requirements

\begin{itemize}
\item[--] the semigroup of operators defined by shifting along the extended function should be strongly continuous, in order to employ the Riemann integral when giving precise meaning to the variation-of-constants formula;

\item[--] to represent the rule for extension, one should be able to define the value in the point of extension and to change it without changing the value in nearby points.
\end{itemize}
(Incidentally, the so-called fundamental solution has as initial condition the function that is trivial, except in the point of extension where it equals one.)

Unfortunately, the obvious candidate function spaces satisfy one of these requirements, but not both. The standard approach is to sacrifice the second requirement and to make amends in one way or another. In \cite{Diek95} and \cite{DGG07} one starts with the simplest rule for extension and a Banach space $X$ on which the semigroup is strongly continuous. The representation of the rule for extension is facilitated by embedding the 'small' space $X$ into a 'big' space $\BX$, obtained as the dual of the subspace $X^\odot$ of $X^\ast$ on which the adjoint semigroup of operators is strongly continuous. 
Perturbations are bounded maps from $X$ into $\BX$ and the integral is now a weak-star Riemann integral taking values in $\BX$. Since one can show that the values belong to the image of $X$ under the embedding, they can be re-interpreted as elements of $X$. 

The framework of the four spaces $X$, $X^\ast$, $X^\odot$, $\BX$ is stable under perturbations at the generator level that are described by \emph{bounded} maps from $X$ to $\BX$. Thus sun-star calculus yields a satisfactory theory for semilinear problems (see \cite{MR09,MR18} for an alternative approach using integrated semigroups).

As far as we know, this paper is the first attempt to develop the qualitative theory when, instead of the second, we sacrifice the first requirement. Our way of making amends is to define the integral in Gelfand-Pettis spirit.

\smallskip
\noindent{\sl Note on terminology:} In the context of delay equations we call a space of functions of one real variable (time) ``small'' if translation along an (extended) element is continuous and ``big'' if it is not. So the spaces of continuous functions $C\gb{[-1,0],\BR^n}$ and integrable functions $L^1\gb{[-1,0],\BR^n}$ are small, while the spaces of bounded Borel measurable functions $B\gb{[-1,0],\BR^n}$ and  bounded variation functions $NBV\gb{[-1,0],\BR^n}$ are big.

\smallskip

The aim of the present paper is to establish the variation-of-constants formula for a semigroup of linear operators $\{S(t)\}$ on a big state space $Y$ that accommodates the fundamental solution. The motivation has four components:

\begin{itemize}

\item[--] We anticipate that such a formula should hold; indeed, an integrated version was verified in \cite[Theorem III.2.16]{Diek95},
so it seems merely a matter of making sense of the integral.

\item[--] Strong continuity is a blessing, but the need to have it can be a curse; already in 1953 Feller emphasized that measurability and integrability of (in matrix inspired notation)  $t \mapsto y^\ast S(t)y$, for $y$ belonging to $Y$ and for a sufficiently rich collection of $y^\ast$ in the dual space $Y^\ast$, might be a natural starting point for defining integrals \cite{Fel53}; more recently Kunze \cite{Kun11}, building on Feller's ideas, emphasized that it is natural to work with a norming dual pair of spaces, see the beginning of Section \ref{sec:2} below, such as $B(E)$, the space of all bounded measurable function on a measurable space $E$ and $M(E)$, the space of all bounded measures on $E$, in the theory of Markov processes; in delay equations the Markov process is trivial (just aging), but numbers change; can one incorporate the change of numbers via the variation-of-constants formula?

\item[--] For Renewal Equations corresponding to population models, the space NBV of normalized functions of bounded variation is a very natural state space (see \cite[Chapter XI]{Fel76}) with jumps capturing cohorts, cf. \cite{Huy97}.

\item[--] This is a first step towards covering neutral delay equations in Part III. Neutral delay equations correspond to \emph{unbounded} (actually relatively\hfil\break bounded) maps from $X$ to $\BX$ and as a consequence the spaces $X^\odot$ and $\BX$ depend on the particular perturbation; this undermines the strength (and beauty) of sun-star calculus.

\end{itemize}

We shall heavily exploit that the extension can be defined in terms of the solution of a finite dimensional renewal equation, for which the powerful (Lebesgue) integration theory of real valued functions provides a wealth of results. In other words, we exploit that the rule for extension is represented by an operator with finite dimensional range (so abstract delay equations are not (yet) included). But the variation-of-constants formula itself involves an abstract integral. To define it, we fine-tune the Pettis integral developed by Kunze \cite{Kun11} in the context of a norming dual pair of spaces. 

In Sections 2--4 we introduce twin semigroups defined on a norming dual pair of spaces and we show how Retarded Functional Differential Equations (RFDE), with the space of bounded measurable functions as the state space, fit into this framework. In the second part, Sections 5--7, we deal with bounded finite rank perturbations of twin semigroups and show that the theory covers both RFDE and Renewal Equations (RE) with ``smooth" kernels. In the third and final part we turn to relatively bounded (but still finite rank) perturbations. We use ``cumulative output'' \cite{DGT93} and the Stieltjes integral to extend our approach to cover Neutral Functional Differential Equations (NFDE) and RE with bounded variation kernels.

\vspace{1.5cm}
\centerline{\Large\bf Part I: Twin semigroups}
\bigskip

\section{Twin semigroups on a norming dual pair}\label{sec:2}
\setcounter{equation}{0}

Conceptually, the linear space $Y$ is the state space for the dynamical systems that we want to study and the linear space $\DY$ is an auxiliary space that helps us to perform such studies. But this difference in role is more or less hidden in the linear situation considered in this paper (it will clearly manifest itself in follow-up work on nonlinear problems that we plan to do). A related remark is that our formulation employs the field $\BR$ of real numbers, even though conceptually there is no difference with vector spaces over the field $\BC$ of complex numbers (also see the beginning of Section \ref{sec:5}).

Two Banach spaces $Y$ and $\DY$ are called a {\sl norming dual pair} (cf. \cite{Kun11}) if a bilinear map
\[\pa{\novar}{\novar} : \DY \times Y \to \BR\]
exists such that, for some $M \in [1,\infty)$,
\[\abs{\pa{\dy}{y}} \le M \|\dy\|\|y\|\]
and, moreover,
\begin{align*}
\|y\| &= \sup \Bigl\{|\pa{\dy}{y}| \mid \dy \in \DY,\ \|\dy\| \le 1\,\Bigr\}\\
\|\dy\| &= \sup \Bigl\{|\pa{\dy}{y}| \mid y \in Y,\ \|y\| \le 1\,\Bigr\}.
\end{align*}
So we can consider $Y$ as a closed subspace of $Y^{\diamond\ast}$ and $\DY$ as a closed subspace of $Y^{\ast}$ and both subspaces are necessarily weak$^\ast$ dense since they separate points. The collection of linear functionals $\DY$ defines a weak topology on $Y$, denoted by $\si(Y,\DY)$. The corresponding locally convex topological vector space is denoted by $\gb{Y,\si(Y,\DY)}$. While we denote the dual space of a Banach space $Z$ by adding a star, so by $Z^\ast$, we shall denote the dual space of such topological vector spaces by adding an acute accent. A crucial point is that the dual space $\gb{Y,\si(Y,\DY)}'$ is (isometrically isomorphic to) $\DY$ \cite[Theorem 3.10]{Rudin91}. So if a linear functional on $Y$ is continuous with respect to the topology induced by $\DY$, it can be (uniquely) represented by an element of $\DY$. And please note the symmetry: in the last five sentences one can replace $Y$ and $\DY$ by $\DY$ and $Y$!

A {\sl twin operator} $L$ on a norming dual pair $(Y,\DY)$ is a bounded bilinear map from $\DY \times Y$ to $\BR$ that defines both a bounded linear map from $Y$ to $Y$ and a bounded linear map from $\DY$ to $\DY$. More precisely,
\[L : \DY \times Y \to \BR\qquad (\dy,y) \mapsto \dy Ly\]
is such that 
\begin{itemize}
\item[(i)] for some $C > 0$ the inequality
\begin{equation}\label{eq:bnd-bilin-map}
\abs{\dy L y} \le C \|\dy\|\,\|y\|
\end{equation}
holds for all $y \in Y$ and $\dy \in \DY$;
\item[(ii)] for given $y \in Y$ the map $\dy \mapsto \dy Ly$ is continuous as a map from $\gb{\DY, \si(\DY,Y)}$ to $\BR$ and hence there exists $Ly \in Y$ such that
\begin{equation}\label{eq:bnd-bilin-map-a}
\pa{\dy}{Ly} = \dy Ly
\end{equation}
for all $\dy \in \DY$;
\item[(iii)] for given $\dy \in \DY$ the map $y \mapsto \dy Ly$ is continuous as a map from $\gb{Y,\si(Y,\DY)}$ to $\BR$ and hence there exists $\dy L \in \DY$ such that
\begin{equation}\label{eq:bnd-bilin-map-b}
\pa{\dy L}{y} = \dy Ly
\end{equation}
for all $y \in Y$.
\end{itemize}
So all three maps are denoted by the symbol $L$, but to indicate on which space $L$ acts we write, inspired by \cite{Fel53} which, in turn, is inspired by matrix notation, either $\dy Ly$, $Ly$ or $\dy L$. As a concrete example, consider the identity operator. It maps $(\dy,y)$ to $\pa{\dy}{y}$, $y$ to $y$ and $\dy$ to $\dy$.

If our starting point is a bounded linear operator $L : Y \to Y$ then there exists an associated twin operator if and only if the adjoint of $L$ leaves the embedding of $\DY$ into $Y^\ast$ invariant. We express this in words by saying that $L$ extends to a twin operator. Likewise, if our starting point is an operator $L : \DY \to \DY$ then $L$ extends to a twin operator if and only if the adjoint of $L$ leaves the embedding of $Y$ into $Y^{\diamond\ast}$ invariant. So a twin operator on a norming dual pair is reminiscent of the combination of a bounded linear operator on a reflexive Banach space and its adjoint, whence the adjective ``twin''.

The composition of bounded bilinear maps is, in general, not defined. But for twin operators it is! Indeed, if $L_1$ and $L_2$ are both twin operators on the norming dual pair $(Y,\DY)$, we define the composition $L_1L_2$ by
\begin{equation}\label{eq:bnd-bilin-map-comp}
\dy L_1L_2y := \pa{\dy L_1}{L_2y}.
\end{equation}
Note that this definition entails that $L_1L_2$ acts on $Y$ by first applying $L_2$ and next $L_1$, whereas $L_1L_2$ acts on $\DY$ by first applying $L_1$ and next $L_2$.

\begin{definition}\label{def:2.1}
A family $\{S(t)\}_{t \ge 0}$ of twin operators on a norming dual pair $(Y,\DY)$ is called a {\sl twin semigroup} if
\begin{itemize}
\item[i\)] $S(0) = I$, and\ \ $S(t+s) = S(t)S(s)\quad$ for $t,s \ge 0$;
\item[ii\)] there exist constants $M \ge 1$ and $\om \in \BR$ such that
\[\abs{\dy S(t)y} \le Me^{\om t} \|y\|\|\,\dy\|;\]
\item[iii\)] for all $y \in Y$, $\dy \in \DY$ the function
\[t \mapsto \dy S(t)y\]
is measurable;
\item[iv\)] for $\Re \la > \om$ \(with $\om$ as introduced in ii\)\) there exists a twin operator $\Sbar(\la)$ such that
\begin{equation}\label{eq:2.2}
\dy \Sbar(\la)y = \int_0^\infty e^{-\la t} \dy S(t)y\,dt.
\end{equation}
\end{itemize}
\end{definition}

Note that the combination of $ii)$ and $iii)$ allows us to conclude that the right hand side of $\sef{eq:2.2}$ defines a bounded bilinear map, but not that it defines a twin operator. Hence $iv)$ is indeed an additional assumption.

We call $\Sbar(\la)$ defined on $\{\la \mid \Re \la > \om\}$ the {\sl Laplace transform} of $\{S(t)\}$. It actually suffices to assume that the assertion of iv) holds for $\la = \la_0$ with $\Re \la_0 > \om$. This assumption allows us to introduce the multi-valued operator
\begin{equation}\label{eq:2.3a}
C = \la_0 I - \Sbar(\la_0)^{-1}
\end{equation}
on $Y$ and next define the function $\la \mapsto \Sbar(\la)$ by
\begin{equation}\label{eq:2.3}
\Sbar(\la) = (\la I - C)^{-1}
\end{equation}
on an open neighbourhood of $\la_0$. As Proposition A.2.3 of \cite{Haa06} shows, the function $R$ is holomorphic with Taylor series given by
\[\Sbar(\la) = \sum_{k=0}^\infty (\mu - \la)^k \Sbar(\mu)^{k+1}\]
and the resolvent identity
\[\Sbar(\la) - \Sbar(\mu) = (\mu - \la)\Sbar(\la)\Sbar(\mu)\]
holds. In Proposition 5.2 of \cite{Kun11} these facts are used to prove that
\[\Om_0 := \bigl\{ \la \mid \Sbar(\la) \hbox{ is a twin operator and } \sef{eq:2.2} \hbox{ holds } \bigr\}\]
contains the half plane $\{ \la \mid \Re \la > \om \}$.

In Definition 2.6 of \cite{Kun09} Kunze calls $C$ the \emph{generator} of the semigroup provided the Laplace transform is injective and hence $C$ is single-valued. Here we adopt a more pliant position and call $C$ the generator even when it is multi-valued. Note that we might equally well call the operator $\DC$, defined on $\DY$ as the inverse of the Laplace transform, but now considered as an operator mapping $\DY$ into $\DY$, the generator. As long as one realises that the two have the same twin operator as their resolvent, this cannot lead to confusion. By combining \cite[Prop. 2.7]{Kun09} and \cite[Thm. 5.4]{Kun11} one obtains that the twin semigroup is uniquely determined by the generator if both $C$ and $\DC$ are single-valued.

\medskip\noindent

Focusing on $\{S(t)\}_{t \ge 0}$ as a semigroup of bounded linear operators on Y, we now list some basic results from \cite{Kun11}. For completeness we provide proofs, even though these are, in essence, copied from \cite{Kun11}.

\begin{lemma}\label{lem:2.2}
The following statements are equivalent
\begin{itemize}
\item[1.] $y \in \DOM{C}$ and $z \in Cy$;
\item[2.] there exist $\la \in \BC$ with $\Re \la > \om$ and $\om$ as introduced in ii) and $y,z \in Y$ such that
\begin{equation}\label{eq:2.4}
y = \Sbar(\la)(\la y - z)
\end{equation}
\item[3.] $y,z \in Y$ and for all $t > 0$
\begin{equation}\label{eq:2.5}
\int_0^t S(\tau)z\,d\tau = S(t)y - y.
\end{equation}
\end{itemize}
Here it should be noted that item 3. includes the assertions
\begin{itemize}
\item[--] the integral $\int_0^t S(\tau)z\,d\tau$ defines an element of $Y$ (even though at first it only defines an element of $Y^{\diamond\ast}$);
\item[--] the integral $\int_0^t S(\tau)z\,d\tau$ does not depend on the choice of $z \in Cy$ in case $C$ is multi-valued.
\end{itemize}
\end{lemma}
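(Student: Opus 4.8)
The plan is to establish the equivalences in the order $1\Leftrightarrow 2$, then $2\Rightarrow 3$, then $3\Rightarrow 1$, and to obtain the two supplementary assertions of item~$3$ as by-products of the step $2\Rightarrow 3$. Two preliminary remarks come first. For $z\in Y$ and $t>0$ the map $\dy\mapsto\int_0^t\dy S(\tau)z\,d\tau$ is a bounded linear functional on $\DY$: the integrand is measurable by $iii)$ and, by $ii)$, dominated on $[0,t]$ by $Me^{\om\tau}\|z\|\|\dy\|$, so the functional has norm at most $\bigl(\int_0^tMe^{\om\tau}\,d\tau\bigr)\|z\|$ and hence represents an element of the Banach dual $Y^{\diamond\ast}$ of $\DY$; whether this element lies in $Y$ is exactly what item~$3$ claims. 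Secondly, the resolvent identity $\Sbar(\la)-\Sbar(\mu)=(\mu-\la)\Sbar(\la)\Sbar(\mu)$, which also gives $\Sbar(\la)\Sbar(\mu)=\Sbar(\mu)\Sbar(\la)$, shows that the relation $\{(y,z)\in Y\times Y:y=\Sbar(\la)(\la y-z)\}$ does not depend on $\la$ for $\Re\la>\om$. Thus statement~$2$ is equivalent to its ``for all such $\la$'' version, and for $\la=\la_0$ it is literally the defining relation $z\in(\la_0I-\Sbar(\la_0)^{-1})y$ of $C$, which gives $1\Leftrightarrow 2$ and lets us use any $\la$ with $\Re\la>\max(\om,0)$ below.

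\textbf{$3\Rightarrow 1$.} Assume $\int_0^tS(\tau)z\,d\tau=S(t)y-y$ for all $t>0$. Fix $\dy\in\DY$ and a real $\la>\max(\om,0)$, pair with $\dy$, multiply by $\la e^{-\la t}$ and integrate over $(0,\infty)$. Fubini on the left (legitimate because $\Re\la>\om$, using $ii)$ for the dominating bound) turns $\int_0^\infty\la e^{-\la t}\int_0^t\dy S(\tau)z\,d\tau\,dt$ into $\int_0^\infty e^{-\la\tau}\dy S(\tau)z\,d\tau=\dy\Sbar(\la)z$ by $\sef{eq:2.2}$, while the right-hand side equals $\la\dy\Sbar(\la)y-\dy y$. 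Since $\DY$ is norming, $\Sbar(\la)z=\la\Sbar(\la)y-y$, i.e. $y=\Sbar(\la)(\la y-z)$; by the $\la$-independence above this says $y\in\DOM C$ and $z\in Cy$.

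\textbf{$2\Rightarrow 3$ (the crux).} Pick $\la$ with $\Re\la>\max(\om,0)$ and put $w:=\la y-z$, so $y=\Sbar(\la)w$ and $z=\la\Sbar(\la)w-w$. Applying $\sef{eq:2.2}$ with $\dy S(t)$ in place of $\dy$, together with $S(t)S(s)=S(t+s)$ and the composition rule $\sef{eq:bnd-bilin-map-comp}$, yields the auxiliary identity
\[\dy S(t)\Sbar(\la)w=e^{\la t}\Bigl(\dy\Sbar(\la)w-\int_0^te^{-\la u}\dy S(u)w\,du\Bigr).\]
Now expand $\int_0^t\dy S(\tau)z\,d\tau=\la\int_0^t\dy S(\tau)\Sbar(\la)w\,d\tau-\int_0^t\dy S(\tau)w\,d\tau$, substitute the auxiliary identity in the first integral, and interchange the order of integration on the triangle $\{0\le u\le\tau\le t\}$ (only local integrability is needed, which holds by $ii)$ and $iii)$). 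The exponential factors telescope, leaving $\int_0^t\dy S(\tau)z\,d\tau=\dy S(t)\Sbar(\la)w-\dy\Sbar(\la)w=\dy\bigl(S(t)y-y\bigr)$. As this holds for all $\dy\in\DY$, the element of $Y^{\diamond\ast}$ represented by $\int_0^tS(\tau)z\,d\tau$ coincides with $S(t)y-y\in Y$, and since the right-hand side involves only $y$, it is independent of the choice of $z\in Cy$. Hence item~$3$ holds, including its two displayed clarifications.

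\textbf{Main obstacle.} Everything outside $2\Rightarrow 3$ is bookkeeping with the resolvent identity and a single Fubini/Laplace computation. The care in $2\Rightarrow 3$ goes into keeping all the ``operator identities'' ($S(t)\Sbar(\la)$ as a composition of twin operators, $S(t)S(s)=S(t+s)$, inserting $\dy S(t)$ into $\sef{eq:2.2}$) in their legitimate bilinear-pairing form, and into checking that the two interchanges of integration are covered by $ii)$ and $iii)$. A softer route through uniqueness of Laplace transforms would give equality only for almost every $t$, so the explicit telescoping identity is what delivers item~$3$ for every $t>0$ and pins the integral down as an element of $Y$.
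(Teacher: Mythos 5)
Your proof is correct, and it covers the two supplementary clarifications of item~3 cleanly. The only place where it departs from the paper's own argument is the direction $2\Rightarrow 3$. The paper establishes the identity
\[
\la\int_0^t e^{-\la\tau}S(\tau)y\,d\tau \;=\; y + \int_0^t e^{-\la\tau}S(\tau)z\,d\tau - e^{-\la t}S(t)y
\]
for $\Re\la>\om$ and then appeals to analytic continuation: both sides are entire in $\la$ (being finite integrals), so the identity persists at $\la=0$, which is exactly \sef{eq:2.5}. You instead stay at a fixed $\la$ with $\Re\la>\max(\om,0)$, insert the twin version of \sef{eq:2.6} twice, run a Fubini interchange over the triangle $\{0\le u\le\tau\le t\}$, and let the exponential factors cancel algebraically. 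Both routes rest on the same building block (the finite-time Laplace identity \sef{eq:2.6}); the paper's version is shorter on the page but requires invoking the identity theorem, whereas yours is a fully explicit elementary computation. The directions $1\Leftrightarrow 2$ and $3\Rightarrow 1$ (the paper proves $3\Rightarrow 2$, but it is the same Laplace/Fubini argument) match the paper's proof, and your explicit use of the resolvent identity to justify $\la$-independence in $1\Leftrightarrow 2$ is a small but sound elaboration of what the paper leaves implicit.
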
 

\begin{proof}
The observation $y \in \DOM{C}$ if and only if $y = \Sbar(\la)\tilde y$ and in that case $(\la I - C)y = \tilde y$, establishes the equivalence of the items 1. and 2.

The integrals below derive their meaning by pairing the integrand with arbitrary $\dy \in \DY$. But in order to enhance readability, we do not actually write these pairings. Let $\Re \la > \om$. The identity
\begin{equation}\label{eq:2.6}
\int_0^t e^{-\la \tau}S(\tau)y\,d\tau = \Sbar(\la)\gb{y - e^{-\la t}S(t)y}
\end{equation}
follows straightforwardly by considering $\int_0^t = \int_0^\infty - \int_t^\infty$ and next shifting the integration variable in the second integral over $t$. If we multiply \sef{eq:2.6} by $\la$, assume that 2. holds, and use \sef{eq:2.4} to rewrite $\la R(\la)y$, we obtain
\[\la \int_0^t e^{-\la \tau}S(\tau)y\,d\tau = y + \Sbar(\la)\gb{z - \la e^{-\la t}S(t)y}.\]
Next use \sef{eq:2.6} with $y$ replaced by $z$, as well as the fact that $S(t)$ and $\Sbar(\la)$ commute, to arrive at
\begin{equation*}
\la \int_0^t e^{-\la \tau}S(\tau)y\,d\tau = y + \int_0^t e^{-\la \tau} S(\tau)z\,d\tau - e^{-\la t}S(t)\Sbar(\la)(\la y - z)
\end{equation*}
or, on account of \sef{eq:2.4}
\begin{equation}\label{eq:2.7}
\la \int_0^t e^{-\la \tau}S(\tau)y\,d\tau = y + \int_0^t e^{-\la \tau} S(\tau)z\,d\tau - e^{-\la t}S(t)y.
\end{equation}
The identity \sef{eq:2.7} does not involve any improper integral, so we can extend by analytic continuation and, in particular, take $\la = 0$. This yields \sef{eq:2.5}. Thus we have proved that 2. implies 3.

Finally, assume that 3. holds. Then
\begin{align*}
\la \Sbar(\la)y - y &= \int_0^\infty \la e^{-\la \tau}\gb{S(\tau)y - y}\,d\tau\\
&= \int_0^\infty \la e^{-\la \tau}\int_0^\tau S(\si)z\,d\si\,d\tau\\
&= \int_0^\infty \int_\si^\infty \la e^{-\la \tau}\,d\tau\,S(\si)z\,d\si\\
&= \int_0^\infty e^{-\la \si}S(\si)z\,d\si = \Sbar(\la)z
\end{align*}
which amounts to \sef{eq:2.4}
\end{proof}

\begin{lemma}\label{lem:2.3} 
For all $t > 0$ and $y \in Y$, we have $\int_0^t S(\tau)y\,d\tau \in \DOM{C}$ and 
\begin{equation}\label{eq:2.8}
S(t)y - y \in C\int_0^t S(\tau)y\,d\tau.
\end{equation}
\end{lemma}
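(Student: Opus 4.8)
The plan is to reduce the statement to Lemma~\ref{lem:2.2}. Put $v := \int_0^t S(\tau)y\,d\tau$ and $z := S(t)y - y$; then $z \in Y$, and the goal becomes to verify that the pair $(v,z)$ satisfies condition~3.\ of Lemma~\ref{lem:2.2}, that is,
\begin{equation}\label{eq:shift-plan}
S(h)v - v = \int_0^h S(\sigma)z\,d\sigma \qquad \text{for all } h > 0 .
\end{equation}
Once \eqref{eq:shift-plan} holds, the equivalence of items~1.\ and~3.\ in Lemma~\ref{lem:2.2}, read with ``$y$'' replaced by $v$ and ``$z$'' by $z$, yields $v \in \DOM{C}$ and $z \in Cv$, which is exactly the assertion.

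The identity \eqref{eq:shift-plan} is a routine computation once both sides are paired with an arbitrary $\dy \in \DY$ and the pairing is suppressed, as in the proof of Lemma~\ref{lem:2.2}: by Definition~\ref{def:2.1} everything is then a Lebesgue integral of a measurable, exponentially bounded scalar function. Using the semigroup property $S(h)S(\tau) = S(h+\tau)$ and the substitution $\sigma = h+\tau$ one gets $S(h)v = \int_h^{h+t} S(\sigma)y\,d\sigma$, hence, after recombining intervals of integration, $S(h)v - v = \int_t^{h+t} S(\sigma)y\,d\sigma - \int_0^h S(\sigma)y\,d\sigma$; on the other hand, again by the semigroup property and a shift of variable,
\[
\int_0^h S(\sigma)z\,d\sigma = \int_0^h S(\sigma+t)y\,d\sigma - \int_0^h S(\sigma)y\,d\sigma = \int_t^{h+t} S(\sigma)y\,d\sigma - \int_0^h S(\sigma)y\,d\sigma ,
\]
and \eqref{eq:shift-plan} follows.

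The genuinely delicate point, which I expect to be the main obstacle, is that condition~3.\ of Lemma~\ref{lem:2.2} presupposes $v \in Y$, whereas the integral defining $v$ a priori only produces an element of $Y^{\diamond\ast}$. (The implicit assertion in condition~3.\ that $\int_0^h S(\sigma)z\,d\sigma$ lies in $Y$ then follows from \eqref{eq:shift-plan}, but not before $v \in Y$ is established.) To see that $v \in Y$, fix $\nu > \om$ and set $G(\tau) := \int_0^\tau e^{-\nu\sigma}S(\sigma)y\,d\sigma$; by \eqref{eq:2.6} with $\la$ replaced by $\nu$ we have $G(\tau) = \Sbar(\nu)\bigl(y - e^{-\nu\tau}S(\tau)y\bigr) \in Y$ for every $\tau$, and moreover $\tau \mapsto G(\tau)$ is (locally Lipschitz, hence) continuous into $Y$, since $\|G(\tau') - G(\tau)\| = \|\int_\tau^{\tau'} e^{-\nu\sigma}S(\sigma)y\,d\sigma\|$ is at most a constant multiple of $|\tau' - \tau|$ by the norming inequality together with the exponential bound of Definition~\ref{def:2.1} (recall that $Y$ is isometrically embedded in $Y^{\diamond\ast}$). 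Carrying out the integration by parts
\[
v = \int_0^t S(\tau)y\,d\tau = \int_0^t e^{\nu\tau}\,dG(\tau) = e^{\nu t}G(t) - \nu\int_0^t e^{\nu\tau}G(\tau)\,d\tau
\]
(legitimate after pairing with any $\dy \in \DY$) then exhibits $v$ as the sum of an element of $Y$ and a bona fide $Y$-valued Riemann integral of the continuous function $\tau \mapsto e^{\nu\tau}G(\tau)$; hence $v \in Y$. With $v \in Y$ secured, \eqref{eq:shift-plan} is precisely condition~3.\ of Lemma~\ref{lem:2.2}, and that lemma delivers $v \in \DOM{C}$ and $z \in Cv$, completing the proof.
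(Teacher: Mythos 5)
Your proof is correct, and it takes a genuinely different route from the paper's. The paper works entirely through item~2.\ of Lemma~\ref{lem:2.2}: using $y \in (\lambda I - C)\Sbar(\lambda)y$ together with \sef{eq:2.5} applied to $\Sbar(\lambda)y \in \DOM{C}$, it rewrites $\int_0^t S(\tau)y\,d\tau$ as $\lambda\int_0^t S(\tau)\Sbar(\lambda)y\,d\tau - S(t)\Sbar(\lambda)y + \Sbar(\lambda)y$; membership in $Y$ then follows because the first integral is a Bochner integral of a continuous orbit, and commuting $\Sbar(\lambda)$ with $S(\tau)$ yields the identity $v = \Sbar(\lambda)(\lambda v - z)$, which is exactly \sef{eq:2.4}. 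You instead verify item~3.\ directly by elementary semigroup algebra, and you settle the key issue $v \in Y$ by a separate device: integrating by parts against the $Y$-valued primitive $G(\tau)=\Sbar(\nu)(y - e^{-\nu\tau}S(\tau)y)$ from \sef{eq:2.6}, whose local Lipschitz continuity you extract from the norming pairing, and then recognizing $\int_0^t e^{\nu\tau}G(\tau)\,d\tau$ as a Bochner integral of a continuous $Y$-valued function. Both routes ultimately invoke the same two ingredients — the twin-operator property of $\Sbar(\lambda)$ and Bochner integrability of continuous $Y$-valued orbits — but the paper's resolvent manipulation delivers both $v\in Y$ and the characterization \sef{eq:2.4} in one stroke, whereas your argument is more elementary and makes the two steps (first $v\in Y$, then the verification of \sef{eq:2.5}) visibly independent, which some readers may find easier to follow. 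One stylistic point worth tightening: you state \eqref{eq:shift-plan} before establishing $v \in Y$, and the left side $S(h)v$ only makes literal sense once $v \in Y$; you do acknowledge the identity is to be read after pairing with $\dy$, which repairs this, but reordering so that $v \in Y$ comes first would remove the need for that caveat.
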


\begin{proof}
Again we omit the pairing with $\dy$. Yet, we keep in mind that the integrals define elements in $Y^{\diamond\ast}$ for which we subsequently check that they are represented by elements in $Y$. Since
$y \in (\la I - C)\Sbar(\la)y$ we have
\[\int_0^t S(\tau)y\,d\tau \in \int_0^t S(\tau)(\la I - C)\Sbar(\la)y\,d\tau\]
and
\begin{align*}
\int_0^t S(\tau)(\la I - C)\Sbar(\la)y\,d\tau &= \la \int_0^t S(\tau)\Sbar(\la)y\,d\tau - \int_0^t S(\tau)C\Sbar(\la)y\,d\tau\\
&= \la \int_0^t S(\tau)\Sbar(\la)y\,d\tau - S(t)\Sbar(\la)y + \Sbar(\la)y,
\end{align*}
where we have used \sef{eq:2.5}. Note that the right hand side is single valued. We claim that the right hand side belongs to $Y$. This is clear for the last two terms. Concerning the first, observe that \sef{eq:2.5} implies that $t \mapsto S(t)y$ is continuous if $y \in \DOM{C}$. Hence we can interpret the integral $\int_0^t S(\tau)\Sbar(\la)y\,d\tau$ as a Bochner integral of a continuous $Y$-valued function.

Since $S(\tau)\Sbar(\la) = \Sbar(\la)S(\tau)$ and $\Sbar(\la)$ is a twin operator we have
\[\int_0^t S(\tau)\Sbar(\la)y\,d\tau = \Sbar(\la)\int_0^t S(\tau)y\,d\tau.\]
So the identity above can be written in the form
\[\int_0^t S(\tau)y\,d\tau = \Sbar(\la)\gb{\la\int_0^t S(\tau)y\,d\tau + y - S(t)y}.\]
Comparing this to \sef{eq:2.4} we conclude that $\int_0^t S(\tau)y\,d\tau \in \DOM{C}$ and that \sef{eq:2.8} holds.
\end{proof}

In the proof of Lemma \ref{lem:2.3} we used the assumption that $\Sbar(\la)$ is a twin operator (cf. Definition \ref{def:2.1}, iv) to prove that the same is true for local integrals of the orbit $t \mapsto S(t)y$ for arbitrary $y \in Y$. In Theorem 5.8 of \cite{Kun11} Kunze proves that these two properties are equivalent.

\medskip

In order to obtain information about the asymptotic behaviour of the twin semigroup $S(t)$, we adapt a result  for strongly continuous semigroups from \cite{AreBat88}. It was observed by Batty in \cite{Batty94} that in case $\si(C) \mcap i\BR = \emptyset$, the asymptotic behaviour actually follows from Korevaar's proof of the Ingham theorem \cite{Korevaar}. Here we adapt this argument from \cite{Batty94} to the case of twin semigroups.

\begin{theorem}\label{thm:Tauber}
Let $S(t)$ be a twin semigroup on a norming dual pair $(Y,\DY)$ and assume that $S(t)$ is bounded. If $\si(C) \mcap i\BR = \emptyset$, then
\begin{equation}\label{eq:Tauber}
\|S(t)C^{-1}\| \to 0 \qquad \hbox{as}\quad t \to \infty.
\end{equation}
As a consequence we have that $S(t)y \to 0$ as $t \to \infty$ for every $y$ in the norm-closure of $\DOM{C}$.
\end{theorem}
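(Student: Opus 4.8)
The plan is to adapt the Arendt--Batty--Lyubich--V\~u / Korevaar--Ingham Tauberian argument to the twin semigroup setting. The central object is the $Y$-valued (Laplace) transform $\la \mapsto \Sbar(\la)C^{-1}$. By Lemma \ref{lem:2.3}, for $y \in Y$ the local integrals $\int_0^t S(\tau)y\,d\tau$ lie in $\DOM{C}$, which is precisely what we need to make sense of $S(t)C^{-1}$ as a bounded twin operator: writing $x = C^{-1}y$ we have $S(t)x - x \in C\int_0^t S(\tau)x\,d\tau$, so $t \mapsto S(t)x$ is (norm-)continuous and $\int_0^t S(\tau)x\,d\tau$ is an honest Bochner integral of a continuous $Y$-valued function. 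Thus, after composing with $C^{-1}$, the orbit $t \mapsto S(t)C^{-1}y$ is a bounded, norm-continuous $Y$-valued function, and the problem is reduced to a genuine (strong, $C_0$-flavoured) asymptotics statement to which the classical machinery applies.

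First I would record the resolvent data. By Definition \ref{def:2.1}(iv) and the discussion following it, $\Sbar(\la) = (\la I - C)^{-1}$ is defined and holomorphic on $\Om_0 \supseteq \{\Re\la > \om\}$, and by boundedness of $S(t)$ (with $\om$ replaceable by any positive number) one gets the usual bound $\|\Sbar(\la)\| \le M/\Re\la$ for $\Re\la > 0$. The hypothesis $\si(C) \mcap i\BR = \emptyset$ means $\Sbar(\la)$ extends holomorphically across the imaginary axis, so there is a neighbourhood of $i\BR$ on which $\la \mapsto \Sbar(\la)$, and hence $\la \mapsto \Sbar(\la)C^{-1}$, is bounded and holomorphic. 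The key observation is that for fixed $y$, $\int_0^\infty e^{-\la t} S(t)C^{-1}y\,dt = \Sbar(\la)C^{-1}y$ for $\Re\la > 0$ (this follows from \sef{eq:2.2} applied after pairing with $\dy$, together with the fact that $\Sbar(\la)$ and $C^{-1}$ commute), so $\Sbar(\la)C^{-1}y$ is exactly the Laplace transform of the bounded continuous $Y$-valued function $g_y(t) := S(t)C^{-1}y$.

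Next I would run Korevaar's contour-integral proof of the quantified Ingham--Karamata theorem on $g_y$. Concretely: for $T>0$ set $G_T(\la) = \int_0^T e^{-\la t} g_y(t)\,dt$, the entire function approximating $\Sbar(\la)C^{-1}y$. Using the holomorphic extension of $\Sbar(\cdot)C^{-1}$ past $i\BR$ and Cauchy's theorem, integrate $(\Sbar(\la)C^{-1}y - G_T(\la)) e^{\la T}(1/\la + \la/R^2)\,d\la$ over a suitable modified contour (a large semicircle of radius $R$ in $\Re\la>0$ together with a small detour to the left of $i\BR$ where the integrand is controlled by the extension), and estimate each piece. The standard Korevaar estimates give $\|g_y(T) - (\text{something tending to }0)\|$, i.e. $\|S(T)C^{-1}y\| \to 0$ as $T \to \infty$; since $g_y$ is bounded with the uniform bound $\|g_y(t)\| \le M\|C^{-1}\|\,\|y\|$ independent of $y$, and the contour estimates depend on $y$ only through this uniform bound and the uniform bound on $\Sbar(\cdot)C^{-1}$ near $i\BR$, the convergence is in fact \emph{uniform} in $\|y\| \le 1$, which is exactly \sef{eq:Tauber}. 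The final assertion then follows: if $y \in \clo{\DOM{C}}$ and $\ep>0$, pick $x \in \DOM{C}$, say $x = C^{-1}w$, with $\|y - x\| < \ep$; then $\|S(t)y\| \le \|S(t)(y-x)\| + \|S(t)C^{-1}w\| \le M\ep + \|S(t)C^{-1}\|\,\|w\| \to M\ep$, and since $\ep$ is arbitrary, $S(t)y \to 0$.

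The main obstacle I anticipate is not the complex-analytic Tauberian core, which is verbatim Korevaar/Batty, but the bookkeeping at the interface between the weak ($\si(Y,\DY)$) definition of the twin semigroup and the norm-continuous Banach-space-valued picture needed to apply that core. Specifically one must check carefully that $S(t)C^{-1}$ really is a bounded operator $Y \to Y$ with $t \mapsto S(t)C^{-1}y$ norm-continuous (this is where Lemma \ref{lem:2.3} and the Bochner-integral interpretation in its proof are essential), that the Laplace-transform identity $\Sbar(\la)C^{-1}y = \int_0^\infty e^{-\la t}S(t)C^{-1}y\,dt$ holds as a norm-convergent (Bochner) integral rather than merely weakly, and that all contour estimates are uniform in $\|y\|\le 1$ so that one obtains operator-norm convergence in \sef{eq:Tauber} rather than merely pointwise convergence of orbits. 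Once these continuity and uniformity points are secured, the rest is the classical argument.
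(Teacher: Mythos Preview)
Your proposal is correct and follows essentially the same Korevaar--Ingham contour argument as the paper, with the same fudge factor $(1+z^2/R^2)/z$, the same $\limsup \le C/R$ conclusion, and the same $R\to\infty$ finish. The one presentational difference is that where you first use $C^{-1}$ to land in the strongly continuous subspace $X$ and then run the argument on $Y$-valued Bochner integrals, the paper works throughout at the scalar level of the pairing $\dy S(t)C^{-1}y$---starting from the Cauchy representation $\dy S(t)C^{-1}y = -\tfrac{1}{2\pi i}\int_\Gamma (1+z^2/R^2)\,\dy(zI-C)^{-1}S(t)y\,\tfrac{dz}{z}$ and using the identity $e^{zt}\int_0^t e^{-z\tau}\dy S(\tau)y\,d\tau = \dy(zI-C)^{-1}(e^{zt}y - S(t)y)$---which keeps all integrals scalar and neatly sidesteps the Bochner-integral bookkeeping you flagged as the main obstacle.
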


\begin{proof}
Let $\Ga_R := \{ z \in \BC \mid |z| = R \}$ and $\Ga_R^{-}$ and $\Ga_R^+$ denote the part of $\Ga_R$ in the, respectively, left and right closed half plane of $\BC$. Define $\Ga_0$ to be a path in the intersection of $\rho(C)$ and the open left half plane connecting $iR$ and $-iR$ such that the closed contour $\Ga$ given by the union of $\Ga_R^+$ and $\Ga_0$ does not encircle any pole of $(zI - C)^{-1}$. 

From Cauchy's Residue Theorem it follows that we can write
\begin{equation}\label{eq:cauchy-id}
\dy S(t)C^{-1}y = -\frac{1}{2\pi i} \int_{\Ga} \gb{1 + \frac{z^2}{R^2}}\dy (zI - C)^{-1}S(t)y\,\frac{dz}{z},
\end{equation}
where the factor $\gb{1 + \frac{z^2}{R^2}}$ is chosen because for $z \in \Ga_R$ the identity
\begin{equation}\label{eq:fudge}
\bigl| 1 + \frac{z^2}{R^2} \bigr| = \frac{2\bigl| \Re z\bigr|}{R}
\end{equation}
holds. Fix $t \ge 0$ and observe that from the identity \sef{eq:2.6} we have for $\Re z \ge 0$
\begin{equation}\label{eq:2.6new}
e^{zt}\int_0^t e^{-z\tau} \dy S(\tau)y\,d\tau = \dy (zI - C)^{-1}\gb{e^{zt}y - S(t)y}.
\end{equation}
Define the entire function $g_t : \BC \to Y$ by
\[g_t(z) := \int_0^t e^{-z\tau} S(\tau)y\,d\tau\]
and use \sef{eq:2.6new} to deduce the identity
\begin{align}\label{eq:cauchy-id2}
&\frac{1}{2\pi i} \int_{\Ga_0}\gb{1 + \frac{z^2}{R^2}}\dy (zI - C)^{-1}S(t)y\,\frac{dz}{z}\nonumber\\
&\qquad\qquad\qquad= \frac{1}{2\pi i} \int_{\Ga_0}\gb{1 + \frac{z^2}{R^2}}e^{zt}\dy (zI - C)^{-1}y\,\frac{dz}{z}\nonumber\\
&\qquad\qquad\qquad\qquad\qquad\qquad -\frac{1}{2\pi i} \int_{\Ga_R^{-}}\gb{1 + \frac{z^2}{R^2}} e^{zt} \dy g_t(z)\,\frac{dz}{z}.
\end{align}
Since along $\Ga_0$ we have $\Re z < 0$, it follows from the dominated convergence theorem that the first integral on the right hand side of \sef{eq:cauchy-id2} tends to zero as $t \to \infty$. 

Using the fact that $|\dy S(t)y| \le M\|\dy\|\,\|y\|$, we have for $z \in \Ga_R^-$,
\[\bigl|e^{zt}\dy g_t(z)\bigr| = \bigl| \int_0^t e^{z(t-\tau)} \dy S(\tau)y\,d\tau \bigr| \le \frac{M}{\bigl|\Re z\bigr|}\|\dy\|\,\|y\|.\]
Similarly, for $z \in \Ga_R^+$
\[\bigl| \dy (z I - C)^{-1}S(t)y \bigr| = \bigl| \int_0^\infty e^{-z\tau} \dy S(t+\tau)y\,d\tau \bigr| \le \frac{M}{\Re z}\|\dy\|\,\|y\|,\]
From the property \sef{eq:fudge} it follows that both the integral over $\Ga_R^-$ in \sef{eq:cauchy-id2} and the integral over $\Ga_R^+$ in \sef{eq:cauchy-id} are bounded. Using these estimates in combination with the identities \sef{eq:cauchy-id} and \sef{eq:cauchy-id2} yields
\begin{equation}\label{eq:cauchy-id3}
\limsup_{t \to \infty}\ \bigl|\dy S(t)C^{-1}y \bigr| \le \frac{2M}{R}\|\dy\|\,\|y\|.
\end{equation}
By letting $R \to \infty$, we conclude \sef{eq:Tauber}.

Since $C^{-1}$ has dense range in the norm-closure of $\DOM{C}$, the final observation follows from \sef{eq:Tauber} and the fact that $S(t)$ is bounded.
\end{proof}

In this paper we will see that our perturbation results are well suited to verify the conditions of Theorem \ref{thm:Tauber} in terms of the given data.

\section{The subspace of strong continuity}\label{sec:3}
\setcounter{equation}{0}

We define the subspace $X$ of $Y$ by
\begin{equation}\label{eq:3.1}
X := \bigl\{y \in Y \mid t \mapsto S(t)y\hbox{ is continuous}\, \bigr\}
\end{equation}
and note, first of all, that the semigroup property of $\{S(t)\}_{t \ge 0}$ yields as an equivalent characterization
\begin{equation}\label{eq:3.2}
X := \bigl\{y \in Y \mid \lim_{t \da 0} \| S(t)y - y \| = 0 \bigr\}.
\end{equation}
As $S(t)$ maps $X$ into $X$, the restriction
\begin{equation}\label{eq:3.3}
T(t) = S(t)\big\vert_{X}
\end{equation}
defines a strongly continuous semigroup $\{T(t)\}_{t \ge 0}$ on the Banach space $X$ ($X$ is norm-closed in $Y$, see Theorem \ref{thm:3.1}). 

\smallskip

The main results of this section are the following theorems.

\begin{theorem}\label{thm:3.1}
The subspace $X$ of strong continuity equals the norm closure of $\DOM{C}$
\[ X = \clo{\DOM{C}}.\]
\end{theorem}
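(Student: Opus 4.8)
The plan is to establish the two inclusions $\clo{\DOM{C}} \subseteq X$ and $X \subseteq \clo{\DOM{C}}$ separately, in both cases leaning on Lemmas~\ref{lem:2.2} and~\ref{lem:2.3} together with the uniform bound in part ii) of Definition~\ref{def:2.1}. Note first that the latter gives $\|S(t)y\| \le Me^{\om t}\|y\|$ for all $y \in Y$ (take the supremum over $\dy$ with $\|\dy\| \le 1$ in ii), recalling that $\pa{\dy}{S(t)y} = \dy S(t)y$), a fact we shall use repeatedly.

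To show $\clo{\DOM{C}} \subseteq X$ I would argue in two steps. \emph{Step 1: $\DOM{C} \subseteq X$} --- this is in essence the observation already made inside the proof of Lemma~\ref{lem:2.3}. Given $y \in \DOM{C}$, choose $z \in Cy$; by the implication $1.\Rightarrow 3.$ of Lemma~\ref{lem:2.2}, $S(t)y - y = \int_0^t S(\tau)z\,d\tau$ for all $t > 0$. Pairing with $\dy$, using $\abs{\dy S(\tau)z} \le Me^{\om\tau}\|\dy\|\,\|z\|$ and taking the supremum over $\|\dy\| \le 1$ gives, for $0 \le s \le t \le 1$,
\[\Bigl\|\int_s^t S(\tau)z\,d\tau\Bigr\| \le M\|z\|\int_s^t e^{\om\tau}\,d\tau \le M\,e^{\abs{\om}}\|z\|\,(t-s),\]
so $t \mapsto S(t)y$ is (locally Lipschitz, hence) continuous and $y \in X$. \emph{Step 2: $X$ is norm-closed.} If $y_n \in X$ and $y_n \to y$ in norm, then for $t \in [0,1]$,
\[\|S(t)y - y\| \le \gb{Me^{\abs{\om}} + 1}\|y - y_n\| + \|S(t)y_n - y_n\|,\]
and a routine $\ep/2$-argument (first fix $n$ large, then $t$ small) yields $\lim_{t \da 0}\|S(t)y - y\| = 0$, i.e.\ $y \in X$ by the characterization \sef{eq:3.2}. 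Together, Steps 1 and 2 give $\clo{\DOM{C}} \subseteq X$.

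For the reverse inclusion I would take $y \in X$ and regularize by setting $y_t := \tfrac{1}{t}\int_0^t S(\tau)y\,d\tau$ for $t > 0$. Since $y \in X$, the map $\tau \mapsto S(\tau)y$ is norm-continuous, so this integral is an ordinary $Y$-valued Bochner integral; by Lemma~\ref{lem:2.3} it lies in $\DOM{C}$, and hence so does $y_t$ ($\DOM{C}$ being a linear subspace). Moreover,
\[\|y_t - y\| = \Bigl\|\tfrac{1}{t}\int_0^t\gb{S(\tau)y - y}\,d\tau\Bigr\| \le \tfrac{1}{t}\int_0^t\|S(\tau)y - y\|\,d\tau \longrightarrow 0 \qquad(t\da 0),\]
since the integrand is continuous and vanishes at $\tau = 0$. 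Hence $y \in \clo{\DOM{C}}$, which together with the previous paragraph proves the theorem.

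The step I expect to need the most care is the meaning of $\int_0^t S(\tau)y\,d\tau$ in the last paragraph. In Lemma~\ref{lem:2.3} this integral is defined a priori only weak-$\ast$, as an element of $Y^{\diamond\ast}$, and is only afterwards shown to lie in $Y$; for $y \in X$ one must check that this weak-$\ast$ integral coincides with the Bochner integral of the continuous $Y$-valued map $\tau \mapsto S(\tau)y$ --- which it does, since pairing the Bochner integral against any $\dy$ reproduces $\int_0^t \dy S(\tau)y\,d\tau$ --- so that Lemma~\ref{lem:2.3} legitimately applies to precisely the object we are using. Apart from this bookkeeping, the argument is the familiar regularization proof from strongly continuous semigroup theory, with norm estimates on orbits systematically replaced by the uniform bound in part ii) of Definition~\ref{def:2.1}.
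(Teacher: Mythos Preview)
Your proof is correct and follows essentially the same route as the paper: the inclusion $\DOM{C}\subseteq X$ via Lemma~\ref{lem:2.2} and the uniform bound (this is the paper's Lemma~\ref{lem:3.3}/Corollary~\ref{col:3.4}), and the reverse inclusion via the regularization $\tfrac{1}{t}\int_0^t S(\tau)y\,d\tau$ together with Lemma~\ref{lem:2.3} (this is the paper's Lemma~\ref{lem:3.5}). The only difference is that you spell out the norm-closedness of $X$ explicitly, whereas the paper leaves this step implicit when it asserts that Corollary~\ref{col:3.4} and Lemma~\ref{lem:3.5} combine to give the theorem; your discussion of the weak-$\ast$ versus Bochner interpretation of the regularizing integral is likewise a point the paper passes over without comment.
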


\vspace{.25cm}

\begin{theorem}\label{thm:3.2}
The generator $A$ of the strongly continuous semigroup $\{T(t)\}_{t \ge 0}$ on $X$ is the part of $C$ in $X$.
\end{theorem}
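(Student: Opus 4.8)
The plan is to show the two standard inclusions that identify $A$ with the part $C_X$ of $C$ in $X$, where $\DOM{C_X} = \{y \in \DOM{C} \cap X : Cy \cap X \ne \emptyset\}$ (and, in the multivalued case, $C_X y = Cy \cap X$). First I would fix $\la_0$ with $\Re\la_0 > \om$ and recall from Lemma \ref{lem:2.2} that $\Sbar(\la_0)$ is the resolvent of $C$, while by Theorem \ref{thm:3.1} we have $X = \clo{\DOM C}$. The key preliminary observation is that $\Sbar(\la_0)$ maps $X$ into $X$: indeed for $y \in X$ the orbit $t \mapsto S(t)y$ is norm-continuous, hence $\Sbar(\la_0)y = \int_0^\infty e^{-\la_0 t}S(t)y\,dt$ is a genuine Bochner integral in $X$, and $S(r)\Sbar(\la_0)y = \Sbar(\la_0)S(r)y \to \Sbar(\la_0)y$ as $r \da 0$ by dominated convergence together with continuity of the orbit. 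Therefore $\Sbar(\la_0)\big\vert_X$ is a bounded operator on $X$, and because $T(t) = S(t)\big\vert_X$ is strongly continuous with $\int_0^\infty e^{-\la_0 t}T(t)y\,dt = \Sbar(\la_0)y$ for $y \in X$, uniqueness of Laplace transforms for $C_0$-semigroups gives $\Sbar(\la_0)\big\vert_X = (\la_0 I - A)^{-1}$.

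Granting that, the inclusion $A \subseteq C_X$ is immediate: if $y \in \DOM A$ and $z = Ay$, then $y = (\la_0 I - A)^{-1}(\la_0 y - z) = \Sbar(\la_0)(\la_0 y - z)$ with $\la_0 y - z \in X$, so by the equivalence of items 1.\ and 2.\ in Lemma \ref{lem:2.2} we get $y \in \DOM C$ and $z \in Cy$; since also $y, z \in X$, this says exactly $z \in C_X y$. Conversely, if $y \in \DOM{C_X}$ pick $z \in Cy \cap X$; Lemma \ref{lem:2.2} gives $y = \Sbar(\la_0)(\la_0 y - z)$, and since $\la_0 y - z \in X$ the right-hand side equals $(\la_0 I - A)^{-1}(\la_0 y - z)$, whence $y \in \DOM A$ and $(\la_0 I - A)y = \la_0 y - z$, i.e.\ $Ay = z$. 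Thus $C_X \subseteq A$, and combining the two inclusions yields $A = C_X$.

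The main obstacle — really the only non-formal point — is the claim that $\Sbar(\la_0)$ leaves $X$ invariant and that its restriction is the resolvent of $A$; once this is in hand everything else is the routine resolvent bookkeeping above, which works verbatim in the multivalued setting because $(\la_0 I - A)^{-1}$ is single-valued and $\Sbar(\la_0)$ is genuinely an operator $Y \to Y$. The subtlety in establishing the invariance is that for general $y \in Y$ the object $\Sbar(\la_0)y$ is a priori only defined by the weak pairing $\dy \mapsto \int_0^\infty e^{-\la_0 t}\dy S(t)y\,dt$; the point is that for $y \in X$ the integrand $t \mapsto S(t)y$ is norm-continuous and exponentially bounded, so the Bochner integral exists in $X$ and, by the uniqueness of representing elements in the norming dual pair, coincides with the twin-operator value $\Sbar(\la_0)y$. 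I would also note that continuity of $r \mapsto S(r)\Sbar(\la_0)y$ at $r=0$ can alternatively be read off from Lemma \ref{lem:2.3} (or the identity \sef{eq:2.5}), which already records that orbits through the range of $\Sbar(\la_0)$ are continuous.
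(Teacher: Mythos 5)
Your proof is correct, but it takes a genuinely different route from the paper's. The paper works directly with item 3.\ of Lemma \ref{lem:2.2}: for $y \in \DOM{C}$ with $z \in Cy \cap X$, the identity $T(t)y - y = \int_0^t T(\tau)z\,d\tau$ immediately gives $t^{-1}(T(t)y - y) \to z$, whence $y \in \DOM{A}$, $Ay = z$, and $Cy \cap X$ is a singleton; conversely, for $y \in \DOM{A}$ with $Ay = z$, the same identity read backwards through Lemma \ref{lem:2.2} gives $y \in \DOM{C}$ and $z \in Cy$. You instead establish $\Sbar(\la_0)\big|_X = (\la_0 I - A)^{-1}$ first and then translate both inclusions through the resolvent characterization (item 2.\ of Lemma \ref{lem:2.2}). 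Both arguments are sound; the paper's is shorter because it differentiates the orbit directly, while yours is the more systematic ``match the resolvents'' argument and perhaps better highlights why the construction is compatible with the multivalued nature of $C$.

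One simplification worth noting: you spend effort showing that $\Sbar(\la_0)$ maps $X$ into $X$ via a Bochner-integral argument, but this invariance is already immediate. Since $\Sbar(\la_0)$ is the resolvent of $C$, it maps all of $Y$ onto $\DOM{C}$, and Corollary \ref{col:3.4} gives $\DOM{C} \subseteq X$. So $\Sbar(\la_0)Y \subseteq X$ without any work. The genuinely needed ingredient in your approach is the identification $\Sbar(\la_0)\big|_X = (\la_0 I - A)^{-1}$, and your argument for that (Bochner integral of the continuous orbit, then uniqueness of Laplace transforms for $C_0$-semigroups, together with the remark that pairing with any $\dy \in \DY$ shows the Bochner integral coincides with the twin-operator value) is correct.
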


It should be noted here that, as we shall prove below, the generator $A$ is single-valued even if $C$ is a multi-valued map. 

\smallskip
In order to prove Theorems \ref{thm:3.1} and \ref{thm:3.2} we first provide an auxiliary result that is of independent interest, cf. \cite{Cran73}.

\begin{lemma}\label{lem:3.3}
If $y \in \DOM{C}$ then
\[\limsup_{h \da 0} \frac{1}{h}\|S(h)y - y\| < \infty.\]
\end{lemma}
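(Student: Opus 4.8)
\textbf{Proof plan for Lemma \ref{lem:3.3}.}

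The plan is to exploit the characterization of $\DOM{C}$ provided by Lemma \ref{lem:2.2}, item 3: if $y \in \DOM{C}$ then there exists $z \in Cy$ (any choice will do) such that
\[
S(h)y - y = \int_0^h S(\tau)z\,d\tau
\]
as elements of $Y$, for every $h > 0$. This representation already does the heavy lifting, since it expresses the difference quotient purely in terms of an integral of the orbit of $z$, and we have a uniform exponential bound on $S(\tau)$ from Definition \ref{def:2.1}, ii).

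The key steps, in order, are as follows. First, fix $y \in \DOM{C}$ and pick $z \in Cy$; set $\omega$ and $M$ as in Definition \ref{def:2.1}, ii). Second, estimate the norm of the integral: since (via Theorem \ref{thm:3.1}, or rather the observation in the proof of Lemma \ref{lem:2.3} that $t \mapsto S(t)y$ is continuous when $y \in \DOM{C}$, hence $\int_0^h S(\tau)z\,d\tau$ is a genuine Bochner integral of a continuous $Y$-valued function — note $z$ itself need not lie in $\DOM{C}$, but the representation \eqref{eq:2.5} shows the integral takes values in $Y$ and one can estimate it through the defining pairings) we get
\[
\|S(h)y - y\| = \Bigl\| \int_0^h S(\tau)z\,d\tau \Bigr\| \le \int_0^h \|S(\tau)z\|\,d\tau \le \int_0^h M e^{\omega \tau}\,d\tau\,\|z\|.
\]
Third, divide by $h$ and let $h \downarrow 0$: the right-hand side is $\frac{M}{h}\int_0^h e^{\omega\tau}\,d\tau\,\|z\| \to M\|z\|$, so
\[
\limsup_{h \downarrow 0} \frac{1}{h}\|S(h)y - y\| \le M\|z\| < \infty,
\]
which is the claim.

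The one technical point requiring care — and the step I expect to be the main obstacle — is justifying the norm estimate $\|\int_0^h S(\tau)z\,d\tau\| \le \int_0^h \|S(\tau)z\|\,d\tau$ in the $\sigma(Y,\DY)$-setting, where the integral is defined weakly (by pairing with $\dy \in \DY$) rather than as a Bochner integral; one argues that for each $\dy$ with $\|\dy\| \le 1$ one has $|\pa{\dy}{\int_0^h S(\tau)z\,d\tau}| = |\int_0^h \dy S(\tau)z\,d\tau| \le \int_0^h |\dy S(\tau)z|\,d\tau \le \int_0^h M e^{\omega\tau}\|\dy\|\,\|z\|\,d\tau$, and then takes the supremum over such $\dy$ using the norming property of the dual pair. (Alternatively, as noted in the proof of Lemma \ref{lem:2.3}, one may use that $z$ can be replaced by $\Sbar(\la)(\la y - z) = y$-type manipulations, but the direct pairing argument is cleanest.) Everything else is routine.
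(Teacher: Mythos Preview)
Your proposal is correct and follows essentially the same route as the paper: use the integral representation $S(h)y - y = \int_0^h S(\tau)z\,d\tau$ from Lemma~\ref{lem:2.2}, estimate by pairing with an arbitrary $\dy$ of norm at most one, apply the exponential bound from Definition~\ref{def:2.1}(ii), and take the supremum via the norming property to get $\limsup_{h\da 0}\frac{1}{h}\|S(h)y-y\|\le M\|z\|$. The only cosmetic difference is that the paper works with the pairing from the outset rather than first writing the $Y$-valued identity and then unpacking it; your ``technical point'' paragraph is exactly the argument the paper uses (and your initial Bochner digression is indeed irrelevant since $z$ need not lie in $\DOM{C}$, as you yourself note).
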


\begin{proof}
By Lemma \ref{lem:2.2} we have for $z \in Cy$ the identity
\[\frac{1}{h}\gb{\dy S(h)y - \pa{\dy}{y}} = \frac{1}{h}\int_0^h \dy S(\tau)z\,d\tau\]
and consequently
\begin{align*}
\frac{1}{h}\bigl|\dy S(h)y - \pa{\dy}{y}\bigr| &\le \frac{1}{h}\int_0^h Me^{\om\tau}\|\dy\|\|z\|\,d\tau\\
&= M\frac{e^{\om h}-1}{\om h}\|\dy\|\|z\|.
\end{align*}
It follows that
\[\frac{1}{h}\|S(h)y - y\| \le M\frac{e^{\om h}-1}{\om h}\|z\|\]
and so
\[\limsup_{h \da 0} \frac{1}{h}\|S(h)y - y\| \le M\|z\|.\]
\end{proof}

\begin{corollary}\label{col:3.4}
The domain of the generator $C$ of the semigroup $\{S(t)\}_{t \ge 0}$ satisfies
\[ \DOM{C} \subset X.\]
\end{corollary}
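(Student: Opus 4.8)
The plan is to obtain the inclusion directly from Lemma~\ref{lem:3.3} together with the characterization \sef{eq:3.2} of the subspace of strong continuity. The point is that Lemma~\ref{lem:3.3} already does all the work: it converts the resolvent/integral description of $\DOM{C}$ (via Lemma~\ref{lem:2.2}) into a Lipschitz-type estimate at $t=0$, and such an estimate trivially forces continuity at $0$.

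Concretely, I would fix $y \in \DOM{C}$ and set $K := \limsup_{h \da 0}\frac1h\|S(h)y - y\|$, which is finite by Lemma~\ref{lem:3.3}. By the definition of $\limsup$ there is a $\delta > 0$ with $\frac1h\|S(h)y - y\| \le K+1$ for all $h \in (0,\delta)$, hence $\|S(h)y - y\| \le (K+1)h$ on $(0,\delta)$. Letting $h \da 0$ gives $\lim_{h \da 0}\|S(h)y - y\| = 0$, which is exactly the condition in \sef{eq:3.2}, so $y \in X$. Since $y \in \DOM{C}$ was arbitrary, $\DOM{C} \subset X$.

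There is no genuine obstacle at this stage; the corollary is immediate once Lemma~\ref{lem:3.3} is in hand, and the substance of that lemma in turn rests on the identity $\frac1h(\dy S(h)y - \pa{\dy}{y}) = \frac1h\int_0^h \dy S(\tau)z\,d\tau$ for $z \in Cy$ supplied by Lemma~\ref{lem:2.2} and the growth bound ii) of Definition~\ref{def:2.1}.
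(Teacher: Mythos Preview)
Your argument is correct and is exactly the intended one: the paper states Corollary~\ref{col:3.4} without proof precisely because it is immediate from Lemma~\ref{lem:3.3} via the characterization \sef{eq:3.2}, just as you spell out.
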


\begin{lemma}\label{lem:3.5}
For $X$ defined by $\sef{eq:3.1}$ we have
\[X \subset \clo{\DOM{C}}.\]
\end{lemma}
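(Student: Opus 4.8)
The goal is to show that every $y\in X$, i.e.\ every $y\in Y$ for which $t\mapsto S(t)y$ is continuous, lies in the norm closure of $\DOM{C}$. The natural strategy is the standard semigroup trick: approximate $y$ by averages of the orbit. For $h>0$ set
\[
y_h := \frac{1}{h}\int_0^h S(\tau)y\,d\tau .
\]
The plan is to prove three things: (a) the integral defining $y_h$ is an element of $Y$ (not merely of $Y^{\diamond\ast}$); (b) $y_h\in\DOM{C}$; (c) $\|y_h-y\|\to 0$ as $h\da 0$. Granting these, $y\in\clo{\DOM{C}}$ and we are done.

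For (a) and (b) I would invoke Lemma~\ref{lem:2.3} directly: it states that for every $y\in Y$ and $t>0$ the element $\int_0^t S(\tau)y\,d\tau$ belongs to $Y$ and to $\DOM{C}$ (with $S(t)y-y\in C\int_0^t S(\tau)y\,d\tau$). So $y_h$, being a scalar multiple of $\int_0^h S(\tau)y\,d\tau$, lies in $\DOM{C}$ for every $h>0$, and this step requires no new work. The only place the hypothesis $y\in X$ enters is step (c). Here, because $t\mapsto S(t)y$ is norm-continuous, $t\mapsto S(t)y$ is in particular a continuous $Y$-valued function on $[0,h]$, so the integral $\int_0^h S(\tau)y\,d\tau$ may be viewed as a genuine Bochner integral of a continuous function, and the elementary estimate
\[
\Bigl\|\frac{1}{h}\int_0^h S(\tau)y\,d\tau - y\Bigr\|
= \Bigl\|\frac{1}{h}\int_0^h \bigl(S(\tau)y - y\bigr)\,d\tau\Bigr\|
\le \sup_{0\le\tau\le h}\|S(\tau)y - y\|
\]
tends to $0$ as $h\da 0$ by the definition of $X$ in the form \sef{eq:3.2}. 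This yields $y_h\to y$ in norm, completing the argument.

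One subtlety to flag: in step (a) one must make sure the Bochner-integral value (computed in the continuous picture, using $y\in X$) agrees with the $Y$-element produced by Lemma~\ref{lem:2.3} (where the integral was originally interpreted weakly, by pairing with $\dy\in\DY$). This is immediate because both are determined by their pairings against all $\dy\in\DY$ — the Bochner integral commutes with the continuous functional $\dy$, giving $\pa{\dy}{\int_0^h S(\tau)y\,d\tau}=\int_0^h\dy S(\tau)y\,d\tau$, which is exactly the defining relation for the weak integral — and $\DY$ separates points of $Y$. I expect this bookkeeping, rather than any genuine analytic difficulty, to be the only real obstacle; the heart of the matter, the smoothing estimate, is entirely routine once Lemma~\ref{lem:2.3} is in hand.
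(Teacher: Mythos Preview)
Your proof is correct and follows essentially the same route as the paper: approximate $y$ by the averages $\frac{1}{h}\int_0^h S(\tau)y\,d\tau$, invoke Lemma~\ref{lem:2.3} to place these in $\DOM{C}$, and use the strong continuity of $\tau\mapsto S(\tau)y$ to get norm convergence. The only cosmetic difference is that the paper estimates via the dual pairing directly (bounding by $\frac{1}{t}\int_0^t\|S(\tau)y-y\|\,d\tau$) rather than passing through the Bochner integral and the sup bound, but your consistency check between the two integral notions is correct and the argument is complete.
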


\begin{proof}
For arbitrary $y \in Y$
\begin{align*}
\bigl\|\frac{1}{t}\int_0^t S(\tau)y\,d\tau - y \bigr\| &= \sup_{\|\dy\|\le 1}\bigl|\frac{1}{t}\int_0^t \gb{\dy S(\tau)y - \pa{\dy}{y}}\,d\tau\bigr|\\
&\le \frac{1}{t}\int_0^t \| S(\tau)y - y\|\,d\tau.
\end{align*}
If $y \in X$, then the integrand at the right hand side is a continuous function of $\tau$ vanishing at $\tau = 0$. It follows that in that case the right hand side converges to zero for $t \da 0$. Since
\[\int_0^t S(\tau)y\,d\tau \in \DOM{C},\]
cf. Lemma \ref{lem:2.3}, we conclude that in any $\ep$-neighbourhood of $y \in X$, there is an element of $\DOM{C}$.
\end{proof}

By combining Corollary \ref{col:3.4} and Lemma \ref{lem:3.5} we obtain a proof of Theorem \ref{thm:3.1}. Note that the semigroups $\{S(t)\}_{t \ge 0}$ and $\{T(t)\}_{t \ge 0}$ are {\it intertwined} in the sense that
\begin{equation}\label{eq:3.4}
S(t)y \in (\la I - C)T(t)\Sbar(\la)y.
\end{equation}

\begin{remark}\textup{
It is unclear whether the converse of Lemma \ref{lem:3.3} holds:
\[\limsup_{h \da 0} \frac{1}{h}\|S(h)y - y\| < \infty \implies y \in \DOM{C}?\]
In the rather special case that i) $Y = Y^{\diamond\ast}$ and ii) $\{S(t)\}_{t \ge 0}$ as a semigroup of bounded linear operators on $\DY$ is strongly continuous, this does hold, see e.g. Theorem 3.19 in Appendix II of \cite{Diek95}.}
\end{remark}

\vspace{1cm}
\noindent{\it Proof of Theorem $3.2$.} If $y \in \DOM{C}$ and $z \in Cy \cap X$ then, by Lemma \ref{lem:2.2},
\[T(t)y - y = \int_0^t T(\tau)z\,d\tau\]
and it follows that $t^{-1}(T(t)y - y) \to z$ for $t \da 0$. In particular this shows that $Cy \cap X$ is, when non-empty, a singleton. Moreover, $Ay \in Cy$.

Now assume that $y \in \DOM{A}$ and $Ay = z \in X$. Then
\[T(t)y - y = \int_0^t T(\tau)z\,d\tau\]
and we conclude from Lemma \ref{lem:2.2} that $y \in \DOM{C}$ and $z \in Cy$.
\QED

Note on notation: the analogue of $X$ at the $\diamond$ side we shall denote by $X^{\odot}$. So in this paper
\begin{equation}\label{eq:3.5}
X^{\odot} := \bigl\{ \dy \in \DY \mid \lim_{t \da 0} \| \dy S(t) - \dy\| = 0\, \bigr\}.
\end{equation}

\newpage

\section{RFDE -- Retarded Functional Differential\\ Equations}\label{sec:4}
\setcounter{equation}{0}

We adopt the standard notation $x_t(\th) = x(t+\th)$ and the only slightly less standard notation
\[\pa{\ze}{\ph} := \int_{[0,1]} d\ze(\si)\ph(-\si)\]
for $\ze \in NBV\gb{[0,1],\BR^{n \times n}}$ and $\ph \in B\gb{[-1,0],\BR^n}$.
An equation of the form
\begin{equation}\label{eq:6.1}
\dot x(t) = \pa{\ze}{x_t} = \int_{[0,1]} d\ze(\si)\,x(t-\si)
\end{equation}
is called a RFDE. If we pose an initial value problem, we require \sef{eq:6.1} to hold for $t \ge 0$ and supplement the equation by the initial condition
\begin{equation}\label{eq:6.2}
x(\th) = \ph(\th),\qquad -1 \le \th \le 0,
\end{equation}
for a given function $\ph$. The standard theory assumes that $\ph \in X$ with $X = C\gb{[-1,0],\BR^n}$, but here we allow
\begin{equation}\label{eq:6.3}
\ph \in Y = B\gb{[-1,0],\BR^n}.
\end{equation}
Concerning the given kernel $\ze$ we assume that for $i = 1,\ldots,n$
\begin{equation}\label{eq:6.4}
\ze_i \in \DY = NBV\gb{[0,1],\BR^n},
\end{equation}
where $\ze_i$ is the $i$-th row of the matrix $\ze$.

In Appendix B, it is shown that $Y$ and $\DY$ given by \sef{eq:6.3} and \sef{eq:6.4} form a norming dual pair.

Once we solve \sef{eq:6.1}--\sef{eq:6.2}, we can define a $Y$-valued function $u : [0,\infty) \to Y$ by
\begin{equation}\label{eq:6.5}
u(t)(\th) = x(t+\th; \ph),\qquad -1 \le \th \le 0,\ t \ge 0
\end{equation}
and bounded linear operators $S(t) : Y \to Y$ by
\begin{equation}\label{eq:6.5a}
S(t)\ph = u(t;\ph) = x(t + \novar;\ph).
\end{equation}
The initial condition \sef{eq:6.2} translates into
\begin{equation}\label{eq:6.6}
S(0)\ph = u(0;\ph) = \ph
\end{equation}
and \sef{eq:6.5a} reflects that we define a dynamical system on $Y$ by translating along the function $\ph$ extended according to \sef{eq:6.1}.
Below we show that $\{S(t)\}$ is a twin semigroup and we characterize its generator $C$. But first we present some heuristics.

In order to motivate an abstract ODE for the $Y$-valued function $u$, we first observe that the infinitesimal formulation of the translation rule \sef{eq:6.5} amounts to the PDE
\begin{equation}\label{eq:6.7}
\frac{\partial u}{\partial t} - \frac{\partial u}{\partial \th} = 0.
\end{equation}
We need to combine this with \sef{eq:6.1}, in terms of $u(t)(0) = x(t)$, and we have to specify the domain of definition of the derivative with respect to $\th$. The latter is actually rather subtle. An absolutely continuous function has almost everywhere a derivative and when the function is Lipschitz continuous this derivative is bounded. Thus a Lipschitz function specifies a unique $L^\infty$-equivalence class by the process of differentiation. But {\sl not} a unique element of $Y$. In fact the set
\begin{equation}\label{eq:6.8}
C\psi = \bigl\{ \psi' \in Y \mid \psi(\th) = \psi(-1) + \int_{-1}^\th \psi'(\si)\,d\si,\ \psi'(0) = \pa{\ze}{\psi} \bigr\}
\end{equation}
is, for a given Lipschitz continuous function $\psi$, very large indeed. Nota bene that the condition $\psi'(0) = \pa{\ze}{\psi}$ takes care of \sef{eq:6.1} and that, in the context of the space $Y$, we can simply take this as the definition of $\psi'(0)$ without having to worry about an influence of this choice on $\psi'(\si)$ for $\si$ near zero (such in sharp contrast to the space $X$ of continuous functions). Anyhow, we define $C$ as a multi-valued, unbounded, operator on $Y$ by
\begin{equation}\label{eq:6.9}
\DOM{C} = Lip\gb{[-1,0],\BR^n},\qquad C\psi \hbox{ given by } \sef{eq:6.8}.
\end{equation}
We claim that \sef{eq:6.1}--\sef{eq:6.2} and \sef{eq:6.5} correspond to 
\begin{equation}\label{eq:6.10}
\frac{du}{dt} \in Cu.
\end{equation}
To substantiate this claim, we shall first derive (following essentially Section I.2 of \cite{Diek95}) a representation of the solution of \sef{eq:6.1}--\sef{eq:6.2} in terms of $\ph$, $\ze$ and the resolvent $\rho$ of $\ze$, next verify that $\{S(t)\}_{t \ge 0}$ defined by \sef{eq:6.5a} is a twin semigroup and, finally, that $C$ is the corresponding generator in the sense of \sef{eq:2.3}--\sef{eq:2.2}.

\begin{lemma}\label{lem:6.1}
The solution of \sef{eq:6.1}--\sef{eq:6.2} is given explicitly by
\begin{align}\label{eq:6.12}
x(t;\ph) &= \bigl(1 + \int_0^t \rho(\si)\,d\si\bigr)\ph(0) + \int_0^1 \Bigl\{ \ze(t+\si) - \ze(\si) +\nonumber\\ &\qquad\qquad\qquad\qquad\int_0^t \rho(\tau)\gb{\ze(t-\tau+\si) - \ze(\si)}\,d\tau\Bigr\}\ph(-\si)\,d\si,
\end{align}
where the {\sl resolvent} $\rho$ of the kernel $\ze$ is the unique solution of
\begin{equation}\label{eq:6.13}
\rho \ast \ze + \ze = \rho = \ze \ast \rho + \ze
\end{equation}
and hence given by
\begin{equation}\label{eq:6.14}
\rho = \sum_{l=1}^\infty \ze^{l\ast}.
\end{equation}
\end{lemma}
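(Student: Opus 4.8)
The plan is to convert the delay equation into a finite--dimensional Volterra convolution (``renewal'') equation on $[0,\infty)$ and then invert it using the resolvent $\rho$. \emph{Step 1 (reformulation).} Integrating \sef{eq:6.1} over $[0,t]$ and using \sef{eq:6.2} gives $x(t)=\ph(0)+\int_0^t\pa{\ze}{x_\tau}\,d\tau$. For $\tau\in[0,t]$ and $\si\in[0,1]$ one has $\tau-\si\ge 0$ precisely when $\si\le\tau$, in which case $x(\tau-\si)$ is still part of the unknown solution on $[0,t]$, whereas for $\si>\tau$ the value $x(\tau-\si)=\ph(\tau-\si)$ is prescribed by the datum. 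Splitting $\pa{\ze}{x_\tau}$ accordingly, I collect all purely-$\ph$ contributions into the forcing term
\[F(t):=\ph(0)+\int_0^t\int_{(\tau,1]}d\ze(\si)\,\ph(\tau-\si)\,d\tau,\]
which is continuous (indeed Lipschitz), its integrand being bounded and measurable. For the remaining part, Fubini turns $\int_0^t\int_{[0,\tau\wedge1]}d\ze(\si)\,x(\tau-\si)\,d\tau$ into $\int_{[0,1]}X(t-\si)\,d\ze(\si)$ with $X(s):=\int_0^s x(u)\,du$ for $s\ge 0$ and $X(s):=0$ for $s<0$; a Stieltjes integration by parts -- using $X(0)=0$, the normalization $\ze(0)=0$, and the extension $\ze(s):=\ze(1)$ for $s>1$ (the convention already implicit in the symbol $\ze(t+\si)$ of \sef{eq:6.12}) -- rewrites this as the genuine convolution $(\ze\ast x)(t)=\int_0^t\ze(\si)\,x(t-\si)\,d\si$. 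Hence a function $x$ with $x=\ph$ on $[-1,0]$ solves \sef{eq:6.1}--\sef{eq:6.2} if and only if $x=F+\ze\ast x$ on $[0,\infty)$.

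\emph{Step 2 (inversion and rewriting).} Since $\ze$ is bounded, the series $\sum_{l\ge1}\ze^{l\ast}$ converges locally uniformly (via $\|\ze^{l\ast}\|_{L^\infty[0,T]}\le\|\ze\|_\infty^{\,l}\,T^{l-1}/(l-1)!$), so $\rho:=\sum_{l\ge1}\ze^{l\ast}$ is well defined and, by rearranging the series, satisfies \sef{eq:6.13}; convolving $x-\ze\ast x=F$ with $\de+\rho$ and using $\rho\ast\ze=\rho-\ze$ then gives the unique solution $x=F+\rho\ast F$. It remains to put this into the form \sef{eq:6.12}. Two applications of Fubini together with the substitution $r=\si-\tau$ move the $d\ze$-integration in $F$ off of $\ph$ and onto a $d\si$-integrable kernel built from $\ze$, yielding $F(t)=\ph(0)+\int_0^1\bigl(\ze(t+\si)-\ze(\si)\bigr)\ph(-\si)\,d\si$ -- which is precisely what renders \sef{eq:6.12} meaningful on the big space $Y=B\gb{[-1,0],\BR^n}$, where $\ph$ can be integrated against $d\si$ but not against $d\ze$. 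Convolving with $\rho$ and interchanging integration orders once more,
\[(\rho\ast F)(t)=\ph(0)\int_0^t\rho(\si)\,d\si+\int_0^1\Bigl(\int_0^t\rho(\tau)\bigl(\ze(t-\tau+\si)-\ze(\si)\bigr)\,d\tau\Bigr)\ph(-\si)\,d\si,\]
so that $F+\rho\ast F$ is exactly \sef{eq:6.12}.

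\emph{Main obstacle.} The delicate part is Step 1: correctly isolating the forcing term $F$ and carrying out the two changes of variable -- first the Fubini step plus Stieltjes integration by parts, which replaces the measure--convolution $d\ze\ast X$ by the function--convolution $\ze\ast x$ (here the endpoint normalization of $\ze$ and its extension past $1$ must be handled with care), and then, in Step 3, the transfer of the $d\ze$-integration from $\ph$ onto an explicit $\ze$-kernel integrated against $d\si$. One should also check the joint measurability needed for each Fubini application, which is routine because $\ph$ is Borel and the substitutions are continuous. Convergence of \sef{eq:6.14}, verification of \sef{eq:6.13}, and the inversion in Step 2 are entirely standard.
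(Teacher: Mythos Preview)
Your proof is correct and follows essentially the same route as the paper: integrate \sef{eq:6.1} to obtain the renewal equation $x=\ze\ast x+f$ with the forcing term $f=F$ (the paper even writes both forms of $f$ that you derive), invert via the resolvent to get $x=f+\rho\ast f$, and then reshuffle the integrals to arrive at \sef{eq:6.12}. The paper's own proof is extremely terse and defers the details to \cite[Section~I.2]{Diek95}; you have simply spelled out the Fubini and Stieltjes integration-by-parts steps that the paper summarizes as ``interchange the order of the two integrals''.
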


\begin{proof}
(See Section I.2 of \cite{Diek95} for more detail). We integrate \sef{eq:6.1} from $0$ to $t$ and interchange the order of the two integrals at the right hand side. This yields
\begin{equation}\label{eq:6.15}
x = \ze \ast x + f
\end{equation}
with
\begin{align}\label{eq:6.16}
f(t) &= \ph(0) + \int_0^t\gb{\int_s^1 d\ze(\th)\,\ph(s-\th)}\,ds\nonumber\\
     &= \ph(0) + \int_0^1 \gb{\ze(t+\si)-\ze(\si)}\ph(-\si)\,d\si.
\end{align}
The solution of \sef{eq:6.15} is given by
\begin{equation}\label{eq:6.17}
x = f + \rho * f
\end{equation}
which leads, after another change of integration order, to \sef{eq:6.12}.
\end{proof}

Please observe that $x$ depends on the value of $\ph$ in $\th = 0$ and the $L^\infty$-equivalence class to which $\ph$ belongs, but not on the precise point values of $\ph$ in points $\th<0$.

\begin{corollary}\label{col:6.2}
The definition \sef{eq:6.5a} amounts to
\begin{equation}\label{eq:6.18}
\gb{S(t)\ph}(\th) = \int_0^1 K_t(\th,d\si)\,\ph(-\si)
\end{equation}
with for $\si > 0$
\begin{align}\label{eq:6.19}
K_t(\th,\si) &= H(\si+t+\th) + H(t+\th)\Bigl\{\int_0^{t+\th} \rho(\tau)\,d\tau + \int_0^\si \Bigl[\ze(t+\th+\tau)\nonumber\\
&\qquad\qquad - \ze(\tau) + \int_0^{t+\th} \rho(\xi)\gb{\ze(t+\th+\tau-\xi) - \ze(\tau)}\,d\xi\Bigr]\,d\tau\Bigr\}
\end{align}
and $K_t(\th,0) = 0$. \(Here $H$ is the standard Heaviside function.\)
\end{corollary}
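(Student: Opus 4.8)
The plan is to simply unfold the definition $\sef{eq:6.5a}$ and substitute the explicit representation from Lemma \ref{lem:6.1}, distinguishing the two regimes according to the sign of $t+\th$. By definition $\gb{S(t)\ph}(\th) = x(t+\th;\ph)$, so everything reduces to expressing $x(s;\ph)$, for $s = t+\th \in [-1,t]$, in the Stieltjes form $\int_0^1 K(s,d\si)\ph(-\si)$ and then reading off that the induced kernel is exactly $K_t(\th,\novar)$ as given by $\sef{eq:6.19}$.

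First, if $s = t+\th \le 0$, the initial condition $\sef{eq:6.2}$ gives $x(s;\ph) = \ph(s) = \ph\gb{-(-s)}$ with $-s = -(t+\th) \in [0,1]$. On the other hand, when $t+\th \le 0$ we have $H(t+\th) = 0$, so $\sef{eq:6.19}$ collapses to $K_t(\th,\si) = H(\si + t + \th)$, a function of $\si$ that vanishes at $\si = 0$ (consistent with the stipulation $K_t(\th,0)=0$) and jumps by $1$ at $\si = -(t+\th)$. Hence its Lebesgue--Stieltjes measure is the unit mass at $-(t+\th)$ and $\int_0^1 K_t(\th,d\si)\ph(-\si) = \ph(t+\th)$, as required. (If $t+\th = -1$ the atom sits at the endpoint $\si=1$; if $t+\th=0$ it sits at $\si = 0$ and the formula agrees with the one obtained below.)

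Second, if $s = t+\th > 0$, we invoke Lemma \ref{lem:6.1} with $t$ replaced by $s = t+\th$ (recall $\rho$ is defined on all of $[0,\infty)$, and $\ze$ is extended by $\ze(\si) = \ze(1)$ for $\si \ge 1$, consistent with $d\ze$ being supported on $[0,1]$). Formula $\sef{eq:6.12}$ then exhibits $x(t+\th;\ph)$ as the sum of $\gb{1 + \int_0^{t+\th}\rho(\si)\,d\si}\ph(0)$ and $\int_0^1 g(\si)\ph(-\si)\,d\si$, where
\[ g(\si) = \ze(t+\th+\si) - \ze(\si) + \int_0^{t+\th}\rho(\tau)\gb{\ze(t+\th-\tau+\si) - \ze(\si)}\,d\tau. \]
Since $\ze$ is bounded, $g$ is bounded and $\si \mapsto \int_0^\si g(\tau)\,d\tau$ is Lipschitz. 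Comparing with $\sef{eq:6.19}$: for $\si > 0$ and $t+\th > 0$ both Heaviside factors equal $1$, so $K_t(\th,\si) = 1 + \int_0^{t+\th}\rho(\tau)\,d\tau + \int_0^\si g(\tau)\,d\tau$, while $K_t(\th,0) = 0$. Thus $\si \mapsto K_t(\th,\si)$ consists of a single atom at $\si = 0$ of size $1 + \int_0^{t+\th}\rho(\tau)\,d\tau$ plus an absolutely continuous part on $(0,1]$ with density $g$. Its Stieltjes integral against $\ph(-\novar)$ therefore equals $\gb{1 + \int_0^{t+\th}\rho(\si)\,d\si}\ph(0) + \int_0^1 g(\si)\ph(-\si)\,d\si = x(t+\th;\ph)$, which is precisely $\sef{eq:6.18}$.

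The argument is essentially bookkeeping, and the only points demanding care are: (a) cleanly separating the atom at $\si = 0$ (which encodes the dependence of $x$ on the point value $\ph(0)$) from the absolutely continuous part (which encodes the dependence on the $L^\infty$-class of $\ph$), in accordance with the remark following Lemma \ref{lem:6.1}; and (b) observing that the single term $H(\si + t + \th)$ reconciles, uniformly in $\th \in [-1,0]$, the ``still inside the initial interval'' regime with the ``already governed by $\sef{eq:6.1}$'' regime, the two expressions matching at $t+\th = 0$ by the computation above. There is no genuine analytic obstacle: all needed existence and integrability facts are contained in Lemma \ref{lem:6.1}.
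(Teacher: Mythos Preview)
Your proof is correct and follows essentially the same approach as the paper: split according to the sign of $t+\th$, and in each regime identify the jump structure and absolutely continuous part of $\si \mapsto K_t(\th,\si)$ so that its Stieltjes integral against $\ph(-\novar)$ reproduces, respectively, $\ph(t+\th)$ and the explicit formula \sef{eq:6.12} from Lemma~\ref{lem:6.1}. Your write-up is in fact slightly more explicit (you spell out the density $g$ and handle the boundary case $t+\th=0$), but there is no substantive difference in method.
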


\begin{proof}
For $t+\th < 0$ the second term in the expression for $K$ does not contribute and the first term yields
\[\gb{S(t)\ph}(\th) = \ph(t+\th)\]
which is in accordance with \sef{eq:6.5a} because of \sef{eq:6.2}. Now assume that $t+\th \ge 0$. Clearly the first term contributes a unit jump at $\si = 0$ and $H(t+\th) = 1$. The second factor has, as a function of $\si$, a jump of magnitude $\int_0^{t+\th} \rho(\tau)\,d\tau$ at $\si = 0$, but is otherwise absolutely continuous with derivative
\[\ze(t+\th+\si) - \ze(\si) + \int_0^{t+\th} \rho(\xi)\gb{\ze(t+\th+\si-\xi) - \ze(\si)}\,d\xi.\]
The jumps yield the first term at the right hand side of \sef{eq:6.12} evaluated at $t+\th$ and the absolutely continuous part yields the second term.
\end{proof}

Note that $K_t$ is a \emph{bounded} in the sense (cf. \cite[Definition 3.2]{Kun09}) that for fixed $\th$ in $[-1,0]$ the function $\si \mapsto K_t(\th,\si)$ is of normalized bounded variation, while for fixed $\si \in [0,1]$ the function $\th \mapsto K_t(\th,\si)$ is bounded and measurable.

\begin{corollary}\label{col:6.3}
The operator $S(t)$ extends to a twin operator.
\end{corollary}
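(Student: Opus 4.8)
The plan is to verify the three defining conditions of a twin operator (the display following Definition~\ref{def:2.1}, or rather the conditions (i)--(iii) of the definition of \emph{twin operator} on p.~\pageref{eq:bnd-bilin-map}) for the bilinear form
\[(\ze,\ph)\mapsto \ze\, S(t)\,\ph := \int_{[-1,0]} d\ze(-\th)\,\bigl(S(t)\ph\bigr)(\th),\]
using the explicit kernel representation $\sef{eq:6.18}$--$\sef{eq:6.19}$ from Corollary~\ref{col:6.2}. First I would record the boundedness estimate (i): since $\ph\in Y=B\gb{[-1,0],\BR^n}$ and, for each fixed $\th$, $\si\mapsto K_t(\th,\si)$ is of bounded variation with total variation bounded uniformly in $\th$ (this is visible from $\sef{eq:6.19}$: the jump at $\si=0$ is $1+\int_0^{t+\th}\rho$, and the absolutely continuous part has an $L^1$ derivative whose norm is controlled by $\|\ze\|$, $\|\rho\|_{L^1}$ on the relevant intervals), the quantity $\bigl(S(t)\ph\bigr)(\th)=\int_0^1 K_t(\th,d\si)\ph(-\si)$ is bounded by a constant $c(t)\|\ph\|$ uniformly in $\th$; hence $S(t)\ph\in Y$ with $\|S(t)\ph\|\le c(t)\|\ph\|$, and then $|\ze\,S(t)\ph|\le M\|\ze\|\,\|S(t)\ph\|\le Mc(t)\|\ze\|\,\|\ph\|$ by the norming dual pair inequality.

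Next, condition (ii): for fixed $\ph$ the element $S(t)\ph$ has already been produced as a member of $Y$ in the previous step, so the map $\ze\mapsto\ze\,S(t)\ph=\pa{\ze}{S(t)\ph}$ is by construction $\si(\DY,Y)$-continuous and represented by $S(t)\ph\in Y$; nothing further is needed. The real work is condition (iii): for fixed $\ze\in\DY=NBV\gb{[0,1],\BR^{n}}$ (row-wise) I must show $\ph\mapsto\ze\,S(t)\ph$ is $\si(Y,\DY)$-continuous, equivalently produce an element $\ze\,S(t)\in\DY$ with $\pa{\ze S(t)}{\ph}=\ze\,S(t)\ph$ for all $\ph$. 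Here I would use Fubini to interchange the two integrations, writing
\[\ze\,S(t)\ph=\int_{[-1,0]}d\ze(-\th)\int_0^1 K_t(\th,d\si)\,\ph(-\si)
=\int_0^1\Bigl(\int_{[-1,0]}d\ze(-\th)\,K_t(\th,d\si)\Bigr)\ph(-\si),\]
which identifies $\ze\,S(t)$ as the measure $\nu(d\si):=\int_{[-1,0]}d\ze(-\th)\,K_t(\th,d\si)$ on $[0,1]$; one then checks $\nu$ is of bounded variation (its total variation is bounded by $\mathrm{Var}(\ze)\cdot\sup_\th\mathrm{Var}_\si K_t(\th,\cdot)$, finite by the boundedness noted after Corollary~\ref{col:6.2}) and normalises it to lie in $\NBV$. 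Since the bilinear form is jointly bounded, this Fubini step is legitimate; alternatively one can invoke Corollary~\ref{col:6.3}'s analogue on the $\diamond$-side directly, but the Fubini argument is cleanest.

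The main obstacle is the Fubini/measurability bookkeeping in step (iii): $K_t(\th,d\si)$ is a measure in $\si$ depending measurably and boundedly on $\th$ (it is ``bounded'' in Kunze's sense, as remarked after Corollary~\ref{col:6.2}), so one needs the version of Fubini for such kernels — precisely the kind of statement Kunze establishes for bounded kernels on a norming dual pair, and which underlies \cite[Def.~3.2]{Kun09}. Concretely the delicate points are: (a) the jump of $K_t(\th,\cdot)$ at $\si=0$ depends on $\th$ through $H(t+\th)$ and $\int_0^{t+\th}\rho$, so the ``atomic part'' of $\nu$ must be computed separately from the absolutely continuous part; and (b) one must confirm the resulting $\nu$ is genuinely of \emph{normalized} bounded variation (right-continuity, value $0$ at the left endpoint), which is a matter of fixing the normalisation convention consistently with $\sef{eq:6.19}$'s $K_t(\th,0)=0$. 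Once the interchange is justified, everything else is the routine total-variation estimates already sketched, and the three conditions (i)--(iii) are in place, completing the proof that $S(t)$ extends to a twin operator.
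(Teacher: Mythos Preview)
Your proposal is correct and follows the same underlying idea as the paper: $S(t)$ is a kernel operator, and kernel operators (with a kernel that is ``bounded'' in Kunze's sense, as noted after Corollary~\ref{col:6.2}) are automatically twin operators. The paper's proof is a one-liner: it simply writes down the explicit formula
\[\bigl(\dy S(t)\bigr)(\si)=\int_0^1 \dy(d\tau)\,K_t(-\tau,\si)\]
and declares this a general property of kernel operators. Your argument is essentially the detailed justification of that assertion---your Fubini step in (iii) is precisely what verifies that this candidate NBV function pairs correctly with every $\ph$. Note that the paper avoids your Fubini detour by writing down the NBV \emph{function} directly (integrating the $\si$-variable function $K_t(-\tau,\si)$ against $\dy(d\tau)$) rather than the associated measure; since $K_t(\th,0)=0$ the normalisation is automatic, and the bounded-kernel property guarantees the result lies in $\DY$. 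Your more laborious route through the measure $\nu(d\si)$ reaches the same destination but obscures this simplicity.
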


\begin{proof}
This is a general property of kernel operators. Explicitly we have
\begin{equation}\label{eq:6.20}
\gb{\dy S(t)}(\si) = \int_0^1 \dy(d\tau)\,K_t(-\tau,\si).
\end{equation}
\end{proof}

\begin{theorem}\label{thm:6.4} 
The semigroup $\{S(t)\}_{t \ge 0}$ defined by \sef{eq:6.18} is a twin semigroup.
\end{theorem}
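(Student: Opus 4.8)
The plan is to check the four conditions of Definition~\ref{def:2.1}. That each $S(t)$ is a twin operator is exactly Corollary~\ref{col:6.3}, and $S(0)=I$ is \sef{eq:6.6}. For the semigroup identity in i) I would not manipulate the kernel \sef{eq:6.19} directly but argue by autonomy together with uniqueness: by Lemma~\ref{lem:6.1}, or rather by the Volterra equation \sef{eq:6.15}, whose solution \sef{eq:6.17} is the unique one in $L^\iy_{\mathrm{loc}}$, the problem \sef{eq:6.1}--\sef{eq:6.2} has for every $\ph\in Y$ a unique solution $x(\novar;\ph)$ on $[-1,\iy)$. Fix $s\ge 0$ and put $v:=S(s)\ph$, so that $v(\th)=x(s+\th;\ph)$ for $\th\in[-1,0]$. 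Since \sef{eq:6.1} is time-invariant, $r\mapsto x(r+s;\ph)$ solves \sef{eq:6.1} on $[0,\iy)$ with history $v$ at $r=0$; by uniqueness it equals $x(\novar;v)$, i.e. $x(r+s;\ph)=x(r;v)$ for $r\ge -1$. Evaluating at $r=t+\th$ with $\th\in[-1,0]$ yields $\gb{S(t+s)\ph}(\th)=\gb{S(t)S(s)\ph}(\th)$.

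For ii), the integrated equation $x(t)=\ph(0)+\int_0^t\pa{\ze}{x_\si}\,d\si$ together with the estimate $\abs{\pa{\ze}{\psi}}\le\om\|\psi\|_\iy$ (with $\om$ bounded in terms of the total variations of the rows $\ze_i$) gives, via Gronwall's inequality applied to $t\mapsto\sup_{-1\le\tau\le t}\abs{x(\tau;\ph)}$, the bound $\|S(t)\ph\|_\iy\le e^{\om t}\|\ph\|_\iy$, and hence $\abs{\dy S(t)\ph}\le Me^{\om t}\|\dy\|\,\|\ph\|$ with $M$ the norming constant of the pair $(Y,\DY)$; alternatively this can be read off \sef{eq:6.12}, since $\int_0^t\abs{\rho}$ grows at most exponentially because $\|\ze^{l\ast}\|$ does. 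For iii), note that $\dy S(t)\ph=\pa{\dy}{S(t)\ph}=\int_{[0,1]}d\dy(\tau)\,x(t-\tau;\ph)$, and that $(t,\tau)\mapsto x(t-\tau;\ph)$ is bounded and Borel measurable on $[0,\iy)\times[0,1]$: it is continuous on $\{t\ge\tau\}$, because $x(\novar;\ph)$ is continuous on $[0,\iy)$, while on $\{t<\tau\}$ it equals $\ph(t-\tau)$, the Borel function $\ph$ evaluated at the continuous map $(t,\tau)\mapsto t-\tau$. Fubini's theorem then gives measurability of $t\mapsto\dy S(t)\ph$.

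Condition iv) is the substantive step. Fix $\Re\la>\om$ and define, for $\th\in[-1,0]$, $\gb{\Sbar(\la)\ph}(\th):=\int_0^\iy e^{-\la t}x(t+\th;\ph)\,dt$, which converges by ii). Substituting $s=t+\th$, splitting $[\th,\iy)=[\th,0]\cup[0,\iy)$, and using the representation $x(s;\ph)=\int_{[0,1]}K_s(0,d\si)\,\ph(-\si)$ from Corollary~\ref{col:6.2} for the part over $[0,\iy)$, one obtains
\[\gb{\Sbar(\la)\ph}(\th)=e^{\la\th}\int_0^{-\th}e^{\la\si}\ph(-\si)\,d\si+e^{\la\th}\int_{[0,1]}\Bigl(\int_0^\iy e^{-\la s}K_s(0,d\si)\,ds\Bigr)\ph(-\si).\]
The inner transform $\int_0^\iy e^{-\la s}K_s(0,\novar)\,ds$ is of normalized bounded variation in $\si$, with total variation at most $\int_0^\iy e^{-\Re\la s}\,\mathrm{Var}_\si K_s(0,\novar)\,ds<\iy$ since $\mathrm{Var}_\si K_s(0,\novar)\le Ce^{\om s}$ by ii). Hence $\Sbar(\la)$ is again a kernel operator whose kernel $\bar K_\la(\th,\novar)$ is NBV in $\si$ (a jump at $\si=0$ plus the remaining finite-variation part) and bounded and measurable in $\th$, so by the argument of Corollary~\ref{col:6.3} it extends to a twin operator. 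Finally \sef{eq:2.2} follows by Fubini: in $\int_0^\iy e^{-\la t}\dy S(t)\ph\,dt=\int_0^\iy e^{-\la t}\int_{[0,1]}d\dy(\tau)\,x(t-\tau;\ph)\,dt$ the integrand is dominated by a constant times $e^{\om t}$, which is integrable for $e^{-(\Re\la-\om)t}\,dt\times\abs{d\dy}$, so the order of integration may be exchanged to give $\pa{\dy}{\Sbar(\la)\ph}$.

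The main obstacle is precisely iv): establishing that the Laplace transform is not merely a bounded bilinear map but a genuine twin operator --- the ``additional assumption'' flagged right after Definition~\ref{def:2.1}. What makes it work is that \sef{eq:6.18}--\sef{eq:6.19} already exhibit $S(t)$ as an operator with a ``bounded'' kernel, so Laplace transformation in $t$ stays within this class, and the exponential estimate ii) ensures the transformed kernel has finite variation; items i)--iii) are then routine consequences of the explicit solution formula and standard measure theory.
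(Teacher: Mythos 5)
Your proof is correct and takes essentially the same route as the paper: check conditions (i)--(iv) of Definition~\ref{def:2.1}, deriving the semigroup identity from uniqueness and time-invariance of \sef{eq:6.1}, the exponential estimate from the explicit solution (or Gronwall), measurability from the kernel's $t$-dependence, and twin-operator-ness of the Laplace transform from the fact that it is again a kernel operator with a ``bounded'' kernel (NBV in $\si$, bounded measurable in $\th$) via Fubini. You simply spell out the details --- the change of variables $s=t+\th$, the split into the shift and non-shift parts, the total-variation estimate on the transformed kernel --- that the paper compresses into a one-line appeal to Fubini and a citation to \cite{Diek95} for the exponential bound.
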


\begin{proof}
With reference to Definition \ref{def:2.1} we note that $S(0)=I$ follows directly from \sef{eq:6.18}--\sef{eq:6.19}, while the semigroup property follows from the uniqueness of solutions to \sef{eq:6.1}--\sef{eq:6.2} and the fact that $S(t)$ corresponds to translation along the solution (so essentially it follows from the corresponding property for translation, and uniqueness of extension). 

The exponential estimates ii) are well-established in the theory of RFDE, for instance Sections I.5, IV.2 and IV.3 of \cite{Diek95} or the proof of Theorem \ref{thm:5.1}. 

Property iii), the measurability of $t \mapsto \dy S(t)y$, is a direct consequence of the way $K_t(\th,\si)$ defined in \sef{eq:6.19} depends on $t$. 

It remains to verify that the Laplace transform defines a twin operator. By Fubini's Theorem, the Laplace transform is a kernel operator with kernel
\[\int_0^\infty e^{-\la t} K_t(\th,\si)\,dt.\]
\end{proof}

\begin{theorem}\label{thm:6.5}
The operator $C$ defined by \sef{eq:6.8}--\sef{eq:6.9} is the generator \(in the sense of \sef{eq:2.3}\) of $\{S(t)\}_{t \ge 0}$ defined by \sef{eq:6.18}.
\end{theorem}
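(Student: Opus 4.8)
The plan is to verify that the operator $C$ from \sef{eq:6.8}--\sef{eq:6.9} coincides with the generator defined via \sef{eq:2.3a}--\sef{eq:2.3}, i.e. to show that for $\Re\la$ large the Laplace transform $\Sbar(\la)$ of $\{S(t)\}$ is precisely the resolvent $(\la I - C)^{-1}$ of the explicit $C$. Since $C$ is multi-valued, this amounts to two inclusions at the level of graphs: first that $y\in\DOM{C}$ and $z\in Cy$ imply $y=\Sbar(\la)(\la y - z)$, and second, conversely, that every $y$ of the form $\Sbar(\la)\tilde y$ lies in $Lip\gb{[-1,0],\BR^n}$ with $\la y - \tilde y \in C y$. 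Equivalently, by Lemma~\ref{lem:2.2}, one can work with the integrated characterization \sef{eq:2.5}: $z\in Cy$ iff $\int_0^t S(\tau)z\,d\tau = S(t)y - y$ for all $t>0$.

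First I would compute $\Sbar(\la)$ explicitly. From Theorem~\ref{thm:6.4} the Laplace transform is the kernel operator with kernel $\int_0^\infty e^{-\la t}K_t(\th,\si)\,dt$, but it is cleaner to Laplace-transform the representation \sef{eq:6.12} directly: using $\widehat\rho = \widehat\ze(I-\widehat\ze)^{-1}$ from \sef{eq:6.13}, one gets for $(S(t)\ph)(\th) = x(t+\th;\ph)$ (when $t+\th\ge 0$) an expression whose Laplace transform in $t$ collapses, via the convolution structure, to a formula of the form $(\Sbar(\la)\ph)(\th) = e^{\la\th}\bigl[(I-\widehat\ze(\la))^{-1}(\ph(0) + \text{(terms in }\ph|_{[-1,0)})\,)\bigr] + (\text{truncated shift of }\ph)$. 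In fact the neatest route is to guess the answer: set $y = \Sbar(\la)\tilde y$ and check that $\psi := y$ defined by the candidate formula is Lipschitz, satisfies $\psi(\th) = \psi(-1) + \int_{-1}^\th\psi'$, has $\psi'(0) = \pa{\ze}{\psi}$, and satisfies $\la\psi - \psi' = \tilde y$. The ODE $\la\psi - \psi' = \tilde y$ together with the boundary-type condition $\psi'(0)=\pa{\ze}{\psi}$ is exactly a two-point problem whose solvability for $\Re\la > \om$ hinges on invertibility of the characteristic matrix $\la I - \widehat\ze(\la)$, which holds for $\Re\la$ large by the exponential bound in Definition~\ref{def:2.1}~ii).

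So the core computation is: (1) Starting from $y\in Lip$ and $z\in Cy$ (so $z = \psi'$ with $\psi=y$, $\psi'(0)=\pa{\ze}{\psi}$), verify $\int_0^t S(\tau)z\,d\tau = S(t)y - y$ pointwise in $\th$ using Corollary~\ref{col:6.2}; this is a direct but bookkeeping-heavy manipulation of \sef{eq:6.19} and the resolvent identity \sef{eq:6.13}, and here the condition $\psi'(0)=\pa{\ze}{\psi}$ is what makes the computed $x(t)$ agree with the RFDE solution. (2) Conversely, given $y\in\DOM{C}$ in the sense of \sef{eq:2.3a} (i.e. $y=\Sbar(\la)\tilde y$), solve $\la\psi - \psi' = \tilde y$ with $\psi'(0)=\pa{\ze}{\psi}$ explicitly, observe the unique solution is Lipschitz (indeed $C^1$ if $\tilde y$ is continuous, Lipschitz in general since $\tilde y\in Y$ is bounded), and match it with $\Sbar(\la)\tilde y$ using uniqueness of Laplace transforms / the resolvent identity. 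Then Lemma~\ref{lem:2.2} closes the loop.

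The main obstacle I anticipate is the bookkeeping in step (1): the kernel $K_t$ in \sef{eq:6.19} is a sum of a Heaviside jump term, a resolvent term, and a double-convolution term, and one must integrate $S(\tau)z$ over $\tau\in[0,t]$ and recognize the result as $S(t)y - y$ — this requires repeated use of the Volterra identity \sef{eq:6.13} ($\rho\ast\ze+\ze=\rho$) and careful handling of the two regimes $t+\th<0$ and $t+\th\ge0$, plus the jump at $\si=0$. A secondary but real subtlety is justifying that the integral $\int_0^t S(\tau)z\,d\tau$ genuinely defines an element of $Y$ and that the pointwise-in-$\th$ computation is legitimate — but this is exactly what Lemma~\ref{lem:2.3} already guarantees, so it can be invoked rather than reproved. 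I would also need to confirm that $\DOM{C}=Lip\gb{[-1,0],\BR^n}$ is correct and not merely $\DOM{C}\subseteq Lip$: the reverse inclusion follows because for any Lipschitz $\psi$ the set $C\psi$ in \sef{eq:6.8} is nonempty (the a.e. derivative, modified at $0$ to equal $\pa{\ze}{\psi}$, is a bounded measurable representative lying in $Y$), and then step (1) shows such $\psi$ satisfies \sef{eq:2.4}.
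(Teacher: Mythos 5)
Your outline is correct in its essentials and, in its step (2), matches the paper's argument: assume $\ph \in (\la I - C)\psi$, solve $\la\psi - \psi' = \ph$ together with the boundary condition $\psi'(0) = \pa{\ze}{\psi}$ to get the explicit formulas \sef{eq:6.21}--\sef{eq:6.22} (requiring $\det\De(\la) \neq 0$, which holds for $\Re\la > \om$), and then verify that this $\psi$ agrees with $\Sbar(\la)\ph$. However, the paper never carries out your step (1). It avoids entirely the time-domain bookkeeping with the kernel $K_t$ of \sef{eq:6.19} and the verification of \sef{eq:2.5} pointwise in $\th$; instead it stays in the Laplace domain and exploits the renewal structure directly: it splits $\int_0^\infty e^{-\la t}x(t+\th;\ph)\,dt$ at $t=-\th$ to get $e^{\la\th}\bigl\{\int_\th^0 e^{-\la\si}\ph(\si)\,d\si + \bar x(\la;\ph)\bigr\}$, and then computes $\bar x(\la;\ph) = (1-\bar\ze(\la))^{-1}\bar f(\la)$ using \sef{eq:6.15}--\sef{eq:6.16} and matches this with $\psi(0)$ from \sef{eq:6.22}. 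Since the map $\ph\mapsto\psi$ given by \sef{eq:6.21}--\sef{eq:6.22} is a two-sided inverse of the multi-valued operator $\la I - C$ once $\det\De(\la)\neq 0$, this single Laplace-domain computation settles both graph inclusions at once; you do not need a separate verification of \sef{eq:2.5}. So your plan would work, but your step (1) is a harder detour and is redundant given step (2). The subtleties you flag (that $\DOM{C} = Lip$ and not merely a subset, and that Lemma~\ref{lem:2.3} legitimizes the integral) are real and worth noting, but in the paper's route the first is handled implicitly by the bijectivity of $\ph\mapsto\psi$ and the second does not arise because one never touches the time-domain integral.
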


\begin{proof}
Assume $\ph \in (\la I - C)\psi$. Then there exists $\psi' \in Y$ which is a.e. derivative of $\psi$ such that
\begin{align*}
\la\psi - \psi' &= \ph,\qquad -1 \le \th < 0\\
\la\psi(0) - \pa{\ze}{\psi} &= \ph(0).
\end{align*}
Solving the differential equation yields that
\begin{equation}\label{eq:6.21}
\psi(\th) = e^{\la\th}\bigl\{\int_\th^0 e^{-\la\si}\ph(\si)\,d\si + \psi(0)\bigr\}
\end{equation}
and accordingly the boundary condition for $\th = 0$ boils down to
\begin{equation}\label{eq:6.22}
\psi(0) = \De(\la)^{-1}\bigl[\ph(0) + \int_0^1 d\ze(\si)e^{-\la\si}\int_{-\si}^0 e^{-\la\tau}\ph(\tau)\,d\tau\bigr]
\end{equation}
which requires that $\det \De(\la) \not= 0$ with
\[\De(\la) = \la I - \int_0^1 d\ze(\si)e^{-\la\si}.\]

Our claim is that the identity
\[\psi(\th) = \int_0^\infty e^{-\la t}\gb{S(t)\ph}(\th)\,dt\]
holds. To verify this, we first note that
\[\int_0^\infty e^{-\la t}\gb{S(t)\ph}(\th)\,dt = e^{\la\th}\bigl\{\int_\th^0 e^{-\la\si}\ph(\si)\,d\si + \bar x(\la;\ph)\bigr\}\]
(where $\bar x(\la;\ph) := \int_0^\infty e^{-\la t}x(t;\ph)\,dt$, with $x(t;\ph)$ the solution of \sef{eq:6.1}--\sef{eq:6.2} given by \sef{eq:6.17}) since
\begin{align*}
\int_0^\infty e^{-\la t}x(t+\th;\ph)\,dt &= \int_0^{-\th} e^{-\la t}\ph(t+\th)\,dt + \int_{-\th}^\infty  e^{-\la t}x(t+\th)\,dt\\
&= e^{\la\th}\bigl\{\int_\th^0 e^{-\la\si}\ph(\si)\,d\si + \bar x(\la;\ph)\bigr\}.
\end{align*}
So, since \sef{eq:6.21} holds, we need to check that $\psi(0) = \bar x(\la;\ph)$. From \sef{eq:6.15} we deduce that
\[\bar x = (1 - \bar \ze)^{-1}\bar f.\]
Therefore, using the first representation of $f$ in \sef{eq:6.16}, it follows that
\begin{align*}
\la \bar f(\la) &= \ph(0) + \int_0^\infty \la e^{-\la t}\int_0^t\gb{\int_s^1 d\ze(\th)\ph(s-\th)}\,ds dt\\
&= \ph(0) + \int_0^\infty e^{-\la t}\int_t^1 d\ze(\th)\ph(t-\th)\, dt\\
&= \ph(0) + \int_0^1 d\ze(\th)\int_0^\th e^{-\la t}\ph(t-\th)\,dt\\
&= \ph(0) + \int_0^1 d\ze(\th)e^{-\la\th}\int_{-\th}^0 e^{-\la\si}\ph(\si)\,d\si
\end{align*}
which equals the vector at the right hand side of \sef{eq:6.22} on which the matrix $\De(\la)^{-1}$ acts. Since
\[\la \bar\ze(\la) = \int_0^1 d\ze(\th)e^{-\la\th},\]
we arrive at the conclusion that indeed $\psi(0) = \bar x(\la;\ph)$.
\end{proof}

It is a direct consequence of \sef{eq:6.9} that
\begin{equation}\label{eq:6.23}
X = \clo{\DOM{C}} = C\gb{[-1,0],\BR^n}.
\end{equation}
Clearly $C\psi \cap X$ is either empty or a singleton, cf. \sef{eq:6.8}, and for the set to be nonempty we need that $\psi \in C^1$ and $\psi'(0) = \pa{\ze}{\psi}$. So the generator $A$ of the restriction $\{T(t)\}_{t \ge 0}$ of $\{S(t)\}_{t \ge 0}$ to $X$ is given by
\begin{align}\label{eq:6.24}
\begin{split}
\DOM{A} &= \bigl\{ \psi \in C^1 \mid \psi'(0) = \pa{\ze}{\psi} \bigr\}\\
A\psi &= \psi'
\end{split}
\end{align}
in complete agreement with the standard theory.

As $S(t)$ maps $Y$ into $X$ for $t \ge 1$, one might wonder whether we gained anything at all by the extension from $X$ to $Y$? Already in the pioneering first version of his book \cite{Hale71}, Jack Hale emphasized that if one adds a forcing term to \sef{eq:6.1}, one needs
\begin{equation}\label{eq:6.25}
q(\th) = \begin{cases}
1 &\th = 0\\
0 &-1 \le \th < 0
\end{cases}
\end{equation}
to describe the solution by way of the variation-of-constants formula. Indeed, the solution of
\begin{align}\label{eq:6.26}
\begin{split}
\dot x(t) &= \pa{\ze}{x_t} + f(t),\qquad t \ge 0\\
x(\th) &= \ph(\th),\qquad -1 \le \th \le 0
\end{split}
\end{align}
is explicitly given by
\begin{equation}\label{eq:6.27}
x_t = S(t)\ph + \int_0^t S(t-\tau)q f(\tau)\,d\tau
\end{equation}
since \sef{eq:6.26} corresponds to the initial value problem
\begin{equation}\label{eq:6.28}
\frac{du}{dt} \in Cu + qf,\qquad u(0) = \ph,
\end{equation}
where $u(t) = x_t$. (Incidentally, please note that the solution with initial condition $q$ is the so-called \emph{fundamental solution}, cf. \cite[Section I.2]{Diek95}.)

The integration theory of Section \ref{sec:5} provides a precise underpinning of the integral in \sef{eq:6.27}. In the original approach of Hale, the hidden argument $\th$ in \sef{eq:6.27} is inserted and thus the integral reduces to the integration of an $\BR^n$-valued function. Note that evaluation in a point corresponds to the application of a Dirac functional, so our approach yields, in a sense, a rather late theoretical underpinning of Hale's approach. The $\odot\ast$-calculus approach of \cite{Diek95} amounts, for RFDE, to the observation just before Corollary \ref{eq:6.2} and its consequences.

More precisely, one embeds $X$ into $\BR^n \times L^\infty\gb{[-1,0],\BR^n}$, interprets $q$ as $(1,0)$, considers $\BR^n \times L^\infty\gb{[-1,0],\BR^n}$ as the dual space of $\BR^n \times L^1\gb{[0,1],\BR^n}$, interprets the integral as a weak$^\ast$-integral and checks that the integral belongs to the range of the embedding, so defines an element of $X$. As long as one restricts attention to RFDE, the current approach has its more sophisticated integration theory as a drawback and no clear advantage to compensate. However, this changes when one extends the theory, as we shall do in Section \ref{sec:11}, to neutral equations. Neutral equations correspond to an unbounded change in the rule for extension, even within a functional analytic framework where $q$ is well-defined. In the $\odot\ast$-setting this manifests itself in dependence of $X^\odot$, and hence $X^{\odot\ast}$, on the particular perturbation thus obstructing a satisfactory sun-star perturbation theory for neutral equations. In contrast, the present approach allows us to keep working with the norming dual pair $Y$ and $\DY$ and to develop a variation-of-constants formula.

At the end of Section \ref{sec:6} we shall briefly indicate how, alternatively, one can use a perturbation approach to derive the results presented above.

As a final remark, we emphasize that the variation-of-constants formula \sef{eq:6.27} is the key first step towards a local stability and bifurcation theory for nonlinear problems, as shown in detail in \cite{Diek95}.

\vspace{1.5cm}
\centerline{\Large\bf Part II: Bounded perturbations}
\smallskip
\centerline{\Large\bf describing retarded equations}
\bigskip

\section{The variation-of-constants formula for forcing functions with finite dimensional range}\label{sec:5}
\setcounter{equation}{0}

When the ultimate aim is to study nonlinear problems, one usually focuses on real-valued functions and functionals. Spectral theory, on the other hand, benefits from complexification. The formulation below considers real functionals acting on a real vector space, but when $Y$, $\DY$ is a norming dual pair, the same holds for their complexifications.\footnote{Complexification of a norming dual pair entails some subtle difficulties regarding the choice of norms. These subtleties are explained in \cite[Section III.7]{Diek95}. But when we deal with function spaces, complexification can be represented by allowing the functions to take values in $\BC$ or $\BC^n$ and subsequently the norm can be defined by copying the definition for the real functions, while replacing the real absolute value by the complex modulus. In the present paper the two relevant norms are the supremum norm and the total variation norm, see Appendix B.}

\medskip\noindent

Motivated by RFDE, in particular \sef{eq:6.27}, we want to define an element $u(t)$ of $Y$ by way of the action on $\DY$ expressed in the formula
\begin{equation}\label{eq:4.2}
\pa{\dy}{u(t)} = \dy S(t)u_0 + \int_0^t \dy S(t-\tau)q\,f(\tau)d\tau,
\end{equation}
where
\begin{itemize}
\item[(i)] $(Y,\DY)$ is a norming dual pair;
\item[(ii)] $q \in Y$;
\item[(iii)] $f : [0,T] \to \BR$ is bounded and measurable;
\item[(iv)] $\bigl\{S(t)\bigr\}$ is a twin semigroup,
\end{itemize}
and where $u_0$ (corresponding to $\ph$ in \sef{eq:6.27}) is an arbitrary element of $Y$. The first term at the right hand side of \sef{eq:4.2} is no problem at all, it contributes $S(t)u_0$ to $u(t)$. The second term defines an element of $Y^{\diamond\ast}$, but it is not clear that this element is, without additional assumptions, represented by an element of $Y$.

\begin{lemma}\label{lem:4.1}
In addition to (i)-(iv) assume that
\begin{equation}\label{eq:4.1}
\gb{Y,\si(Y,\DY)}\quad\hbox{is sequentially complete.}
\end{equation}
Then
\begin{equation}\label{eq:4.3}
\dy \mapsto \int_0^t \dy S(t-\tau)q\,f(\tau)d\tau
\end{equation}
is represented by an element of $Y$, to be denoted as
\begin{equation}\label{eq:4.4}
\int_0^t S(t-\tau)q\,f(\tau)\,d\tau
\end{equation}
\end{lemma}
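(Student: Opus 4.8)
The plan is to realize the functional \sef{eq:4.3} as a limit, in the dual space $\gb{Y,\si(Y,\DY)}'$, of a net (or sequence) of Riemann-type partial sums and then to invoke \sef{eq:4.1} to conclude that the limit lands in $Y$ itself. First I would fix $t$ and, for a partition $P : 0 = \tau_0 < \tau_1 < \cdots < \tau_m = t$ of $[0,t]$ with tags $\si_k \in [\tau_{k-1},\tau_k]$, form the partial sum $\Sigma_P := \sum_{k=1}^m S(t-\si_k)q\, f(\si_k)\,(\tau_k - \tau_{k-1})$, which is a genuine element of $Y$ since each $S(t-\si_k)q \in Y$. The key estimate is that for every $\dy \in \DY$ the scalar quantity $\pa{\dy}{\Sigma_P} = \sum_k \dy S(t-\si_k)q\, f(\si_k)(\tau_k-\tau_{k-1})$ is a Riemann sum for the integral $\int_0^t \dy S(t-\tau)q\,f(\tau)\,d\tau$; by assumptions (iii) and (iv) (measurability of $\tau \mapsto \dy S(\tau)q$ together with the exponential bound in Definition \ref{def:2.1} ii), and boundedness of $f$) the integrand is a bounded measurable function, hence Lebesgue integrable, so the Riemann sums over $P$ need not converge pointwise in $\dy$ along a fixed refining sequence of partitions. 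This is the main obstacle: one does not have a single sequence $\Sigma_{P_n}$ that is $\si(Y,\DY)$-Cauchy in general, because bounded measurable $f$ and a merely measurable orbit $\tau \mapsto \dy S(\tau)q$ need not be Riemann integrable.

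The way around this — following Kunze's construction of the Pettis integral in the norming-dual-pair setting — is to first treat the case of a \emph{simple} function $f$, where \sef{eq:4.4} is a finite linear combination $\sum_j c_j \int_{a_j}^{b_j} S(t-\tau)q\,d\tau$, and the individual pieces $\int_{a}^{b} S(\tau)q\,d\tau$ are exactly the local integrals of an orbit that were shown in Lemma \ref{lem:2.3} to define elements of $Y$ (taking $y = q$ there); alternatively one can appeal to the Riemann-integrability of $\tau \mapsto S(\tau)q$ obtained from \sef{eq:2.6} and \sef{eq:2.5}. For a simple $f$ the functional \sef{eq:4.3} is therefore represented by an element of $Y$, and it satisfies $\bigl|\int_0^t \dy S(t-\tau)q\,f(\tau)\,d\tau\bigr| \le M e^{|\om|t}\,\|q\|\,\|f\|_\infty\, t\,\|\dy\|$, so that, taking the supremum over $\|\dy\|\le 1$, the map $f \mapsto \int_0^t S(t-\tau)q\,f(\tau)\,d\tau$ from simple functions into $Y$ is bounded (by $M e^{|\om| t}\|q\|\,t$) in the $\sup$-norm on $f$.

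Next I would approximate an arbitrary bounded measurable $f$ by a sequence $f_n$ of simple functions with $\|f_n\|_\infty \le \|f\|_\infty$ and $f_n \to f$ pointwise (standard for bounded measurable functions). Writing $y_n := \int_0^t S(t-\tau)q\,f_n(\tau)\,d\tau \in Y$, dominated convergence gives, for each fixed $\dy \in \DY$, that $\pa{\dy}{y_n} = \int_0^t \dy S(t-\tau)q\,f_n(\tau)\,d\tau \to \int_0^t \dy S(t-\tau)q\,f(\tau)\,d\tau$, i.e. $y_n$ is $\si(Y,\DY)$-Cauchy (indeed $\si(Y,\DY)$-convergent in $Y^{\diamond\ast}$). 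By the sequential completeness hypothesis \sef{eq:4.1}, the limit $y := \lim_n y_n$ exists in $\gb{Y,\si(Y,\DY)}$, hence $y \in Y$, and by construction $\pa{\dy}{y}$ equals \sef{eq:4.3} for every $\dy$; this $y$ is the claimed element, denoted \sef{eq:4.4}. Finally I would note that $y$ is independent of the choice of approximating sequence $(f_n)$, since any two such sequences yield the same value of \sef{eq:4.3} (the right-hand side depends only on $f$), and that the resulting map $f \mapsto \int_0^t S(t-\tau)q\,f(\tau)\,d\tau$ inherits the bound $\|{\cdot}\| \le M e^{|\om| t}\,\|q\|\, t$ by passing to the limit. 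The only genuinely delicate point, worth a sentence in the write-up, is verifying measurability of $\tau \mapsto \dy S(t-\tau)q$ as a function of the \emph{new} variable $\tau$ — but this is immediate from Definition \ref{def:2.1} iii) and the change of variable $\tau \mapsto t-\tau$.
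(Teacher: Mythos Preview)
Your proposal is correct and, once you discard the Riemann-sum digression, follows exactly the same route as the paper: approximate $f$ pointwise and boundedly by step (simple) functions, invoke Lemma~\ref{lem:2.3} to see that for each step function the integral lies in $Y$ (indeed in $\DOM{C}$), use dominated convergence to get $\si(Y,\DY)$-convergence of the approximants, and conclude via the sequential completeness assumption \sef{eq:4.1}. The paper's write-up is terser but substantively identical; your extra remarks on uniqueness and the norm bound are fine additions but not needed for the statement as given.
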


\begin{proof}
There exists a sequence of step functions $f_m$ such that $|f_m| \le |f|$ and $f_m \to f$ pointwise. Lemma \ref{lem:2.3} shows that
\begin{equation*}
\int_0^t S(t-\tau)q\,f_m(\tau)d\tau
\end{equation*}
belongs to $Y$ (in fact even to $\DOM{C}$). Since (see Definition \ref{def:2.1}(ii))
\begin{equation*}
\babs{\dy S(t-\tau)q f_m(\tau)} \le M e^{\om(t-\tau)}\|q\|\,\|\dy\|\,\sup_{\si} |f(\si)|,
\end{equation*}
the dominated convergence theorem implies that for every $\dy \in \DY$
\begin{equation*}
\lim_{m \to \infty} \int_0^t \dy S(t-\tau)q\,f_m(\tau)d\tau = \int_0^t \dy S(t-\tau)q\,f(\tau)d\tau.
\end{equation*}
The sequential completeness next guarantees that the limit too is represented by an element of $Y$.
\end{proof}

In Section \ref{sec:8} we shall, as a step towards treating neutral equations, replace $f(\tau)\,d\tau$ by $F(d\tau)$ with $F$ of bounded variation. Then approximation by step functions no longer works. This observation motivates to look for an alternative sufficient condition.

\begin{lemma}\label{lem:4.2}
In addition to (i)-(iv) assume that
\begin{align}\label{eq:4.5}
&\hbox{a linear map  } \gb{\DY, \si(\DY,Y)} \to \BR\hbox{ is continuous}\nonumber\\
&\hbox{if it is sequentially continuous}.
\end{align}
Then the assertion of Lemma \ref{lem:4.1} holds.
\end{lemma}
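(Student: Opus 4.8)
The plan is to show that the bounded linear functional
\[
\Phi(\dy) := \int_0^t \dy S(t-\tau)q\,f(\tau)\,d\tau ,\qquad \dy \in \DY ,
\]
is continuous with respect to $\si(\DY,Y)$; once this is established, the symmetric counterpart of the duality recorded in Section~\ref{sec:2}, namely $\gb{\DY,\si(\DY,Y)}' \cong Y$, furnishes the element of $Y$ representing $\Phi$, and that element is exactly $\sef{eq:4.4}$. That $\Phi$ is well defined and norm-bounded on $\DY$ is immediate from Definition~\ref{def:2.1}(ii)--(iii): the integrand $\tau \mapsto \dy S(t-\tau)q\,f(\tau)$ is measurable and dominated by $Me^{\om(t-\tau)}\|q\|\,\|\dy\|\sup_\si|f(\si)|$. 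By the hypothesis \sef{eq:4.5} it therefore suffices to prove that $\Phi$ is $\si(\DY,Y)$-\emph{sequentially} continuous.

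So let $\dy_n \to \dy^\diamond$ in $\gb{\DY,\si(\DY,Y)}$, i.e. $\pa{\dy_n}{y} \to \pa{\dy^\diamond}{y}$ for every $y \in Y$. The first key step is to upgrade this to norm-boundedness of the sequence. Each $\dy_n$, viewed via $y \mapsto \pa{\dy_n}{y}$ as a bounded linear functional on the Banach space $Y$, has functional norm exactly $\|\dy_n\|$ by the norming property of the dual pair; since $\gb{\pa{\dy_n}{y}}_n$ converges, hence is bounded, for each fixed $y \in Y$, the Banach--Steinhaus theorem yields $K := \sup_n \|\dy_n\| < \infty$.

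The second step is dominated convergence. For each $\tau \in [0,t]$ one has $S(t-\tau)q \in Y$, whence $\dy_n S(t-\tau)q = \pa{\dy_n}{S(t-\tau)q} \to \pa{\dy^\diamond}{S(t-\tau)q} = \dy^\diamond S(t-\tau)q$, so the integrands converge pointwise in $\tau$. Moreover, by Definition~\ref{def:2.1}(ii),
\[
\babs{\dy_n S(t-\tau)q\,f(\tau)} \le Me^{\om(t-\tau)}\|q\|\,K\,\sup_\si|f(\si)| ,
\]
and the right-hand side is a bounded, hence integrable, function of $\tau$ on $[0,t]$, independent of $n$. The dominated convergence theorem then gives $\Phi(\dy_n) \to \Phi(\dy^\diamond)$, which is the asserted sequential $\si(\DY,Y)$-continuity; invoking \sef{eq:4.5} once more, $\Phi$ is $\si(\DY,Y)$-continuous, and $\gb{\DY,\si(\DY,Y)}' \cong Y$ produces the representing element of $Y$.

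The load-bearing point is the Banach--Steinhaus step, which is what makes the domination uniform in $n$; everything else is a routine passage to the limit. Note that, in contrast to Lemma~\ref{lem:4.1}, no approximation of $f$ by step functions enters, which is precisely what will let this argument survive the replacement of $f(\tau)\,d\tau$ by a bounded-variation measure $F(d\tau)$ in Section~\ref{sec:8}; the cost is that the abstract hypothesis \sef{eq:4.5} then has to be checked for the dual pairs actually in play.
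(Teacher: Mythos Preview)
Your argument is correct and follows essentially the same route as the paper: establish sequential $\si(\DY,Y)$-continuity of the functional via dominated convergence, invoke \sef{eq:4.5} to upgrade to continuity, and then use $\gb{\DY,\si(\DY,Y)}' = Y$ to obtain the representing element. The paper considers sequences $\dy_m \to 0$ rather than $\dy_n \to \dy^\diamond$, but for a linear map that is of course equivalent. Your version is in fact more complete: you make explicit the Banach--Steinhaus step that guarantees $\sup_n \|\dy_n\| < \infty$, which is what furnishes the integrable dominant; the paper's ``consequently'' passes over this point in silence.
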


\begin{proof}
Again we are going to make use of the dominated convergence theorem. Consider a sequence $\{\dy_m\}$ in $\DY$ such that for every $y \in Y$ the sequence $\pa{\dy_m}{y}$ converges to zero in $\BR$. Then for all relevant $t$ and $\tau$ we have
\[\lim_{m \to \infty} \dy_m S(t-\tau)q = 0\]
and consequently
\[\lim_{m \to \infty} \int_0^t \dy_m S(t-\tau)q \, f(\tau)\,d\tau = 0.\]
So the linear map \sef{eq:4.3} is, in the sense described in \sef{eq:4.5}, sequentially continuous and therefore, by the assumption, continuous. Since
\[\gb{\DY,\si(\DY,Y)}' = Y,\]
we conclude that \sef{eq:4.3} is represented by an element of $Y$.
\end{proof}

In the next section we are going to use these results to show that a certain type of perturbation of a twin semigroup yields again a {\sl twin} semigroup and then we will also need that with (ii) replaced by
\begin{itemize}
\item[$\hbox{(ii)}^\prime$] $\dq \in \DY$,
\end{itemize}
we have that
\begin{equation}\label{eq:4.6}
y \mapsto \int_0^t \dq S(t-\tau)y\, f(\tau)\,d\tau
\end{equation}
is represented by an element of $\DY$, to be denoted as
\begin{equation}\label{eq:4.7}
\int_0^t \dq S(t-\tau)\, f(\tau)\,d\tau.
\end{equation}
Applying the two lemmas above, with the role of $Y$ and $\DY$ interchanged, we find that this is indeed the case if either
\begin{equation}\label{eq:4.8}
\gb{\DY,\si(\DY,Y)}\quad\hbox{is sequentially complete.}
\end{equation}
or
\begin{align}\label{eq:4.9}
&\hbox{a linear map  } \gb{Y, \si(Y,\DY)} \to \BR\hbox{ is continuous}\nonumber\\
&\hbox{if it is sequentially continuous}.
\end{align}
In our treatment of delay differential equations we shall assume \sef{eq:4.1} and \sef{eq:4.9}, but in our treatment of renewal equations we shall assume \sef{eq:4.8} and \sef{eq:4.5}. 

This difference is a consequence of what we stated at the start of Section \ref{sec:2}: we want that $Y$ is the state space and $\DY$ is an auxiliary space. For delay differential equations we take $Y = B([-1,0])$ and $\DY = NBV([0,1])$, while for renewal equations we take $Y = NBV([0,1])$ and $\DY = B([-1,0])$. So in terms of the two function spaces involved, the assumptions are identical (and these assumptions are substantiated in Appendix B), but because their roles are interchanged the formulations are a mirror image of each other.

As in the next section we shall use both properties, we state

\begin{definition}\label{def:4.3}
We say that a norming dual pair $(Y,\DY)$ is suitable for twin perturbation if 
\begin{itemize}
\item[(a)] at least one of \sef{eq:4.1} and \sef{eq:4.5} holds; and
\item[(b)] at least one of \sef{eq:4.8} and \sef{eq:4.9} holds
\end{itemize}
\end{definition}

\section{Finite dimensional range perturbation of twin semigroups}\label{sec:6}
\setcounter{equation}{0}

In this section we consider the following situation:
\begin{itemize}
\item[--] $(Y,\DY)$ is a norming dual pair that is suitable for twin perturbation, cf. Definition \ref{def:4.3};
\item[--] $\{S_0(t)\}$ is a twin semigroup on $(Y,\DY)$ with generator $C_0$;
\item[--] For $j=1,\ldots,n$ the elements $q_j \in Y$ and $\dq_j \in \DY$ are given. 
\end{itemize}

Our aim is to define constructively a twin semigroup $\{S(t)\}$ with generator $C$ defined by
\begin{equation}\label{eq:5.1}
\DOM{C} = \DOM{C_0}.\qquad Cy = C_0y + \sum_{j=1}^n \pa{\dq_j}{y}q_j.
\end{equation}
The first step is to introduce a $n \times n$-matrix valued function $k$ on $[0,\infty)$ via
\begin{equation}\label{eq:5.2}
k_{ij}(t) = \dq_i S_0(t) q_j.
\end{equation}
Note that, by assumption, $t \mapsto k(t)$ is locally bounded and measurable. With the kernel $k$ we associate its resolvent $r$. This is by definition the unique solution of the matrix renewal equation
\begin{equation}\label{eq:5.3}
k + k \ast r = r = k + r \ast k
\end{equation}
or, equivalently,
\begin{equation}\label{eq:5.4}
r = \sum_{j=1}^\infty k^{j\ast},
\end{equation}
where $k^{1\ast} := k$ and $k^{m\ast} := k \ast k^{(m-1)\ast}$ for $m \ge 2$. Here $\ast$ denotes the usual convolution product of functions.

In variation-of-constants spirit,  \sef{eq:5.1} motivates us to presuppose that $S(t)$ and $S_0(t)$ should be related to each other by the equation
\begin{equation}\label{eq:5.5}
S(t) = S_0(t) + \int_0^t S_0(t-\tau)BS(\tau)\,d\tau,
\end{equation}
where
\begin{equation}\label{eq:5.6}
By := \sum_{j=1}^n \pa{\dq_j}{y}q_j.
\end{equation}
By letting $B$ act on \sef{eq:5.5} we obtain, for given initial point $y \in Y$, a finite dimensional renewal equation. To formulate this equation, we first write \sef{eq:5.6} as
\begin{equation}\label{eq:5.7}
By = \pa{\dq}{y}\cdot q
\end{equation}
where $\dq$ is the $n$-vector with $\DY$-valued components $\dq_j$ and similarly $q$ is the $n$-vector with $Y$-valued components $q_j$ and where $\cdot$ denotes the inner product in $\BR^n$. We can factor (a rank factorization) $B$ as $B= B_2B_1$ with $B_1 : Y \to \BR^n$ and $B_2 : \BR^n \to Y$ defined by
\begin{equation}\label{eq:5.7a}
B_1y = \pa{\dq}{y},\qquad B_2x = \sum_{j=1}^n x_jq_j
\end{equation}

Now let \sef{eq:5.5} act on $y \in Y$ and next act on the resulting identity with the vector $\dq$. This yields the equation
\begin{equation}\label{eq:5.8}
v(t)y = \dq S_0(t)y + \int_0^t k(t-\tau)v(\tau)y\,d\tau,
\end{equation}
where $v(t)y$ corresponds to $\dq S(t)y = B_1S(t)y$. The solution of \sef{eq:5.8} can be expressed in terms of the resolvent $r$ of the kernel $k$ and the forcing function $t \mapsto \dq S_0(t)y$ by the formula
\begin{equation}\label{eq:5.9}
v(t)y = \dq S_0(t)y + \int_0^t r(t-\tau)\dq S_0(\tau)y\,d\tau.
\end{equation}
And now that $v(\novar)y$, representing $\dq S(\novar)y$, can be considered as known, \sef{eq:5.5} becomes an explicit formula
\begin{equation}\label{eq:5.10}
S(t) = S_0(t) + \int _0^t S_0(t-\tau)q\cdot v(\tau)\,d\tau.
\end{equation}

Please note that, with this definition of $S(t)$, we do indeed have that
\[v(t)y = \dq S(t)y\]
(compare \sef{eq:5.10} to \sef{eq:5.8}).

Formula \sef{eq:5.10} is well suited for proving, on the basis of Lemma \ref{lem:4.1} or Lemma \ref{lem:4.2}, that $S(t)$ maps $Y$ into $Y$. But not for proving that $S(t)$ maps $\DY$ into $\DY$. So even though this may seem superfluous, we now provide an alternative dual constructive definition starting from the following equation
\begin{equation}\label{eq:5.12}
S(t) = S_0(t) + \int_0^t S(t-\tau) BS_0(\tau)\,d\tau
\end{equation}
which is the variant of \sef{eq:5.5} in which the roles of $S(t)$ and $S_0(t)$ are interchanged. Let \sef{eq:5.12} act (from the right) on $\dy \in \DY$ and next let the resulting identity act on the vector $q$. This yields the equation
\begin{equation}\label{eq:5.13}
\dy w(t) = \dy S_0(t)q + \int_0^t \dy w(t-\tau) k(\tau)\,d\tau,
\end{equation}
where $\dy w(t)$ corresponds to $\dy S(t)q$. The formula
\begin{equation}\label{eq:5.14}
\dy w(t) = \dy S_0(t)q + \int_0^t \dy S_0(t-\tau)q\,r(\tau)\,d\tau
\end{equation}
expresses the solution of \sef{eq:5.13} in terms of the forcing function $\dy S_0(t)q$ and the resolvent $r$ of the kernel $k$. Next we rewrite \sef{eq:5.12} in the form
\begin{equation}\label{eq:5.15}
S(t) = S_0(t) + \int_0^t w(t-\tau) \cdot \dq S_0(\tau)\,d\tau.
\end{equation}
Please note that indeed $\dy w(t) = \dy S(t)q$ (compare \sef{eq:5.15} to \sef{eq:5.13}).

Of course we should now verify that the integrals in \sef{eq:5.10} and \sef{eq:5.15} do indeed define the same object. Writing the integral in \sef{eq:5.10} as $w_0 \ast v$ and the integral in \sef{eq:5.15} as $w \ast v_0$, equality follows from \sef{eq:5.9} written in the form
\[v = v_0 + r \ast v_0\]
and \sef{eq:5.14} written in the form 
\[w = w_0 + w_0 \ast r\]
since
\begin{align*}
w_0 \ast v &= w_0 \ast (v_0 + r \ast v_0) = w_0 \ast v_0 + w_0 \ast r \ast v_0\\
&= (w_0 + w_0 \ast r) \ast v_0 = w \ast v_0.
\end{align*}

\begin{theorem}\label{thm:5.1} 
The combination \sef{eq:5.9}--\sef{eq:5.10} or, equivalently, the combination of \sef{eq:5.14}--\sef{eq:5.15}, defines a twin semigroup $\bigl\{S(t)\bigr\}$ with generator $C$ defined in \sef{eq:5.1}.
\end{theorem}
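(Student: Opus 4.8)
The plan is to verify the four defining properties of a twin semigroup from Definition~\ref{def:2.1} directly from formulas \sef{eq:5.10} and \sef{eq:5.15}, using the fact that $(Y,\DY)$ is suitable for twin perturbation so that the integrals there land in $Y$ (resp.\ $\DY$). First I would observe that $S(t)$ as given by \sef{eq:5.10} is a bounded bilinear map on $\DY\times Y$ that defines a bounded linear operator $Y\to Y$: property (a) of Definition~\ref{def:4.3} applied with $q$ in place of the single vector and $f(\tau)=v(\tau)y$ (which is bounded measurable in $\tau$, being the solution of the finite-dimensional renewal equation \sef{eq:5.8}) makes the integral an element of $Y$. Symmetrically, \sef{eq:5.15} together with property (b) shows $S(t)$ maps $\DY$ into $\DY$, and the continuity requirements (ii), (iii) of the definition of a twin operator hold because $S_0(t)$ is a twin operator and the perturbation terms are built from $q$, $\dq$, and a scalar kernel. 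The exponential bound in Definition~\ref{def:2.1}(ii) follows from the exponential bound for $S_0$ and Gronwall applied to \sef{eq:5.8} (equivalently, from the standard estimate $\|r(t)\|\le \text{const}\cdot e^{\gamma t}$ for the resolvent of a kernel with exponential growth), giving $\abs{\dy S(t)y}\le \widetilde M e^{\widetilde\om t}\|\dy\|\|y\|$ for suitable $\widetilde M,\widetilde\om$.

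Next I would verify $S(0)=I$ (immediate, since the convolution integral over $[0,0]$ vanishes) and the semigroup property $S(t+s)=S(t)S(s)$. For the latter I would not compute with the explicit kernels but argue via the renewal equation: starting from \sef{eq:5.5}, one checks that $t\mapsto S(t+s)$ and $t\mapsto S(t)S(s)$ both satisfy the same variation-of-constants identity with forcing term $S_0(\novar)S(s)$ and the same kernel, and then invoke uniqueness of solutions of the renewal equation \sef{eq:5.8} (uniqueness holds because it is a Volterra equation with locally bounded measurable kernel, solved by Neumann series \sef{eq:5.4}). Concretely: pairing against $\dy$ and applying $B_1=\dq$, one gets $v(t+s)y=\dq S(t+s)y$ on one side and on the other side $\dq S(t)S(s)y$ satisfies $\dq S(t)(S(s)y)=v(t)(S(s)y)$; combining \sef{eq:5.5} at $t+s$ with the semigroup property of $S_0$ and the convolution identity $k\ast(\text{solution})$ collapses to the claim. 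Here measurability of $t\mapsto\dy S(t)y$ (property iii) drops out of the construction since it is a sum of a measurable term $\dy S_0(t)y$ and a convolution of locally bounded measurable functions.

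For property (iv) and the identification of the generator, I would compute the Laplace transform of \sef{eq:5.10}. Taking $\dy S(\novar)y$ and Laplace transforming, \sef{eq:5.10} becomes $\dy\Sbar(\la)y=\dy\Sbar_0(\la)y+\dy\Sbar_0(\la)q\cdot\hat v(\la)y$ where $\hat v(\la)y=\bigl(I-\hat k(\la)\bigr)^{-1}\dq\Sbar_0(\la)y$ by Laplace transforming \sef{eq:5.8} and using \sef{eq:5.3}; since $\hat k(\la)=\dq\Sbar_0(\la)q$, the matrix $I-\hat k(\la)$ is invertible for $\Re\la$ large, and one gets a closed-form expression exhibiting $\Sbar(\la)$ as a twin operator (it is $\Sbar_0(\la)$ plus a finite-rank twin operator built from $q$, $\dq$, $\Sbar_0(\la)$, and the matrix $(I-\hat k(\la))^{-1}$). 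To show this $\Sbar(\la)$ is the resolvent of $C$ in \sef{eq:5.1}, I would verify the algebraic identity $(\la I-C)\Sbar(\la)y=y$ and $\Sbar(\la)(\la I-C)y=y$ on $\DOM{C}=\DOM{C_0}$: writing $C=C_0+B$ with $B$ from \sef{eq:5.6}, one has $(\la I - C)\Sbar(\la)=(\la I-C_0)\Sbar(\la)-B\Sbar(\la)$, substitute the closed form of $\Sbar(\la)$, use $(\la I-C_0)\Sbar_0(\la)=I$, and check the finite-dimensional cancellation via the identity $\bigl(I-\hat k(\la)\bigr)^{-1}=I+\hat r(\la)$ combined with $B\Sbar_0(\la)q=\hat k(\la)$ acting on $\BR^n$. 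The main obstacle, I expect, is bookkeeping: keeping straight that $S(t)$ acts on $Y$ by first convolving with $v$ built from $S_0$-forcing but acts on $\DY$ by first convolving with $w$ (cf.\ the remark after \sef{eq:bnd-bilin-map-comp}), and confirming that the two constructions \sef{eq:5.10} and \sef{eq:5.15} yield literally the same twin operator — this last point is the equality $w_0\ast v=w\ast v_0$ already established in the text, which I would invoke rather than reprove. No single step is deep; the care lies in matching the $Y$-side and $\DY$-side computations so that all three avatars of the symbol $S(t)$ are consistent.
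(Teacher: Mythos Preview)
Your proposal is correct and follows essentially the same route as the paper: twin-operator property from \sef{eq:5.10}/\sef{eq:5.15} via Definition~\ref{def:4.3}, the semigroup law through the identity $v(t+s)y=v(t)S(s)y$ (the paper isolates this as Lemma~\ref{lem:5.2} and proves it exactly as you outline, by recognizing the shifted forcing term as $\dq S_0(t)S(s)y$ and invoking uniqueness for \sef{eq:5.8}), exponential bounds from the resolvent estimate, and the Laplace transform computation yielding the finite-rank formula for $\Sbar(\la)$. The only cosmetic difference is that the paper identifies the generator by solving $(\la I-C)\eta=y$ for $\eta$ and matching the result to \sef{eq:5.19}, whereas you propose to check $(\la I-C)\Sbar(\la)=I$ directly; these are the same linear-algebra identity read in opposite directions.
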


\begin{proof}
Since $(Y,\DY)$ is suitable for twin perturbation, we can use \sef{eq:5.10} and either Lemma \ref{lem:4.1} or Lemma \ref{lem:4.2} to deduce that $S(t)$ maps $Y$ into $Y$. Similarly we can use \sef{eq:5.15} and the observation concerning \sef{eq:4.6} to deduce that $S(t)$ maps $\DY$ into $\DY$. So $\bigl\{S(t)\bigr\}$ is a twin operator.

\medskip\noindent
With a view to deriving the semigroup property
\begin{equation}\label{eq:5.16}
S(t+s) = S(t)S(s),\qquad t,s \ge 0,
\end{equation}
we first formulate the auxiliary result

\begin{lemma}\label{lem:5.2}
The solution $v(\novar)y$ of \sef{eq:5.8} has the property
\begin{equation}\label{eq:5.17}
v(t+s)y = v(t)S(s)y
\end{equation}
\end{lemma}

\begin{proof}
From \sef{eq:5.8} it follows that
\begin{align*}
v(t+s)y &= \dq S_0(t)S_0(s)y + \int_0^s k(t+s-\tau)v(\tau)y\,d\tau\\
&\qquad + \int_0^t k(t-\si)v(s+\si)y\,d\si
\end{align*}
and by uniqueness \sef{eq:5.17} follows provided
\[\dq S_0(t)S_0(s)y + \int_0^s k(t+s-\tau)v(\tau)y\,d\tau = \dq S_0(t)S(s)y.\]
Noting that
\[k(t+s-\tau) = \dq S_0(t+s-\tau)q = \dq S_0(t)S_0(s-\tau)q,\]
we conclude from \sef{eq:5.10} that this identity does indeed hold.
\end{proof}

To verify \sef{eq:5.16}, we start from \sef{eq:5.10} and write
\begin{align*}
S(t+s)y &= S_0(t)S_0(s)y + \int_0^s S_0(t+s-\tau)q \cdot v(\tau)y\,d\tau\\
&\qquad + \int_0^t S_0(t-\si)q \cdot v(\si+s)y\,d\si\\
&= S_0(t)S(s)y + \int_0^t S_0(t-\si)q \cdot v(\si)S(s)y\,d\si\\
&= S(t)S(s)y.
\end{align*}

Both the property $S(0)=I$ and the measurability, for all $y \in Y$, $\dy \in \DY$, of $t \mapsto \dy S(t)y$ follow from \sef{eq:5.10} and the corresponding properties of $\{S_0(t)\}$. 

The exponential estimate for $\dy S_0(t) y$ yields exponential estimates for both the kernel $k$ and the forcing function $\dq S_0(\novar)y$ in the renewal equation \sef{eq:5.8}. Therefore, see Theorem \ref{thm:resolvent} with $\mu(dt)=k(t)dt$, we obtain an exponential estimate for the resolvent $\rho(dt)=r(t)dt$, and hence via \sef{eq:5.9} an exponential bound for $v(t)y$. Finally, using \sef{eq:5.10} we obtain an exponential bound for $\dy S(t)y$.

It remains to compute the Laplace transform, cf. \sef{eq:2.2}. Since 
\begin{equation}\label{eq:5.18}
\int_0^\infty e^{-\la t} \dy S_0(t)y\,dt = \dy (\la I - C_0)^{-1}y,
\end{equation}
we obtain by Laplace transformation of \sef{eq:5.10} the identity
\[\int_0^\infty e^{-\la t} \dy S(t)y\,dt = \dy (\la I - C_0)^{-1}y + \dy (\la I - C_0)^{-1}q \novar \bar{v}(\la)y.\]
Laplace transformation of either \sef{eq:5.8} or \sef{eq:5.9} and \sef{eq:5.3} yields
\begin{align}\label{eq:5.18a}
\bar{v}(\la)y &= \bigl[ I - \dq(\la I - C_0)^{-1}q\bigr]^{-1}\,\dq(\la I - C_0)^{-1}y\nonumber\\
&= \bigl[ I - \bar k(\la) \bigr]^{-1}\,\dq(\la I - C_0)^{-1}y
\end{align}
By combining the last two identities we arrive at
\begin{align}\label{eq:5.19}
\int_0^\infty e^{-\la t}\dy S(t)y\,dt
&= \dy (\la I - C_0)^{-1}y + \dy (\la I - C_0)^{-1}q\, \cdot\nonumber \\
&\qquad \cdot\, \bigl[ I - \dq(\la I - C_0)^{-1}q\bigr]^{-1}\,\dq(\la I - C_0)^{-1}y.
\end{align}
It remains to check that the right hand side of \sef{eq:5.19} is exactly $\dy (\la I - C)^{-1}y$ when $C$ is defined by \sef{eq:5.1}. So consider the equation
\[(\la I - C)\eta = y.\]
By \sef{eq:5.1} this is equivalent to
\[(\la I - C_0)\eta = y + \pa{\dq}{\eta}\cdot q\]
and hence to
\[\eta = (\la I - C_0)^{-1}y + \sum_{j=1}^n \pa{\dq_j}{\eta}(\la I - C_0)^{-1}q_j.\]
In particular,
\[\pa{\dq_k}{\eta} = \dq_k(\la I - C_0)^{-1}y + \sum_{j=1}^n \dq_k (\la I - C_0)^{-1}q_j\,\pa{\dq_j}{\eta}\]
or, in vector form,
\[\pa{\dq}{\eta} = \dq (\la I - C_0)^{-1}y + \dq (\la I - C_0)^{-1}q \pa{\dq}{\eta}.\]
Hence
\begin{align}\label{eq:5.19a}
\eta = (\la I - C)^{-1}y &= (\la I - C_0)^{-1}y + (\la I - C_0)^{-1}q \nonumber\\
&\hspace{1.75cm} \times\gb{ I - \dq (\la I - C_0)^{-1}q}^{-1}\,\dq (\la I - C_0)^{-1}y
\end{align}
and comparison with \sef{eq:5.19} shows that indeed
\begin{equation}\label{eq:5.20}
\int_0^\infty e^{-\la t} \dy S(t)y\,dt = \dy (\la I - C)^{-1}y.
\end{equation}
This completes the proof of Theorem \ref{thm:5.1}.
\end{proof}

\smallskip

By combining Theorems \ref{thm:5.1} and \ref{thm:3.1} we see that perturbations of the form \sef{eq:5.1} do not alter the subspaces of strong continuity.

\begin{corollary}\label{col:5.3}
The subspaces $X$ and $X^\odot$ of strong continuity are the same for $\{S_0(t)\}$ and $\{S(t)\}$.
\end{corollary}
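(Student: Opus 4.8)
The plan is to reduce the statement to the identification, proved in Theorem~\ref{thm:3.1} and its $\diamond$-side mirror image, of the subspace of strong continuity with the norm-closure of the domain of the generator. Write $X$, $X^{\odot}$ for the subspaces \sef{eq:3.1}, \sef{eq:3.5} attached to $\{S(t)\}$ and $X_0$, $X_0^{\odot}$ for the ones attached to $\{S_0(t)\}$; the claim is then an equality of subsets of $Y$, resp.\ of $\DY$.

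For $X$: Theorem~\ref{thm:5.1} asserts that $\{S(t)\}$ is a twin semigroup whose generator, in the sense of \sef{eq:2.3}, is the operator $C$ of \sef{eq:5.1}, so that $\DOM{C}=\DOM{C_0}$ \emph{as subsets of $Y$}. Applying Theorem~\ref{thm:3.1} to $\{S_0(t)\}$ and to $\{S(t)\}$ immediately yields
\[
X \;=\; \clo{\DOM{C}} \;=\; \clo{\DOM{C_0}} \;=\; X_0 .
\]

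For $X^{\odot}$ I would exploit the symmetry between the two members of the pair emphasised at the beginning of Sections~\ref{sec:2} and~\ref{sec:3}: interchanging the roles of $Y$ and $\DY$ and of the $n$-vectors $q$ and $\dq$ turns the constructive scheme \sef{eq:5.5}--\sef{eq:5.10} into the dual scheme \sef{eq:5.12}--\sef{eq:5.15}, and these two schemes were shown (via the identity $w_0\ast v = w\ast v_0$ established just before Theorem~\ref{thm:5.1}) to define the \emph{same} twin semigroup $\{S(t)\}$. Reading the resolvent computation \sef{eq:5.18a}--\sef{eq:5.20} from the $\diamond$ side, i.e.\ running the end of the proof of Theorem~\ref{thm:5.1} with $Y$ and $\DY$ exchanged, shows that the generator of $\{S(t)\}$ viewed as a semigroup of bounded operators on $\DY$ is the operator $\DC$ with $\DOM{\DC}=\DOM{C_0^{\diamond}}$ and $\DC\dy = C_0^{\diamond}\dy + \sum_{j=1}^{n}\pa{\dy}{q_j}\dq_j$; concretely, the perturbation term in the $\diamond$-analogue of \sef{eq:5.19a}, seen as a map $\DY\to\DY$, has range inside $\hull\{(\la I-C_0^{\diamond})^{-1}\dq_1,\dots,(\la I-C_0^{\diamond})^{-1}\dq_n\}\subset\DOM{C_0^{\diamond}}$, and the reverse inclusion of ranges follows by the same argument with $S$ and $S_0$ interchanged, so $\DOM{\DC}=\DOM{C_0^{\diamond}}$ as subsets of $\DY$. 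The $\diamond$-side counterpart of Theorem~\ref{thm:3.1} (obtained, as announced there, by literally swapping $Y$ and $\DY$ throughout Section~\ref{sec:3}) then gives $X^{\odot}=\clo{\DOM{\DC}}$ and $X_0^{\odot}=\clo{\DOM{C_0^{\diamond}}}$, whence $X^{\odot}=X_0^{\odot}$.

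The only point that is not a purely mechanical invocation of earlier results is the $\diamond$-side identity $\DOM{\DC}=\DOM{C_0^{\diamond}}$: one must make sure that the generator of $\{S(t)\}$ on $\DY$ is genuinely the finite-rank perturbation of $C_0^{\diamond}$ with \emph{unchanged domain}, and not merely some operator sharing the same resolvent twin operator. This is precisely what the explicitly symmetric construction \sef{eq:5.12}--\sef{eq:5.15}, together with the resolvent identities \sef{eq:5.18a}--\sef{eq:5.20} read with the roles of $Y$ and $\DY$ exchanged, is designed to provide; once it is in place the corollary is immediate.
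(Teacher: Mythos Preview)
Your proof is correct and follows exactly the approach the paper intends: the paper's entire justification is the single sentence ``By combining Theorems~\ref{thm:5.1} and~\ref{thm:3.1}'' preceding the corollary, and you have faithfully unpacked that combination, including the $\diamond$-side analogue needed for $X^{\odot}$. Your explicit treatment of the $\diamond$-side domain equality $\DOM{\DC}=\DOM{C_0^{\diamond}}$ is more detailed than anything the paper writes out, but it is precisely the symmetry argument the paper has set up and tacitly relies on.
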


The special representation of the perturbed semigroup $S(t)$ given in respectively \sef{eq:5.9}--\sef{eq:5.10} and \sef{eq:5.14}--\sef{eq:5.15} allows us to use Theorem \ref{thm:Tauber} to derive a result about the asymptotic behaviour of $S(t)$ without using a spectral mapping theorem, eventual compactness or eventual norm continuity of the semigroup $S(t)$. 

\begin{theorem}\label{thm:5.4}
Under the assumptions of this section let $k$, given by \sef{eq:5.2}, be integrable. Suppose that $S_0(t)$ is bounded and that $(\la I - C_0)^{-1}$ is bounded for $\Re \la \ge 0$. If
\begin{equation}\label{eq:5.22}
\det\gb{I - \bar k(\la)}\quad\hbox{has no zeros for } \Re \la \ge 0,
\end{equation}
then
\begin{equation}\label{eq:5.23}
\|S(t)C^{-1}\| \to 0\qquad\hbox{as}\quad t \to \infty.
\end{equation}
As a consequence we have that $S(t)y \to 0$ as $t \to \infty$ for every $y$ in the norm-closure of $\DOM{C}$.
\end{theorem}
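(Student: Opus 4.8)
The plan is to derive the statement from Theorem \ref{thm:Tauber} applied to the perturbed twin semigroup $\{S(t)\}$ of Theorem \ref{thm:5.1}. That theorem requires two things of $\{S(t)\}$: that it is bounded, and that $\si(C) \mcap i\BR = \emptyset$. Granting these, \sef{eq:5.23} is precisely \sef{eq:Tauber}, and the final assertion follows exactly as in the last paragraph of the proof of Theorem \ref{thm:Tauber}: $C^{-1}$ has dense range in $\clo{\DOM{C}}$, and boundedness of $\{S(t)\}$ upgrades $\|S(t)C^{-1}\| \to 0$ to $S(t)y \to 0$ for every $y \in \clo{\DOM{C}}$.

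I would dispose of the spectral condition first, since it is immediate from the resolvent computation in the proof of Theorem \ref{thm:5.1}. By \sef{eq:5.19a}, rewritten via the identification $\dq(\la I - C_0)^{-1}q = \bar k(\la)$ of \sef{eq:5.18a}, the operator $\la I - C$ has a bounded inverse (given by the right hand side of \sef{eq:5.19a}) whenever $\la \in \rho(C_0)$ and $\det\gb{I - \bar k(\la)} \neq 0$. The hypothesis that $(\la I - C_0)^{-1}$ is bounded for $\Re \la \ge 0$ means precisely that $\{\la \mid \Re\la \ge 0\} \subset \rho(C_0)$, and \sef{eq:5.22} says that $I - \bar k(\la)$ is invertible there as well; hence $\{\la \mid \Re\la \ge 0\} \subset \rho(C)$, so in particular $\si(C) \mcap i\BR = \emptyset$ (and $0 \in \rho(C)$, so that $C^{-1}$ in \sef{eq:5.23} makes sense). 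Since moreover $\bar k(\la) \to 0$ as $|\la| \to \infty$ in the closed right half plane — by Riemann--Lebesgue, $k$ being integrable — one also obtains $\sup_{\Re\la \ge 0}\|(\la I - C)^{-1}\| < \infty$, which is convenient for the next step.

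The boundedness of $\{S(t)\}$ is the real work, and I expect it to be the main obstacle. The first ingredient is the Paley--Wiener theorem for the matrix renewal equation \sef{eq:5.3}: since $k$ is integrable and $\det\gb{I - \bar k(\la)}$ has no zeros for $\Re \la \ge 0$, the resolvent $r$ of $k$ lies in $L^1(0,\infty)$ (Theorem \ref{thm:resolvent}). Inserting this into \sef{eq:5.9} shows that $t \mapsto v(t)y$ is bounded, uniformly for $\|y\| \le 1$, being the sum of the bounded function $t \mapsto \dq S_0(t)y$ and its convolution with $r \in L^1$. The delicate step is then to bound the second term of \sef{eq:5.10}, namely $\int_0^t S_0(t-\tau)q \cdot v(\tau)y\,d\tau$, uniformly in $t$: the crude estimate (boundedness of $S_0$ times uniform boundedness of $v(\novar)y$) only gives growth of order $t$, so one must use the hypothesis on $(\la I - C_0)^{-1}$ in an essential way. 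Here I would invoke Theorem \ref{thm:Tauber} for $\{S_0(t)\}$ itself — legitimate, since $S_0$ is bounded and, as shown above, $\si(C_0) \mcap i\BR = \emptyset$ — to obtain $\|S_0(t)C_0^{-1}\| \to 0$, and then combine this decay of the orbits of $S_0$ with $r \in L^1$ inside the representation \sef{eq:5.9}--\sef{eq:5.10} to produce the uniform bound $\sup_{t \ge 0}\|S(t)\| < \infty$. With both hypotheses of Theorem \ref{thm:Tauber} in hand, \sef{eq:5.23} follows, and with it the stated consequence.
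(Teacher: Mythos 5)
Your overall strategy — verify the two hypotheses of Theorem~\ref{thm:Tauber} (boundedness of $\{S(t)\}$ and $\si(C)\mcap i\BR=\emptyset$) and then apply it — is exactly the paper's, and your disposal of the spectral condition via \sef{eq:5.18a} and \sef{eq:5.19a} coincides with what the paper does. A small slip: you cite Theorem~\ref{thm:resolvent} for $r\in L^1(0,\infty)$, but that theorem only yields an exponential bound at some rate $\al\ge\ga$; the correct reference for the Paley--Wiener conclusion is the half-line Gel'fand theorem, Theorem~\ref{thm:Gelfand}, which is what the paper invokes.

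The genuine gap is in the boundedness of $\{S(t)\}$, and you correctly identify where it lies: both $\dy S_0(\cdot)q$ and $v(\cdot)y$ are merely bounded functions, so the naive estimate for $\int_0^t \dy S_0(t-\tau)q\cdot v(\tau)y\,d\tau$ gives only $O(t)$, and Theorem~\ref{thm:convo-meas-Borel-fun} cannot be applied a second time unless one of the two factors is actually integrable. Your proposed remedy, however, does not close this gap. The Tauberian decay $\|S_0(t)C_0^{-1}\|\to 0$ controls $S_0(t)$ only on $\DOM{C_0}$ (more precisely on its norm closure, by the last line of Theorem~\ref{thm:Tauber}); but $q$ is an arbitrary element of $Y$ — in the motivating applications it is precisely \emph{not} in $\DOM{C_0}$ or in $\clo{\DOM{C_0}}$ — and the $y$ appearing in $v(\cdot)y$ is also arbitrary, so there is no element of $\clo{\DOM{C_0}}$ to which the decay can be applied inside \sef{eq:5.9}--\sef{eq:5.10}. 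The most one obtains in this direction is that $\si\mapsto \dy\int_0^\si S_0(\tau)q\,d\tau = \dy\gb{S_0(\si)-I}C_0^{-1}q$ is bounded, but turning that into a uniform bound for $\int_0^t \dy S_0(t-\tau)q\cdot v(\tau)y\,d\tau$ by integration by parts requires $v(\cdot)y$ to have bounded variation — which is not available here, and is exactly the extra assumption that reappears in the neutral version, Theorem~\ref{thm:10.4}. For what it is worth, the paper's own proof at this step is also terse: it asserts that ``$v\,dt$ is a bounded measure'' and then applies Theorem~\ref{thm:convo-meas-Borel-fun} once more, whereas what \sef{eq:5.9} together with $r\in L^1$ and Theorem~\ref{thm:convo-meas-Borel-fun} actually delivers is that $v(\cdot)y$ is a bounded Borel function, not that it is $L^1$. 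So you flagged the right pressure point, but the fix you sketch is not a proof, and simply stating ``combine the decay with $r\in L^1$'' leaves the crucial estimate unjustified.
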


\begin{proof}
We first show that $S(t)$ is bounded. From the half-line Gel'fand theorem, see Theorem \ref{thm:Gelfand}, applied to the absolutely continuous measure $\mu(dt) = k\,dt$, it follows that the resolvent $\rho(dt) = r\,dt$ is an absolutely continuous bounded measure. Fix $y$ in $Y$ and $\dy \in \DY$. From Theorem \ref{thm:convo-meas-Borel-fun} and \sef{eq:5.9} it follows that $v\,dt$ is a bounded measure and another application of Theorem \ref{thm:convo-meas-Borel-fun} shows that $\dy S(t) y$ as defined via \sef{eq:5.10}, is a bounded Borel function and hence $S(t)$ is a bounded twin semigroup.

If $(\la I - C_0)^{-1}$ is bounded for $\Re \la \ge 0$ and \sef{eq:5.22} holds, then it follows from \sef{eq:5.18a} and \sef{eq:5.19a} that $(\la I - C)^{-1}$ is bounded for $\Re \la \ge 0$.

This completes the proof that $S(t)$ is bounded and that $\si(C) \mcap i\BR = \emptyset$. So an application of Theorem \ref{thm:Tauber} yields the proof.
\end{proof}

The following variant of Theorem \ref{thm:5.4} is motivated by RFDE and various boundary value problems. See \cite{KVL20} for more information.

\begin{theorem}\label{thm:5.5}
Under the assumptions of this section let $k$, given by \sef{eq:5.2}, be of bounded variation with $k(0) = 0$. Suppose that $S_0(t)$ is bounded and that $(\la I - C_0)^{-1}$ has a simple pole at $\la = 0$ but is otherwise bounded for $\Re \la \ge 0$. Assume that
\begin{equation}\label{eq:5.24}
\pa{\dq}{P_0q}P_0y = \pa{\dq}{P_0y}P_0q,
\end{equation}
where $P_0 : Y \to Y$ denotes the spectral projection onto the eigenspace of $C_0$ at $\la = 0$. If
\begin{equation}\label{eq:5.25}
\det\gb{\la I - \hat k(\la)}\quad\hbox{has no zeros for } \Re \la \ge 0,
\end{equation}
then
\begin{equation}\label{eq:5.26}
\|S(t)C^{-1}\| \to 0\qquad\hbox{as}\quad t \to \infty.
\end{equation}
As a consequence we have that $S(t)y \to 0$ as $t \to \infty$ for every $y$ in the norm-closure of $\DOM{C}$.
\end{theorem}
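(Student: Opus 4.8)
\emph{Proof plan.} The plan is to follow the pattern of the proof of Theorem~\ref{thm:5.4}: reduce the statement to the two facts (a) the twin semigroup $\{S(t)\}$ is bounded, and (b) $(\la I-C)^{-1}$ is bounded on $\{\la\mid\Re\la\ge0\}$, so that in particular $\si(C)\mcap i\BR=\emptyset$; then Theorem~\ref{thm:Tauber} yields \sef{eq:5.26}, and the final assertion follows exactly as there, using the density of $\RAN C^{-1}$ in $\clo{\DOM{C}}$ together with the boundedness of $\{S(t)\}$. Two features distinguish the present situation from Theorem~\ref{thm:5.4}: $k$ is only of bounded variation, so the function resolvent $r$ of \sef{eq:5.3} need no longer be a well-behaved object; and $(\la I-C_0)^{-1}$ has a simple pole at $\la=0$, so $C_0^{-1}$ does not exist. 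The hypotheses \sef{eq:5.24} and \sef{eq:5.25} are tailored, respectively, to these two difficulties.

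For (b) I would work from \sef{eq:5.19a}. Since $k(0)=0$ and $k$ is of bounded variation, an integration by parts gives $\hat k(\la)=\la\bar k(\la)$, whence, for $\Re\la>\om$,
\[
(\la I-C)^{-1}=(\la I-C_0)^{-1}+\la\,(\la I-C_0)^{-1}q\,\bigl(\la I-\hat k(\la)\bigr)^{-1}\dq(\la I-C_0)^{-1},
\]
and this right hand side is the analytic continuation to be analysed. On $\{\Re\la\ge0\}$ away from $\la=0$ every factor is bounded: $(\la I-C_0)^{-1}$ by hypothesis, and $\bigl(\la I-\hat k(\la)\bigr)^{-1}$ because $\det(\la I-\hat k(\la))$ is continuous and, by \sef{eq:5.25}, zero-free --- hence bounded away from $0$ on the compact set $\{\Re\la\ge0,\ |\la|\le R\}$ --- while for $|\la|>R$ one has $\|\hat k(\la)\|\le\mathrm{Var}\,k$, so $\bigl(\la I-\hat k(\la)\bigr)^{-1}$ is $O(|\la|^{-1})$. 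Near $\la=0$ I would insert the Laurent expansion $(\la I-C_0)^{-1}=\la^{-1}P_0+R_0(\la)$ ($P_0$ the spectral projection, $R_0$ holomorphic). Noting that $\hat k(0)=\dq P_0q$ is invertible --- this being \sef{eq:5.25} evaluated at $\la=0$ --- a short computation gives
\[
(\la I-C)^{-1}=\frac{1}{\la}\bigl(P_0-P_0q\,(\dq P_0q)^{-1}\dq P_0\bigr)+O(1)\qquad(\la\to0),
\]
and \sef{eq:5.24} is exactly what makes the $\la^{-1}$--coefficient vanish. Hence $(\la I-C)^{-1}$ extends holomorphically and boundedly across $\la=0$ and is therefore bounded on all of $\{\Re\la\ge0\}$; in particular $0\notin\si(C)$ (so $C^{-1}$ is a bounded twin operator) and $\si(C)\mcap i\BR=\emptyset$.

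For (a) the non-integrability of $k$ forces a detour through the Stieltjes measure $dk$, which \emph{is} finite. I would introduce the $\BR^{n\times n}$-valued fundamental solution $g$ characterized by $g(0)=I$ and $\bar g(\la)=\bigl(\la I-\hat k(\la)\bigr)^{-1}$; by the half-line Gel'fand/Paley--Wiener theorem (Theorem~\ref{thm:resolvent}, or Theorem~\ref{thm:Gelfand}, applied to $dk$ in the bounded-variation form adapted to \sef{eq:5.25}) one obtains $g\in L^1(0,\infty)$ and $dg$ a bounded measure. Rewriting \sef{eq:5.8} in terms of $dk$ identifies the resolvent entering \sef{eq:5.9} with the absolutely continuous part of $dg$, so that $v(\novar)y=(dg)\ast\bigl(\dq S_0(\novar)y\bigr)$. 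Because $S_0$ is bounded and $(\la I-C_0)^{-1}$ has only a simple pole at $0$, Theorem~\ref{thm:Tauber} applied on $\RAN(I-P_0)$ gives $S_0(t)\to P_0$ strongly; splitting $\dq S_0(t)y=\dq P_0y+\dq S_0(t)(I-P_0)y$ and using $g(t)\to0$ (so that the convolution of $dg$ with the constant $\dq P_0y$ annihilates it), $t\mapsto v(t)y$ comes out bounded. Feeding this into \sef{eq:5.10} and invoking Theorem~\ref{thm:convo-meas-Borel-fun} for the convolution of the bounded Borel function $\dy S_0(\novar)q$ with $v(\novar)y$ then shows that $t\mapsto\dy S(t)y$ is bounded, so $\{S(t)\}$ is a bounded twin semigroup.

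The step I expect to be the main obstacle is (a): once $k$ is merely of bounded variation, the clean renewal-theoretic argument of Theorem~\ref{thm:5.4} breaks down and the tracking of integrability and boundedness becomes delicate. The decisive observations are that \sef{eq:5.25} is exactly the condition forcing $g\in L^1$ (equivalently $g(\infty)=0$), and that the cancellation of the non-decaying mode of the forcing term $\dq S_0(\novar)y$ that this entails is mirrored, at the level of the resolvent, by the compatibility identity \sef{eq:5.24}; in the motivating examples $S_0(t)=P_0$ for all large $t$, which renders every convolution in sight genuinely $L^1$ and makes this bookkeeping routine.
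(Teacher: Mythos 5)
Your argument for (b), the boundedness of $(\la I-C)^{-1}$ on the closed right half-plane, is correct and is essentially the paper's: insert the Laurent expansion $(\la I-C_0)^{-1}=\la^{-1}P_0+H(\la)$ into the perturbed-resolvent formula and observe that \sef{eq:5.24} annihilates the singular term. You exhibit the cancellation in the $\la^{-1}$ coefficient of $(\la I-C)^{-1}$ itself, while the paper exhibits it as the vanishing of the $\la^{-2}$ coefficient of the intermediate quantity in \sef{eq:5.27}; the two computations are algebraically equivalent, and both correctly isolate \sef{eq:5.24} (together with the invertibility of $\hat k(0)=\dq P_0 q$) as the decisive hypothesis.

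Your argument for (a), the boundedness of $\{S(t)\}$, has a genuine gap in the last step. You establish only that $t\mapsto v(t)y$ is a \emph{bounded} Borel function, and then ``invoke Theorem~\ref{thm:convo-meas-Borel-fun}'' to conclude that the convolution in \sef{eq:5.10} yields a bounded function of $t$. But that theorem pairs a bounded \emph{measure} with a bounded Borel function, and in $\int_0^t\dy S_0(t-\tau)q\,v(\tau)y\,d\tau$ both $\dy S_0(\cdot)q$ and $v(\cdot)y$ are merely bounded; a convolution of two bounded functions over $[0,t]$ can grow linearly in $t$, so the desired bound does not follow. What is actually needed is that $v(\cdot)y\in L^1\bigl([0,\infty)\bigr)$ (equivalently that $v\,dt$ is a finite measure), and this is precisely the claim the paper makes at the corresponding point of the proof. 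Your decomposition $\dq S_0(t)y=\dq P_0 y+\dq S_0(t)(I-P_0)y$ is a sensible move, but the argument must be pushed to yield \emph{integrability} (decay) of $v(\cdot)y$, not just boundedness; your weaker conclusion is the wrong target and the final estimate fails with it.
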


\begin{proof}
We first show that $S(t)$ is bounded. Let $\mu(dt) = k\,dt$ and observe that
\[\what\mu(\la) = \bar k(\la) = \frac{1}{\la}\hat k(\la).\]
So it follows from \sef{eq:5.22} and the half-line Gel'fand theorem, see Theorem \ref{thm:Gelfand}, applied to $\mu$, that the resolvent $\rho$ is an absolutely continuous bounded measure $\rho = r\,dt$. Fix $y$ in $Y$ and $\dy \in \DY$. From Theorem \ref{thm:convo-meas-Borel-fun} and \sef{eq:5.9} it follows that $v\,dt$ is a bounded measure and another application of Theorem \ref{thm:convo-meas-Borel-fun} shows that $\dy S(t) y$ as defined via \sef{eq:5.10}, is a bounded Borel function and hence $S(t)$ is a bounded twin semigroup.

To show that $(\la I - C)^{-1}$ is bounded for $\Re \la \ge 0$ first observe that
\begin{align}\label{eq:5.27}
(\la I - C)^{-1}y &= \la \bigl[\la I - \hat k(\la)\bigr]^{-1}\gb{(\la I - C_0)^{-1}y - \dq (\la I - C_0)^{-1}q\,(\la I - C_0)^{-1}y\nonumber\\
&\qquad\qquad\qquad + \dq (\la I - C_0)^{-1}y\, (\la I - C_0)^{-1}q}.
\end{align}
Using the assumption on $(\la I - C_0)^{-1}$ we can write
\[(\la I - C_0)^{-1}y = \frac{1}{\la}P_0y + H(\la)y,\]
where $P_0 : Y \to Y$ denotes the spectral projection onto the eigenspace of $C_0$ at $\la = 0$ and $H(\la) : Y \to Y$ is a bounded linear operator for $\Re \la \ge 0$. Using this we can expand
\begin{align*}
&-\dq (\la I - C_0)^{-1}q\,(\la I - C_0)^{-1}y + \dq (\la I - C_0)^{-1}y\, (\la I - C_0)^{-1}q\\
&\qquad=\\
&\ \frac{1}{\la^2}\gb{\dq P_0y\, P_0q-\dq P_0q\,P_0y }\\
&\quad +\frac{1}{\la}\gb{\dq\, P_0y H(\la)q + \dq H(\la)y P_0q -\dq P_0q\,H(y)y - \dq H(\la)q\, P_0y}\\
&\quad + \dq H(\la) y \,H(\la)q - \dq H(\la)q\,H(\la)y.
\end{align*}
Condition \sef{eq:5.24} shows that the term $\la^{-2}$ vanishes and since $(\la I - C_0)^{-1}y$ has a simple pole at $\la = 0$ and $H(\la)y$ is bounded for $\Re \la \ge 0$, it follows from \sef{eq:5.27} that $(\la I - C)^{-1}$ is bounded for $\Re \la \ge 0$.

This completes the proof that $S(t)$ is bounded and $\si(C) \mcap i\BR = \emptyset$. So an application of Theorem \ref{thm:Tauber} yields the proof.
\end{proof}

\smallskip
Note that condition \sef{eq:5.24} is automatically satisfied if the null space of $C_0$ is one-dimensional. In general, the condition that $\la = 0$ is a simple pole of $(\la I - C_0)^{-1}$ implies that the generalized null space of $C_0$ equals the null space of $C_0$, but does not give any information about the dimension of the null space of $C_0$. In the example of RFDE, the null space of $C_0$ is, as we show soon, $n$-dimensional.

\medskip

In Section \ref{sec:4} there was no need to use the perturbation approach developed in this section. Yet, alternatively, we can first concentrate on the special case $\ze = 0$, calling the corresponding twin semigroup $\{S_0(t)\}$ and its generator $C_0$. Next we define for $i = 1,\ldots,n$  elements $q_i \in Y$ and $\dq_i \in \DY$ by
\begin{equation}\label{eq:6.29}
q_i(\th) =
\begin{cases}
0   &\th < 0\\
e_i &\th = 0,
\end{cases}
\end{equation}
where $e_i$ is the $i$-th unit vector in $\BR^n$ and
\begin{equation}\label{eq:6.30}
\dq_i(\th) = \ze_i(\th),
\end{equation}
where $\ze_i$ is the $i$-th row of the matrix valued function $\ze$. For the matrix $k$ introduced in \sef{eq:5.2} we find
\begin{equation}\label{eq:6.31}
k_{ij}(t) = \dq_i S_0(t) q_j = \int_0^1 \ze_i(d\tau) \chi_{t-\tau \ge 0} e_j = \ze_{ij}(t).
\end{equation}
With the convention that $\ze(\tau) = \ze(1)$ for $\tau \ge 1$, we can also write (with $y$ corresponding to $\ph$)
\begin{align}\label{eq:6.32}
\dq S_0(t)y &= \int_0^t \ze(d\tau)y(0) + \int_t^1 \ze(d\tau) y(t-\tau)\nonumber\\
                &= \ze(t)y(0) + \int_t^1 \ze(d\tau) y(t-\tau).
\end{align}
Comparing the right hand side to the right hand side of (2.5) on page 16 of \cite{Diek95}, we see that the RE \sef{eq:5.8} of the present paper is identical to equation (2.4a) on page 16 of \cite{Diek95}:
\[\dot x(t) = \int_0^t \ze(\th) \dot x(t-\th)\,d\th + \ze(t)y(0) + \int_t^1 \ze(d\tau) y(t-\tau).\]
We conclude that $v(\cdot)y$ in \sef{eq:5.8} corresponds to $\dot x$ in this equation. 

Next apply to \sef{eq:5.10} the element of $\DY$ that corresponds to the Dirac measure in $-\th \in [0,1]$. This yields
\begin{equation}\label{eq:6.33}
\gb{S(t)y}(\th) = y(t+\th) + \int_0^t r(t-\tau+\th)\cdot v(\tau)y\,d\tau,
\end{equation}
where we adopted the convention that both $y$ and $q$ are extended by their value in zero. It follows that
\begin{equation}\label{eq:6.34}
\gb{S(t)y}(\th) = 
\begin{cases}
y(t+\th), &t+\th \le 0\\
y(0) + \int_0^{t+\th} v(\tau)y\,d\tau, &t+\th \ge 0.
\end{cases}
\end{equation}
Since
\[y(0) + \int_0^{t+\th} v(\tau)y\,d\tau = y(0) + \int_0^{t+\th} \dot x(\tau;y)\,d\tau = x(t+\th;y),\]
this corresponds exactly to \sef{eq:6.5a}. We conclude that the direct approach and the perturbation approach are fully consistent.

We conclude by showing that the assumptions of Theorem \ref{thm:5.5} are satisfied for RFDE. From \sef{eq:6.31} it follows that $k$ is of bounded variation. Furthermore,
\[\gb{(\la I - C_0)^{-1}y}(\th) = \frac{e^{\la\th}}{\la}y(0) + \int_{\th}^0 e^{\la(\th-\si)} y(\si)\,d\si\]
and $P_0 : Y \to Y$ is given by
\[P_0 y = y(0) \mathbbm{1},\]
where $\mathbbm{1} \in Y$ denotes the function that is identically one.

Using \sef{eq:6.29} and \sef{eq:6.30}, observe that 
\begin{equation*}
\pa{\dq}{P_0q}P_0y = \pa{\ze}{I\,\mathbbm{1}}\,y(0)\,\mathbbm{1} = \ze(1)y(0)\,\mathbbm{1}
\end{equation*}
and
\begin{equation*}
\pa{\dq}{P_0y}P_0q = \pa{\ze}{y(0)\,\mathbbm{1}}\,\mathbbm{1} = \ze(1)y(0)\,\mathbbm{1}.
\end{equation*}
This shows that condition \sef{eq:5.24} is satisfied for RFDE. Therefore an application of Theorem \ref{thm:5.5} yields that if
\[\det \gb{\la I - \int_0^1 e^{-z\si}\,d\ze(\si)}\qquad\hbox{ has no zeros for } \Re \la \ge 0,\]
then for $y \in C\gb{[-1,0];\BR^n}$  we have $S(t)y \to 0$ as $t \to \infty$. Since for RFDE $S(1)y \in \DOM{C}$ for every $y \in B\gb{[-1,0];\BR^n}$, we conclude that $S(t)y \to 0$ as $t \to \infty$ for every $y \in B\gb{[-1,0];\BR^n}$.

\section{RE - Renewal equations with ``smooth'' kernels}\label{sec:7}
\setcounter{equation}{0}
The RE
\begin{equation}\label{eq:7.1}
b(t) = \int_0^1 k(a)b(t-a)\,da
\end{equation}
arises in the context of age-structured population dynamics. In that context, $b(t)$ is the rate at which newborn individuals are added to the population at time $t$ and 
\[k(a) = \F(a)\be(a),\] 
with $\F(a)$ the probability to survive to at least age $a$ and $\be(a)$ the age-specific fecundity (it is helpful to think in terms of mothers and daughters, with the male subpopulation implicitly included via a fixed sex ratio). Note that we have scaled the time variable such that the maximum age at which reproduction is possible equals one. It is convenient to define $k(a) = 0$ for $a > 1$.

To facilitate statements and arguments based on the interpretation, we focus in this section our attention on a scalar equation. Generalization to $n$-vector valued functions $b$ and $n \times n$-matrix valued kernels $k$ is straightforward.

We consider \sef{eq:7.1} as a rule for extending the function $b$ and, to get started, supplement it by prescribing the history of $b$ at a particular time, say $t=0$:
\begin{equation}\label{eq:7.2}
b(\th) = \ph(\th),\qquad -1 \le \th \le 0.
\end{equation}
(Note that this leads to equation \sef{eq:7.7} below with $f$ given by \sef{eq:7.8}; even though we have not yet specified assumptions concerning the kernel $k$ and the initial history $\ph$, we like to mention already now that existence, uniqueness and regularity of a solution of this kind of linear Volterra integral equations is covered extensively in \cite{GLS90}; also see Theorem \ref{thm:renewal}).

By translation along the extended function, i.e., by putting
\begin{equation}\label{eq:7.3}
T(t)\ph = b_t
\end{equation}
which is a shorthand for
\begin{equation}\label{eq:7.4}
\gb{T(t)\ph}(\th) = b(t+\th;\ph)
\end{equation}
with $b(\novar;\ph)$ the unique solution of \sef{eq:7.1}--\sef{eq:7.2}, we define a dynamical system.

But what do we choose for the state space $X$ on which the dynamical system acts? Since $b$ is a rate, we get numbers by integrating with respect to time. So the interpretation suggests to take
\begin{equation}\label{eq:7.5}
X = L^1\gb{[-1,0];\BR}
\end{equation}
as is indeed done in \cite{DGG07}. The bonus is that the semigroup $\{T(t)\}$ defined by \sef{eq:7.4} is strongly continuous. But when we compute the infinitesimal generator $A$, we find (with $AC$  standing for ``absolutely continuous'')
\begin{equation}\label{eq:7.6}
\DOM{A} = \bigl\{\ph \in AC \mid \ph(0) = \int_0^1 k(a)\ph(-a)\,da\,\bigr\},\qquad A\ph = \ph'
\end{equation}
showing that all information about the rule for extension is in the domain of $A$ and that the action of $A$ only reflects the translation. The trouble with this is that even small changes in the rule for extension correspond, at the generator level, to unbounded perturbations.

In \cite{DGG07} it is shown how perturbation theory of dual semigroups, also known as sun-star calculus, can be used to overcome this difficulty. Here we show that the formalism of twin semigroups on a norming dual pair of spaces provides an alternative approach. In Section \ref{sec:12} we shall show that this new approach allows us to cover ``neutral'' RE as well, where the adjective neutral expresses that we replace $k(a)da$ by a measure.

Soon we will assume that $k$ is a given bounded measurable function (defined on $[0,\infty)$ but with support in $[0,1]$), but for the time being, while discussing the representation of the solution of \sef{eq:7.1}--\sef{eq:7.2}, it suffices that $k$ is in $L^1$. Combining \sef{eq:7.1} and \sef{eq:7.2} we obtain 
\begin{equation}\label{eq:7.7}
b = k \ast b + f
\end{equation}
with
\begin{equation}\label{eq:7.8}
f(t) = \int_t^1 k(a)\ph(t-a)\,da = \int_{t-1}^0 k(t-\th)\ph(\th)\,d\th\qquad\hbox{for } t <1
\end{equation}
and, by definition, $f(t) = 0$ for $t \ge 1$.
In the theory of RE, cf. Section \ref{sec:4}, in particular, Lemma \ref{lem:6.1}, \cite{GLS90} and Appendix A, the solution $r$ of
\begin{equation}\label{eq:7.9}
k * r + k = r = r * k + k
\end{equation}
is called the {\sl resolvent} of the kernel $k$, in particular since the solution of \sef{eq:7.7} is given by
\begin{equation}\label{eq:7.10}
b = f + r * f.
\end{equation}
Note that
\begin{equation}\label{eq:7.11}
r = \sum_{j=1}^\infty k^{j*}.
\end{equation}
As we will establish soon, the resolvent plays the role of fundamental solution in the present context. In order to have $f(t) = k(t)$ we need to replace in \sef{eq:7.8} $\ph(\th)\, d\th$ by the unit Dirac measure in zero. So we need to consider an initial condition that is not an integrable function, but rather a measure. Now recall that when working with continuous functions in the theory of delay differential equations, one finds that the fundamental solution corresponds to a discontinuous initial condition. Here the situation is reminiscent: while working with integrable functions, we find that the resolvent corresponds to a measure as initial condition (note that in the population dynamical context, the Dirac measure in zero represents a cohort of newborn individuals). And our strategy will be the same: enlarge the state space, even though this entails the loss of strong continuity.

In the tradition of delay equations we will represent measures by $NBV$ functions. So let now
\begin{equation}\label{eq:7.12}
Y = NBV\gb{[-1,0];\BR}
\end{equation}
but with the normalization convention that the elements are zero in the right end point $\th = 0$. Let
\begin{equation}\label{eq:7.13}
\DY = B\gb{[0,1];\BR}
\end{equation}
with pairing defined by 
\begin{equation}\label{eq:7.13a}
\pa{\dy}{y} = \int_{-1}^0 \dy(-\th)\,y(d\th)
\end{equation}
and let
\begin{equation}\label{eq:7.14}
k \in \DY
\end{equation}
be given. We still consider \sef{eq:7.7} but replace \sef{eq:7.8} by
\begin{equation}\label{eq:7.15}
f(t) = \int_{t-1}^0 k(t-\th)\psi(d\th)
\end{equation}
with $\psi \in Y$ considered as the initial condition. (So in the population dynamical context one should interpret $\psi$ as the cumulative number of newborns, but otherwise nothing changes. In particular there is still a population level birth {\sl rate} for $t > 0$. This will  change in Section \ref{sec:12}, where we work with cumulative quantities throughout.) For given $\psi \in Y$ equation \sef{eq:7.7} with $f$ given by \sef{eq:7.15} has a unique solution given explicitly by \sef{eq:7.10}. We define
\begin{equation}\label{eq:7.16}
B(t) = \int_0^t b(\tau)\,d\tau,\qquad t > 0,
\end{equation}
\begin{equation}\label{eq:7.17}
B(\th) = \psi(\th),\qquad \th \le 0,
\end{equation}
(where we have suppressed the dependence of $b$ on $\psi$ in the notation) and next $S(t) : Y \to Y$ by
\begin{equation}\label{eq:7.18}
\gb{S(t)\psi}(\th) = B(t+\th) - B(t).
\end{equation}
Note that we subtract $B(t)$ in order to comply with the normalization that the value in $\th = 0$ should be zero. It is very well possible to check that $\{S(t)\}$ is a twin semigroup on the norming dual pair $(Y,\DY)$ specified by \sef{eq:7.12} and \sef{eq:7.13} and to determine the generator via the Laplace transform. Here, however, we establish the relevant facts via the perturbation theory of Section \ref{sec:6}. This allows us to show that \sef{eq:5.8} and \sef{eq:7.7} are identical (in the sense that both the kernels $k$ and the forcing functions ($\dq S_0(\cdot) y$ and $f$, respectively) coincide; to call the kernel in \sef{eq:7.1} $k$, introduces the risk of ambiguity when invoking Section \ref{sec:6}, but in fact there is, as shall show, no need to worry).

The twin semigroup $\{S_0(t)\}$ defined by
\begin{equation}\label{eq:7.19}
\gb{S_0(t)\psi}(\th) =
\begin{cases}
\psi(t+\th) &t+\th \le 0\\
0              &t+\th > 0
\end{cases}
\end{equation}
corresponds to a kernel $k$ that is identically equal to zero, and so to trivial extension of the initial function. A straightforward calculation reveals that
\[\int_0^\infty e^{-\la t} \gb{S_0(t)\psi}(\th)\,dt = e^{\la\th}\int_\th^0 e^{-\la\si} \psi(\si)\,d\si\]
and next that
\[\int_0^\infty e^{-\la t}\dy S_0(t)\psi\,dt = \dy (\la I - C_0)^{-1}\psi,\]
where
\begin{align}\label{eq:7.20}
\begin{split}
\DOM{C_0} &= \bigl\{ \psi \mid \exists \ph \in Y \mid \psi(\th) = \int_0^\th \ph(\si)\,d\si\,\bigr\}\\
C_0\psi &= \bigl\{\ph \in Y \mid \psi(\th) = \int_0^\th \ph(\si)\,d\si\,\bigr\}.
\end{split}
\end{align}
Note that when $\ph_1$ and $\ph_2$ both belong to $C_0 \psi$ then -- they are equal in $\theta=0$, because of the normalization -- they are equal in $(-1,0)$, because they are equal almost everywhere in this interval and left (or, right, depending on the chosen normalization) continuous -- they might differ in $\theta=-1$ -- so if they differ, they differ by a jump in $-1$ (representing a Dirac mass in -1).

The subspace of strong continuity is given by
\begin{align}\label{eq:7.21}
X &= AC_0\gb{[-1,0];\BR}\nonumber\\
  &= \bigl\{ \psi \mid \exists \ph \in L^1\gb{[-1,0];\BR} \mid \psi(\th) = \int_0^\th \ph(\si)\,d\si\,\bigr\}
\end{align}
according to Theorem \ref{thm:3.1} and the fact that NBV functions are dense in $L^1$ (admittedly we ignore an isometric isomorphism when using the same symbol $X$ in \sef{eq:7.5} and \sef{eq:7.21}).

To capture  the true rule for extension, we define $q$ in $Y$ by
\begin{equation}\label{eq:7.22}
q(\th) = \begin{cases}
0 &\hbox{for } \th = 0\\
-1 &\hbox{for } -1 \le \th < 0
\end{cases}
\end{equation}
(i.e., $q$ is the Heaviside function that represents the Dirac measure in $\th = 0$) and, inspired by \sef{eq:5.2}, search for $\dq$ in $\DY$ such that
\begin{equation}\label{eq:7.23}
\dq S_0(t)q = k(t).
\end{equation}
It follows from \sef{eq:7.19} and \sef{eq:7.22} that
\begin{equation*}
\dq S_0(t)q = \begin{cases}
\dq(t)   &\mbox{for } 0 \le t \le 1\\
0         &\mbox{otherwise}
\end{cases}
\end{equation*}
so we can in fact identify $\dq$ and $k$.

The perturbed semigroup is defined by \sef{eq:5.10} and this involves the solution of \sef{eq:5.8}, which is a RE with kernel $k$ and forcing function
\begin{align*}
\dq S_0(t)\psi &= \int_{-1}^0 k(-\th)\gb{S_0(t)\psi}(d\th) = \int_{-1}^{-t} k(-\th)\psi(t+d\th)\\
                    &=\int_{t-1}^0 k(t-\si)\psi(d\si)
\end{align*}
which is exactly equal to $f(t)$ defined in \sef{eq:7.15}. We conclude that in the present setting \sef{eq:5.8} is simply another way of writing \sef{eq:7.7} and that, accordingly, we may replace $v(\tau)y$ in \sef{eq:5.10} by $b(\tau)$. It only remains to verify that \sef{eq:5.10} amounts to \sef{eq:7.18}.
  
With $Y$ and $\DY$ given by,  respectively, \sef{eq:7.12} and \sef{eq:7.13}, one can turn \sef{eq:5.10} into a pointwise equality (just use step functions from $\DY$ in the pairing that provides the precise meaning of the integral). It reads
\[\gb{S(t)\psi}(\th) = \gb{S_0(t)\psi}(\th) + \int_0^t \gb{S_0(t-\tau)q}(\th) b(\tau)\,d\tau\]
with
\[\gb{S_0(t)\psi}(\th) = \begin{cases}
\psi(t+\th)  & t+\th \le 0\\
0               & t+\th > 0
\end{cases}\]
and
\begin{align*}
\int_0^t \gb{S_0(t-\tau)q}(\th) b(\tau)\,d\tau &= \int_0^t - \chi_{t-\tau+\th < 0}\, b(\tau)\,d\tau\\
&= -\int_{\max\{t+\th,0\}}^t b(\tau)\,d\tau\\
&= \int_0^{\max\{t+\th,0\}} b(\tau)\,d\tau - \int_0^t b(\tau)\,d\tau.
\end{align*}
On account of \sef{eq:7.16}--\sef{eq:7.18} we conclude that the twin semigroup defined by \sef{eq:5.10} is equivalently described by \sef{eq:7.18}.

In terms of $B$ we can rewrite \sef{eq:7.1} as  the delay differential equation
\begin{equation}\label{eq:7.24}
B'(t) = \int_{-1}^0 k(-\si)B_t(d\si).
\end{equation}
If we formally differentiate \sef{eq:7.18} with respect to $t$ and next evaluate at $t=0$, we obtain for $\th < 0$
\[\frac{d}{dt}\gb{S(t)\psi}(\th)\big|_{t=0} = \psi'(\th) - B'(0) = \psi'(\th) - \int_{-1}^0 k(-\si)\psi(d\si)\]
which is completely in line with the characterization of the generator $C$ in \sef{eq:5.1} when \sef{eq:7.20}, \sef{eq:7.22} and $\dq = k$ are taken into account.

\vfill\eject
\vspace{1.5cm}
\centerline{\Large\bf Part III: Unbounded perturbations}
\smallskip
\centerline{\Large\bf describing neutral equations}
\bigskip

\section{Forcing functions with finite dimensional\\ range revisited}\label{sec:8}
\setcounter{equation}{0}

The aim of this section is to generalize the results of Section \ref{sec:5} in order to prepare for (the analysis of) relatively bounded perturbations in Sections 9 and 10 below. We now consider linear functionals
\begin{equation}\label{eq:8.2}
\dy \mapsto \int_0^t\dy  S(t-\tau)q \, F(d\tau)
\end{equation}
for a given $\BR$-valued $BV$ function $F$ and ask: when is such a functional represented by an element of $Y$?

The proof of Lemma \ref{lem:4.2} carries over verbatim if we replace $f(\tau)\,d\tau$ by $F(d\tau)$. The proof of Lemma \ref{lem:4.1}, on the other hand, breaks down. To save the underlying idea, we perform integration by parts and first rewrite \sef{eq:8.2} as
\begin{equation}\label{eq:8.3}
\dy \mapsto \int_0^t d_\si\bigl[\dy S(\si)q \bigr] F(t-\si) + F(t)\pa{\dy}{q}
\end{equation}
and next incorporate the last term into the first term by redefining $\dy S(\si)q$ as zero for $\si = 0$. In \sef{eq:8.3} we can allow $F$ to be a bounded measurable function, but we have to require that for every $\dy \in \DY$ the function
\[t \mapsto \dy S(t)q\qquad\hbox{for } t > 0\]
with value zero for $t=0$, is of bounded variation. Once this is assumed, the proof of Lemma \ref{lem:4.1} can be copied in order to show

\begin{lemma}\label{lem:8.1}
Let $\bigl\{S(t)\bigr\}$ be a twin semigroup on a norming dual pair $(Y,\DY)$. Assume \sef{eq:4.1} holds, i.e., assume that $(Y,\si(Y,\DY))$ is sequentially complete. Let $q \in Y$ be given. For $\dy \in \DY$ define
\begin{equation}\label{eq:8.4}
\dy W(\si) = \begin{cases}
0              &\hbox{for } \si = 0\\
\dy S(\si)q &\hbox{for } \si > 0.
\end{cases}
\end{equation}
Assume that for all $\dy \in \DY$ the function
\[\si \mapsto \dy W(\si)\]
belongs to $\NBV_{loc}\gb{[0,\infty),\BR}$. Let $F : [0,\infty) \to \BR$ be locally bounded and measurable. Then there exists $u(t) \in Y$ such that for all $\dy \in \DY$
\begin{equation}\label{eq:8.5}
\int_0^t d_\si\bigl[\dy W(\si)\bigr]\, F(t-\si) = \pa{\dy}{u(t)}.
\end{equation}
\end{lemma}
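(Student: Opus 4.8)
The plan is to follow the proof of Lemma~\ref{lem:4.1} line by line, replacing Lebesgue integration in $d\tau$ by Lebesgue--Stieltjes integration against the signed measure generated by $\si \mapsto \dy W(\si)$; this measure is well defined and of finite total variation on each $[0,t]$ precisely by the standing $\NBV_{loc}$ hypothesis. Fix $t>0$, set $N := \sup_{[0,t]}|F| < \infty$, and choose a sequence of step functions $F_m : [0,t] \to \BR$ with $|F_m| \le N$ and $F_m \to F$ pointwise.

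First I would dispose of a step forcing term. If $F_m$ is a step function, then by the very definition of the Stieltjes integral (equivalently, after one integration by parts and use of $\dy W(0)=0$, as in the passage from \sef{eq:8.2} to \sef{eq:8.3}) the quantity $\int_0^t d_\si\bigl[\dy W(\si)\bigr]\,F_m(t-\si)$ is a \emph{finite} linear combination of values $\dy W(\ga)$; discarding the terms at $\ga = 0$, which vanish, and writing $\dy W(\ga) = \dy S(\ga)q$ otherwise, this becomes $\pa{\dy}{u_m(t)}$ with
\[
u_m(t) := \sum_k c_k^{(m)}\, S\bigl(\ga_k^{(m)}\bigr)q \in Y,
\]
a finite linear combination of elements of $Y$ that depends on $F_m$ and $t$ but not on $\dy$. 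Here, in contrast with Lemma~\ref{lem:4.1}, no appeal to Lemma~\ref{lem:2.3} is needed, because $S(\ga)q \in Y$ trivially.

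Next I would pass to the limit $m \to \infty$. Fix $\dy \in \DY$. The functions $\si \mapsto F_m(t-\si)$ are bounded by $N$, which is integrable against the finite signed measure $d_\si[\dy W(\si)]$, and they converge pointwise to $\si \mapsto F(t-\si)$; so the dominated convergence theorem gives
\[
\pa{\dy}{u_m(t)} = \int_0^t d_\si\bigl[\dy W(\si)\bigr]\,F_m(t-\si) \longrightarrow \int_0^t d_\si\bigl[\dy W(\si)\bigr]\,F(t-\si).
\]
Thus $\pa{\dy}{u_m(t)}$ converges in $\BR$ for every $\dy$, so $\{u_m(t)\}_m$ is Cauchy in $\gb{Y,\si(Y,\DY)}$; by assumption \sef{eq:4.1} it has a limit $u(t)\in Y$, and letting $m\to\infty$ in the previous display yields $\pa{\dy}{u(t)} = \int_0^t d_\si[\dy W(\si)]\,F(t-\si)$ for all $\dy$, i.e. \sef{eq:8.5}.

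The point that needs care --- and the place where ``copying the proof of Lemma~\ref{lem:4.1}'' is not quite automatic --- is the compatibility of the approximation of $F$ with the measure $d_\si[\dy W(\si)]$. This measure is in general not absolutely continuous: jumps of $\si \mapsto \dy S(\si)q$ create atoms, and their locations move with $\dy$, so in the dominated convergence step one really needs $F_m \to F$ pointwise \emph{everywhere}, not just Lebesgue-almost everywhere as the usual step-function approximation provides. The normalization $\dy W(0)=0$ already neutralizes the atom at $\si=0$; for the remaining atoms it is cleanest to take $F$ regulated (so that the $F_m$ can be chosen converging uniformly, which covers the retarded and neutral applications we have in mind) or, more generally, to isolate the finitely many atoms of $d_\si[\dy W(\si)]$ of mass exceeding a given $\varepsilon$ and estimate their contribution separately. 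With that understood, everything else is routine.
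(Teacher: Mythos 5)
Your proposal follows what the paper's proof amounts to: ``copy Lemma~\ref{lem:4.1}'' by approximating $F$ with step functions, reducing the integral to a finite linear combination of $S(\gamma)q$, and invoking dominated convergence plus $\si(Y,\DY)$-sequential completeness. Your observation that Lemma~\ref{lem:2.3} is no longer needed, since $S(\gamma)q \in Y$ directly, is a correct small simplification. You are also right that the dominated-convergence step is not automatic: step functions on intervals approximate a bounded Borel function only Lebesgue-a.e.\ in general (their pointwise limits are Baire class one, and $B\gb{[0,t]}$ contains functions that are not, e.g.\ $\chi_{\mathbb{Q}\cap[0,t]}$), while $d_\si[\dy W(\si)]$ can charge a Lebesgue-null set that moves with $\dy$. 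However, neither of your two fixes closes this gap at the level of generality of the lemma: restricting to regulated $F$ proves a strictly weaker statement, and isolating the large atoms of $d_\si[\dy W(\si)]$ would make the approximating elements $u_m(t)$ depend on $\dy$, which is precisely what must not happen.

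The clean resolution is to approximate $F$ by \emph{simple} functions $F_m = \sum_k c_k^{(m)}\chi_{E_k^{(m)}}$ with Borel level sets: the standard dyadic approximation of the range gives $|F_m|\le|F|$ and $F_m \to F$ pointwise \emph{everywhere}, so dominated convergence applies against every measure $\mu_{\dy}:=d_\si[\dy W(\si)]$. The price is that one must show $\dy \mapsto \mu_{\dy}(E)$ is represented in $Y$ for every Borel $E\subset[0,t]$, not just for intervals. For $[0,b]$ and $(a,b]$ with $0<a<b\le t$ one has $\mu_{\dy}([0,b])=\pa{\dy}{S(b)q}$ and $\mu_{\dy}((a,b])=\pa{\dy}{S(b)q - S(a)q}$; the collection of Borel $E$ admitting a representing element of $Y$ is closed under proper differences, and closed under countable disjoint unions because the partial sums are $\si(Y,\DY)$-Cauchy by countable additivity of each $\mu_{\dy}$, so that \sef{eq:4.1} supplies the limit; it is therefore a Dynkin system containing the intervals, hence all Borel sets. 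With this in hand your argument goes through verbatim, and the lemma holds for all locally bounded measurable $F$, as claimed.
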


\medskip\noindent
For completeness we also state

\begin{lemma}\label{lem:8.2}
Let $\bigl\{S(t)\bigr\}$ be a twin semigroup on a norming dual pair $(Y,\DY)$. Assume \sef{eq:4.5} holds, i.e., assume that a linear map $\gb{\DY,\si(\DY,Y)} \to \BR$ is continuous if it is sequentially continuous. Let $q \in Y$ be given. Let $F : [0,\infty) \to \BR$ be of locally bounded variation. Then there exists $u(t) \in Y$ such that for all $\dy \in \DY$
\begin{equation}\label{eq:8.6}
\int_0^t \dy S(t-\tau)q \, F(d\tau) = \pa{\dy}{u(t)}.
\end{equation}
\end{lemma}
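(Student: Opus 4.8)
The plan is to imitate the proof of Lemma~\ref{lem:4.2} essentially verbatim; the only point that needs a little extra care is that the object on the left of \sef{eq:8.6} must first be recognised as a genuine integral. Since $F$ is of locally bounded variation it induces a signed Lebesgue--Stieltjes measure on $[0,t]$ of finite total variation $|F|$. By Definition~\ref{def:2.1}(iii) the map $\tau \mapsto \dy S(t-\tau)q = \pa{\dy}{S(t-\tau)q}$ is measurable (it is the composition of the measurable function $s \mapsto \dy S(s)q$ with an affine change of variable), and by Definition~\ref{def:2.1}(ii) it is bounded on $[0,t]$ by $M\max\{1,e^{\om t}\}\|q\|\,\|\dy\|$; hence it is $|F|$-integrable and
\[\Phi(\dy) := \int_0^t \dy S(t-\tau)q\,F(d\tau)\]
defines a linear functional on $\DY$.

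The next step is to show that $\Phi$ is \emph{sequentially} continuous from $\gb{\DY,\si(\DY,Y)}$ to $\BR$. So let $\{\dy_m\} \subset \DY$ satisfy $\pa{\dy_m}{y} \to 0$ for every $y \in Y$. Viewing the $\dy_m$ as functionals on the Banach space $Y$ (the embedding $\DY \hookrightarrow Y^\ast$ being isometric, by the norming property), the uniform boundedness principle gives $\sup_m \|\dy_m\| =: B < \infty$, so that
\[\babs{\dy_m S(t-\tau)q} \le Me^{\om(t-\tau)}\|q\|\,B \qquad (0 \le \tau \le t),\]
a bound independent of $m$ and $|F|$-integrable on $[0,t]$. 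Since $\dy_m S(t-\tau)q \to 0$ pointwise in $\tau$, the dominated convergence theorem (applied to the total variation measure $|F|$, together with $\bigl|\int g\,dF\bigr| \le \int |g|\,d|F|$) yields $\Phi(\dy_m) \to 0$.

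Finally, by the standing hypothesis \sef{eq:4.5} this sequential continuity upgrades to continuity of $\Phi$ on $\gb{\DY,\si(\DY,Y)}$, and since $\gb{\DY,\si(\DY,Y)}' = Y$ — as recorded in Section~\ref{sec:2}, via \cite[Theorem 3.10]{Rudin91} and the symmetry of the norming dual pair — there is a unique $u(t) \in Y$ with $\Phi(\dy) = \pa{\dy}{u(t)}$ for all $\dy \in \DY$; this is \sef{eq:8.6}. I do not expect a real obstacle here: the content is the same as for Lemma~\ref{lem:4.2}, and the only thing to watch is that, in contrast with Lemma~\ref{lem:4.1}, one does \emph{not} approximate $F$ by step functions but instead extracts the dominating function directly from the exponential bound in Definition~\ref{def:2.1}(ii) and the finiteness of $|F|$ on bounded intervals.
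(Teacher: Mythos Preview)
Your proof is correct and follows the paper's own approach exactly: the paper states explicitly (just before Lemma~\ref{lem:8.1}) that ``the proof of Lemma~\ref{lem:4.2} carries over verbatim if we replace $f(\tau)\,d\tau$ by $F(d\tau)$,'' and that is precisely what you do. If anything, you are more careful than the paper's proof of Lemma~\ref{lem:4.2}, which invokes dominated convergence without spelling out the dominating function; your use of the uniform boundedness principle to bound $\sup_m\|\dy_m\|$ makes that step explicit.
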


\medskip\noindent

\begin{lemma}\label{lem:8.3}
Let $\bigl\{S(t)\bigr\}$ be a twin semigroup on a norming dual pair $(Y,\DY)$. Assume \sef{eq:4.8} holds, i.e., assume that $(\DY,\si(\DY,Y))$ is sequentially complete. Let $\dq \in \DY$ be given. For $y \in Y$ define
\begin{equation}\label{eq:8.7}
V(\si)y = \begin{cases}
0              &\hbox{for } \si = 0\\
\dq S(\si)y &\hbox{for } \si > 0.
\end{cases}
\end{equation}
Assume that for all $y \in Y$ the function
\[\si \mapsto V(\si)y\]
belongs to $\NBV_{loc}\gb{[0,\infty),\BR}$. Let $F : [0,\infty) \to \BR$ be locally bounded and measurable. Then there exists $\du(t) \in \DY$ such that for all $y \in Y$
\begin{equation}\label{eq:8.8}
\int_0^t F(t-\si)\,d_\si\bigl[V(\si)y\bigr]\,  = \pa{\du(t)}{y}.
\end{equation}
\end{lemma}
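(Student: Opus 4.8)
The plan is to treat this as the mirror image of Lemma \ref{lem:8.1} under the symmetry that interchanges $Y$ with $\DY$ and $q$ with $\dq$, transcribing that proof with hypothesis \sef{eq:4.8} now playing the role that \sef{eq:4.1} plays there. Note that \sef{eq:8.8} is already in the ``integrated-by-parts'' form, so — unlike in Lemma \ref{lem:8.1} — no integration by parts is needed here; the boundary term at $\si=0$ has been absorbed into the convention $V(0)y=0$. The first thing I would record is that the right-hand side of \sef{eq:8.8} makes sense for each fixed $y\in Y$: by hypothesis $\si\mapsto V(\si)y$ lies in $\NBV_{loc}\gb{[0,\infty),\BR}$, so $d_\si[V(\si)y]$ is a finite signed Borel measure on $[0,t]$ and the locally bounded measurable function $F(t-\novar)$ is integrable against it. Thus
\[
\Lambda_t : y \mapsto \int_0^t F(t-\si)\,d_\si\bigl[V(\si)y\bigr]
\]
is a well-defined linear functional on $Y$, and the task is to represent it by an element $\du(t)\in\DY$. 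Since $\gb{\DY,\si(\DY,Y)}' = Y$, it suffices to exhibit $\Lambda_t$ as a $\si(\DY,Y)$-limit of functionals of this type and then invoke the sequential completeness of $\gb{\DY,\si(\DY,Y)}$ supplied by \sef{eq:4.8}.

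Next I would pick step functions $F_m$ with $|F_m|\le|F|$ and $F_m\to F$ pointwise. For each $m$, $\int_0^t F_m(t-\si)\,d_\si[V(\si)y]$ is a finite linear combination of increments of $\si\mapsto V(\si)y$; since $V(\si)y=\dq S(\si)y=\pa{\dq S(\si)}{y}$ for $\si>0$ — here $\dq S(\si)\in\DY$ because $S(\si)$ is a twin operator — and $V(0)y=0$, this combination equals $\pa{\du_m(t)}{y}$ with $\du_m(t)\in\DY$ the corresponding finite linear combination of the elements $\dq S(\si)$. Fixing $y$, I would then apply the dominated convergence theorem with respect to the finite signed measure $d_\si[V(\si)y]$: the integrands $F_m(t-\novar)$ converge pointwise to $F(t-\novar)$ and are dominated by the constant $\sup_{[0,t]}|F|$, which is integrable against the total variation measure of $V(\novar)y$ on $[0,t]$. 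This yields $\pa{\du_m(t)}{y}\to\Lambda_t(y)$ for every $y\in Y$, so $\{\du_m(t)\}$ is $\si(\DY,Y)$-Cauchy; by \sef{eq:4.8} it converges to some $\du(t)\in\DY$ with $\pa{\du(t)}{y}=\Lambda_t(y)$ for all $y$, which is \sef{eq:8.8}.

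The part that is more than a verbatim copy — and the only place I would slow down — is the measure-theoretic bookkeeping now that the relevant measure $d_\si[V(\si)y]$ is signed rather than Lebesgue: one must check measurability of $F(t-\novar)$ for this measure, integrability of the dominating constant against its total variation, and that the increments (hence the $\du_m(t)$) are unambiguous given the $\NBV$ normalization convention. All of this is routine. The structural argument is the exact mirror of Lemma \ref{lem:8.1}, and the two hypotheses actually doing the work are that $\si\mapsto V(\si)y\in\NBV_{loc}$ for every $y\in Y$ and that $\gb{\DY,\si(\DY,Y)}$ is sequentially complete.
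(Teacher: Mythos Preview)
Your proposal is correct and matches the paper's approach: the paper states Lemma~\ref{lem:8.3} without an explicit proof, having already indicated that Lemma~\ref{lem:8.1} is proved by copying the proof of Lemma~\ref{lem:4.1}, and Lemma~\ref{lem:8.3} is simply the mirror image with the roles of $Y$ and $\DY$ interchanged. Your step-function approximation of $F$, the observation that the resulting finite linear combinations of increments $V(\si_j)y$ are represented by elements $\dq S(\si_j)\in\DY$, the dominated convergence argument against the signed measure $d_\si[V(\si)y]$, and the appeal to the sequential completeness \sef{eq:4.8} are exactly what the paper has in mind.
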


\medskip\noindent

\begin{lemma}\label{lem:8.4}
Let $\bigl\{S(t)\bigr\}$ be a twin semigroup on a norming dual pair $(Y,\DY)$. Assume \sef{eq:4.9} holds, i.e., assume that a linear map $\gb{Y,\si(Y,\DY)} \to \BR$ is continuous if it is sequentially continuous. Let $\dq \in \DY$ be given. Let $F : [0,\infty) \to \BR$ be of locally bounded variation. Then there exists $\du(t) \in \DY$ such that for all $y \in Y$
\begin{equation}\label{eq:8.9}
\int_0^t F(d\tau) \,\dq S(t-\tau)y \,  = \pa{\du(t)}{y}.
\end{equation}
\end{lemma}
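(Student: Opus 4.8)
The plan is to mirror the proof of Lemma \ref{lem:4.2} — in the form recorded in Lemma \ref{lem:8.2}, where $f(\tau)\,d\tau$ is replaced by $F(d\tau)$ — but with the roles of the two spaces of the norming dual pair interchanged, so that hypothesis \sef{eq:4.9} takes over the role played by \sef{eq:4.5}. Fix $t \ge 0$. Since $F$ is of locally bounded variation, its Lebesgue--Stieltjes measure restricted to $[0,t]$ is a bounded signed measure, with finite total variation measure $\abs{F}$. For each $y \in Y$ the map $\tau \mapsto \dq S(t-\tau)y$ is measurable (Definition \ref{def:2.1}(iii), after the substitution $\si = t-\tau$) and bounded by $M e^{\abs{\om}t}\|\dq\|\,\|y\|$ on $[0,t]$ (Definition \ref{def:2.1}(ii)), so the integral on the left-hand side of \sef{eq:8.9} is a well-defined real number. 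Write $\ell_t(y)$ for it; then $\ell_t$ is a linear functional on $Y$.

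The key step is to show that $\ell_t$ is continuous as a map $\gb{Y,\si(Y,\DY)} \to \BR$. By \sef{eq:4.9} it suffices to check \emph{sequential} continuity, so let $\{y_m\} \subset Y$ satisfy $\pa{\dy}{y_m} \to 0$ for every $\dy \in \DY$. Regarding each $y_m$ as a bounded functional on the Banach space $\DY$, this says $\{y_m\}$ converges to $0$ pointwise on $\DY$; by the uniform boundedness principle $\{y_m\}$ is norm-bounded as such functionals, and by the norming identity $\|y_m\| = \sup\{\abs{\pa{\dy}{y_m}} : \|\dy\| \le 1\}$ this yields $K := \sup_m \|y_m\| < \infty$. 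Now fix $\tau \in [0,t]$. Because $S(t-\tau)$ is a twin operator, $\dq S(t-\tau) \in \DY$, hence
\[
\dq S(t-\tau)y_m = \pa{\dq S(t-\tau)}{y_m} \longrightarrow 0 \qquad (m \to \infty),
\]
while $\abs{\dq S(t-\tau)y_m} \le M e^{\abs{\om}t}\|\dq\|\,\|y_m\| \le KM e^{\abs{\om}t}\|\dq\|$, a constant that is $\abs{F}$-integrable over $[0,t]$. The dominated convergence theorem then gives $\ell_t(y_m) = \int_0^t F(d\tau)\,\dq S(t-\tau)y_m \to 0$, which is the required sequential continuity.

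By \sef{eq:4.9}, $\ell_t$ is therefore $\si(Y,\DY)$-continuous. Since $\gb{Y,\si(Y,\DY)}'$ is (isometrically isomorphic to) $\DY$ — recalled at the start of Section \ref{sec:2}, cf. \cite[Theorem 3.10]{Rudin91} — there is a unique $\du(t) \in \DY$ with $\ell_t(y) = \pa{\du(t)}{y}$ for all $y \in Y$, and this is precisely \sef{eq:8.9}.

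The one ingredient that is not purely mechanical is the uniform bound $K = \sup_m\|y_m\| < \infty$: without it the integrand $\tau \mapsto \dq S(t-\tau)y_m$ would lack a dominating function independent of $m$, and the dominated convergence step would collapse. It is exactly here that the Banach space (rather than merely topological vector space) structure of the norming dual pair enters, through the Banach--Steinhaus theorem together with the norming identity. Everything else — measurability, the exponential bound of Definition \ref{def:2.1}(ii), and the passage $\dq S(t-\tau)y = \pa{\dq S(t-\tau)}{y}$ — is immediate from the twin-semigroup axioms.
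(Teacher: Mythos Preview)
Your proof is correct and follows exactly the approach the paper intends: it is the dual version of the argument for Lemma~\ref{lem:4.2} (carried over to the Stieltjes setting as in Lemma~\ref{lem:8.2}), with the roles of $Y$ and $\DY$ interchanged so that hypothesis \sef{eq:4.9} replaces \sef{eq:4.5}. You have in fact made explicit a point the paper's proof of Lemma~\ref{lem:4.2} leaves implicit, namely the appeal to the uniform boundedness principle to secure a dominating function independent of $m$; this is indeed needed for the dominated convergence step and your justification via the norming identity is the right one.
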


\newpage

\section{The main ideas explained by formula mani\-pulation}\label{sec:9}
\setcounter{equation}{0}

In Section \ref{sec:6} we perturbed the abstract ODE
\begin{equation}\label{eq:9.1}
\frac{du}{dt} \in C_0u
\end{equation}
by adding at the right hand side a {\sl bounded} finite rank perturbation. Here, instead, we shall add a {\sl relatively bounded} finite rank perturbation. Again we introduce $q_j \in Y$, $j = 1,\ldots, n$, to span the range of the perturbation. But the coefficients are now of the form $\pa{\DQ_j}{C_0u}$ for given $\DQ_j \in \DY$ (so we use a capital letter to alert the reader that the element of $\DY$ does now act on $C_0u$, rather than on $u$ itself). Thus the aim is to study
\begin{equation}\label{eq:9.2}
\frac{du}{dt} \in C_0u + q \cdot \DQ C_0u
\end{equation}
with
\begin{equation}\label{eq:9.3}
q \cdot \DQ C_0 u = \sum_{j=1}^n \pa{\DQ_j}{C_0u} q_j.
\end{equation}

We assume that $C_0$ is the generator of a twin semigroup $\{S_0(t)\}$ and our aim is to construct a twin semigroup $\{S(t)\}$ with a generator that has $\DOM{C_0}$ as its domain of definition and action given by the right hand side of \sef{eq:9.2}.

The construction starts from the variation-of-constants formula (cf. \sef{eq:5.5})
\begin{equation}\label{eq:9.4}
S(t) = S_0(t) + \int_0^t S_0(t-\tau)q \cdot \DQ C_0 S(\tau)\,d\tau,
\end{equation}
or from the variant (cf. \sef{eq:5.12})
\begin{equation}\label{eq:9.5}
S(t) = S_0(t) + \int_0^t S(t-\tau)q \cdot \DQ C_0 S_0(\tau)\,d\tau
\end{equation}
in which the roles of $\{S_0(t)\}$ and $\{S(t)\}$ are interchanged. Again the construction is based on solving a finite dimensional RE, but now this RE involves the Stieltjes integral and a bounded variation kernel (or, equivalently, a measure, cf. Appendix A).

To see how the Stieltjes integral might come in, please recall Lemma \ref{lem:2.3} and note that this suggests to replace the second term at the right hand side of \sef{eq:9.5} by
\[\int_0^t S(t-\tau)q \cdot d_\tau\DQ\gb{S_0(\tau) - I}\]
when indeed $\tau \mapsto \DQ \gb{S_0(\tau) - I}y$ is $(N)BV$ for all $y \in Y$. Another option, again motivated by Lemma \ref{lem:2.3}, is to integrate the second term at the right hand side of \sef{eq:9.4} by parts while assuming that $\tau \mapsto \dy S_0(\tau)q$ is $BV$ for all $\dy \in \DY$. The point of both options is to ``neutralize" the unbounded operator $C_0$ and the price we pay is that we have to work with Stieltjes integrals.
 
Define $V_0(t) : Y \to \BR^n$ by
\begin{align}\label{eq:9.6}
V_0(t)y &= \DQ C_0 \int_0^t S_0(\si)\,d\si y\nonumber\\
&= \DQ\gb{S_0(t) - I}y
\end{align}
and define a $\BR^{n \times n}$-valued kernel $K$ by
\begin{equation}\label{eq:9.7}
K(t) = \DQ\gb{S_0(t) - I}q
\end{equation}
or, in more detail, 
\begin{equation}\label{eq:9.8}
K_{ij}(t) = \DQ_i\gb{S_0(t) - I}q_j.
\end{equation}

If we first change the integration variable in \sef{eq:9.4} to $\si = t - \tau$, next integrate both sides of the equation with respect to time, and finally apply $\DQ C_0$ to both sides, we obtain the equation
\begin{equation}\label{eq:9.9}
V(t) = V_0(t) + \int_0^t K(d\si) V(t-\si)
\end{equation}
with
\begin{equation}\label{eq:9.10}
V(t) = \DQ C_0 \int_0^t S(\tau)\,d\tau.
\end{equation}
Here \sef{eq:9.9} is short hand for
\begin{equation}\label{eq:9.11}
V(t)y = V_0(t)y + \int_0^t K(d\si) V(t-\si)y
\end{equation}
which is, for given $y \in Y$, an equation for the $\BR^n$-valued function $t \mapsto V(t)y$. Introducing the notation (cf. Theorem \ref{thm:convo-meas-Borel-fun} while taking Theorem \ref{thm:bv2} into account to switch back and forth between measures and NBV functions)
\begin{equation}\label{eq:9.12}
\gb{K \bast V}(t) := \int_0^t K(d\si)V(t-\si),
\end{equation}
we can write \sef{eq:9.9}, and hence \sef{eq:9.11}, in the even more compact form
\begin{equation}\label{eq:9.13}
V = V_0 + K \bast V.
\end{equation}

Let $R$ be the resolvent of $K$, i.e., the unique solution of (cf. Theorem \ref{thm:resolvent})
\begin{equation}\label{eq:9.14}
K + R \bast K = R = K + K \bast R,
\end{equation}
then the solution of \sef{eq:9.13} is given by
\begin{equation}\label{eq:9.15}
V = V_0 + R \bast V_0.
\end{equation}
Define $W_0(t) : \DY \to \BR^n$ by
\begin{equation}\label{eq:9.16}
\dy W_0(t) = \dy S_0(t)q,\qquad\hbox{for } t > 0,
\end{equation}
with value zero for $t=0$, where we allow ourselves once more the freedom of writing the element of $\DY$, on which the operator acts, to the left of the operator itself. By applying \sef{eq:9.5} to $q$ we obtain the equation
\begin{equation}\label{eq:9.17}
W(t) = W_0(t) + \int_0^t W(t-\tau)K(d\tau)
\end{equation}
and accordingly we find for
\begin{equation}\label{eq:9.18}
W(t) = S(t)q,\qquad\hbox{for } t > 0,
\end{equation}
with value zero for $t=0$, the formula
\begin{equation}\label{eq:9.19}
W(t) = W_0(t) + \int_0^t W_0(t-\tau)R(d\tau).
\end{equation}
Again we abbreviate and write \sef{eq:9.17} as
\begin{equation}\label{eq:9.20}
W = W_0 + W \bast K
\end{equation}
and \sef{eq:9.19} as
\begin{equation}\label{eq:9.21}
W = W_0 + W_0 \bast R.
\end{equation}
(Please note a notational difficulty: in principle we would like to indicate by the order of the factors in the product which of the two factors is considered as a measure, but, on the other hand, we also want to indicate by the order how the matrix acts on the vector. In \sef{eq:9.20} and \sef{eq:9.21} we sacrificed the first in order to realize the second.)

Motivated by \sef{eq:9.4} we now define
\begin{equation}\label{eq:9.22}
S(t) = S_0(t) + \int_0^t W_0(t-\tau) \cdot V(d\tau)
\end{equation}
by which we mean that
\[\dy S(t)y = \dy S_0(t)y + \int_0^t \dy W_0(t-\tau) \cdot V(d\tau)y.\]
This is compatible with the formula
\begin{equation}\label{eq:9.23}
S(t) = S_0(t) + \int_0^t W(t-\tau) \cdot V_0(d\tau)
\end{equation}
that is motivated by \sef{eq:9.5}. (Because of \sef{eq:9.15} and \sef{eq:9.21} the checking amounts to verifying
\[W_0 \bast \gb{V_0 + R \bast V_0} = \gb{W_0 + W_0 \bast R} \bast V_0\]
which is a direct consequence of the associativity and distributivity of the $\bast$-convolution product.) 

To make all this work, we need in any case that the kernel $K$ defined by $\sef{eq:9.7}$ is of bounded variation. In the next section we shall indeed assume that $t \mapsto K(t)$ belongs to $NBV_{loc}$ (note that \sef{eq:9.7} is compatible with $K(0) = 0$). The formulas \sef{eq:9.22} and \sef{eq:9.23} are based on the additional assumption that for all $y \in Y$ the function
\begin{equation}\label{eq:9.23a}
t \mapsto V_0(t)y = \DQ\gb{S_0(t) - I}y\hbox{ belongs to } NBV_{loc}.
\end{equation}
Note that \sef{eq:9.15} guarantees that this property of $V_0$ is inherited by $V$, making also \sef{eq:9.22} well-defined.

In Section \ref{sec:12}, when applying the theory to renewal equations involving a measure as kernel, we shall find that this assumption indeed holds. But in Section \ref{sec:11}, when dealing with NFDE (neutral functional differential equations), we shall need to replace \sef{eq:9.22} and \sef{eq:9.23} by their counterparts
\begin{equation}\label{eq:9.24}
S(t) = S_0(t) + \int_0^t W_0(d\si) \cdot V(t-\si)
\end{equation}
and
\begin{equation}\label{eq:9.25}
S(t) = S_0(t) + \int_0^t W(d\si) \cdot V_0(t-\si)
\end{equation}
that are obtained by partial integration. Note carefully that \sef{eq:9.24} and \sef{eq:9.25} are based on the definition
\begin{equation}\label{eq:9.26}
W_0(0) = 0\quad\hbox{and}\quad W(0) = 0
\end{equation}
and therefore both of these have a jump of size $q$ in zero, cf. \sef{eq:9.16} and \sef{eq:9.17}. By this we mean that both $\dy W_0(t)$ and $\dy W(t)$ have $\pa{\dy}{q}$ as limit for $t \da 0$. When working with \sef{eq:9.24} or \sef{eq:9.25}, we replace the earlier additional assumption \sef{eq:9.23a} by the new additional assumption that for all $\dy \in \DY$
\begin{equation}\label{eq:9.26a}
\hbox{the function } t \mapsto \dy W_0(t) = \dy S_0(t)q\hbox{ belongs to } NBV_{loc},
\end{equation}
where we define the function to be zero at $t=0$, cf. \sef{eq:9.26}.

\section{Relatively bounded finite dimensional range perturbation of twin semigroups}\label{sec:10}
\setcounter{equation}{0}

Throughout this section we assume

\begin{itemize}
\item $\{S_0(t)\}$ is a twin semigroup with generator $C_0$;
\item the elements $q_j \in Y$ and $\DQ_j \in \DY$, $j=1,2,\ldots,n$, are such that for $i,j = 1,\ldots,n$ the function
\begin{equation}\label{eq:10.1}
t \mapsto K_{ij}(t) := \DQ_i \gb{S_0(t) - I}q_j
\end{equation}
belongs to $NBV_{loc}\gb{[0,\infty);\BR}$ and is continuous in $t = 0$.
\end{itemize}

Moreover, we use the following notation and definitions

\begin{itemize}
\item $R$ denotes the resolvent of $K$, i.e., the solution of \sef{eq:9.14}. Note that $R$ too is continuous in $t=0$.
\item $V_0(t) : Y \to \BR^n$ is for $t \ge 0$ defined by
\begin{equation}\label{eq:10.2}
V_0(t) = \DQ\gb{S_0(t) - I}.
\end{equation}
\item $W_0(t) : \DY \to \BR^n$ is for $t > 0$ defined by
\begin{equation}\label{eq:10.3}
W_0(t) = S_0(t)q
\end{equation}
and $W_0(0) = 0$.
\item $V(t) : Y \to \BR^n$ is for $t \ge 0$ defined by
\begin{equation}\label{eq:10.4}
V(t) = V_0(t) + \int_0^t R(d\tau)\,V_0(t-\tau).
\end{equation}
\item $W(t) : \DY \to \BR^n$ is for $t > 0$ defined by
\begin{equation}\label{eq:10.5}
W(t) = W_0(t) + \int_0^t W_0(t-\tau)\, R(d\tau)
\end{equation}
and $W(0) = 0$.
\end{itemize}

\smallskip

We now formulate two theorems. The first will be used in Section \ref{sec:12} to deal with neutral RE. As we show in the next section, the second covers NFDE.

\begin{theorem}\label{thm:10.1}
Let $(Y,\DY)$ be a norming dual pair such that \sef{eq:4.8} and \sef{eq:4.5} hold. Assume that $t \mapsto V_0(t)y$ belongs to $NBV_{loc}\gb{[0,\infty); \BR^n}$ for all $y \in Y$. Then the same holds for the function $t \mapsto V(t)y$ and
\begin{equation}\label{eq:10.6}
S(t) = S_0(t) + \int_0^t W_0(t-\tau) \cdot V(d\tau)
\end{equation}
defines a twin semigroup with generator $C$ given by
\begin{equation}\label{eq:10.7}
\DOM{C}= \DOM{C_0},\qquad Cy = C_0y + \pa{\DQ}{C_0y}\cdot q.
\end{equation}
\end{theorem}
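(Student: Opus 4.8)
The plan is to verify the four requirements of Definition~\ref{def:2.1} for the family $\{S(t)\}$ defined by \sef{eq:10.6}, and then to compute its Laplace transform and match it with $(\la I - C)^{-1}$ for $C$ as in \sef{eq:10.7}. Throughout, the argument is a Stieltjes analogue of the proof of Theorem~\ref{thm:5.1}: the Riemann convolution is replaced by the $\bast$-convolution and Lemmas~\ref{lem:8.1}--\ref{lem:8.4} take over the role played there by Lemmas~\ref{lem:4.1}--\ref{lem:4.2}.

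First I would collect the preliminaries. Since $K$ solves the Stieltjes renewal equation \sef{eq:9.14} with an $\NBV_{loc}$ kernel, its resolvent $R$ is again of locally bounded variation and carries an exponential bound (Theorem~\ref{thm:resolvent}); formula \sef{eq:10.4} together with the convolution results of Appendix~A (Theorem~\ref{thm:convo-meas-Borel-fun}) then shows $t \mapsto V(t)y \in \NBV_{loc}\gb{[0,\infty);\BR^n}$ for every $y \in Y$, with total variation bounded exponentially in $t$. Hence the right-hand side of \sef{eq:10.6}, written out as $\dy S_0(t)y + \sum_{j=1}^n \int_0^t \dy S_0(t-\tau)q_j\, d_\tau[V_j(\tau)y]$, falls under Lemma~\ref{lem:8.2} (valid because $q_j \in Y$ and \sef{eq:4.5} holds), so $\dy \mapsto \dy S(t)y$ is represented by an element $S(t)y \in Y$. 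For the action on $\DY$ one uses the equivalent representation \sef{eq:9.23}, $S(t) = S_0(t) + \int_0^t W(t-\tau)\cdot V_0(d\tau)$ (the equivalence was checked right after \sef{eq:9.25}): there $\tau \mapsto \dy W(t-\tau)$ is bounded and measurable, $\tau \mapsto V_0(\tau)y$ is $\NBV_{loc}$, and each $V_0(\tau)=\DQ\gb{S_0(\tau)-I}$ is represented by an $n$-vector in $\DY$, so \sef{eq:4.8} and the mirror of Lemma~\ref{lem:8.1}, that is Lemma~\ref{lem:8.3}, yield $\dy S(t) \in \DY$. Thus each $S(t)$ is a twin operator; $S(0)=I$ is immediate from \sef{eq:10.6}; measurability of $t \mapsto \dy S(t)y$ follows from \sef{eq:10.6}, Fubini and the corresponding properties of $S_0$, $W_0$, $V$, $R$; and the exponential estimate $|\dy S(t)y| \le Me^{\om t}\|y\|\,\|\dy\|$ (with possibly enlarged $M,\om$) follows by combining the exponential bound on $\dy S_0(t)q$ with that on the total variation of $V(\novar)y$.

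The main obstacle is the semigroup property $S(t+s)=S(t)S(s)$. As in Lemma~\ref{lem:5.2}, but modified to account for the \emph{cumulative} nature of $V$, I would first establish the auxiliary identity
\[V(t+s)y = V(s)y + V(t)S(s)y \qquad (t,s \ge 0,\ y \in Y).\]
To prove it, fix $s$ and $y$ and regard both sides, as functions of $t$, as solutions of the Stieltjes renewal equation \sef{eq:9.13} with the single forcing function $t \mapsto V_0(t)S(s)y$; for the left-hand side this comes from inserting $t+s$ into \sef{eq:9.13}, using $V_0(t+s)y = V_0(s)y + V_0(t)S_0(s)y$, splitting the $\bast$-convolution at $s$, re-expressing $K(s+\eta)-K(s)$ through $S_0(s)q$ via the semigroup property of $\{S_0(t)\}$, and rewriting $V_0(t)S(s)y$ by \sef{eq:10.6} --- pure bookkeeping of shifts inside the convolutions, with $K(0)=0$ ensuring no spurious boundary term. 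Uniqueness for \sef{eq:9.13} (Theorem~\ref{thm:resolvent}) then gives the identity. Granting it, one substitutes $t+s$ into \sef{eq:10.6}, splits $\int_0^{t+s} = \int_0^s + \int_s^{t+s}$, uses $W_0(t+s-\tau)=S_0(t)W_0(s-\tau)$ on $[0,s]$ together with \sef{eq:10.6} once more to recognise the first piece as $S_0(t)\gb{S(s)y - S_0(s)y}$, shifts the integration variable in the second piece and applies the auxiliary identity; collecting the terms produces exactly $S(t)S(s)y$, just as in Theorem~\ref{thm:5.1}.

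Finally I would identify the generator by Laplace transformation of \sef{eq:10.6}. By Fubini, using $\int_0^\infty e^{-\la\si}\dy S_0(\si)q\,d\si = \dy(\la I - C_0)^{-1}q$,
\[\int_0^\infty e^{-\la t}\dy S(t)y\,dt = \dy(\la I - C_0)^{-1}y + \dy(\la I - C_0)^{-1}q \cdot \int_0^\infty e^{-\la\tau}\,d_\tau[V(\tau)y].\]
Transforming \sef{eq:10.4} and invoking \sef{eq:9.14} gives $\int_0^\infty e^{-\la\tau} d_\tau[V(\tau)y] = \gb{I - \hat K(\la)}^{-1}\int_0^\infty e^{-\la\tau}d_\tau[V_0(\tau)y]$, and a one-line integration by parts in \sef{eq:10.2} (with $V_0(0)=0$) yields $\int_0^\infty e^{-\la\tau}d_\tau[V_0(\tau)y] = \DQ C_0 (\la I - C_0)^{-1}y$, and likewise $\hat K(\la) = \DQ C_0 (\la I - C_0)^{-1}q$ from \sef{eq:9.7}. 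Assembling,
\[\int_0^\infty e^{-\la t}\dy S(t)y\,dt = \dy(\la I - C_0)^{-1}y + \dy(\la I - C_0)^{-1}q \cdot \gb{I - \hat K(\la)}^{-1}\DQ C_0(\la I - C_0)^{-1}y.\]
It remains to check this equals $\dy(\la I - C)^{-1}y$ for $C$ as in \sef{eq:10.7}: the equation $(\la I - C)\eta = y$ is equivalent to $\eta = (\la I - C_0)^{-1}y + (\la I - C_0)^{-1}q\cdot\pa{\DQ}{C_0\eta}$, and --- reading the addend in \sef{eq:10.7} via the resolvent, i.e. through the single-valued operator $\DQ C_0(\la I - C_0)^{-1} = \DQ\bigl(\la(\la I - C_0)^{-1} - I\bigr)$, so that $C$ is the operator corresponding to the Laplace transform in the sense of \sef{eq:2.3} even when $C_0$ is multi-valued --- applying $\DQ C_0$ to the last identity gives $\pa{\DQ}{C_0\eta} = \gb{I - \hat K(\la)}^{-1}\DQ C_0(\la I - C_0)^{-1}y$. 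Substituting back and pairing with $\dy$ reproduces the displayed formula, so $\Sbar(\la) = (\la I - C)^{-1}$; by Definition~\ref{def:2.1}(iv) and \sef{eq:2.3}, $\{S(t)\}$ is a twin semigroup with generator $C$, which completes the proof.
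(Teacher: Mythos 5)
Your proposal follows essentially the same route as the paper's proof: the twin-operator property of $S(t)$ is verified via Lemmas~\ref{lem:8.2} and~\ref{lem:8.3}, the cumulative-output identity $V(t+s)-V(s)=V(t)S(s)$ (the paper's Lemma~\ref{lem:10.3} with the roles of $t$ and $s$ swapped) gives the semigroup law, and Laplace transformation of \sef{eq:10.6} is matched against the resolvent formula for $C$, with the remaining items of Definition~\ref{def:2.1} handled through the convolution estimates of Appendix~A. The one point worth adding at the end is that, to close Definition~\ref{def:2.1}(iv), one should observe explicitly---as the paper does---that the formula you derive for $\wbar{S}(\la)=(\la I-C)^{-1}$ exhibits it as a finite-rank perturbation of the twin operator $\wbar{S}_0(\la)$ by vectors in $Y$ and functionals in $\DY$, and is therefore itself a twin operator.
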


\begin{theorem}\label{thm:10.2}
Let $(Y,\DY)$ be a norming dual pair such that \sef{eq:4.1} and \sef{eq:4.9} hold.
Assume that $t \mapsto \dy W_0(t)$ belongs to $NBV_{loc}\gb{[0,\infty); \BR^n}$ for all $\dy \in \DY$. Then the same holds for the function $t \mapsto \dy W(t)$ and
\begin{equation}\label{eq:10.8}
S(t) = S_0(t) + \int_0^t W(d\tau) \cdot V_0(t-\tau)
\end{equation} 
defines a twin semigroup with generator $C$ given by \sef{eq:10.7}.
\end{theorem}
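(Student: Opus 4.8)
\medskip\noindent\emph{Proof proposal.} The plan is to parallel the proof of Theorem~\ref{thm:5.1}, replacing the Riemann convolutions there by the Stieltjes ($\bast$‑)convolutions of Appendix~A and exploiting the two representations of $S(t)$: \sef{eq:10.8} to control the action of $S(t)$ on $\DY$ and its companion \sef{eq:9.24} to control the action on $Y$. First I would record the regularity and growth of the auxiliary kernels. Since $K$ in \sef{eq:10.1} lies in $NBV_{loc}$, is continuous at $0$, and (by Definition~\ref{def:2.1}(ii) for $\{S_0(t)\}$) is exponentially bounded, the half‑line resolvent theorem of Appendix~A (Theorem~\ref{thm:resolvent}) gives that its resolvent $R$, solving \sef{eq:9.14}, shares all three properties. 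Then \sef{eq:10.5} exhibits $t\mapsto\dy W(t)$ as $\dy W_0$ plus the convolution of the $NBV_{loc}$ function $\dy W_0$ with the measure $dR$, which by the convolution results of Appendix~A (Theorems~\ref{thm:convo-meas-Borel-fun} and \ref{thm:bv2}) is again $NBV_{loc}$ and exponentially bounded --- this is the first assertion. Applying the same results to \sef{eq:10.4} shows that $t\mapsto V(t)y$ is locally bounded, measurable and exponentially bounded (the hypotheses do not force it to be $NBV_{loc}$, and in general it is not).

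Next I would check that $S(t)$ defined by \sef{eq:10.8} is a twin operator. The estimate in Definition~\ref{def:2.1}(ii) for the bilinear form $(\dy,y)\mapsto\dy S(t)y$ is immediate from \sef{eq:10.8} and the bounds just obtained. To see $S(t)Y\subseteq Y$ I would use \sef{eq:9.24}: for fixed $y$, $\dy S(t)y=\dy S_0(t)y+\sum_j\int_0^t d_\si\bigl[\dy S_0(\si)q_j\bigr](V(t-\si)y)_j$, and Lemma~\ref{lem:8.1} applies componentwise, since \sef{eq:4.1} holds, $\si\mapsto\dy S_0(\si)q_j\in NBV_{loc}$ by \sef{eq:9.26a}, and each $(V(\novar)y)_j$ is locally bounded and measurable; hence the sum is represented by an element of $Y$. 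To see $\dy S(t)\in\DY$ I would use \sef{eq:10.8}: for fixed $\dy$, $y\mapsto\dy S(t)y$ is sequentially $\si(Y,\DY)$‑continuous --- a weakly convergent sequence in $Y$ is norm bounded, so dominated convergence applies against the finite measures $|d_\si[\dy W_j(\si)]|$ on $[0,t]$, while $(V_0(t-\si)y)_j=\pa{\DQ_jS_0(t-\si)-\DQ_j}{y}$ depends $\si(Y,\DY)$‑continuously on $y$ --- hence, by \sef{eq:4.9} and $\bigl(Y,\si(Y,\DY)\bigr)'=\DY$, it is represented by $\dy S(t)\in\DY$. Finally $S(0)=I$ because $V_0(0)=0$ and $S_0(0)=I$, and $t\mapsto\dy S(t)y$ is measurable by \sef{eq:10.8} and the measurability statements of Appendix~A; so $S(0)=I$, the exponential estimate ii) and the measurability iii) of Definition~\ref{def:2.1} all hold.

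The semigroup property $S(t+s)=S(t)S(s)$ is the analogue of Lemma~\ref{lem:5.2} and the computation that follows it, and I expect it to be the main obstacle. Starting from \sef{eq:9.24}, splitting $\int_0^{t+s}$ at $\tau=s$, substituting, and using $S_0(t+s)=S_0(t)S_0(s)$ together with $W_0(s+\si)=S_0(s)W_0(\si)$ (so that $d_\si W_0(s+\si)=S_0(s)W_0(d\si)$ and the bounded twin operator $S_0(s)$ may be pulled out of the Stieltjes convolution), one should reduce the claim --- by uniqueness of solutions of the renewal equation \sef{eq:9.13}/\sef{eq:9.17} (Theorem~\ref{thm:resolvent}) --- to the defining identities \sef{eq:9.15} and \sef{eq:9.21} of $V$ and $W$ and to associativity and distributivity of the $\bast$‑product. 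The delicate bookkeeping here concerns the atoms of $W_0$ and $W$ at the origin (cf.\ \sef{eq:9.26}) and the partial integrations used to pass between the representations \sef{eq:10.8}, \sef{eq:9.24} and \sef{eq:9.25}; once these are controlled the argument runs exactly as in the proof of Theorem~\ref{thm:5.1}.

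It remains to compute the Laplace transform and identify the generator. For $\Re\la$ large, transforming \sef{eq:10.8} with the convolution theorem for Laplace--Stieltjes transforms and using Definition~\ref{def:2.1}(iv) for $\{S_0(t)\}$, \sef{eq:10.5} and \sef{eq:9.14} gives, writing $\Sbar_0(\la)=(\la I-C_0)^{-1}$,
\[
\int_0^\infty e^{-\la t}\dy S(t)y\,dt=\dy\Sbar_0(\la)y+\dy\Sbar_0(\la)q\,\bigl(I-\DQ C_0\Sbar_0(\la)q\bigr)^{-1}\DQ C_0\Sbar_0(\la)y,
\]
where $I-\DQ C_0\Sbar_0(\la)q=I-\widehat{dK}(\la)$ is invertible because $\widehat{dK}(\la)\to0$ as $\Re\la\to\infty$ ($K$ being continuous at $0$ with $K(0)=0$). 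On the other hand, solving $(\la I-C)\eta=y$ for $C$ as in \sef{eq:10.7} --- i.e.\ $(\la I-C_0)\eta=y+q\cdot\pa{\DQ}{C_0\eta}$, applying $C_0$ and pairing with $\DQ$ to solve for $\pa{\DQ}{C_0\eta}$, then substituting back --- yields $\eta=(\la I-C)^{-1}y$ equal to the right‑hand side above; in particular $\DOM C=\DOM{C_0}$, and $\Sbar(\la)=(\la I-C)^{-1}$ is a twin operator, being $\Sbar_0(\la)$ plus a correction of rank $\le n$ built from twin operators, so \sef{eq:2.2} holds for $\Re\la$ large. By the discussion following Definition~\ref{def:2.1} this identifies $C$ as the generator; running the same computation with the roles of $Y$ and $\DY$ interchanged, starting from \sef{eq:9.25}, treats $\DC$ on the $\DY$‑side, and this completes the verification of all items of Definition~\ref{def:2.1}.
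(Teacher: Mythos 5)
Your proposal is correct and follows essentially the same strategy the paper uses, namely: verify the twin-operator property via the two Stieltjes lemmas of Section~8 (one for each of the two representations \sef{eq:9.24} and \sef{eq:10.8}), and then carry over the semigroup, measurability, exponential-bound and Laplace-transform arguments from Theorem~\ref{thm:10.1}. In fact, where the paper merely writes ``to verify (i) we invoke Lemma~\ref{lem:8.1}'' for the integral $\int_0^t\dy W(d\tau)\cdot V_0(t-\tau)y$, you work instead with the equivalent form \sef{eq:9.24}, so that the $NBV_{loc}$ integrator is $\dy W_0(\si)=\dy S_0(\si)q_j$ and Lemma~\ref{lem:8.1} applies \emph{literally} with $S=S_0$; this sidesteps the small circularity in applying Lemma~\ref{lem:8.1} to the as-yet-unverified $W$, and is a genuine (if modest) tightening of the argument. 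Your preliminary observations --- that $\dy W$ inherits $NBV_{loc}$ from $\dy W_0$ via \sef{eq:10.5} and Theorem~\ref{thm:convo-meas-fun2}, and that under these hypotheses $V(\cdot)y$ is only locally bounded and measurable, not $NBV_{loc}$ --- are exactly what is needed and are correct. The Laplace-transform identity you write equals the paper's \sef{eq:10.21} once one uses $\DQ C_0\Sbar_0(\la)q=\what K(\la)$ and $\DQ C_0\Sbar_0(\la)=\la\DQ(\Sbar_0(\la)-\la^{-1}I)$. The only place you understate the work is the semigroup property, which you sketch (correctly identifying that it should reduce, via $W_0(s+\si)=S_0(s)W_0(\si)$ and associativity of $\bast$, to the defining identities for $V$ and $W$); the paper is equally terse there, deferring to Lemma~\ref{lem:10.3} mutatis mutandis, so no objection.
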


\medskip\noindent
{\sl Proof of Theorem $\ref{thm:10.1}$.} We follow the lines of the proof of Theorem \ref{thm:5.1}, but adapt the details to the somewhat different situation. In order to show that
\[(\dy,y) \mapsto \int_0^t \dy W_0(t-\tau) \cdot V(d\tau)y\]
defines a twin operator, we have to verify

\begin{itemize}
\item[(i)] for given $y \in Y$, the linear functional on $\DY$ defined by
\[\dy \mapsto \int_0^t \dy W_0(t-\tau) \cdot V(d\tau)y\]
is represented by an element of $Y$ and
\item[(ii)] for given $\dy \in \DY$, the linear functional on $Y$ defined by
\[y \mapsto \int_0^t \dy W_0(t-\tau) \cdot V(d\tau)y\]
is represented by an element of $\DY$.
\end{itemize}

To verify (i) we invoke Lemma \ref{lem:8.2} and to verify (ii) we invoke Lemma \ref{lem:8.3}. So $S(t)$ defined by \sef{eq:10.6} is a twin operator and we proceed by verifying properties (i)--(iv) of Definition \ref{def:2.1}.

We start with the semigroup property (i). Clearly $S(0) = S_0(0) = I$. To verify the semigroup property, we first derive \sef{eq:10.9}, i.e., we show that $\bigl\{V(t)\bigr\}$ is a cumulative output family \cite{DGT93} for the semigroup $\bigl\{S(t)\bigr\}$.

\begin{lemma}\label{lem:10.3}
We have
\begin{equation}\label{eq:10.9}
V(t+s) - V(t) = V(s)S(t).
\end{equation}
\end{lemma}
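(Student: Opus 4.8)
The plan is to prove \sef{eq:10.9} by manipulating the defining formula \sef{eq:10.4} for $V$ together with the formula \sef{eq:10.6} for $S$, exploiting the semigroup property of $\{S_0(t)\}$ and the convolution structure of the renewal-type identities from Section \ref{sec:9}. First I would recall that $V_0(t) = \DQ\gb{S_0(t) - I}$, so that $S_0(t)S_0(s) = S_0(t+s)$ immediately gives the ``prototype'' identity
\[V_0(t+s) - V_0(t) = \DQ\gb{S_0(t+s) - S_0(t)} = \DQ\gb{S_0(s) - I}S_0(t) = V_0(s)S_0(t).\]
This is exactly \sef{eq:10.9} for the pair $(V_0, S_0)$, and the job is to transport it through the resolvent correction.

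Next I would compute $V(t+s) - V(t)$ from \sef{eq:10.4}. Writing $V = V_0 + R \bast V_0$ and splitting the convolution integral $\int_0^{t+s} R(d\tau)V_0(t+s-\tau)$ as $\int_0^{t} + \int_{t}^{t+s}$, the first part combines with $V_0(t+s) - V_0(t) = V_0(s)S_0(t)$ and the shifted remainder, after substituting $\si = \tau - t$ in the second piece, to yield an expression of the form $V_0(s)S_0(t) + \int_0^s R(d\si)V_0(s-\si)S_0(t) + \bigl(\text{a term involving } R \text{ on } [0,t] \text{ paired with } V_0 \text{ shifted by } s\bigr)$. The first two terms already assemble to $V(s)S_0(t)$ by \sef{eq:10.4} again. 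The remaining term should be recognized, using the resolvent equation \sef{eq:9.14} $R = K + K\bast R$ and the defining relation $K(t) = \DQ\gb{S_0(t)-I}q = V_0(t)q$, as precisely the convolution correction that upgrades $V(s)S_0(t)$ to $V(s)S(t)$ via \sef{eq:10.6}. Concretely, $V(s)S(t) = V(s)S_0(t) + \int_0^t V(s)W_0(t-\tau)\cdot V(d\tau)$, and $V(s)W_0(\cdot) = V(s)S_0(\cdot)q$ should match what the shifted-$R$ term produces after one more use of the resolvent identity.

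The main obstacle, and the step I would spend the most care on, is the bookkeeping of the Stieltjes convolutions: one has to be scrupulous about the endpoint conventions (the jump of $W_0$ at $0$, the continuity of $K$ and $R$ at $0$ assumed in this section), about which factor is the measure and which is the function in each $\bast$-product, and about the fact that everything is really an identity of $\BR^n$-valued (resp. $\BR^{n\times n}$-valued) functions obtained by pairing with arbitrary $y \in Y$ — so associativity and distributivity of $\bast$ (invoked already in Section \ref{sec:9}, e.g. for \sef{eq:9.23}) are the algebraic engine, and Theorem \ref{thm:convo-meas-Borel-fun} legitimizes the interchanges. A clean way to organize it is to introduce $\tilde V(t) := V(t+s) - V(t) - V(s)S(t)$, show by the above splitting that $\tilde V$ satisfies the homogeneous renewal equation $\tilde V = K \bast \tilde V$ (all the inhomogeneous pieces canceling against $V(s)S(t)$), and then conclude $\tilde V \equiv 0$ from uniqueness of solutions to the resolvent equation (Theorem \ref{thm:resolvent}), exactly the uniqueness argument used for Lemma \ref{lem:5.2}. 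I would present it in that ``subtract and show it solves the homogeneous equation'' form, since it localizes all the delicate convolution algebra into verifying one identity rather than a long chain of equalities.
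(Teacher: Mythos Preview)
Your final strategy --- show that the difference satisfies the homogeneous renewal equation with kernel $K$ and invoke uniqueness, in the spirit of Lemma~\ref{lem:5.2} --- is exactly what the paper does. But the route you sketch to get there has a concrete error, and it obscures the one step where the real work lies.

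The error is in the direct computation via \sef{eq:10.4}. When you split $\int_0^{t+s} R(d\tau)V_0(t+s-\tau)$ at $\tau=t$ and substitute $\si=\tau-t$ in the tail, you obtain $\int_0^s R(t+d\si)\,V_0(s-\si)$, \emph{not} $\int_0^s R(d\si)\,V_0(s-\si)S_0(t)$. The shifted measure $R(t+d\si)$ is not $R(d\si)$, and no $S_0(t)$ factor appears. So the claim that ``the first two terms already assemble to $V(s)S_0(t)$'' is false as written, and the subsequent identification of the remainder with the $W_0$-correction in \sef{eq:10.6} does not go through along these lines.

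The paper avoids this by never touching the explicit resolvent formula \sef{eq:10.4}. It works instead from the implicit renewal equation \sef{eq:9.11}, $V = V_0 + K\bast V$, and --- crucially --- fixes $t$ and treats $s$ as the running variable. Then $s\mapsto V(s)S(t)y$ automatically satisfies the renewal equation with kernel $K$ and forcing $V_0(s)S(t)y$, simply by applying \sef{eq:9.11} at $S(t)y$. For $U(s):=V(t+s)-V(t)$ one derives \sef{eq:10.10}--\sef{eq:10.11}, and the entire content of the lemma reduces to the claim \sef{eq:10.12}: the forcing term $f$ equals $V_0(s)S(t)y$. That verification is the genuine computation --- it uses $V_0(s)[S(t)-S_0(t)]y = \int_0^t [K(t+s-\tau)-K(t-\tau)]\,V(d\tau)y$ via \sef{eq:10.6}, followed by an integration by parts --- and it is the step your sketch does not isolate. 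With the variable chosen as $t$ (as in your $\tilde V(t)$), the term $V(s)S(t)$ does not obviously satisfy a $K$-renewal equation in $t$, so the homogeneous-equation argument does not set up cleanly; switching to $s$ is not cosmetic.
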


\begin{proof}
From \sef{eq:9.11} we deduce that
\begin{equation}\label{eq:10.10}
V(t+s)y - V(t)y = f(t,y) + \int_0^s K(d\si)\,\bigl[V(t+s-\si)y - V(t)y\bigr]
\end{equation}
with
\begin{align}\label{eq:10.11}
f(t,y) &= V_0(t+s)y - V_0(t)y + \int_s^{t+s} K(d\si)\,V(t+s-\si)y\nonumber\\
&\qquad\qquad+ K(s)V(t)y - \int_0^t K(d\si)\, V(t-\si)y.
\end{align}
We claim that
\begin{equation}\label{eq:10.12}
f(t,y) = V_0(s)S(t)y.
\end{equation}
If the claim is justified, we can write \sef{eq:10.10} as
\begin{equation}\label{eq:10.13}
U(s)y = V_0(s)S(t)y + \int_0^s K(d\si)\, U(s-\si)y
\end{equation}
with
\begin{equation}\label{eq:4.19bis}
U(s)y := V(t+s)y - V(t)y.
\end{equation}
Comparing \sef{eq:10.13} to \sef{eq:9.11} we conclude that
\begin{equation}\label{eq:4.19}
U(s)y = V(s)S(t)y
\end{equation}
which, on account of \sef{eq:4.19bis}, amounts to \sef{eq:10.9}.

To verify the claim, we first rewrite \sef{eq:10.11} as
\[f(t,y) = V_0(s)S_0(t)y + \int_0^t \bigl[K(s+d\tau)-K(d\tau)\bigr]\,V(t-\tau)y + K(s)V(t)y\]
and observe that we need to show that
\[V_0(s)\bigl[S(t)y - S_0(t)y\bigr] = \int_0^t \bigl[K(s+d\tau)-K(d\tau)\bigr]V(t-\tau)y + K(s)V(t)y.\]

The left hand side equals
\begin{align*}
&\DQ\gb{S_0(s)-I}\int_0^t S_0(t-\tau)q V(d\tau)y\\
&\qquad\qquad\qquad= \int_0^t \DQ\gb{S_0(t+s-\tau) - I - S_0(t-\tau) + I}q\,V(d\tau)y\\
&\qquad\qquad\qquad= \int_0^t \bigl[K(t+s-\tau) - K(t-\tau)\bigr]\,V(d\tau)y.
\end{align*}
Partial integration shows that this is equal to the right hand side.
\end{proof}

To prove the exponential estimate (ii) for $\dy S(t)y$, first note that the exponential estimates for $\dy S_0(t)y$ directly yield exponential estimates for $V_0(t)y$, $\dy W_0(t)$ and $K(t)$. The exponential estimate for $R(t)$ follows from Theorem \ref{thm:resolvent} and the exponential estimate for $K(t)$. Using \sef{eq:10.4} the exponential estimate for $V(t)$ follows from the exponential estimates for $V_0(t)y$ and $R(t)$ together with Theorem \ref{thm:convo-meas-prop}. The exponential estimate for $\dy S(t)y$ now follows from the exponential estimates for $\dy W_0(t)$ and $V(t)$ and again Theorem \ref{thm:convo-meas-prop}.

The proof of (iii) that $t \mapsto \dy S(t)y$ is measurable follows from the measurability of both $t \mapsto \dy S_0(t)y$ and, using Theorem \ref{thm:convo-meas-Borel-fun},
\[t \mapsto \int_0^t \dy W_0(t-\tau) \cdot V(d\tau)y.\]

It remains to prove (iv). In order to compute the Laplace transform, we use the notation
\begin{equation}\label{eq:10.15}
\what{K}(\la) := \int_0^\infty e^{-\la \tau}\,K(d\tau)
\end{equation}
and note that, since $K(0) = 0$, we have
\begin{equation}\label{eq:10.16}
\what{K}(\la) = \la \wbar{K}(\la)
\end{equation}
with $\wbar{K}(\la) := \int_0^\infty e^{-\la \tau} K(\tau)\,d\tau$.

The relation
\begin{equation}\label{eq:10.17}
\wbar{V}(\la) = \gb{I - \what{K}(\la)}^{-1}\wbar{V}_0(\la)
\end{equation}
can either be derived from \sef{eq:9.9} or from \sef{eq:10.4} in combination with \sef{eq:9.14}. From \sef{eq:10.2} we deduce that 
\begin{equation}\label{eq:10.18}
\wbar{V}_0(\la) = \DQ\gb{\wbar{S}_0(\la) - \la^{-1} I}
\end{equation}
and from \sef{eq:10.1} that
\begin{equation}\label{eq:10.19}
\what{K}(\la) = \la \wbar{K}(\la) = \la \DQ \wbar{S}_0(\la)q - \pa{\DQ}{q}.
\end{equation}
Laplace transformation of \sef{eq:10.6} yields, when using \sef{eq:10.3},
\begin{equation}\label{eq:10.20}
\wbar{S}(\la) = \wbar{S}_0(\la) + \la \wbar{S}_0(\la) q \wbar{V}(\la).
\end{equation}
Since $\wbar{S}_0(\la)$ is a bounded bilinear map from $\DY \times Y$ to $\BR$ and $\wbar{V}(\la)$ is a bounded linear map from $Y$ to $\BR^n$, \sef{eq:10.20} defines a bounded bilinear map from $\DY \times Y$ to $\BR$ as well as a bounded linear map from $Y$ to $Y$ (a perturbation of $\wbar{S}_0(\la)$ with an operator with range spanned by $\wbar{S}_0(\la)q$). From
\sef{eq:10.17} and \sef{eq:10.18} it follows that  $\wbar{V}(\la)$ is defined in terms of functionals that belong to $\DY$. Hence \sef{eq:10.20} defines as well a bounded linear operator from $\DY$ to $\DY$. We conclude that \sef{eq:10.20} defines a twin operator and thus verified (iv) of Definition \ref{def:2.1}.

We proceed by showing that the right hand side of \sef{eq:10.20} equals $(\la I - C)^{-1}$ with $C$ defined in \sef{eq:10.7}. 
By combining \sef{eq:10.20}, \sef{eq:10.17}, \sef{eq:10.18} and \sef{eq:10.19} we find that
\begin{align}\label{eq:10.21}
\wbar{S}(\la) &= \wbar{S}_0(\la) + \la \wbar{S}_0(\la)q\Bigl(I + \pa{\DQ}{q} - \la \DQ \wbar{S}_0(\la)q\Bigr)^{-1}\nonumber\\
&\qquad\qquad\qquad\qquad\qquad\times \DQ\gb{\wbar{S}_0(\la) - \la^{-1} I}.
\end{align}
The equation $(\la I - C)z = y$ can be rewritten in the form
\[(\la I - C_0)z = y + \pa{\DQ}{C_0z}q.\]
So it follows that
\[z = (\la I - C_0)^{-1}y + \pa{\DQ}{C_0z}(\la I - C_0)^{-1}q\]
and hence that $\pa{\DQ}{C_0z}$ should satisfy the equation
\begin{align*}
\pa{\DQ}{C_0z} &= \pa{\DQ}{C_0(\la I - C_0)^{-1}y} + \pa{\DQ}{C_0z}\pa{\DQ}{C_0(\la I - C_0)^{-1}q}.
\end{align*}
So necessarily
\begin{align*}
\pa{\DQ}{C_0z} &= \gb{ I - \pa{\DQ}{C_0(\la I - C_0)^{-1}q}}^{-1} \pa{\DQ}{C_0(\la I - C_0)^{-1}y}.
\end{align*}
Using
\[C_0(\la I - C_0)^{-1} = \la(\la I - C_0)^{-1} - I\]
we find that
\begin{align}\label{eq:10.21a}
z &= (\la I - C)^{-1}y\nonumber\\
&= (\la I - C_0)^{-1}y + \bigl[ I + \pa{\DQ}{q} - \la \pa{\DQ}{(\la I - C_0)^{-1}q}\bigr]^{-1}\nonumber\\
&\qquad\qquad\quad \times \la (\la I - C_0)^{-1}q\pa{\DQ}{(\la I - C_0)^{-1}y - \la^{-1}y}\nonumber\\
&= (\la I - C_0)^{-1}y + \bigl[ I - \what K(\la)\bigr]^{-1}\nonumber\\
&\qquad\qquad\quad \times \la (\la I - C_0)^{-1}q\pa{\DQ}{(\la I - C_0)^{-1}y - \la^{-1}y}
\end{align}
Since $(\la I - C_0)^{-1} = \wbar{S}_0(\la)$ this is identical to the right hand side of \sef{eq:10.21}.
\QED

\medskip\noindent
{\sl Proof of Theorem $\ref{thm:10.2}$.} We follow the lines of the proof of Theorem \ref{thm:10.1}, but adapt the details. In order to show that
\[(\dy,y) \mapsto \int_0^t \dy W(d\tau) \cdot V_0(t-\tau)y\]
defines a twin operator, we have to verify

\begin{itemize}
\item[(i)] for given $y \in Y$, the linear functional on $\DY$ defined by
\[\dy \mapsto \int_0^t \dy W(d\tau) \cdot V_0(t-\tau)y\]
is represented by an element of $Y$ and
\item[(ii)] for given $\dy \in \DY$, the linear functional on $Y$ defined by
\[y \mapsto \int_0^t \dy W(d\tau) \cdot V_0(t-\tau)y\]
is represented by an element of $\DY$.
\end{itemize}

To verify (i) we invoke Lemma \ref{lem:8.1} and to verify (ii) we invoke Lemma \ref{lem:8.4}. So $S(t)$ defined by \sef{eq:10.6} is a twin operator and the verification of properties (ii)--(iv) of Definition \ref{def:2.1} proceeds as in the proof of Theorem \ref{thm:10.1}.
\QED

\medskip
Exactly as in Section \ref{sec:6}, the special representation of the perturbed semigroup $S(t)$ given in respectively \sef{eq:10.6} and \sef{eq:10.8} allows us to derive a strong result about the asymptotic behaviour of $S(t)$. 

\begin{theorem}\label{thm:10.4}
Let $K$ be given by \sef{eq:10.1} and let $S(t)$ be given by \sef{eq:10.6} or \sef{eq:10.8} with generator $C$ given by \sef{eq:10.7}. Suppose that $S_0(t)$ is bounded and that $(\la I - C_0)^{-1}$ is bounded for $\Re \la \ge 0$. If
\begin{equation}\label{eq:cond-asymp-behav}
\inf_{\Re z \ge 0} \bigl | \det \gb{ I - \what K(z)} \bigr| > 0,
\end{equation}
then
\begin{equation}\label{eq:10.25}
\|S(t)C^{-1}\| \to 0\qquad\hbox{as}\quad t \to \infty.
\end{equation}
As a consequence we have that $S(t)y \to 0$ as $t \to \infty$ for every $y$ in the norm-closure of $\DOM{C}$.
\end{theorem}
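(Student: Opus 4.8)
The plan is to follow the proof of Theorem \ref{thm:5.4} line by line, adapting it to the Stieltjes setting. The two things to establish are (a) that the twin semigroup $\{S(t)\}$ is bounded and (b) that $\si(C)\cap i\BR = \emptyset$; once these are in hand, Theorem \ref{thm:Tauber} delivers \sef{eq:10.25} and hence the final assertion.

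For (a) I would argue from the representation \sef{eq:10.6} (the argument from \sef{eq:10.8} being its mirror image). Boundedness of $S_0(t)$ gives uniform bounds (over the unit balls of $Y$ and $\DY$) on $K$ as in \sef{eq:10.1}, on $V_0(\cdot)y = \DQ\gb{S_0(\cdot) - I}y$ and on $\dy W_0(\cdot) = \dy S_0(\cdot)q$. The half-line Laplace transform of $K$ is $\what K(\la) = \DQ C_0(\la I - C_0)^{-1}q$ (see \sef{eq:10.19}), which by the hypothesis on $(\la I - C_0)^{-1}$ is bounded for $\Re\la\ge0$; condition \sef{eq:cond-asymp-behav} is then exactly the input required by the half-line Gel'fand theorem (Theorem \ref{thm:Gelfand}, applied to the measure attached to $K$ via Theorem \ref{thm:bv2}) to conclude that the resolvent $R$ solving \sef{eq:9.14} is a bounded measure. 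Theorem \ref{thm:convo-meas-prop} applied to \sef{eq:10.4} then shows that $t\mapsto V(t)y$ is a bounded measure whose total variation is controlled uniformly for $\|y\|\le1$, and Theorem \ref{thm:convo-meas-Borel-fun} applied to \sef{eq:10.6} shows that $t\mapsto\dy S(t)y$ is a bounded Borel function, with bound uniform over $\|\dy\|\le1$, $\|y\|\le1$; hence $\sup_{t\ge0}\|S(t)\|<\infty$.

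For (b) I would read off the Laplace-transform identity \sef{eq:10.21}, equivalently \sef{eq:10.21a}. Since $\what K(\la) = \DQ C_0(\la I - C_0)^{-1}q$ is bounded for $\Re\la\ge0$ and $\det\gb{I - \what K(\la)}$ stays bounded away from zero there by \sef{eq:cond-asymp-behav}, the matrix $\gb{I - \what K(\la)}^{-1}$ is bounded for $\Re\la\ge0$; also $\la(\la I - C_0)^{-1} = I + C_0(\la I - C_0)^{-1}$ is bounded there, and the apparent $\la^{-1}$ terms in \sef{eq:10.21a} are harmless because each occurs multiplied by $\la(\la I - C_0)^{-1}q$, so that no singularity arises at $\la = 0$. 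Consequently \sef{eq:10.21a} represents $(\la I - C)^{-1}$ as a bounded operator for every $\la$ with $\Re\la\ge0$, so $\si(C)\subset\{\la\mid\Re\la<0\}$; in particular $\si(C)\cap i\BR = \emptyset$. Applying Theorem \ref{thm:Tauber} now gives $\|S(t)C^{-1}\|\to0$, and since $C^{-1}$ has dense range in $\clo{\DOM{C}}$ and $S(t)$ is bounded, also $S(t)y\to0$ for $y\in\clo{\DOM{C}}$.

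I expect step (a) to be the main obstacle: one has to make sure that the \emph{a priori} only local ($NBV_{loc}$) regularity of $K$, $V_0$ and $W_0$, combined with the boundedness of $S_0$ and with \sef{eq:cond-asymp-behav}, genuinely upgrades to \emph{global} finite-total-variation control on $[0,\infty)$, so that the half-line Gel'fand theorem and the convolution estimates of Appendix A can be invoked and then propagated through \sef{eq:10.4} and \sef{eq:10.6}. Step (b) is routine algebra with the resolvent formula.
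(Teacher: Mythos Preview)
Your proposal is correct and takes essentially the same route as the paper: establish boundedness of $\{S(t)\}$ via the half-line Gel'fand theorem for $R$ and then propagate through the convolution representations \sef{eq:10.4}--\sef{eq:10.6} (respectively \sef{eq:10.5}--\sef{eq:10.8}), read off boundedness of $(\la I - C)^{-1}$ on $\Re\la\ge0$ from \sef{eq:10.21a}, and invoke Theorem~\ref{thm:Tauber}. The only cosmetic difference is that the paper cites Theorem~\ref{thm:convo-meas-fun2} rather than Theorem~\ref{thm:convo-meas-prop} in the propagation step, and you have correctly flagged the $NBV_{loc}$-to-$NBV$ upgrade as the point requiring care.
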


\begin{proof}
We first show that the semigroup $S(t)$ is bounded. From the half-line Gel'fand theorem, see Theorem \sef{thm:Gelfand}, it follows that the resolvent $R$ of $K$ belongs to $NBV\gb{[0,\infty); \BR^n}$. Fix $y \in Y$ and $\dy \in \DY$. Since $S_0(t)$ is a bounded semigroup, $t \mapsto V_0(t)y$ and $t \mapsto \dy W_0(t)$ are bounded Borel functions on $[0,\infty)$. 

Suppose $S(t)$ is given by \sef{eq:10.6}. From the assumption that $t \mapsto V_0(t)y$ belongs to $NBV_{loc}\gb{[0,\infty); \BR^n}$ and the fact that  $t \mapsto V_0(t)y$ is bounded, it follows from Theorem \ref{thm:convo-meas-fun2} that $t \mapsto V(t)y$ defined by \sef{eq:10.4} belongs to $NBV\gb{[0,\infty); \BR^n}$ as well. Therefore, it follows from Theorem \ref{thm:convo-meas-Borel-fun} that there exists $M \ge 0$ such that $|\dy S(t)y| \le M\|\dy\|\,\|y\|$.  

Suppose $S(t)$ is given by \sef{eq:10.8}.  From the assumption that $t \mapsto \dy W_0(t)$ belongs to $NBV_{loc}\gb{[0,\infty); \BR^n}$ and the fact that  $t \mapsto \dy W_0(t)$ is bounded, it follows from Theorem \ref{thm:convo-meas-fun2} that $t \mapsto \dy W(t)$ defined by \sef{eq:10.5} belongs to $NBV\gb{[0,\infty); \BR^n}$ as well. Therefore it follows from Theorem \ref{thm:convo-meas-Borel-fun} that there exists $M \ge 0$ such that $|\dy S(t)y| \le M\|\dy\|\,\|y\|$ and this proves that $S(t)$ is bounded.

Finally, the representation \sef{eq:10.21a} implies that, under the assumptions of the theorem, $(\la I - C)^{-1}$ is bounded, for $\Re \la \ge 0$. This completes the proof that $S(t)$ is bounded and that $\si(C) \mcap i\BR = \emptyset$. So an application of Theorem \ref{thm:Tauber} yields the proof.
\end{proof}

\section{NFDE -- Neutral Functional Differential\\ Equations}\label{sec:11}
\setcounter{equation}{0}

Much of our motivation for developing the abstract perturbation theory of Section \ref{sec:10} came from our interest in the NFDE
\begin{equation}\label{eq:11.1} 
\frac{d}{dt}\bigl[x(t) - \int_{[0,1]} d\eta(\si)x(t-\si)\bigr] = \int_{[0,1]} d\ze(\si)x(t-\si),\qquad t > 0,
\end{equation}
with initial condition
\begin{equation}\label{eq:11.2}
x(\th) = \ph(\th),\qquad -1 \le \th \le 0.
\end{equation}
Here both $\eta$ and $\ze$ belong to $NBV\gb{[0,1],\BR^{n \times n}}$ and $\ph \in B\gb{[-1,0],\BR^n}$. So we work with the norming dual pair
\begin{equation}\label{eq:11.3}
Y = B\gb{[-1,0],\BR^n},\qquad \DY = NBV\gb{[0,1],\BR^{n}}
\end{equation}
with pairing
\begin{equation}\label{eq:11.4}
\pa{\dy}{y} = \int_{[0,1]} d\dy(\si) \cdot y(-\si).
\end{equation}
The rows of both $\ze$ and $\eta$ are considered as elements of $\DY$. Concerning $\eta$ we additionally assume that
\begin{equation}\label{eq:11.5}
\eta\hbox{ is continuous at zero},
\end{equation}
the idea being that we normalize the jump at zero and write its contribution separately as the term $x(t)$ at the left hand side of \sef{eq:11.1}.

The special case that $\eta$ in \sef{eq:11.1} is identically zero was considered in Section \ref{sec:4}. Here we take the twin semigroup constructed in that section as our starting point. In order to stay in line with the framework of Section \ref{sec:10}, we add an index zero when referring to this ``unperturbed" semigroup $S_0(t)$:
\begin{equation}\label{eq:11.6}
S_0(t)\hbox{ is defined by the right hand side of \sef{eq:6.18}}
\end{equation}
with generator
\begin{align}\label{eq:11.7}
\DOM{C_0} &= \hbox{Lip}\gb{[-1,0],\BR^n}\nonumber\\
C_0\ph &= \bigl\{ \ph' \in Y \mid \ph(\th) = \ph(-1) + \int_{-1}^\th \ph'(\si)\,d\si,\quad \ph'(0) = \pa{\ze}{\ph}\,\bigr\}.
\end{align}
Equivalently
\begin{equation}\label{eq:11.8}
S_0(t)\ph = z(t+\novar;\ph),
\end{equation}
where $z$ is the unique solution of \sef{eq:11.1}--\sef{eq:11.2} with $\eta = 0$.

As in \sef{eq:6.29} we define, for $i=1,\ldots,n$, the elements $q_i \in Y$ by
\begin{equation}\label{eq:11.9}
q_i(\th) = \begin{cases}
0 &\th < 0\\
e_i &\th = 0,
\end{cases}
\end{equation}
where $e_i$ is the $i$-th unit vector in $\BR^n$. The elements $Q_i^\diamond$ of $\DY$ are defined by
\begin{equation}\label{eq:11.10}
Q_i^\diamond(\th) = \eta_i(\th),
\end{equation}
where $\eta_i$ is the $i$-th row of the matrix valued function $\eta$.

The aim of the present section is to show that, with these definitions, the twin semigroup $\bigl\{S(t)\bigr\}$ defined in Theorem \ref{thm:10.2} is exactly the semigroup of solution operators of \sef{eq:11.1}--\sef{eq:11.2} (here, as detailed below, solution refers to the integral equation obtained from \sef{eq:11.1}--\sef{eq:11.2} by integration with respect to time; it is straightforward to prove existence and uniqueness of a solution for this integral equation, cf. \cite[Theorem 9.1.2]{HVL93}).

At a formal level this is immediate: if we rewrite \sef{eq:11.1} as
\[\dot x(t) = \int_{[0,1]} d\eta(\si)\dot x(t-\si) + \int_{[0,1]} d\ze(\si)x(t-\si)\]
and proceed as in the formal derivation of \sef{eq:6.10} from \sef{eq:6.1}, we obtain 
\[\frac{du}{dt} \in C_0u + q \cdot \pa{\DQ}{C_0u}\]
by making the crucial observation that, on account of \sef{eq:11.10} and \sef{eq:11.5}, the value of $\gb{C_0u}(0)$ is irrelevant when evaluating the second term at the right hand side.

The rigorous proof of the general case, presented below, involves an unpleasant amount of formula manipulation. We therefore first present the proof for the relatively simple situation that the kernel $\ze$ in \sef{eq:11.1} is identically zero. In that case we have
\begin{equation}\label{eq:11.11}
\gb{S_0(t)y}(\th) = y(t+\th),
\end{equation}
where by definition $y(t) = y(0)$ for $t \ge 0$. Hence
\begin{align}\label{eq:11.12}
V_0(t)y &= \DQ\gb{S_0(t) - I}y\nonumber\\
&= \int_{[0,1]} d\eta(\si)\bigl[y(t-\si) - y(-\si)\bigr]\nonumber\\
&= \int_{(t,1]} d\eta(\si)y(t-\si) + \eta(t)y(0) - \int_{[0,1]} d\eta(\si)y(-\si).
\end{align}
It follows that
\begin{equation}\label{eq:11.13}
K(t) = V_0(t)q = \eta(t).
\end{equation}
Moreover
\begin{align}\label{eq:11.14}
\dy W_0(t) = \dy S_0(t)q &= \int_{[0,1]} d\dy(\si)q(t-\si)\nonumber\\
&= \int_{[0,t]} d\dy(\si) = \dy(t)
\end{align}
(where now $\dy(t) = \dy(1)$ for $t \ge 1$, by definition) and accordingly
\begin{equation}\label{eq:11.15}
\int_0^t \dy W_0(d\tau) \cdot V(t-\tau)y = \int_0^t d\dy(d\tau) \cdot V(t-\tau)y.
\end{equation}
Let, for $\th \in [0,1]$,
\begin{equation}\label{eq:11.16}
y^\diamond_\th(\si) = \begin{cases}
0,  &0 \le \si < \th;\\
(1,1,\ldots,1)^T, &\th \le \si \le 1.
\end{cases}
\end{equation}
The identity
\[y_\th^\diamond S(t)y = y_\th^\diamond S_0(t)y + \int_0^t y_\th^\diamond W_0(d\tau) \cdot V(t-\tau)y\]
reads
\begin{align}\label{eq:11.17}
\gb{S(t)y}(-\th) &= \gb{S_0(t)y}(-\th) + \begin{cases}
0 &\hbox{if } \th \not\in [0,t]\\
V(t-\th)y &\hbox{if } \th \in [0,t]
\end{cases}\nonumber\\
&= y(t-\th) + V(t-\th)y\, \chi_{[0,t]}(\th).
\end{align}
So in order to establish that the twin semigroup of Theorem \ref{thm:10.2} is indeed the semigroup of solution operators of \sef{eq:11.1}, for the special case $\ze = 0$, we need to verify that
\begin{equation}\label{eq:11.18}
V(t)y = x(t) - y(0).
\end{equation}
By elementary operations one derives from \sef{eq:11.1} the equation
\begin{equation}\label{eq:11.19}
x(t) - y(0) = \int_{[0,t)} d\eta(\si)\bigl[x(t-\si) - y(0)\bigr] + V_0(t)y
\end{equation}
with $V_0(t)y$ as specified in \sef{eq:11.12}. From \sef{eq:11.13}, \sef{eq:9.9} and \sef{eq:11.19} it follows, by uniqueness, that \sef{eq:11.18} holds. This completes the proof in the special case when $\zeta = 0$.

\medskip

For the general case we have to replace \sef{eq:11.11} by
\begin{equation}\label{eq:11.20}
\gb{S_0(t)y}(\th) = z(t+\th),
\end{equation}
with $z$ the solution of \sef{eq:6.15}, as given in \sef{eq:6.17} in terms of the resolvent $\rho$ of $\ze$ and $f$ defined by \sef{eq:6.16}. So \sef{eq:11.12} is replaced by
\begin{equation}\label{eq:11.21}
V_0(t)y = \gb{\eta \star z}(t) + g(t)
\end{equation}
with 
\begin{equation}\label{eq:11.21a}
g(t) = \int_{(t,1]} d\eta(\si)y(t-\si) - \int_{[0,1]} d\eta(\si)y(-\si).
\end{equation}
For $y=q$ we have $f = I$ and hence for $t \ge 0$
\[z(t) = I + \int_0^t \rho(\tau)\,d\tau.\]
Since $g(0) = 0$ for $y=q$ we find
\begin{align}\label{eq:11.22}
K(t) = V_0(t)q &= \int_{[0,t]} d\eta(\si)\bigl[I + \int_0^{t-\si} \rho(\tau)\,d\tau\bigr]\nonumber\\
&= \eta(t) + \int_0^t \eta(\si) \rho(t-\si)\,d\si.
\end{align}
For $t > 0$
\begin{align}\label{eq:11.23}
\dy W_0(t) = \dy S_0(t)q &= \int_{[0,1]} d\dy(\si)\bigl[I + \int_0^{t-\si} \rho(\tau)\,d\tau\bigr]\,\chi_{\si \le t}(\si)\nonumber\\
&= \dy(t) + \int_0^t \dy(\si) \rho(t-\si)\,d\si
\end{align}
(so note that $t \mapsto \dy W_0(t)$ is actually continuous in $t=0$!) and accordingly
\begin{align}\label{eq:11.24}
&\int_0^t \dy W_0(d\tau) \cdot V(t-\tau)y\nonumber\\
&\qquad= \int_0^t \dy(d\tau) \cdot V(t-\tau)y + \int_0^t d_\tau\bigl[\int_0^\tau \dy(\si)\rho(\tau-\si)\,d\si\bigr] \cdot V(t-\tau)y\nonumber\\
&\qquad= \int_0^t \dy(d\tau) \cdot \bigl[ V(t-\tau)y + \int_0^{t-\tau} \rho(\th)V(t-\tau-\th)y\,d\th\bigr]
\end{align}
(where in the last step we have used integration by parts).

So, repeating the argument embodied in \sef{eq:11.16} and \sef{eq:11.17}, we find that the general version of \sef{eq:11.18} reads
\begin{equation}\label{eq:11.25}
V(t)y + \int_0^t \rho(\th)V(t-\th)\,d\th = x(t) - z(t).
\end{equation}

We rewrite \sef{eq:11.1}--\sef{eq:11.2} as
\[x = \eta \star x + \ze \ast x + f +g\]
with $f$ given by \sef{eq:6.16} and $g$ by \sef{eq:11.22}. Subtracting
\[z = \ze \ast z + f\]
we obtain (using \sef{eq:11.21} in the second step)
\begin{align*}
x - z &= \eta \star x + \ze \ast (x-z) + g\\
&= \eta \star (x-z) + \ze \ast (x-z) + V_0.
\end{align*}
Applying $\rho \ast$ to both sides and using \sef{eq:6.13} we find
\[\ze \ast (x-z) = \rho \ast \eta \star (x-z) + \rho \ast V_0\]
and accordingly we can rewrite the equation for $x-z$ in the form
\begin{equation}\label{eq:11.27}
x -z = \eta \star (x-z) + \rho \ast \eta \star (x-z) + V_0 + \rho \ast V_0.
\end{equation}
The general equation \sef{eq:9.9} amounts, when $K$ is given by \sef{eq:11.23}, to
\begin{align*}
V &= \eta \star V + \eta \star \rho \ast V + V_0\\
&= \eta \star (V + \rho \ast V) + V_0.
\end{align*}
So
\[\rho \ast V = \rho \ast \eta \star V + \rho \ast \eta \star \rho \ast V + \rho \ast V_0\]
and
\begin{equation}\label{eq:11.28}
V + \rho \ast V = \eta \star (V + \rho \ast V) + \rho \ast \eta \star (V + \rho \ast V) + V_0 +\rho \ast V_0.
\end{equation}
Comparing \sef{eq:11.27} and \sef{eq:11.28} we deduce from the uniqueness of a solution that \sef{eq:11.25} holds.

We summarize our conclusions as

\begin{theorem}\label{thm:11.1}
The semigroup of solution operators of \sef{eq:11.1}--\sef{eq:11.2}, with the assumption \sef{eq:11.5}, is identical  to the twin semigroup of Theorem $\ref{thm:10.2}$ when the specifications \sef{eq:11.3}, \sef{eq:11.4}, $\sef{eq:11.6}\slash\sef{eq:11.8}$,\sef{eq:11.7},\sef{eq:11.9}, and \sef{eq:11.10} are made.
\end{theorem}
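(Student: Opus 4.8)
The plan is to verify two things: first, that the construction of Theorem~\ref{thm:10.2} really does apply under the specifications \sef{eq:11.3}--\sef{eq:11.10}, so that it produces a bona fide twin semigroup $\{S(t)\}$ with generator $C$ matching the NFDE; and second, that the explicit representation \sef{eq:10.8} for this $S(t)$, read off pointwise, coincides with the solution operator $\ph \mapsto x(t+\novar\,;\ph)$ of \sef{eq:11.1}--\sef{eq:11.2}.

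For the first part I would start by recalling from Appendix~B that $\gb{B\gb{[-1,0],\BR^n},\,NBV\gb{[0,1],\BR^n}}$ with the pairing \sef{eq:11.4} is a norming dual pair for which \sef{eq:4.1} and \sef{eq:4.9} hold, so the structural hypotheses of Theorem~\ref{thm:10.2} on the pair are met. Next I must check the standing hypotheses of Section~\ref{sec:10} for the data \sef{eq:11.9}, \sef{eq:11.10}: that $t \mapsto K_{ij}(t) = \DQ_i\gb{S_0(t)-I}q_j$ lies in $NBV_{loc}$ and is continuous at $t=0$, and that $t \mapsto \dy W_0(t) = \dy S_0(t)q$ lies in $NBV_{loc}$ for every $\dy \in \DY$. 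The first is exactly the computation \sef{eq:11.13}/\sef{eq:11.22} ($K=\eta$ when $\ze=0$, and $K = \eta + \eta \ast \rho$ in general), with continuity at $0$ coming from assumption \sef{eq:11.5} on $\eta$ together with continuity of the convolution term; the second is \sef{eq:11.14}/\sef{eq:11.23}, where in fact $\dy W_0$ is itself continuous at $0$. (Note that $t\mapsto V_0(t)y$ need \emph{not} be $NBV$ for $y\in B$, which is why we must invoke Theorem~\ref{thm:10.2} and not Theorem~\ref{thm:10.1}.) With all this, Theorem~\ref{thm:10.2} delivers a twin semigroup with generator $C$ given by \sef{eq:10.7}, and under \sef{eq:11.9}--\sef{eq:11.10}, using \sef{eq:11.5} to see that the value of $\gb{C_0 u}(0)$ does not enter $\pa{\DQ}{C_0 u}$, this $C$ is precisely the generator of the NFDE described heuristically after \sef{eq:11.8}.

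For the second part the crux is to identify $V(t)y$ (defined by \sef{eq:10.4}) with a quantity built from the NFDE solution. In the model case $\ze = 0$ I would observe that, after one integration in time, $x$ satisfies the renewal equation \sef{eq:11.19} with kernel $\eta = K$ and forcing term $V_0(t)y$ from \sef{eq:11.12}, while $V$ satisfies \sef{eq:9.9} with the same kernel and the same forcing term; uniqueness of solutions of renewal equations (Appendix~A) then forces $V(t)y = x(t) - y(0)$, which is \sef{eq:11.18}. In general one must first strip off the term $\ze \ast x$: writing the integrated equation as $x = \eta \star x + \ze \ast x + f + g$, subtracting $z = \ze \ast z + f$, applying $\rho\,\ast$ and invoking the resolvent identity \sef{eq:6.13} yields \sef{eq:11.27} for $x - z$; the identical manipulations applied to \sef{eq:9.9} yield \sef{eq:11.28} for $V + \rho \ast V$; the two are the same $\BR^n$-valued Volterra equation, so uniqueness gives \sef{eq:11.25}. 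Finally I would substitute these identities into the pointwise version of \sef{eq:10.8}: testing against the Heaviside-type functionals $\dy = y_\th^\diamond$ of \sef{eq:11.16} collapses the $W_0(d\tau)$-integral, by \sef{eq:11.15}/\sef{eq:11.24}, to evaluation at $\tau = \th$, producing \sef{eq:11.17}; since $\gb{S_0(t)y}(-\th) = z(t-\th)$, which equals $y(t-\th)$ when $t-\th<0$, substituting \sef{eq:11.25} (resp.\ \sef{eq:11.18}, with $z = y(\novar)$ and $\rho = 0$) gives $\gb{S(t)y}(-\th) = x(t-\th\,;y)$ for all $\th \in [0,1]$, which is the definition of the solution operator.

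I expect the main obstacle to be the bookkeeping in the general case: keeping the two convolution products straight (ordinary $\ast$ versus the Stieltjes product $\star$ against $d\eta$) and checking rigorously that \sef{eq:11.27} and \sef{eq:11.28} are literally the same Volterra equation, with matching kernels \emph{and} matching forcing terms $V_0 + \rho \ast V_0$, so that the uniqueness theorem of Appendix~A legitimately applies. This requires justifying the associativity and distributivity identities mixing $\ast$ and $\star$, e.g.\ $\ze \ast (x-z) = \rho \ast \eta \star (x-z) + \rho \ast V_0$, in the $NBV$/measure framework of Appendix~A. A secondary point deserving care is the repeated use of \sef{eq:11.5}: it is simultaneously what makes $K$ continuous at $t=0$ (hence what licenses Theorem~\ref{thm:10.2}, rather than leaving a jump that would obstruct the $NBV$ bookkeeping) and what makes the heuristic identification of the generator correct.
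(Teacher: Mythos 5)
Your proposal follows essentially the same route as the paper: first verify that the data \sef{eq:11.9}--\sef{eq:11.10} satisfy the standing hypotheses of Section~\ref{sec:10} (so Theorem~\ref{thm:10.2}, not \ref{thm:10.1}, is the applicable one), then compute $V_0$, $K$, $W_0$ in both the $\ze=0$ case and the general case, establish the identity $V(t)y + \int_0^t \rho(\th)V(t-\th)y\,d\th = x(t)-z(t)$ by comparing the two Volterra equations \sef{eq:11.27} and \sef{eq:11.28} and invoking uniqueness, and finally unwind \sef{eq:10.8} against the step functionals \sef{eq:11.16} to recover $\gb{S(t)y}(\th) = x(t+\th;y)$. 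Your explicit flagging of why $V_0(t)y$ may fail to be $NBV$ for general $y\in B$, forcing the choice of Theorem~\ref{thm:10.2}, is a correct reading of a point the paper leaves largely implicit; otherwise the two arguments coincide.
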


Motivated by Theorem \ref{thm:10.4} we add a result about the asymptotic behaviour for $t \to \infty$.

\begin{theorem}\label{thm:11.2}
Suppose that $\eta$ has no singular part \(see \sef{eq:decomp-meas}\). The semigroup of solution operators of \sef{eq:11.1}--\sef{eq:11.2} restricted to $C\gb{[-1,0];\BR^n}$ is asymptotically stable if the following two conditions are satisfied
\begin{enumerate}
\item[\textup{(i)}] $\det \bigl[ zI - \int_0^1 e^{-z\si} d\ze(\si) \bigr] \not= 0$ for $\Re z \ge 0$;
\item[\textup{(ii)}] $\inf_{\Re z \ge 0}\ \bigl|\det \bigl[ I - \int_0^1 e^{-z\si} d\eta(\si) \bigr] \bigr| > 0$.
\end{enumerate}
\end{theorem}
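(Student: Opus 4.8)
The plan is to derive the theorem from the abstract asymptotic‑stability result Theorem \ref{thm:10.4}, via the identification in Theorem \ref{thm:11.1} of the NFDE solution semigroup $\{S(t)\}$ with the twin semigroup of Theorem \ref{thm:10.2} for the data \eqref{eq:11.3}--\eqref{eq:11.10}. For that data the generator is the $C$ of \eqref{eq:10.7} with $C_0$ the RFDE generator \eqref{eq:11.7} attached to $\ze$, so that $\clo{\DOM C}=\clo{\DOM{C_0}}=\clo{\mathrm{Lip}\gb{[-1,0],\BR^n}}=C\gb{[-1,0],\BR^n}$ (cf. \eqref{eq:6.23}); hence ``restricted to $C$'' is restriction to the subspace of strong continuity and ``asymptotically stable'' means $S(t)y\to0$ as $t\to\infty$ for every $y\in C$, boundedness of $\{S(t)\}$ being part of what Theorem \ref{thm:10.4} yields. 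It therefore suffices to verify the hypotheses of Theorem \ref{thm:10.4}; the structural prerequisites of Section \ref{sec:10} (that the pair \eqref{eq:11.3} satisfies \eqref{eq:4.1} and \eqref{eq:4.9}, and that $K$ and $t\mapsto\dy W_0(t)$ belong to $NBV_{loc}$ and are continuous at $0$) have already been dealt with, see \eqref{eq:11.22}--\eqref{eq:11.24} and Appendix B.

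First I would dispose of the hypotheses involving $\{S_0(t)\}$ alone. Condition (i) says the retarded characteristic matrix $\De_\ze(\la):=\la I-\int_0^1e^{-\la\si}\,d\ze(\si)$ is invertible for $\Re\la\ge0$; as $\De_\ze(\la)^{-1}$ is continuous there and $\|\De_\ze(\la)^{-1}\|\sim|\la|^{-1}$ at infinity, it is bounded on $\Re\la\ge0$, and then the formula \eqref{eq:6.21}--\eqref{eq:6.22} for $(\la I-C_0)^{-1}$ in terms of $\De_\ze(\la)^{-1}$ shows $(\la I-C_0)^{-1}$ is bounded for $\Re\la\ge0$. Condition (i) also forces the spectral bound of $C_0$ to be negative, so by the classical (eventual‑compactness) theory of RFDE the restriction $T_0(t)=S_0(t)|_C$ decays exponentially; since $S_0(1)$ maps $Y$ into $\DOM{C_0}\subset C$ (as for any RFDE, cf. end of Section \ref{sec:6}) we get $S_0(t)y\to0$ for all $y\in Y$, whence $\sup_{t\ge0}\|S_0(t)\|<\infty$ by uniform boundedness.

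The crux is condition \eqref{eq:cond-asymp-behav}, $\inf_{\Re z\ge0}\abs{\det\gb{I-\what K(z)}}>0$. Here I would compute $\what K$ from \eqref{eq:10.19}: since $\pa{\DQ}{q}=0$ (because $\eta$ is continuous at zero, \eqref{eq:11.5}) and $(\la I-C_0)^{-1}q_j$ is the function $\th\mapsto e^{\la\th}\De_\ze(\la)^{-1}e_j$ by \eqref{eq:6.21}--\eqref{eq:6.22}, one obtains $\what K(\la)=\la\,\what\eta(\la)\,\De_\ze(\la)^{-1}$ with $\what\eta(\la):=\int_0^1e^{-\la\si}\,d\eta(\si)$, in agreement with \eqref{eq:11.22}. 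Hence
\[
I-\what K(\la)=\Bigl[\la\gb{I-\what\eta(\la)}-\int_0^1e^{-\la\si}\,d\ze(\si)\Bigr]\De_\ze(\la)^{-1}=\De(\la)\,\De_\ze(\la)^{-1},
\]
where $\De(\la):=\la\gb{I-\what\eta(\la)}-\int_0^1e^{-\la\si}\,d\ze(\si)$ is the characteristic matrix of the NFDE \eqref{eq:11.1}, so $\det\gb{I-\what K(\la)}=\det\De(\la)/\det\De_\ze(\la)$. For $|\la|\ge R$ with $\Re\la\ge0$, $\De(\la)/\la=\gb{I-\what\eta(\la)}+o(1)$ and $\De_\ze(\la)/\la=I+o(1)$ uniformly, so $\det\gb{I-\what K(\la)}=\det\gb{I-\what\eta(\la)}+o(1)$, which condition (ii) bounds away from $0$ once $R$ is large; on the compact complement $\abs{\det\De_\ze(\la)}$ is bounded above and, by (i), below, so it remains only to see that $\det\De(\la)\ne0$ there, i.e. that the NFDE \eqref{eq:11.1} has no characteristic root in $\Re\la\ge0$.

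This last point is where the assumption that $\eta$ has no singular part is needed: together with \eqref{eq:11.5} it makes condition (ii) equivalent to stability of the difference operator $D\ph=\ph(0)-\int_0^1d\eta(\si)\ph(-\si)$, and then the classical spectral theory of NFDE (\cite[Ch.~9]{HVL93}) --- essential spectrum of the semigroup on $C$ governed by $D$ and hence inside the open unit disc, point spectrum consisting of the characteristic roots, which (i) confines to $\Re\la<0$ --- gives $\sup\{\Re\la:\det\De(\la)=0\}<0$. With \eqref{eq:cond-asymp-behav} in hand, Theorem \ref{thm:10.4} yields $\|S(t)C^{-1}\|\to0$ and hence $S(t)y\to0$ as $t\to\infty$ for every $y\in\clo{\DOM C}=C\gb{[-1,0],\BR^n}$, which with the boundedness of $\{S(t)\}$ is the asserted asymptotic stability. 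I expect this very step --- deducing absence of closed‑right‑half‑plane characteristic roots of the NFDE from (i) and (ii) --- to be the main obstacle: a Rouché/perturbation argument that treats the $\eta$‑term as a perturbation of the retarded problem is unavailable, since (ii) controls $D$ but imposes no norm smallness on $\eta$, so one is forced into genuinely neutral spectral arguments, with ``no singular part'' the hypothesis that makes (ii) coincide with stability of $D$.
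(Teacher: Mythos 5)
Your computation $\what K(\la)=\la\what\eta(\la)\De_\ze(\la)^{-1}$ (where $\what\eta(\la):=\int_0^1 e^{-\la\si}\,d\eta(\si)$, $\De_\ze(\la):=\la I-\int_0^1 e^{-\la\si}\,d\ze(\si)$, using $\pa{\DQ}{q}=0$ from \eqref{eq:11.5}), and hence $\det\gb{I-\what K(\la)}=\det\De(\la)/\det\De_\ze(\la)$ with $\De(\la):=\la\gb{I-\what\eta(\la)}-\int_0^1 e^{-\la\si}\,d\ze(\si)$ the NFDE characteristic matrix, is correct, and it is in fact more careful than what the paper writes: the paper's proof asserts ``in the present setting $K=\eta$'', which contradicts the paper's own \eqref{eq:11.22}, $K=\eta+\eta*\rho$, as soon as $\ze\ne 0$.

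However, the step you yourself flag as ``the main obstacle'' is a genuine gap, and it cannot be filled: conditions (i) and (ii) do \emph{not} imply $\det\De(\la)\ne 0$ for $\Re\la\ge 0$. Your assertion that (i) ``confines'' the NFDE characteristic roots to $\Re\la<0$ misreads (i), which is a condition on $\det\De_\ze$ (the characteristic function of the \emph{retarded} sub-problem) and carries no information about zeros of $\det\De$. Concretely, take $n=1$, $\eta(\si)=\frac12 H(\si-1)$ and $\ze(\si)=-\frac{\pi}{2\sqrt3}H(\si-1)$. Then $\eta$ has no singular part; (ii) holds since $|\what\eta(\la)|=\frac12|e^{-\la}|\le\frac12$ for $\Re\la\ge0$; (i) holds since $\frac{\pi}{2\sqrt3}<\frac{\pi}{2}$, so $\la+\frac{\pi}{2\sqrt3}e^{-\la}$ has no zeros in $\Re\la\ge0$. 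Yet a direct computation gives $\De(i\pi/3)=i\frac{\pi}{3}\gb{1-\frac12 e^{-i\pi/3}}+\frac{\pi}{2\sqrt3}e^{-i\pi/3}=0$, so $\det\gb{I-\what K}$ vanishes at $\la=i\pi/3$, the hypothesis \eqref{eq:cond-asymp-behav} of Theorem~\ref{thm:10.4} fails, and the NFDE possesses the bounded non-decaying solution $x(t)=\cos(\pi t/3)$ with continuous initial data. Thus (i), (ii) and ``no singular part'' are not sufficient for the stated conclusion; the ``classical NFDE spectral theory'' you invoke does not derive absence of closed right half-plane characteristic roots from these hypotheses, and some condition on $\det\De$ itself must be added.
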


\begin{proof}
The first condition and Theorem \ref{thm:5.5} imply that the unperturbed semigroup $\{S_0(t)\}$ is bounded. Furthermore, in the present setting $K$ = $\eta$ and the second condition is equivalent to \sef{eq:cond-asymp-behav}. Since in the present setting the norm closure of $\DOM{C}$ equals $C\gb{[-1,0];\BR^n}$, the result follows from an application of Theorem \ref{thm:10.4}.
\end{proof}
 
 \medskip
In contrast to RFDE, general NFDE do not have smoothing properties and it is a delicate question whether the semigroup of solution operators of \sef{eq:11.1}--\sef{eq:11.2} is asymptotically stable under the assumptions of Theorem \ref{thm:11.2} on $B\gb{[-1,0];\BR^n}$. The solutions of NFDE with an absolutely continuous measure $\ze$ do become continuous and Theorem \ref{thm:11.2} can be used to study the asymptotic stability of the semigroup on $B\gb{[-1,0];\BR^n}$. 

As an illustration, consider the following NFDE
 \begin{equation}\label{eq:nfde-example}
 \frac{d}{dt}\bigl[x(t) - \int_0^1 a(s)x(t-s)\,ds\bigr] = -c x(t)
 \end{equation}
 with $c > 0$ and $\int_0^1 |a(s)|\,ds < 1$. 
 
Note that $c > 0$ implies that the first condition in Theorem \ref{thm:11.2} is satisfied and that $\int_0^1 |a(s)|\,ds < 1$ implies that the second condition in Theorem \ref{thm:11.2} is satisfied as well. Therefore, the zero solution of \sef{eq:nfde-example} is asymptotically stable.

\section{RE - Renewal equations with BV kernels}\label{sec:12}
\setcounter{equation}{0}

If a cell divides into two daugther cells after a cell cycle of fixed length, and we take this length as the unit of time, we may replace \sef{eq:7.1} by
\begin{equation}\label{eq:12.1}
b(t) = 2b(t-1)
\end{equation}
when mortality is negligible. More generally we may consider
\begin{equation}\label{eq:12.2}
b(t) = \int_0^1 L(da) b(t-a),
\end{equation}
where the model ingredient $L$ specifies the age-specific expected {\sl cumulative} number of offspring. (The motivation for writing the $L$ factor first is that in the generalization to systems of equations, $L$ is a matrix and $b$ is a vector.) 

The assumption
\begin{equation}\label{eq:12.3}
L \hbox{ is continuous in } a = 0
\end{equation}
reflects that instantaneous reproduction by a newborn individual is impossible. It turns out to be useful to extend the domain of definition of $L$ via
\begin{equation}\label{eq:12.15}
L(a) = 0 \quad \hbox{for } a \le 0\quad\hbox{and}\quad L(a) = L(1)\quad \hbox{for } a \ge 1.
\end{equation}
In terms of $B$ defined by (cf.  \sef{eq:7.16})
\begin{equation}\label{eq:12.4}
B(t) = \int_0^t b(\tau)\,d\tau,\qquad t > 0,
\end{equation}
we can write \sef{eq:12.2} as the neutral delay differential equation
\begin{equation}\label{eq:12.5}
B'(t) = \int_0^1 L(da) B'(t-a)
\end{equation}
Provided $B'$ is of bounded variation and $B_t'$ is normalized to be zero in zero, we can rewrite \sef{eq:12.5} as
\begin{equation}\label{eq:12.6}
B'(t) = \int_{-1}^0 \gb{L(-\si) - L(1)} B_t'(d\si).
\end{equation}
To verify this transformation, we fix $t$ and show that we can interpret \sef{eq:12.5} and \sef{eq:12.6} as convolution of measures. Note that $L(-\cdot) - L(1) \in NBV\gb{\BR}$ and so by Theorem \ref{thm:bv2} there exists a measure $\mu$ such that
\begin{equation}\label{eq:12.6a}
L(a) - L(1) = \mu\gb{(-\infty,a]}.
\end{equation}
Also note that we can normalize $B'$ such that $B'(a) = 0$ for $a \ge t$ and again by Theorem \ref{thm:bv2} there exists a measure $\nu$ such that
\begin{equation}\label{eq:12.6b}
B'(a) = \nu\gb{(-\infty,a]}.
\end{equation}
An application of \sef{eq:convo-meas-ident} now yields
\begin{align*}
B'(t) = \int_0^1 L(da)\,B'(t-a) &= \int_\BR L(da)\, B'(t-a)\\
&= \int_\BR \mu(da)\, \nu\gb{(-\infty,t-a]}\\
&= \int_\BR \mu\gb{(-\infty,t-a]}\,\nu(da)\quad(\mbox{by \sef{eq:convo-meas-ident}})\\
&= \int_\BR \gb{L(t-a) - L(1)}\, B'(da)\\
&= \int_\BR \gb{L(-\si) - L(1)}\, B_t'(d\si)\quad(\mbox{with } t-a = -\si)\\
&= \int_{-1}^0 \gb{L(-\si) - L(1)}\, B_t'(d\si)
\end{align*}
and this shows that the equations \sef{eq:12.5} and \sef{eq:12.6} are equivalent.

Now recall the definition of $Y$, $\DY$ and $C_0$ in \sef{eq:7.12}, \sef{eq:7.13} and \sef{eq:7.20}, respectively. It appears that the right hand side of \sef{eq:12.6} can be written as
\[\pa{\DQ}{C_0B_t}\]
when we define
\begin{equation}\label{eq:12.7}
\DQ(\th) = L(\th) - L(1),\qquad 0 \le \th \le 1.
\end{equation}
Thus we are led to believe that the theory of Section \ref{sec:10} applies to equation \sef{eq:12.5}. The aim of this section is to show that this is indeed the case by elaborating the details.

We supplement \sef{eq:12.5} by the initial condition
\begin{equation}\label{eq:12.8}
B(\th) = \psi(\th),\qquad -1 \le \th \le 0
\end{equation}
with $\psi \in Y$, so in particular $B(0) = \psi(0) = 0$.
Integrating both sides of \sef{eq:12.5} with respect to time from $0$ to $t$, we obtain first
\begin{equation}\label{eq:12.9}
B(t) = \int_0^1 L(da)\bigl[B(t-a) - B(-a)\bigr]
\end{equation}
and next, using \sef{eq:12.8},
\begin{equation}\label{eq:12.10}
B(t) = \int_{[0,t]} L(da)B(t-a) + f(t)
\end{equation}
with
\begin{equation}\label{eq:12.11}
f(t) := \int_{(t,1]} L(da)\psi(t-a) - \int_{[0,1]} L(da)\psi(-a).
\end{equation}
The {\sl resolvent} $R$ of $L$ is the solution of (cf. Theorem \ref{thm:resolvent})
\begin{equation}\label{eq:12.12}
R(a) = \int_{[0,a]} L(a-\si) R(d\si) + L(a)
\end{equation}
which is consistent with $R(0)=0$ and shows that $R$, just like $L$ (recall \sef{eq:12.3}), is continuous from the right in $a=0$. Because of this property of both $R$ and $L$ we have
\[\int_{[0,a]} L(a-\si)R(d\si) = \int_{[0,a]} L(d\si)R(a-\si).\]
We also note that in general, i.e., even for systems, so for functions taking values in $\BR^n$,
\[\int_{[0,a]} L(a-\si)R(d\si) = \int_{[0,a]} R(d\si)L(a-\si)\]
whenever $R$ is the resolvent of $L$, cf. Theorem \ref{thm:resolvent}. According to Theorem \ref{thm:renewal} the solution of \sef{eq:12.10} is given by
\begin{equation}\label{eq:12.13}
B(t) = f(t) + \int_{[0,t]} R(da)f(t-a).
\end{equation}
Starting from the initial condition \sef{eq:12.8} we thus provided a constructive definition of $B(t)$ for $t > 0$. Clearly the definition of the operators $S(t)$ in \sef{eq:7.18} extends to the current situation. We want to identify these operators with the semigroup of Theorem \ref{thm:10.1} when $Y$, $\DY$, $\{S_0(t)\}$, $C_0$, $q$ and $\DQ$ are given by, respectively, \sef{eq:7.12}, \sef{eq:7.13}, \sef{eq:7.19}, \sef{eq:7.20}, \sef{eq:7.22} and \sef{eq:12.7}.

In \sef{eq:9.6} a family of maps $V_0(t) : Y \to \BR$ was defined by
\begin{equation}\label{eq:12.14}
V_0(t)\psi = \DQ\gb{S_0(t)-I}\psi.
\end{equation}
Our first step will be to spell out the right hand side for the current situation.

\begin{lemma}\label{lem:12.1}
Let $V_0$ be defined by \sef{eq:12.14}, then
\begin{equation}\label{eq:12.16}
V_0(t)\psi = \int_{-1}^0 \psi(d\th)\,\bigl[L(t-\th) - L(-\th)\bigr].
\end{equation}
\end{lemma}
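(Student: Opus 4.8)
The plan is to unwind the definition $V_0(t)\psi = \DQ\bigl(S_0(t)-I\bigr)\psi$ by substituting the explicit formulas for $S_0(t)$ on $Y=NBV([-1,0];\BR)$ given in \sef{eq:7.19}, for the pairing $\pa{\novar}{\novar}$ given in \sef{eq:7.13a}, and for $\DQ$ given in \sef{eq:12.7} (so $\DQ(\th)=L(\th)-L(1)$ on $[0,1]$). First I would write, using the pairing convention, $\DQ y = \int_{-1}^0 \DQ(-\th)\,y(d\th) = \int_{-1}^0 \bigl(L(-\th)-L(1)\bigr)y(d\th)$ for $y\in Y$. Applying this with $y = S_0(t)\psi$ and $y=\psi$ and subtracting gives
\[
V_0(t)\psi = \int_{-1}^0 \bigl(L(-\th)-L(1)\bigr)\,\bigl(S_0(t)\psi\bigr)(d\th) - \int_{-1}^0 \bigl(L(-\th)-L(1)\bigr)\,\psi(d\th).
\]

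The next step is to compute the Stieltjes measure $\bigl(S_0(t)\psi\bigr)(d\th)$. From \sef{eq:7.19}, $\bigl(S_0(t)\psi\bigr)(\th) = \psi(t+\th)$ for $t+\th\le 0$ and $=0$ for $t+\th>0$; since $\psi$ is normalized to vanish at $\th=0$, the function $\th\mapsto\psi(t+\th)$ already vanishes at $\th=-t$, so the shifted-and-truncated function is continuous at the cut point $\th=-t$ and $\bigl(S_0(t)\psi\bigr)(d\th)$ is simply the pushforward of $\psi(d\sigma)$ under $\sigma\mapsto\sigma-t$, restricted to $\th\in[-1,-t]$ (equivalently $\sigma\in[t-1,0]$). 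Substituting $\sigma = t+\th$ in the first integral turns it into $\int_{t-1}^{0}\bigl(L(t-\sigma)-L(1)\bigr)\psi(d\sigma)$, and because $L(a)=0$ for $a\le 0$ (convention \sef{eq:12.15}) the integrand's ``$L(t-\sigma)$'' piece vanishes for $\sigma>t$, so I may as well integrate over all of $[-1,0]$. Then the two integrals combine into
\[
V_0(t)\psi = \int_{-1}^0 \bigl[\bigl(L(t-\th)-L(1)\bigr) - \bigl(L(-\th)-L(1)\bigr)\bigr]\psi(d\th) = \int_{-1}^0 \psi(d\th)\,\bigl[L(t-\th)-L(-\th)\bigr],
\]
which is \sef{eq:12.16}.

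The main thing to be careful about — the only real obstacle — is the bookkeeping at the truncation point $\th=-t$: one must check that no spurious boundary term (a jump of $S_0(t)\psi$ at $\th=-t$, or a contribution from the endpoint $\th=0$ where $\psi$ is normalized to zero) is dropped when passing from the function $S_0(t)\psi$ to its differential. This is exactly where the normalization conventions on $Y$ (value zero at $\th=0$) and on $L$ (the extension \sef{eq:12.15}, together with continuity \sef{eq:12.3} at $a=0$) are used: they guarantee $S_0(t)\psi$ has no jump at $\th=-t$ and that extending the range of integration from $[t-1,0]$ to $[-1,0]$ is harmless. Everything else is a routine change of variables in a Stieltjes integral, for which I would cite the measure-theoretic identities of Appendix A (in particular the correspondence between $NBV$ functions and measures, Theorem \ref{thm:bv2}).
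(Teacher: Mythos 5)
Your calculation follows the paper's proof exactly --- unwind the pairing, substitute \sef{eq:7.19}, change variables $\sigma = t+\th$, extend the domain to $[-1,0]$ via the conventions \sef{eq:12.15}, and combine. The formula comes out right. However, the two pieces of bookkeeping you single out as ``the only real obstacle'' are both misdescribed.

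First, it is false that $S_0(t)\psi$ has no jump at $\th = -t$. The normalization $\psi(0)=0$ fixes the \emph{value} there, not the left limit; a general $\psi \in NBV([-1,0])$ can jump at $\th = 0$ (the element $q$ of \sef{eq:7.22} is precisely such a function, representing a Dirac mass at $0$), and under the shift that jump reappears in $S_0(t)\psi$ at $\th = -t$. Your pushforward description of the measure $(S_0(t)\psi)(d\th)$ is nonetheless correct, but for a different reason: the jump of $S_0(t)\psi$ at $-t$ has exactly the magnitude of the point mass of $\psi(d\sigma)$ at $\sigma = 0$, so the pushforward (with the closed endpoint $\sigma = 0$ included) carries it across. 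Second, the justification you give for enlarging $\int_{t-1}^0$ to $\int_{-1}^0$ cites the wrong clause of \sef{eq:12.15}: the set being added is $\sigma \in [-1, t-1]$, where $t - \sigma \ge 1$, so what kills the integrand there is $L(a) = L(1)$ for $a \ge 1$ (giving $L(t-\sigma) - L(1) = 0$), not $L(a) = 0$ for $a \le 0$. The condition $\sigma > t$ that you invoke never occurs on $[t-1,0]$ when $t \ge 0$, so that observation is vacuous. These are exactly the places where the paper's terse parenthetical ``where in the last step we used \sef{eq:12.15}'' is doing the work, and your version of the argument gets both of them wrong.
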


\begin{proof}
Since
\begin{align*}
\DQ\gb{S_0(t)-I}\psi &= \int_{-1}^{-t} d_\th \psi(t+\th)\bigl[ L(-\th) - L(1)\bigr]\\
&\qquad\qquad -\int_{-1}^0 \psi(d\th)\bigl[L(-\th) - L(1)\bigr],
\end{align*}
the claim follows from
\begin{align*}
\int_{-1}^{-t} d_\th \psi(t+\th)\bigl[ L(-\th) - L(1)\bigr] &= \int_{t-1}^0 \psi(d\si)\bigl[ L(t-\th) - L(1)\bigr]\\
&= \int_{-1}^0 \psi(d\th)\,\bigl[L(t-\th) - L(1)\bigr]
\end{align*}
(where in the last step we used \sef{eq:12.15}).
\end{proof}

\begin{corollary}\label{col:12.2}
For the kernel $K = V_0(\novar)q$, cf. \sef{eq:9.7}, we find
\begin{equation}\label{eq:12.17}
K(t) = L(t)
\end{equation}
\end{corollary}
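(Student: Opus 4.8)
The plan is to simply specialize Lemma~\ref{lem:12.1} to the vector $\psi = q$ and read off the answer. Recall from \sef{eq:7.22} that $q$ is the (normalized) Heaviside-type function with $q(0) = 0$ and $q(\th) = -1$ for $-1 \le \th < 0$; in the $NBV$ picture this $q$ represents (minus) the unit Dirac mass concentrated at $\th = 0$. More precisely, for any function $g$ that is continuous at $0$ we have $\int_{-1}^0 q(d\th)\,g(\th) = g(0)$, because the only ``jump'' of $q$ sits at the endpoint $\th = 0$ (here the normalization and the fact that $q$ equals $-1$ to the left and $0$ at the point combine to give a positive unit mass at $0$; one should of course check the sign carefully against the pairing convention \sef{eq:7.13a}).

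With this in hand the computation is immediate. By Corollary~\ref{col:12.2}'s defining relation $K(t) = V_0(t)q$ and Lemma~\ref{lem:12.1},
\[
K(t) = V_0(t)q = \int_{-1}^0 q(d\th)\,\bigl[L(t-\th) - L(-\th)\bigr].
\]
Now apply the Dirac-mass-at-zero property of $q$ to the integrand $\th \mapsto L(t-\th) - L(-\th)$. This integrand is continuous at $\th = 0$ precisely because $L$ is continuous at $a = 0$ (assumption \sef{eq:12.3}) — this is exactly the place where hypothesis \sef{eq:12.3} is used, and it is the one subtlety worth flagging, since without right-continuity/continuity of $L$ at $0$ the value of the Stieltjes integral against a point mass at the endpoint would be ambiguous. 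Evaluating at $\th = 0$ gives $L(t - 0) - L(-0) = L(t) - L(0)$, and since $L(0) = 0$ (again by \sef{eq:12.3} together with the extension \sef{eq:12.15}, which sets $L(a) = 0$ for $a \le 0$), we conclude $K(t) = L(t)$, which is \sef{eq:12.17}.

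The only genuine obstacle here is bookkeeping rather than mathematics: one must make sure the sign and the pairing convention \sef{eq:7.13a} are applied consistently, so that the point mass that $q$ carries at $\th = 0$ enters with coefficient $+1$ and not $-1$, and one must confirm that the contribution of $q$ at the \emph{other} endpoint $\th = -1$ is genuinely absent (it is, by the normalization built into the definition \sef{eq:7.22} of $q$ and into $Y = NBV([-1,0];\BR)$ normalized to vanish at $0$, together with \sef{eq:12.15} which kills $L(t-\th) - L(-\th)$ there for $t \le 1$ when $\th$ near $-1$... but in fact the cleanest argument is just that $q$ as an $NBV$ function has no jump at $-1$ once normalized). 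Everything else is a one-line substitution into Lemma~\ref{lem:12.1}.
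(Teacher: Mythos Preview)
Your proof is correct and follows exactly the approach the paper intends: the corollary is stated without proof immediately after Lemma~\ref{lem:12.1}, so the paper's ``proof'' is precisely the one-line substitution $\psi = q$ in \sef{eq:12.16}, which you have spelled out. Your extra care about the sign of the point mass and the role of \sef{eq:12.3}--\sef{eq:12.15} in ensuring $L(0)=0$ is appropriate bookkeeping, though note that in the Lebesgue--Stieltjes framework used here the continuity of the integrand at $\th=0$ is not strictly needed for the evaluation against the Dirac mass---what matters is simply that $q(d\th)=\delta_0$ and $L(0)=0$.
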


\smallskip

\begin{lemma}\label{lem:12.3}
Let $f$ be defined by \sef{eq:12.11}, then
\begin{equation}\label{eq:12.18}
f(t) = V_0(t)\psi.
\end{equation}
\end{lemma}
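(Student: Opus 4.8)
The plan is to identify the right-hand side of \eqref{eq:12.11} with the expression \eqref{eq:12.16} for $V_0(t)\psi$ provided by Lemma \ref{lem:12.1}; the asserted identity \eqref{eq:12.18} is then immediate. So, starting from
\[f(t) = \int_{(t,1]} L(da)\,\psi(t-a) - \int_{[0,1]} L(da)\,\psi(-a),\]
I would apply Stieltjes integration by parts to each of the two integrals, transferring the differential from $L$ onto $\psi$ so as to rewrite them as integrals of the type $\int \psi(d\th)\,L(\cdot - \th)$. Because $\psi \in Y$ is normalized by $\psi(0)=0$ and $L$ vanishes on $(-\infty,0]$ and is continuous at $0$ (cf. \eqref{eq:12.3}, \eqref{eq:12.15}), the boundary contributions at the endpoint $a=0$ drop out, and one is left only with a boundary term carrying the factor $L(1)$ that arises from the endpoint $a=1$.

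The second step is to absorb that surviving boundary term. For $0<t<1$ the integration by parts leaves an integral of $L(t-\th)$ against $\psi(d\th)$ over $\{\th\in[-1,0] : t-\th\le 1\}$, plus a term of the form $L(1)\bigl[\psi(t-1)-\psi(-1)\bigr]$, minus $\int_{[-1,0]}\psi(d\th)\,L(-\th)$. Here the extension convention $L(a)=L(1)$ for $a\ge 1$ (from \eqref{eq:12.15}) does exactly what is needed: on the complementary set $\{\th : t-\th>1\}$ one has $L(t-\th)=L(1)$, so the boundary term $L(1)\bigl[\psi(t-1)-\psi(-1)\bigr]$ is precisely the missing piece that extends the range of the first integral to all of $[-1,0]$. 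The outcome is $\int_{-1}^0 \psi(d\th)\bigl[L(t-\th)-L(-\th)\bigr]$, which is \eqref{eq:12.16}, and hence $f(t)=V_0(t)\psi$. The degenerate case $t\ge 1$ is handled separately and more easily: the first integral in \eqref{eq:12.11} is over the empty set, while $t-\th\ge 1$ for all $\th\in[-1,0]$ reduces \eqref{eq:12.16} to $\int_{-1}^0\psi(d\th)\bigl[L(1)-L(-\th)\bigr]$, and a single integration by parts shows this equals $-\int_{[0,1]}L(da)\,\psi(-a)$.

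An alternative route, closer in spirit to the computation that establishes the equivalence of \eqref{eq:12.5} and \eqref{eq:12.6}, is to pass via Theorem \ref{thm:bv2} to the measure associated with $L(\novar)-L(1)$ (extended according to \eqref{eq:12.15}) and to the measure $\psi(d\th)$, recognise both $f(t)$ and $V_0(t)\psi$ as the same convolution of these two measures evaluated at $t$, and apply the commutativity identity \eqref{eq:convo-meas-ident}. Either way, the only genuine work — and the step where one is most likely to err — is the careful bookkeeping of the endpoint and atom contributions at $\th=0$, $\th=-1$ and $\th=t-1$: one must invoke that the $NBV$ elements in play are right-continuous and vanish at the right endpoint, together with the continuity of $L$ at $0$ and the extension convention \eqref{eq:12.15}. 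This accounting is routine in the framework of Appendices A and B, but it is the crux of the proof.
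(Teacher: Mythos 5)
Your proposal is correct and follows essentially the same route as the paper's own proof: integrate by parts (after the harmless extension of $\psi$ by zero for positive arguments, which lets one combine the two integrals in \sef{eq:12.11}), observe that the boundary term at the lower end vanishes because $L$ is continuous at zero and $\psi(0)=0$, and then absorb the surviving $L(1)\bigl[\psi(t-1)-\psi(-1)\bigr]$ term into the integral over $[-1,0]$ by invoking the extension convention \sef{eq:12.15}. The measure-theoretic alternative you sketch (via Theorem \ref{thm:bv2} and \sef{eq:convo-meas-ident}) is also valid and mirrors the computation the paper uses for the equivalence of \sef{eq:12.5} and \sef{eq:12.6}, but the paper opts for the integration-by-parts argument you give as your main route.
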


\begin{proof}
Extending $\psi$ by zero for positive arguments, we can write \sef{eq:12.11} as
\[f(t) = \int_0^1 L(da)\,\bigl[\psi(t-a) - \psi(-a)\bigr]\]
and next use partial integration to obtain
\begin{align*}
f(t) &= L(1)\bigl[\psi(t-1) - \psi(-1)\bigr] + \int_{t-1}^0 L(t-\th)\,\psi(d\th) - \int_{-1}^0 L(-\th)\,\psi(d\th)\\
&= \int_{-1}^0 \bigl[L(t-\th) - L(-\th)\bigr]\,\psi(d\th) = V_0(t)\psi.
\end{align*}
\end{proof}

\begin{corollary}\label{col:12.4}
Let $B$ be defined by \sef{eq:12.13} then, by comparing \sef{eq:12.13} to \sef{eq:9.15}, we find
\begin{equation}\label{eq:12.19}
B(t) = V(t)\psi.
\end{equation}
\end{corollary}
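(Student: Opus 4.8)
The plan is to read off the identity \sef{eq:12.19} by comparing the renewal equation \sef{eq:12.13} that defines $B(t)$ with the renewal equation \sef{eq:9.15} that defines $V(t)\psi$, after we have matched all the data. The three preceding results do exactly this matching: by Corollary \ref{col:12.2} the kernel $K$ in \sef{eq:9.7} equals $L$, hence by uniqueness of the resolvent (Theorem \ref{thm:resolvent}) the resolvent $R$ of $K$ coincides with the resolvent $R$ of $L$ appearing in \sef{eq:12.12}; and by Lemma \ref{lem:12.3} the forcing function $f$ in \sef{eq:12.11} equals $V_0(\novar)\psi$. Substituting these identifications into \sef{eq:12.13} gives
\[
B(t) = V_0(t)\psi + \int_{[0,t]} R(da)\,V_0(t-a)\psi,
\]
which is precisely the right-hand side of \sef{eq:10.4} (equivalently \sef{eq:9.15}) evaluated at $\psi$. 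Since both $B(\novar)$ and $V(\novar)\psi$ are the unique solutions of one and the same linear measure renewal equation with the same kernel and the same forcing term, they agree, and \sef{eq:12.19} follows.

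First I would invoke Theorem \ref{thm:resolvent} to be precise about the identification of the two objects both called $R$: the resolvent is uniquely determined by the kernel through \sef{eq:9.14}, and $K = L$ by \sef{eq:12.17}, so the $R$ of \sef{eq:12.12} and the $R$ of \sef{eq:9.14} are literally the same $NBV_{loc}$ function (both continuous in $a=0$ because $L$ is, by \sef{eq:12.3}). Next I would note that formula \sef{eq:10.4}, written out, is $V(t)\psi = V_0(t)\psi + \int_0^t R(d\tau)\,V_0(t-\tau)\psi$, and that by Lemma \ref{lem:12.3} this is exactly $f(t) + \int_{[0,t]} R(da) f(t-a)$, which by Theorem \ref{thm:renewal} is the unique solution of \sef{eq:12.10}, namely $B(t)$. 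So the comparison of \sef{eq:12.13} with \sef{eq:9.15} that the corollary's statement literally asks for is immediate once the dictionary $K\leftrightarrow L$, $R\leftrightarrow R$, $f\leftrightarrow V_0(\novar)\psi$ has been set up.

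Since Corollaries \ref{col:12.2} and \ref{col:12.4} together with Lemmas \ref{lem:12.1} and \ref{lem:12.3} have already done essentially all the work, there is no real obstacle here; the only point requiring a moment's care is the bookkeeping around the normalization conventions (that $\psi$, and hence $B$, is normalized to vanish at the right endpoint, and that $L$ has been extended by \sef{eq:12.15}), so that the integrals over $[0,t]$ in \sef{eq:12.13} and the integrals over $[0,t]$ in \sef{eq:9.15}/\sef{eq:10.4} genuinely have the same endpoints and the same integrands. Once that is checked, \sef{eq:12.19} is just the assertion that two expressions which have been shown term-by-term to be equal are indeed equal, which is what the statement "by comparing \sef{eq:12.13} to \sef{eq:9.15}" records.
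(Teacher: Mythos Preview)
Your proposal is correct and follows exactly the approach the paper indicates: the corollary is stated in the paper with no separate proof beyond the phrase ``by comparing \sef{eq:12.13} to \sef{eq:9.15}'', and you have simply spelled out what that comparison consists of (match $K=L$ via Corollary~\ref{col:12.2}, hence match the resolvents by uniqueness, match $f=V_0(\cdot)\psi$ via Lemma~\ref{lem:12.3}, and read off that the two formulas coincide). One small slip: in your final paragraph you cite ``Corollaries~\ref{col:12.2} and~\ref{col:12.4}'' as having done the work, but \ref{col:12.4} is the very result you are proving; you presumably meant only \ref{col:12.2} together with Lemmas~\ref{lem:12.1} and~\ref{lem:12.3}.
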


\begin{theorem}\label{thm:12.5}
Let $\{S(t)\}$ be the twin semigroup defined by \sef{eq:9.22}, then
\begin{equation}\label{eq:12.20}
\gb{S(t)\psi}(\th) = B(t+\th) - B(t)
\end{equation}
holds for the special case of $Y$, $\DY$, $\{S_0(t)\}$, $C_0$, $q$ and $\DQ$ considered in this section.
\end{theorem}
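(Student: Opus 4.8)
\textbf{Proof plan for Theorem \ref{thm:12.5}.}
The plan is to unwind the definition \sef{eq:9.22} in the concrete norming dual pair of this section, exactly as was done for the bounded case in Section \ref{sec:7}, and to match the result term by term against the constructive definition \sef{eq:7.18} adapted via \sef{eq:12.4}--\sef{eq:12.13}. First I would recall that with $Y = NBV\gb{[-1,0];\BR}$ and $\DY = B\gb{[0,1];\BR}$ the pairing is $\pa{\dy}{y} = \int_{-1}^0 \dy(-\th)\,y(d\th)$, so that testing \sef{eq:9.22} against the elements of $\DY$ corresponding to step functions (equivalently, Heaviside functions $y^\diamond_\th$) turns the abstract identity into a pointwise equality in $\th$; this is the same device used just above, where \sef{eq:5.10} was turned into a pointwise equality. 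Thus the goal reduces to verifying, for $-1 \le \th \le 0$,
\[
\gb{S(t)\psi}(\th) = \gb{S_0(t)\psi}(\th) + \int_0^t \gb{W_0(t-\tau)}(\th)\, V(d\tau)\psi,
\]
and then showing the right-hand side equals $B(t+\th) - B(t)$ with $B$ given by \sef{eq:12.13}.

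The key steps, in order: (1) evaluate $\gb{S_0(t)\psi}(\th)$ using \sef{eq:7.19}, which gives $\psi(t+\th)$ if $t+\th \le 0$ and $0$ if $t+\th > 0$; (2) evaluate $\gb{W_0(t-\tau)}(\th) = \gb{S_0(t-\tau)q}(\th)$ from \sef{eq:10.3} together with \sef{eq:7.22}, obtaining a Heaviside-type expression $-\chi_{t-\tau+\th<0}$, precisely as in the computation displayed after \sef{eq:7.23}; (3) use Corollary \ref{col:12.4}, i.e. $V(t)\psi = B(t)$ (which in turn rests on Lemma \ref{lem:12.1}, Corollary \ref{col:12.2} identifying $K = L$, and Lemma \ref{lem:12.3} identifying $f = V_0(\cdot)\psi$, so that \sef{eq:12.13} and \sef{eq:9.15} coincide), to rewrite $V(d\tau)\psi$ as $B(d\tau) = b(\tau)\,d\tau$ for $\tau > 0$; (4) substitute and compute the integral
\[
\int_0^t \gb{-\chi_{t-\tau+\th<0}}\,b(\tau)\,d\tau = \int_0^{\max\{t+\th,0\}} b(\tau)\,d\tau - \int_0^t b(\tau)\,d\tau = B\gb{\max\{t+\th,0\}} - B(t);
\]
(5) combine with step (1): if $t+\th \le 0$ we get $\psi(t+\th) + 0 - B(t) = B(t+\th) - B(t)$ using the initial condition \sef{eq:12.8} (so $B(t+\th) = \psi(t+\th)$ for $t+\th\le 0$), while if $t+\th > 0$ we get $0 + B(t+\th) - B(t)$, which is \sef{eq:12.20} in both regimes. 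One should also note that $B(t+\th) - B(t)$ is indeed normalized to vanish at $\th = 0$, consistent with membership in $Y$.

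The only genuine subtlety — and the step I expect to be the main obstacle — is the rigorous justification that $V(t)\psi = B(t)$, i.e. that the abstract renewal equation \sef{eq:9.9} for $V$ and the concrete Volterra equation \sef{eq:12.10} for $B$ are literally the same equation with the same kernel and the same forcing function. This requires Corollary \ref{col:12.2} ($K = L$ as $NBV$ functions, using the extension convention \sef{eq:12.15} and the continuity \sef{eq:12.3}) and Lemma \ref{lem:12.3} ($f = V_0(\cdot)\psi$), together with the fact (recorded after \sef{eq:9.12}) that the $\bast$-convolution here is the Stieltjes convolution of measures; once these identifications are in place, uniqueness of the solution of a linear Stieltjes--Volterra equation (Theorem \ref{thm:renewal}) forces $V(t)\psi = B(t)$, and everything else is the bookkeeping of steps (1)--(5). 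A minor additional point to check is the legitimacy of passing from the $\dy$-tested identity \sef{eq:9.22} to the pointwise statement, which follows because step functions in $\DY$ separate points of $Y = NBV$ and both sides are, for fixed $t$, elements of $Y$ (the left side by Theorem \ref{thm:10.1}, the right side by \sef{eq:12.13} and the regularity of $B$).
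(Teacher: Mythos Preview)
Your proof plan is correct and follows essentially the same route as the paper: pass to a pointwise identity via step functions in $\DY$, evaluate $\gb{S_0(t)\psi}(\th)$ and $\gb{S_0(t-\tau)q}(\th)$ explicitly, invoke Corollary \ref{col:12.4} to identify $V(\cdot)\psi$ with $B$, and then split into the two cases $t+\th \le 0$ and $t+\th > 0$.

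One small correction: in step (3)--(4) you write ``$V(d\tau)\psi = B(d\tau) = b(\tau)\,d\tau$ for $\tau > 0$''. The identification $V(d\tau)\psi = B(d\tau)$ is exactly Corollary \ref{col:12.4}, but the further claim that $B$ has a density $b$ is not justified here: the rigorous construction of $B$ in this section is via \sef{eq:12.10}--\sef{eq:12.13} for an arbitrary initial condition $\psi \in NBV$, and in general $B$ is merely of (locally) bounded variation, not absolutely continuous. Fortunately your computation in step (4) does not actually use the density --- it only needs the Stieltjes measure $B(d\tau)$ and the fact that $B(0)=V(0)\psi=0$ --- so simply drop the ``$= b(\tau)\,d\tau$'' and carry out the integral as $\int_0^t(-\chi_{t-\tau+\th<0})\,B(d\tau) = B(\max\{t+\th,0\}) - B(t)$. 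With that adjustment your argument matches the paper's proof.
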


\begin{proof}
By pairing with step functions from $\DY$ we deduce from \sef{eq:9.19} the pointwise definition
\begin{equation}\label{eq:12.21}
\gb{S(t)\psi}(\th) = \gb{S_0(t)\psi}(\th) + \int_0^t \gb{S_0(t-\tau)q}(\th) V(d\tau)\psi.
\end{equation}
For $t+\th \le 0$ we have
\[\gb{S_0(t)\psi}(\th) = \psi(t+\th)\]
and $\gb{S_0(t-\tau)q}(\th) = -1$ for $0 \le \tau \le t$. Hence
\[\gb{S(t)\psi}(\th) = \psi(t+\th) - V(t)\psi = \psi(t+\th) - B(t).\]
For $t+\th > 0$ we have $\gb{S_0(t)\psi}(\th) = 0$ and
\[\gb{S_0(t-\tau)q}(\th) = -1\quad \hbox{for } t+\th < \tau \le t\]
(and zero otherwise), showing that \sef{eq:12.21} holds.
\end{proof}

An application of Theorem \ref{thm:10.4}, note that $S_0(t)$ defined by \sef{eq:7.19} is bounded and identically zero for $t \ge 1$, yields the following asymptotic stability result.

\begin{theorem}\label{thm:12.6}
Suppose that $L$ has no singular part \(see \sef{eq:decomp-meas}\). The twin semigroup  $\{S(t)\}$ defined by \sef{eq:12.20} is asymptotically stable if
\begin{equation}\label{eq:12.22}
\inf_{\Re z \ge 0} \bigl| \det\gb{I - \int_0^1 e^{-z\tau}L(d\tau)} \bigr| > 0.
\end{equation}
\end{theorem}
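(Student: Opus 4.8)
The plan is to reduce everything to Theorem~\ref{thm:10.4} by verifying its hypotheses in the present set-up, and then to upgrade the conclusion from the norm-closure of $\DOM{C}$ to all of $Y$ by exploiting that $S_0(t)$ (and hence $W_0(t)=S_0(t)q$) vanishes for $t\ge 1$. First, by Theorem~\ref{thm:12.5} the semigroup $\{S(t)\}$ of \sef{eq:12.20} is exactly the twin semigroup $\sef{eq:9.22}=\sef{eq:10.6}$ constructed in this section, with generator the $C$ of \sef{eq:10.7}. By Corollary~\ref{col:12.2} the kernel $K$ attached to it equals $L$; since $L$ is supported in $[0,1]$ by the convention \sef{eq:12.15}, we have $\what K(z)=\int_0^1 e^{-z\tau}L(d\tau)$, so the hypothesis \sef{eq:cond-asymp-behav} of Theorem~\ref{thm:10.4} is \emph{verbatim} the assumption \sef{eq:12.22}. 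Moreover the hypothesis that $L$ has no singular part (cf.\ \sef{eq:decomp-meas}) is exactly what the half-line Gel'fand theorem (Theorem~\ref{thm:Gelfand}), used inside the proof of Theorem~\ref{thm:10.4}, needs in order to conclude that the resolvent $R$ of $K=L$ again belongs to $NBV\gb{[0,\infty);\BR}$.

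Next I would check the two boundedness requirements of Theorem~\ref{thm:10.4}. The operators $S_0(t)$ of \sef{eq:7.19} are truncations of translation, hence $\|S_0(t)\|\le 1$, and $S_0(t)=0$ for $t\ge 1$; therefore the Laplace transform $\wbar S_0(\la)=\int_0^1 e^{-\la t}S_0(t)\,dt$, which represents the resolvent $(\la I-C_0)^{-1}$, is entire in $\la$ and satisfies $\|\wbar S_0(\la)\|\le 1$ for $\Re\la\ge 0$, so $(\la I-C_0)^{-1}$ is bounded on the closed right half-plane. (That $t\mapsto V_0(t)\psi$ lies in $NBV_{loc}$ for every $\psi\in Y$, which is needed for $S(t)$ to be given by \sef{eq:10.6} at all, follows from Lemma~\ref{lem:12.1} together with $L\in NBV$, and was already used in Theorem~\ref{thm:12.5}.) Hence Theorem~\ref{thm:10.4} applies: $\|S(t)C^{-1}\|\to 0$ as $t\to\infty$, and consequently $S(t)\psi\to 0$ for every $\psi$ in the norm-closure of $\DOM{C}$, which by \sef{eq:7.21} equals $AC_0\gb{[-1,0];\BR}$.

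To reach the conclusion for all $\psi\in Y$ I would exploit the form of $S(t)$ for $t\ge 1$. Since then $S_0(t)=0$ and $W_0(t)=0$, pairing \sef{eq:10.6} with step functions from $\DY$ gives, as in the proof of Theorem~\ref{thm:12.5}, the pointwise identity $\gb{S(t)\psi}(\th)=B(t+\th)-B(t)$ with $B=V(\novar)\psi$ (Corollary~\ref{col:12.4}). By Lemma~\ref{lem:12.1}, $V_0(t)\psi=\int_{-1}^0\psi(d\th)\bigl[L(t-\th)-L(-\th)\bigr]$ is of bounded variation and, because $L$ has support in $[0,1]$, is constant for $t\ge 1$; combined with $V(\novar)\psi=V_0(\novar)\psi+R\bast V_0(\novar)\psi$ and the fact (from the step above) that $R\in NBV\gb{[0,\infty);\BR}$, this gives $B=V(\novar)\psi\in NBV\gb{[0,\infty);\BR}$, with total variation controlled by a constant times $\|\psi\|$. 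The $Y$-norm of $S(t)\psi$ is the total variation of $B$ over $[t-1,t]$, i.e.\ $\|S(t)\psi\|_Y=|dB|\gb{[t-1,t]}$, which tends to $0$ as $t\to\infty$ (tail of a finite measure), uniformly for $\|\psi\|\le 1$. Thus $\|S(t)\|\to 0$ and $\{S(t)\}$ is asymptotically stable.

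All the conceptual work is in Theorem~\ref{thm:10.4} (which itself rests on the Tauberian Theorem~\ref{thm:Tauber} and the half-line Gel'fand theorem); the rest is bookkeeping. The step I expect to demand the most care is the last paragraph: making the identity $\gb{S(t)\psi}(\th)=B(t+\th)-B(t)$ for $t\ge 1$ rigorous and, above all, establishing that $B=V(\novar)\psi$ really lies in $NBV\gb{[0,\infty);\BR}$ with a bound uniform in $\psi$, so that its tail variation $|dB|([t-1,t])$ decays --- together with confirming that ``$L$ has no singular part'' is precisely the hypothesis consumed by the half-line Gel'fand theorem.
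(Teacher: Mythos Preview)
Your core argument---verify the hypotheses of Theorem~\ref{thm:10.4} and invoke it---is exactly the paper's proof, which consists of the single sentence preceding the theorem. You are also right that Theorem~\ref{thm:10.4} by itself yields $S(t)\psi\to 0$ only for $\psi$ in the norm-closure of $\DOM{C}$, which by \sef{eq:7.21} equals $AC_0\gb{[-1,0];\BR}$, a proper subspace of $Y=NBV$. The paper does not supply the extension to all of $Y$, so your third paragraph goes beyond what the paper actually writes down.

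That extension is sound for the pointwise conclusion $S(t)\psi\to 0$: you correctly identify $\|S(t)\psi\|_Y$ with the variation $|dB|\gb{[t-1,t]}$ of the finite measure $dB=dV(\novar)\psi$, and the tail of a fixed finite measure tends to zero. The one gap is the leap to the \emph{uniform} statement $\|S(t)\|\to 0$: a uniform bound $\|dB_\psi\|_{TV}\le C\|\psi\|$ does \emph{not} by itself force uniform tail decay (the mass of $dB_\psi$ could sit at a $\psi$-dependent location). What rescues you is the structure $dV=dV_0+dR\ast dV_0$ together with the fact, visible from Lemma~\ref{lem:12.1} and \sef{eq:12.15}, that $V_0(\novar)\psi$ is constant for $t\ge 1$, so $dV_0(\novar)\psi$ is supported in $[0,1]$ with $\|dV_0(\novar)\psi\|_{TV}\le C_1\|\psi\|$. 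For $t\ge 2$ this gives
\[
|dV(\novar)\psi|\gb{[t-1,t]}\ \le\ |dR|\gb{[t-2,t]}\cdot\|dV_0(\novar)\psi\|_{TV}\ \le\ C_1\,|dR|\gb{[t-2,t]}\,\|\psi\|,
\]
and now the $\psi$-independent factor $|dR|\gb{[t-2,t]}$ (tail of the resolvent measure, finite by the half-line Gel'fand theorem) carries the uniform decay. If you only want strong asymptotic stability on $Y$, the pointwise argument already suffices and the uniformity is a bonus.
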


\section{Discussion}\label{sec:13}
\setcounter{equation}{0}

When supplemented by an appropriate initial condition, a delay equation has, as a rule, a unique solution. The proof consists of formulating a fixed point problem and verifying the conditions of the contraction mapping theorem. Next a semigroup of solution operators is defined by translation along the constructed solution.

In pioneering fundamental work \cite{Hale71}, J.K. Hale developed the qualitative theory of delay equations along the lines of the corresponding theory for ODE, but with due attention for the infinite dimensional character of the state space. The variation-of-constants formula is an essential instrument for building such a theory. This formula involves both the right hand side of the equation (corresponding to the derivative of the point value in zero of the function that describes the current state, taking values in $\BR^n$) and integration. If one wants to work with the Riemann-integral, the state space needs to be such that the semigroup is strongly continuous. If one wants that the right hand side of the equation corresponds to a well-defined bounded operator on the state space, this space needs to be such that point evaluation is well defined and that point values are not constrained by values in nearby points. As explained in the introduction, these requirements are incompatible. So a fundamental difficulty arises. (In our opinion, the challenge arising from this difficulty actually gives the theory of delay equations its charm.) 

As far as we know, until now state spaces have been chosen such that one can work with the Riemann integral. In \cite{Hale71} the semigroup is strongly continuous and the difficulty is addressed by introducing the fundamental solution (corresponding to an initial condition that does NOT belong to the state space) and letting the formula define the point values of the function that represents the state. In \cite{Diek95}, first an auxiliary space is introduced. This is in fact a dual space ‘containing’ the fundamental solution. Next one checks that the weak* Riemann integral defines an element of the original state space. In \cite{MR09,MR18} integrated semigroups are used to avoid the need of considering elements that do not belong to the state space.

Here we have chosen to work with a state space $Y$ that is `big' enough to contain the fundamental solution. This has two consequences

\begin{itemize}
\item[i)] we lose strong continuity of the semigroups on $Y$
\item[ii)] the dual space $Y^\ast$ does not allow a characterization that enables to represent the adjoint semigroup in a manner that provides information via formula manipulation.
\end{itemize}

To overcome these difficulties, we have in a first step singled out an explicitly characterized subspace $\DY$ of the dual space $Y^\ast$ that is both rich enough and not too rich. By this we mean that the combination of $Y$ and $\DY$ forms a norming dual pair, i.e., an element of $Y$ is completely determined by the action of the elements of $\DY$ on it and, vice versa, an element of $\DY$ is completely determined by the action of the elements of $Y$ on it. Integrals of functions in either one of these spaces are next defined by integrating (after requiring measurability) the scalar functions obtained by pairing with elements of the other space. This yields elements of, respectively, $Y^\ast$ and $Y^{\diamond\ast}$ and a priori it is not guaranteed that these are represented by elements of, respectively, $\DY$ and $Y$. To verify that actually they are, we equip both spaces with a second topology, the weak topology generated by the other space. Viewed thus as locally convex spaces, one space is the dual of the other and the verification reduces to checking the continuity of linear functionals with respect to the right topology. This is where the dominated convergence theorem and additional assumptions enter the story. In this paper we developed the relevant linear theory and showed that, with appropriate choice of $Y$ and $\DY$, it covers perturbation theory for both delay differential equations and renewal equations, not only in the retarded, but also in the neutral case.

We plan to extend our work in several directions. We are confident that equations with infinite delay can be dealt with in the spirit of \cite{DiekGyll12} and that the proofs in \cite{Diek95} of the Principle of Linearized Stability, the Centre Manifold Theorem et cetera, generalize, mutatis mutandae, to the nonlinear version of the present setting. But this has to be checked, with special attention for the neutral case. 

For Renewal Equations it is not yet entirely clear what exactly qualifies as ‘the nonlinear version of the present setting’. And, on top of that, in population dynamical models with individuals characterized by a multi-dimensional variable (e.g., age and size) that can assume a continuum of birth values, we have to deal with an infinite dimensional Renewal Equation. Ideally, we connect the modelling and bookkeeping approach of \cite{DGMT98,DGHKMT01} to our nonlinear extension.

\medskip
\paragraph{Acknowledgement}
A referee provided detailed constructive feedback, leading to substantial improvement of the manuscript. We are most thankful to this anonymous referee.

\begin{appendices}

\section{Renewal equations and their resolvents}

\renewcommand{\theequation}{\Alph{section}.\arabic{equation}}

In this appendix $\SB$ denotes the Borel $\si$-algebra on $[0,\infty)$. For $E \in \SB$, we call a sequence of disjoint sets $\{E_j\}$ in $\SB$ a partition of $E$ if $\mcup_{j=1}^\infty E_j = E$. A complex bounded Borel measure is a map $\mu : \SB \to \BC$ such that $\mu(\emptyset) = 0$ and 
\[\mu(E) = \sum_{j=1}^\infty \mu(E_j),\]
for every partition $\{E_j\}$ of $E$ with the series converging absolutely. In the following we will often omit the adjective `bounded'. The \textit{total variation} measure $\abs{\mu}$ of a complex Borel measure $\mu$ is given by
\begin{equation}\label{eq:bv3}
\abs{\mu}(E) = \sup\big\{\sum_{j=0}^n \abs{\mu(E_j)} \mid n \in \BN,\ \{E_j\} \hbox{ a partition of } E \hbox{ in } \SB\,\big\}.
\end{equation}
The vector space of complex Borel measures of bounded total variation is denoted by $M\gb{[0,\infty)}$. Provided with the total variation norm given by
\begin{equation}\label{eq:norm-TV2}
\|\mu\|_{TV} = \abs{\mu}\gb{[0,\infty)},
\end{equation}
the vector space $M\gb{[0,\infty)}$ becomes a Banach space. 

If needed or handy, we extend measures on $[0,\infty)$ to measures on $\BR$ by defining them to be zero on $(-\infty,0)$, i.e., we define $\mu(E) := \mu(E \cap [0,\infty))$ for every Borel set $E \subset \BR$.

\medskip

Let $f : [0,\infty) \to \BC$. For a given partition $\{E_j\}$ of $[0,t]$ with $E_j = [t_{j-1},t_j)$ and $0 = t_0 < t_1 < \cdots < t_n = t$. we define $T_f : [0,\infty) \to [0,\infty]$ by
\begin{equation}\label{eq:bv1}
T_f(t) := \sup \sum_{j=1}^n \abs{f(t_j)-f(t_{j-1})},
\end{equation}
where the supremum is taken over $n \in \BN$ and all such partitions of $[0,t]$.
The extended real function $T_f$ is called the \textit{total variation function} of $f$. Note that if $0 \le a < b$, then $T_f(b) - T_f(a) \ge 0$ and hence $T_f$ is an increasing function. 

If $\lim_{t \to \infty} T_f(t)$ is finite, then we call $f$ a function of bounded variation. We denote the space of all such functions by $BV$. The space $NBV([0,\infty))$ of \textit{normalized} functions of bounded variation is defined by
\begin{align*}
NBV([0,\infty) ) = \{f \in BV &\mid  f \hbox{ is continuous from the right on } (0,\infty)\\
&\qquad \hbox{ and } f(0) = 0\,\}.
\end{align*}
Provided with the norm 
\begin{equation}\label{eq:bv2}
\|f\|_{TV} := \lim_{t \to \infty} T_f(t)
\end{equation}
the space $NBV([0,\infty))$ becomes a Banach space. More generally, we define for $-\infty < a < b < \infty$, the vector space $NBV\gb{[a,b]}$ to be the space of functions $f : [a,b] \to \BC$ such that $f(a) = 0$, $f$ is continuous from the right on the open interval $(a,b)$, and whose total variation on $[a,b]$, given by $T_f(b)-T_f(a) = T_f(b)$, is finite. Provided with the norm $
\|f\|_{TV} := T_f(b)$, the space  $NBV\gb{[a,b]}$ becomes a Banach space. We extend the domain of definition of a function of bounded variation by defining $f(t) = 0$ for $t < 0$ if $f \in NBV([0,\infty))$ and $f(t) = 0$ for $t < a$ and $f(t) = f(b)$ for $t > b$ if $f \in NBV\gb{[a,b]}$.

\medskip

The following fundamental result, see \cite[Theorem 3.29]{Fol90} provides the correspondence between functions of bounded variation and complex Borel measures.

\begin{theorem}\label{thm:bv2}
Let $\mu$ be a complex Borel measure on $[0,\infty)$. If $f : [0,\infty) \to \BC$ is defined by $f(0) = 0$ and $f(t) = \mu([0,t])$ for $t > 0$, then $f \in NBV([0,\infty))$. Conversely, if $f \in NBV([0,\infty))$ is given, then there is a unique complex Borel measure $\mu_f$ such that $\mu_f([0,t]) = f(t)$ for $t > 0$. Moreover $\abs{\mu_f} = \mu_{T_f}$.
\end{theorem}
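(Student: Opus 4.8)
The plan is to prove the two halves of the correspondence separately and then read off $|\mu_f| = \mu_{T_f}$ from the construction; throughout I would freely use standard measure theory (continuity of finite measures along monotone sequences, the Lebesgue--Stieltjes / Carath\'eodory extension theorem, the Hahn--Jordan decomposition, and Dynkin's $\pi$--$\lambda$ uniqueness theorem), following \cite[Chapters 1, 3]{Fol90}. For the direction $\mu \mapsto f$, I would verify the three defining properties of $NBV$ in turn. The value $f(0) = 0$ is built in. Right-continuity on $(0,\infty)$ is continuity from above of the finite measure $\mu$: if $t_k \downarrow t > 0$ then $[0,t] = \bigcap_k [0,t_k]$, so $f(t_k) = \mu([0,t_k]) \to \mu([0,t]) = f(t)$. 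Bounded variation follows from telescoping: for $0 = t_0 < t_1 < \cdots < t_n = t$ the Borel sets $A_1 := [0,t_1]$ and $A_j := (t_{j-1},t_j]$, $j \ge 2$, partition $[0,t]$ and satisfy $f(t_j) - f(t_{j-1}) = \mu(A_j)$ (for $j=1$ this uses $f(0) = 0$), so $\sum_j |f(t_j) - f(t_{j-1})| = \sum_j |\mu(A_j)| \le |\mu|([0,t]) \le \|\mu\|_{TV}$, and taking the supremum over partitions gives $T_f(t) \le \|\mu\|_{TV}$ uniformly in $t$. One bookkeeping point to carry along is that the normalization $f(0) = 0$ (rather than $f(0) = \mu(\{0\})$) means $f$ records the mass of $\mu$ at the origin as the jump $f(0^+) = \mu(\{0\})$.

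For the converse I would first split $f = f_1 + i f_2$ into real and imaginary parts, each in $NBV$, and then apply the Jordan decomposition $g = \tfrac12(T_g + g) - \tfrac12(T_g - g)$ to each real $g \in \{f_1, f_2\}$. Both summands are non-negative (because $|g(b) - g(a)| \le T_g(b) - T_g(a)$), non-decreasing, vanish at $0$, and are right-continuous on $(0,\infty)$ — the last since the total-variation function of a right-continuous $BV$ function is right-continuous. To a non-decreasing $h$, right-continuous on $(0,\infty)$, with $h(0) = 0$, I would attach a positive measure by extending $h$ to $\BR$ through $h \equiv 0$ on $(-\infty,0)$ and $h(0) := h(0^+)$ — which makes $h$ non-decreasing and right-continuous on all of $\BR$ — and letting $\nu_h$ be the Lebesgue--Stieltjes measure with $\nu_h\bigl((a,b]\bigr) = h(b) - h(a)$; one checks $\nu_h\bigl((-\infty,0)\bigr) = 0$, $\nu_h(\{0\}) = h(0^+)$, and $\nu_h([0,t]) = h(t)$ for $t > 0$. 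Combining the four monotone pieces into a complex combination produces $\mu_f$ with $\mu_f([0,t]) = f(t)$ for all $t > 0$; alternatively $\mu_f$ could be obtained from the Riesz representation theorem applied to the bounded functional $g \mapsto \int_0^\infty g \, df$ on $C_0\bigl([0,\infty)\bigr)$. Uniqueness is then short: any complex Borel measure $\nu$ with $\nu([0,t]) = f(t)$ for all $t > 0$ also satisfies $\nu(\{0\}) = \lim_{t \downarrow 0} \nu([0,t]) = f(0^+)$, so two such measures agree on the $\pi$-system $\{\{0\}\} \cup \{\,[0,t] : t > 0\,\}$, which generates the Borel $\sigma$-algebra on $[0,\infty)$ and contains the exhausting sets $[0,n]$; applying Dynkin's theorem to the Hahn--Jordan components forces the two measures to coincide.

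For the identity $|\mu_f| = \mu_{T_f}$, note that $T_f \in NBV$ (non-decreasing, $T_f(0) = 0$, right-continuous on $(0,\infty)$), so $\mu_{T_f}$ exists by the previous step, and by the uniqueness just proved it is enough to show $|\mu_f|([0,t]) = T_f(t)$ for every $t > 0$. The bound $T_f(t) \le |\mu_f|([0,t])$ is immediate from partitions: for $0 = t_0 < \cdots < t_n = t$ the pieces $[0,t_1], (t_1,t_2], \dots, (t_{n-1},t_n]$ partition $[0,t]$ and $\sum_j |f(t_j) - f(t_{j-1})| = \sum_j |\mu_f(\text{piece}_j)| \le |\mu_f|([0,t])$; take the supremum. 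For the reverse bound, when $f$ is real-valued it follows from the construction above: writing the Jordan decomposition $f = h_1 - h_2$, one has $|\mu_f| = |\nu_{h_1} - \nu_{h_2}| \le \nu_{h_1} + \nu_{h_2}$ by subadditivity of the total variation (the pieces being positive), and the right-hand side evaluated on $[0,t]$ is $h_1(t) + h_2(t) = T_f(t)$. For genuinely complex $f$ this shortcut yields only $|\mu_f|([0,t]) \le T_{f_1}(t) + T_{f_2}(t)$, which in general strictly exceeds $T_f(t)$; I would instead use the polar decomposition $d\mu_f = \varphi \, d|\mu_f|$ with $|\varphi| = 1$: approximate $\bar\varphi$ on $[0,t]$ in $L^1(|\mu_f|)$ by a continuous $g$ with $\|g\|_\infty \le 1$ (Lusin's theorem), observe that $\int_{[0,t]} g \, d\mu_f$ is a limit of Riemann--Stieltjes sums $\sum_j g(\xi_j)\bigl(f(t_j) - f(t_{j-1})\bigr)$ of modulus at most $\|g\|_\infty \, T_f(t) \le T_f(t)$, and pass to the limit to get $|\mu_f|([0,t]) = \bigl|\int_{[0,t]} \bar\varphi \, d\mu_f\bigr| \le T_f(t)$. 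I expect this last step — the reverse total-variation inequality in the complex case, together with keeping the atom at the origin straight under the $f(0) = 0$ normalization — to be the only genuine obstacle; the rest is a routine assembly of the standard theorems cited above.
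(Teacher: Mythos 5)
The paper does not actually prove this theorem: it is stated as a citation to Folland \cite[Theorem 3.29]{Fol90} (Folland states the result on $\BR$ with $F(x)=\mu\bigl((-\infty,x]\bigr)$; the paper's version on $[0,\infty)$ is the specialization obtained by extending $\mu$ by zero to $(-\infty,0)$). So there is no ``paper's own proof'' to measure you against; what you have written is a self-contained reconstruction of the standard textbook argument, and it is essentially correct.

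A few remarks on the reconstruction itself. Your forward direction ($\mu\mapsto f$) is clean: telescoping over the partition $[0,t_1],(t_1,t_2],\dots$ gives $T_f(t)\le|\mu|\bigl([0,t]\bigr)$, right-continuity on $(0,\infty)$ is continuity from above, and you correctly flag that the normalization $f(0)=0$ forces the atom $\mu(\{0\})$ to appear as the right jump $f(0^+)$. Your converse uses the Jordan decomposition $g=\tfrac12(T_g+g)-\tfrac12(T_g-g)$ of the real and imaginary parts together with Carath\'{e}odory extension for increasing, right-continuous functions; this is exactly the route Folland takes (through his result for increasing functions) and is fine, with the necessary caveat that you must cite the lemma that $T_g$ is right-continuous wherever $g$ is. The uniqueness via the $\pi$-system $\{\{0\}\}\cup\{[0,t]:t>0\}$ and Dynkin is correct; you need, and have, the exhausting sets $[0,n]$ so the argument applies to finite (complex) measures.

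The one place that genuinely needs care is the reverse inequality $|\mu_f|\bigl([0,t]\bigr)\le T_f(t)$ for genuinely complex $f$, and you correctly diagnose why the naive splitting into real and imaginary parts gives only $T_{f_1}+T_{f_2}$, which can strictly exceed $T_f$. Your fix — write $d\mu_f=\varphi\,d|\mu_f|$ with $|\varphi|=1$, Lusin-approximate $\bar\varphi$ on $[0,t]$ by a continuous $g$ with $\|g\|_\infty\le1$, and bound $\bigl|\int_{[0,t]}g\,d\mu_f\bigr|$ by $T_f(t)$ via Riemann--Stieltjes sums — is correct, and it is essentially the argument that underlies the identity in the literature. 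One small thing worth making explicit if you were to write this out in full: the identity $\int_{[0,t]}g\,d\mu_f=\int_0^t g\,df$ (with the Riemann--Stieltjes integral on the right taken with the convention $f(0)=0$) does pick up the atom at the origin, precisely because $f(0)=0$ while $f(0^+)=\mu_f(\{0\})$ may be nonzero; so the sums $\sum_j g(\xi_j)\bigl(f(t_j)-f(t_{j-1})\bigr)$ really do converge to the Lebesgue--Stieltjes integral over the closed interval $[0,t]$, not just over $(0,t]$, and each such sum is bounded in modulus by $\|g\|_\infty T_f(t)$. With that spelled out, the proof is complete.
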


\medskip

Given a function $f \in NBV\gb{[a,b]}$ with corresponding measure $\mu_f$, we define the Lebesgue-Stieltjes integral $\int g\,df$ or $\int g(x)\,f(dx)$ to be $\int g\,d\mu_f$. Thus, a Lebesgue-Stieltjes integral is a special Lebesgue integral and the theory for the Lebesgue integral applies to the Lebesgue-Stieltjes integral. We embed $L^1\gb{[0,\infty)}$ into $M\gb{[0,\infty)}$ by identifying $f \in L^1\gb{[0,\infty)}$ with the measure $\mu$ defined by 
\[\mu(E) = \int_E f(x)\,dx\quad\mbox{or, in short,}\quad \mu(dx) = f(x)dx.\] 

From the Radon-Nikodym theorem it follows that we can split a \mbox{scalar-,} vector-, or matrix-valued Borel measure $\mu$ on $[0,\infty)$  into three parts, the absolutely continuous part, the discrete part, and the singular part:
\begin{equation}\label{eq:decomp-meas}
\mu(dx) = b(x)\,dx + \sum_{k=1}^\infty a_k \de_{x_k}(dx) + \mu_s(dx),
\end{equation}
where $b \in L^1([0,\infty))$ represents the absolutely continuous part of $\mu$, $a_k$ are absolutely summable constants and $\de_{x_k}$ denotes the Dirac measure at $x_k$, and $\mu_s$ denotes the singular part of $\mu$.
 
\medskip

In this appendix we collect some results about the convolution of a measure and a function and the convolution of two measures needed to study renewal equations. For details and further results we refer to \cite{Fol90,GLS90}.

\smallskip

Let $B\gb{[0,\infty)}$ denote the vector space of all bounded, Borel measurable functions $f : [0,\infty) \to \BR$. Provided with the supremum norm (denoted by $\|\cdot\|$), the space $B\gb{[0,\infty)}$ becomes a Banach space. With $B\gb{[a,b]}$ we denote the Banach space of all bounded, Borel measurable functions $f : [a,b] \to \BR$ provided with the supremum norm.

\smallskip

The half-line convolution $\mu \bast f$ of a measure $\mu \in M([0,\infty))$ and a Borel measurable function $f$ is the function
\begin{equation}\label{eq:convo-meas-fun}
(\mu \bast f) (t) = \int_{[0,t]} \mu(ds)f(t-s)
\end{equation}
defined for those values of $t$ for which $[0,t] \ni s \mapsto f(t-s)$ is $|\mu|$-integrable.

\smallskip

The following result can be found in \cite[Theorem 3.6.1(ii)]{GLS90}.

\begin{theorem}\label{thm:convo-meas-Borel-fun}
If $f \in B\gb{[0,\infty)}$ and $\mu \in M\gb{[0,\infty)}$, then the convolution of $f$ and $\mu$ satisfies $\mu \bast f \in B\gb{[0,\infty)}$ and
\[\|\mu \bast f \| \le \|\mu\|_{TV}\|f\|.\]
\end{theorem}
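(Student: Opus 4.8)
The statement to prove is Theorem~\ref{thm:convo-meas-Borel-fun}: if $f \in B\gb{[0,\infty)}$ and $\mu \in M\gb{[0,\infty)}$, then $\mu \bast f \in B\gb{[0,\infty)}$ and $\|\mu \bast f\| \le \|\mu\|_{TV}\|f\|$.

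The plan is to proceed in three steps: well-definedness of the convolution at each $t$, the pointwise norm bound, and measurability of $t \mapsto (\mu \bast f)(t)$.

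\textbf{Step 1 (well-definedness).} Fix $t \ge 0$. The map $s \mapsto f(t-s)$ is Borel measurable on $[0,t]$ (composition of the Borel function $f$ with the continuous map $s \mapsto t-s$) and bounded by $\|f\|$. Since $\mu$ is a bounded Borel measure, $|\mu|\gb{[0,t]} \le \|\mu\|_{TV} < \infty$, so the integrand is $|\mu|$-integrable and $(\mu \bast f)(t) = \int_{[0,t]} \mu(ds)\,f(t-s)$ is a well-defined complex (here real) number.

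\textbf{Step 2 (the norm bound).} This is immediate from the standard estimate for integration against a complex measure: for each $t$,
\[
\babs{(\mu \bast f)(t)} = \babs{\int_{[0,t]} \mu(ds)\,f(t-s)} \le \int_{[0,t]} |\mu|(ds)\,|f(t-s)| \le \|f\|\,|\mu|\gb{[0,t]} \le \|f\|\,\|\mu\|_{TV}.
\]
Taking the supremum over $t \ge 0$ gives $\|\mu \bast f\| \le \|\mu\|_{TV}\|f\|$.

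\textbf{Step 3 (measurability).} This is the step I expect to be the main obstacle, since it is the only part that is not a one-line estimate. The cleanest route is a monotone-class / $\pi$-$\lambda$ argument on $f$. First verify measurability of $t \mapsto (\mu \bast f)(t)$ for $f = \chi_{[a,b)}$ an indicator of an interval: then $(\mu \bast f)(t) = \int_{[0,t]} \chi_{[a,b)}(t-s)\,\mu(ds) = \mu\gb{[0,t] \cap (t-b, t-a]}$, and one checks by hand that $t \mapsto \mu\gb{(t-b,t-a]} - \mu\gb{(t-b,t-a]\setminus[0,t]}$, or more simply $t \mapsto \mu\gb{(\max\{0,t-b\}, t-a]}$ (with the convention that the interval is empty when its right endpoint is negative), is Borel measurable --- it is a difference of two functions of the form $t \mapsto \mu\gb{[0,c(t))}$ with $c$ continuous, each of which is measurable because $c \mapsto \mu\gb{[0,c)}$ is a (right-continuous, or at worst BV) function by Theorem~\ref{thm:bv2} applied to the real and imaginary parts. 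By linearity the measurability then extends to simple functions. For general bounded Borel $f$, take a uniformly bounded sequence of simple functions $f_n \to f$ pointwise (standard); then for each fixed $t$, dominated convergence (dominating function $\|f\|\,\chi_{[0,t]} \in L^1(|\mu|)$) gives $(\mu \bast f_n)(t) \to (\mu \bast f)(t)$, so $\mu \bast f$ is a pointwise limit of Borel functions and hence Borel. Combined with Step 2, this shows $\mu \bast f \in B\gb{[0,\infty)}$, completing the proof. (Alternatively, one may simply cite \cite[Theorem 3.6.1(ii)]{GLS90} as indicated in the text, but the above gives a self-contained argument.)
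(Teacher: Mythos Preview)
Your argument is correct. The paper itself does not prove this theorem --- it merely cites \cite[Theorem 3.6.1(ii)]{GLS90}, as you note in your final parenthetical remark --- so there is no in-paper proof to compare against. Your three steps (well-definedness, pointwise bound, measurability via a monotone-class argument on $f$) constitute the standard self-contained route.

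One small wrinkle in Step~3: when $a \le t < b$ the set $[0,t]\cap(t-b,\,t-a]$ equals $[0,\,t-a]$, not $(0,\,t-a]$, so your ``more simply'' formula $(\max\{0,t-b\},\,t-a]$ drops the point $s=0$; this is harmless for the measurability conclusion but worth tidying. If you want a slicker Step~3 that avoids the case analysis altogether, extend $f$ by zero on $(-\infty,0)$ so that $(\mu\bast f)(t)=\int_{[0,\infty)}f(t-s)\,\mu(ds)$, observe that $(t,s)\mapsto f(t-s)$ is jointly Borel on $[0,\infty)^2$, and apply Fubini--Tonelli (first with respect to $|\mu|$, then pass to the Jordan decomposition of $\mu$) to conclude in one stroke that $t\mapsto\int_{[0,\infty)}f(t-s)\,\mu(ds)$ is Borel.
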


\medskip

The half-line convolution $\mu \ast \nu$ of two measures $\mu,\nu \in M\gb{[0,\infty)}$ is defined as the complex Borel measure that to each Borel set $E \in \SB$ assigns the value
\begin{equation}\label{eq:convo-meas}
(\mu \ast \nu) (E) = \int_{[0,\infty)} \mu(ds)\nu\gb{(E-s)_+},
\end{equation}
where $(E - s)_+ \,:=\, \{e-s \mid e \in E\} \,\mcap\, [0,\infty)$ (cf. \cite[Definition 4.1.1]{GLS90}).

If $\chi_E$ is the characteristic function of the set $E$, then 
\[\nu((E-s)_+) = \int_{[0,\infty)} \chi_E(\si + s)\nu(d\si),\]
since $[0,\infty) \ni \si \mapsto \chi_E(\si + s)$ is the characteristic function of $(E-s)_+$.
It follows from Theorem \ref{thm:convo-meas-Borel-fun} that $s \mapsto \nu(E-s)_+)$ belongs to  $B\gb{[0,\infty)}$ and hence the definition of the convolution of two measures $\mu \ast \nu : \SB \to \BC$ given in \sef{eq:convo-meas} makes sense. Furthermore, using Fubini's Theorem, we have the following useful identity
\begin{align}\label{eq:convo-meas-ident}
\mu \ast \nu (E) &= \int_{[0,\infty)} \mu(ds)\nu\gb{(E-s)_+}\nonumber\\
&= \int_{[0,\infty)}\int_{[0,\infty)} \chi_E(\si+s) \mu(ds)\nu(d\si)\nonumber\\
&= \int_{[0,\infty)} \mu\gb{(E-\si)_+}\nu(d\si)
\end{align}

The following result can be found in \cite[Theorem 4.1.2]{GLS90}.

\begin{theorem}\label{thm:convo-meas-prop}
Let $\mu,\nu \in M\gb{[0,\infty)}$.
\begin{itemize}
\item[(i)] The convolution $\mu \ast \nu$ belongs to $M\gb{[0,\infty)}$ and
\[\|\mu \ast \nu\|_{TV} \le \|\mu\|_{TV}\|\nu\|_{TV}.\]
\item[(ii)] For any bounded Borel function $h \in B\gb{[0,\infty)}$, we have
\[\int_{[0,\infty)} h(t) \gb{\mu \ast \nu}(dt) = \int_{[0,\infty)} \int_{[0,\infty)} h(t+s)\,\mu(dt)\nu(ds).\]
\end{itemize}
\end{theorem}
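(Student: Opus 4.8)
The plan is to derive both assertions along the lines of \cite[Theorem 4.1.2]{GLS90}, spelling out the verifications. Well-definedness of the defining formula \sef{eq:convo-meas} has already been recorded: the map $s \mapsto \nu\gb{(E-s)_+}$ is bounded and Borel (for intervals by Theorem \ref{thm:convo-meas-Borel-fun}, for general Borel $E$ by a monotone-class argument), and $|\mu|$ is a finite measure, so the integral converges absolutely. For (i) it then remains to check countable additivity of $E \mapsto \gb{\mu \bast \nu}(E)$ and to bound its total variation; for (ii) the plan is the usual bootstrap from indicators, through simple functions, to bounded Borel functions, the indicator case being nothing but Fubini combined with the identity \sef{eq:convo-meas-ident}.

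For (i), fix a Borel set $E$ and a partition $\{E_j\}_{j\ge 1}$ of $E$. Since the $E_j$ are disjoint, so are the sets $\gb{E_j-s}_+$, and they partition $\gb{E-s}_+$; hence $\nu\gb{(E-s)_+}=\sum_j \nu\gb{(E_j-s)_+}$ for every $s\ge 0$ by countable additivity of $\nu$. The partial sums $s\mapsto\sum_{j\le N}\nu\gb{(E_j-s)_+}=\nu\bigl(\bigl(\Mcup_{j\le N}E_j-s\bigr)_+\bigr)$ are bounded in modulus by $\|\nu\|_{TV}$ uniformly in $N$ and $s$, and this constant is $|\mu|$-integrable, so dominated convergence lets me interchange $\int\mu(ds)$ with $\sum_j$:
\[
\gb{\mu \bast \nu}(E)=\int_{[0,\infty)}\mu(ds)\sum_j\nu\gb{(E_j-s)_+}=\sum_j\int_{[0,\infty)}\mu(ds)\,\nu\gb{(E_j-s)_+}=\sum_j\gb{\mu \bast \nu}(E_j).
\]
Absolute convergence and the norm bound come from the same estimate: by the triangle inequality, Tonelli on the nonnegative integrand, and $\sum_j|\nu|\gb{(E_j-s)_+}=|\nu|\gb{(E-s)_+}$,
\[
\sum_j\babs{\gb{\mu \bast \nu}(E_j)}\le\int_{[0,\infty)}|\mu|(ds)\sum_j\babs{\nu\gb{(E_j-s)_+}}\le\int_{[0,\infty)}|\mu|(ds)\,|\nu|\gb{(E-s)_+}=\gb{|\mu| \bast |\nu|}(E).
\]
Thus $\abs{\mu \bast \nu}(E)\le\gb{|\mu| \bast |\nu|}(E)$ for every $E$; taking $E=[0,\infty)$ and using $\gb{[0,\infty)-s}_+=[0,\infty)$ gives $\|\mu \bast \nu\|_{TV}\le\|\mu\|_{TV}\|\nu\|_{TV}$. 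Together with $\gb{\mu \bast \nu}(\emptyset)=0$ this establishes (i).

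For (ii), I would first take $h=\chi_E$ with $E$ Borel. The left-hand side equals $\int\chi_E\,d\gb{\mu \bast \nu}=\gb{\mu \bast \nu}(E)$. On the right-hand side the inner integral is $\int_{[0,\infty)}\chi_E(t+s)\,\mu(dt)=\mu\gb{(E-s)_+}$, and integrating this against $\nu(ds)$ returns $\gb{\mu \bast \nu}(E)$ precisely by \sef{eq:convo-meas-ident}; the interchange of order needed to identify the iterated integral is legitimate because $|\mu|\otimes|\nu|$ is a finite measure on $[0,\infty)^2$ and $(t,s)\mapsto\chi_E(t+s)$ is bounded and Borel, so Fubini applies. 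Linearity extends the identity to simple functions, and for a general $h\in B\gb{[0,\infty)}$ I would choose simple $h_n$ with $|h_n|\le\|h\|$ and $h_n\to h$ pointwise; dominated convergence on the left (against the finite measure $\abs{\mu \bast \nu}$) and on the right (against the finite measure $|\mu|\otimes|\nu|$), with dominating constant $\|h\|$, then yields the identity for $h$.

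No step is deep here; the main point requiring attention is the recurring justification of Fubini and of dominated convergence for the complex measures $\mu$ and $\nu$, but in each instance the dominating function is a constant and the relevant measures $|\mu|$, $|\nu|$, $|\mu|\otimes|\nu|$, $\abs{\mu \bast \nu}$ are finite, so no obstruction arises. The only genuinely measure-theoretic input — that $s\mapsto\nu\gb{(E-s)_+}$ is a bounded Borel function for every Borel $E$, used both in (i) and in the inner integral of (ii) — has already been recorded in the discussion preceding the statement.
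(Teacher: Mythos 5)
The paper does not prove this theorem itself; it is quoted verbatim from \cite[Theorem 4.1.2]{GLS90}, so there is no in-paper proof to compare against. Your argument is correct and follows the standard route of that reference: countable additivity of $\mu\ast\nu$ by dominated convergence (partial sums uniformly bounded by $\|\nu\|_{TV}$ against the finite measure $|\mu|$), the total-variation estimate via the pointwise domination $|\mu\ast\nu|(E)\le(|\mu|\ast|\nu|)(E)$ evaluated at $E=[0,\infty)$, and the integration identity by bootstrapping from indicators (where it is exactly the Fubini computation \sef{eq:convo-meas-ident}) through simple functions to bounded Borel $h$ by dominated convergence on both sides against the finite measures $|\mu\ast\nu|$ and $|\mu|\otimes|\nu|$. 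Your monotone-class justification that $s\mapsto\nu\gb{(E-s)_+}$ is Borel for arbitrary Borel $E$ is the right way to underpin the well-definedness of \sef{eq:convo-meas}; the paper's passing remark that this follows from Theorem \ref{thm:convo-meas-Borel-fun} glosses over exactly that extension, so your extra care is warranted rather than redundant.
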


Let $M_{loc}\gb{[0,\infty)}$ denote the vector space of local measures, i.e., set functions that are defined on relatively compact Borel measurable subsets of $[0,\infty)$ and that locally behave like bounded measures: for every $T > 0$ the set function $\mu_T$ defined by
\[\mu_T\gb{E} := \mu\gb{E \mcap\, [0,T]},\qquad E \in \SB\]
belong to $M\gb{[0,\infty)}$. The elements of $M_{loc}\gb{[0,\infty)}$ are called Radon measures. Since the restriction to $[0,T]$ of $\mu \ast \nu$ depends only on the restrictions of $\mu$ and $\nu$ to $[0,T]$, we can unambiguously extend the convolution product to $M_{loc}\gb{[0,\infty)}$.

\smallskip

The following corollary to Theorem \ref{thm:convo-meas-prop} can be found in \cite[Corollary 4.1.4]{GLS90}.

\begin{corollary}\label{col:convo-meas-prop}
Let $\mu,\nu,\rho \in M_{loc}\gb{[0,\infty)}$.
\begin{itemize}
\item[(i)] The convolution $\mu \ast \nu$ belongs to $M_{loc}\gb{[0,\infty)}$ and for any $T > 0$
\[\|\mu_T \ast \nu_T\|_{TV} \le \|\mu_T\|_{TV}\|\nu_T\|_{TV}.\]
\item[(ii)] For any locally bounded Borel function $h \in B\gb{[0,\infty)}$, we have
\[\gb{(\mu \ast \nu) \bast h}(t) = \gb{\mu \bast (\nu \bast h)}(t).\]
\item[(iii)] $(\mu \ast \nu) \ast \rho = \mu \ast (\nu \ast \rho)$.
\end{itemize}
\end{corollary}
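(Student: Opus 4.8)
The plan is to derive all three assertions from the bounded case, Theorem \ref{thm:convo-meas-prop}, by truncation to $[0,T]$. First I would isolate the preliminary identity
\[\gb{\mu\ast\nu}(E) = \gb{\mu_T\ast\nu_T}(E)\qquad\text{for every Borel set }E\subseteq[0,T].\]
This is read off the defining formula $\gb{\mu\ast\nu}(E)=\int_{[0,\infty)}\mu(ds)\,\nu\gb{(E-s)_+}$: when $s>T$ every element of $E$ is $\le T<s$, so $(E-s)_+=\emptyset$ and $\nu\gb{(E-s)_+}=0$, while for $s\in[0,T]$ one has $(E-s)_+\subseteq[0,T]$, so neither $\mu(ds)$ nor $\nu\gb{(E-s)_+}$ changes if $\mu,\nu$ are replaced by $\mu_T,\nu_T$. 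Hence the restriction of $\mu\ast\nu$ to $[0,T]$ equals the restriction of $\mu_T\ast\nu_T$, and Theorem \ref{thm:convo-meas-prop}(i) then shows it is a bounded Borel measure with total variation at most $\|\mu_T\|_{TV}\|\nu_T\|_{TV}$. Since $T>0$ is arbitrary, $\mu\ast\nu\in M_{loc}\gb{[0,\infty)}$, which is (i).

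For (ii) I would fix $t\ge 0$ and set $T=t$. Using the preliminary identity, $\gb{(\mu\ast\nu)\bast h}(t)=\int_{[0,t]}(\mu_t\ast\nu_t)(ds)\,h(t-s)$, and applying Theorem \ref{thm:convo-meas-prop}(ii) to the bounded Borel function $s\mapsto\chi_{[0,t]}(s)\,h(t-s)$ rewrites this as $\int\int\chi_{[0,t]}(r+u)\,h(t-r-u)\,\mu_t(dr)\,\nu_t(du)$. On the other hand, expanding the inner convolution directly and again replacing $\mu,\nu$ by $\mu_t,\nu_t$ gives $\gb{\mu\bast(\nu\bast h)}(t)=\int\int\chi_{[0,t]}(r)\,\chi_{[0,t-r]}(u)\,h(t-r-u)\,\mu_t(dr)\,\nu_t(du)$, and for $r,u\ge 0$ one has $\chi_{[0,t]}(r)\,\chi_{[0,t-r]}(u)=\chi_{\{r+u\le t\}}=\chi_{[0,t]}(r+u)$, so the two integrands coincide; the integrability needed for the convolutions to be defined is immediate from local boundedness of $h$ and Theorem \ref{thm:convo-meas-Borel-fun}.

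Part (iii) follows by the same scheme with Theorem \ref{thm:convo-meas-prop}(ii) iterated twice. Fixing $T>0$ and a Borel set $E\subseteq[0,T]$, the preliminary identity lets me replace all three measures by $\mu_T,\nu_T,\rho_T$; applying Theorem \ref{thm:convo-meas-prop}(ii) first to the outer convolution $(\mu_T\ast\nu_T)\ast\rho_T$ and then to $\mu_T\ast\nu_T$ expresses $\gb{(\mu\ast\nu)\ast\rho}(E)$ as the triple integral $\int\int\int\chi_E(r+u+s)\,\mu_T(dr)\,\nu_T(du)\,\rho_T(ds)$, and doing the same on the other side (outer convolution $\mu_T\ast(\nu_T\ast\rho_T)$, then $\nu_T\ast\rho_T$) gives the identical triple integral. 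Since $T$ and $E$ are arbitrary, $(\mu\ast\nu)\ast\rho=\mu\ast(\nu\ast\rho)$.

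I do not expect a genuine obstacle: the only real content is the truncation identity of the first paragraph, and everything after that is bookkeeping with Fubini via Theorem \ref{thm:convo-meas-prop}(ii). The two points that need a little care are checking that the relevant integrands are $|\mu|$- (respectively $|\mu|\times|\nu|$-) integrable so the half-line convolutions are defined, and keeping straight which variable plays the role of the ``measure'' and which the ``function'' when invoking Theorem \ref{thm:convo-meas-prop}(ii).
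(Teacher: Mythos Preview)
Your proposal is correct and follows exactly the approach the paper has in mind: the paper does not give its own proof but simply records this as a corollary to Theorem~\ref{thm:convo-meas-prop} (citing \cite[Corollary~4.1.4]{GLS90}), after first noting that ``the restriction to $[0,T]$ of $\mu\ast\nu$ depends only on the restrictions of $\mu$ and $\nu$ to $[0,T]$''---precisely your preliminary truncation identity. Your write-up fills in the details of that reduction cleanly; nothing is missing.
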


\medskip

Using the one-to-one correspondence between complex Borel measures and functions of bounded variation, see Theorem \ref{thm:bv2}, we can combine the above results to obtain the following theorem.

\begin{theorem}\label{thm:convo-meas-fun2}
If $f \in NBV([0,\infty))$ and $\mu \in M([0,\infty))$, then the convolution of $\mu$ and $f$ satisfies $\mu \bast f \in NBV([0,\infty))$ and 
\[\|\mu \bast f\|_{TV} \le \|\mu\|_{TV} \|f\|_{TV}.\]
\end{theorem}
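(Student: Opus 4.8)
The plan is to prove Theorem~\ref{thm:convo-meas-fun2} by translating the statement about functions of bounded variation into the language of measures, where Theorem~\ref{thm:convo-meas-prop} and Theorem~\ref{thm:bv2} do the heavy lifting. First I would use Theorem~\ref{thm:bv2}: since $f\in NBV([0,\infty))$, there is a unique complex Borel measure $\mu_f$ with $\mu_f([0,t])=f(t)$ for $t>0$ and $f(0)=0$. The key claim is that the function $g:=\mu\bast f$ is precisely the ``cumulative distribution function'' of the measure $\mu\ast\mu_f$, i.e. that $g(0)=0$ and $g(t)=(\mu\ast\mu_f)([0,t])$ for $t>0$; once this identification is established, Theorem~\ref{thm:bv2} immediately gives $g\in NBV([0,\infty))$ (so in particular $g$ is right-continuous on $(0,\infty)$ and vanishes at $0$), and the norm bound follows from
\[
\|\mu\bast f\|_{TV} = \|\mu\ast\mu_f\|_{TV} \le \|\mu\|_{TV}\,\|\mu_f\|_{TV} = \|\mu\|_{TV}\,\|f\|_{TV},
\]
where the middle inequality is Theorem~\ref{thm:convo-meas-prop}(i) and the last equality is the isometry built into the $NBV$--$M$ correspondence (the total variation norm of $f$ equals $\|\mu_f\|_{TV}$, cf. $\abs{\mu_f}=\mu_{T_f}$ in Theorem~\ref{thm:bv2}).

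The core computation is therefore to verify $(\mu\bast f)(t)=(\mu\ast\mu_f)([0,t])$. I would apply Theorem~\ref{thm:convo-meas-prop}(ii) with $h=\chi_{[0,t]}$ (a bounded Borel function) to get
\[
(\mu\ast\mu_f)([0,t]) = \int_{[0,\infty)}\int_{[0,\infty)} \chi_{[0,t]}(r+s)\,\mu(dr)\,\mu_f(ds).
\]
For fixed $r$, the inner integral $\int_{[0,\infty)}\chi_{[0,t]}(r+s)\,\mu_f(ds)$ equals $\mu_f([0,t-r])=f(t-r)$ when $0\le r\le t$ (using the extension convention $f(u)=0$ for $u<0$, equivalently $\mu_f([0,u])=0$), and equals $0$ when $r>t$. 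Hence the double integral reduces to $\int_{[0,t]}\mu(dr)\,f(t-r)$, which is exactly $(\mu\bast f)(t)$ by the definition \sef{eq:convo-meas-fun}. One should also note in passing that $s\mapsto f(t-s)$ is bounded and Borel, hence $|\mu|$-integrable on $[0,t]$, so the convolution $(\mu\bast f)(t)$ is well defined for every $t$; and that $g(0)=0$ because the integral over the single point $[0,0]$ against the (non-atomic-at-issue-irrelevant) setup gives $\mu(\{0\})f(0)=0$ since $f(0)=0$.

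The main obstacle — really the only place where care is needed — is the bookkeeping around the endpoint conventions: the half-line convolution in \sef{eq:convo-meas-fun} integrates over the \emph{closed} interval $[0,t]$, the measure $\mu_f$ is normalized so that $\mu_f((- \infty,0))=0$ and $f$ is extended by zero to the left, and one must check that these conventions are mutually consistent so that the identity $(\mu\bast f)(t)=(\mu\ast\mu_f)([0,t])$ holds with \emph{closed} intervals on both sides and no spurious boundary mass appears. Since both $f$ and (by Theorem~\ref{thm:bv2}, via $R(0)=0$-type normalization) the relevant cumulative functions are continuous from the right and vanish at $0$, any discrepancy would live on a $|\mu|\otimes|\mu_f|$-null diagonal set and can be dismissed; but it is worth writing this out explicitly rather than waving hands. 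Once the identification is in place, right-continuity of $\mu\bast f$ on $(0,\infty)$ and membership in $NBV$ are automatic from Theorem~\ref{thm:bv2}, and the proof is complete.
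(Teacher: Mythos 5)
Your proposal is correct and follows essentially the same path as the paper's own proof: convert $f$ to a measure via Theorem \ref{thm:bv2}, identify $(\mu\bast f)(t)$ with $(\mu\ast\mu_f)([0,t])$, then read off membership in $NBV$ and the norm bound from Theorem \ref{thm:bv2} and Theorem \ref{thm:convo-meas-prop}(i). The only (cosmetic) difference is that you verify the key identity via Theorem \ref{thm:convo-meas-prop}(ii) applied to $h=\chi_{[0,t]}$, whereas the paper applies the Fubini identity \sef{eq:convo-meas-ident} directly to the defining formula \sef{eq:convo-meas} — these are the same computation in slightly different clothing.
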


\begin{proof}
If $\nu$ is the unique complex Borel measure such that $f(t) = \nu\gb{[0,t]})$ for every $t \in [0,\infty)$, then
with $E = [0,t]$
\begin{align}\label{eq:convo-meas-fun2}
\mu \bast f(t) &= \int_{[0,t]} \mu(ds) f(t-s)\nonumber\\
&= \int_{[0,t]} \mu(ds)\nu\gb{[0,t-s]}\nonumber\\
&= \int_{[0,\infty)} \mu(ds)\nu\gb{(E-s)_+}\nonumber\\
&= \gb{\mu \ast \nu}(E),
\end{align}
where we have used \sef{eq:convo-meas-ident}. Since $\mu \ast \nu \in M\gb{[0,\infty)}$, we can use \sef{eq:convo-meas-fun2} to define $g : [0,\infty) \to \BR$ by
\begin{equation}\label{eq:convo-meas-fun2c}
g(t) = \mu \bast f(t) = \mu \ast \nu\gb{[0,t]}.
\end{equation}
According to Theorem \ref{thm:bv2}, the function $g$ belongs to $NBV([0,\infty))$. Finally, the norm estimate follows from Theorem \ref{thm:convo-meas-prop}(i).
\end{proof}

We also need the following result.

\begin{theorem}\label{thm:basic-est}
Let $\mu \in M\gb{[0,\infty)}$ and let $f : [0,\infty) \to \BC$ be a bounded continuous function.
\begin{itemize}
\item[(i)] If $f(0) = 0$, then $\mu \bast f$ is a bounded continuous function and
\[\|\mu \bast f \| \le \|\mu\|_{TV}\|f\|.\]
\item[(ii)] If $\mu$ has no discrete part, then $\mu \bast f$ is a bounded continuous function and
\[\|\mu \bast f \| \le \|\mu\|_{TV}\|f\|.\]
\end{itemize}
\end{theorem}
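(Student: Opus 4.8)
The plan is to obtain everything except continuity for free. Since a bounded continuous $f$ is in particular a bounded Borel function, Theorem \ref{thm:convo-meas-Borel-fun} immediately gives, in both parts (i) and (ii), that $\mu\bast f$ is bounded and Borel measurable with $\|\mu\bast f\|\le\|\mu\|_{TV}\|f\|$. So the only thing left to prove is that $t\mapsto(\mu\bast f)(t)=\int_{[0,t]}\mu(ds)\,f(t-s)$ is continuous on $[0,\infty)$, and it is precisely here that the two hypotheses enter.

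Fix $t_0\ge0$ and consider $t$ with $|t-t_0|<1$, so $t,t_0\in[0,t_0+1]$; on that compact interval $f$ is uniformly continuous, with some modulus of continuity $\omega$ satisfying $\omega(\delta)\to0$ as $\delta\downarrow0$. For right-continuity ($t\downarrow t_0$) I would split
\[(\mu\bast f)(t)-(\mu\bast f)(t_0)=\int_{[0,t_0]}\mu(ds)\bigl[f(t-s)-f(t_0-s)\bigr]+\int_{(t_0,t]}\mu(ds)\,f(t-s).\]
The first term is at most $\|\mu\|_{TV}\,\omega(t-t_0)\to0$, and the second is at most $\|f\|\,|\mu|((t_0,t])$, which tends to $|\mu|(\emptyset)=0$ by continuity from above of the finite measure $|\mu|$. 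Thus right-continuity holds for every bounded continuous $f$ and every $\mu$, needing neither hypothesis.

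The genuine content is left-continuity. For $t\uparrow t_0$ write, analogously,
\[(\mu\bast f)(t_0)-(\mu\bast f)(t)=\int_{[0,t]}\mu(ds)\bigl[f(t_0-s)-f(t-s)\bigr]+\int_{(t,t_0]}\mu(ds)\,f(t_0-s),\]
the first term again bounded by $\|\mu\|_{TV}\,\omega(t_0-t)\to0$. In the boundary term the argument $t_0-s$ runs over $[0,t_0-t)$, i.e.\ it stays close to $0$. Under hypothesis (i), $f(0)=0$ together with continuity of $f$ at $0$ forces $\sup_{0\le u\le t_0-t}|f(u)|\to0$, so the boundary term is bounded by $\|\mu\|_{TV}\sup_{0\le u\le t_0-t}|f(u)|\to0$. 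Under hypothesis (ii), $|\mu|$ has no atoms (indeed $|\mu|(\{t_0\})=|\mu(\{t_0\})|=0$, since the only Borel partition of a point is the trivial one and the discrete part is absent), hence $|\mu|((t,t_0])\to|\mu|(\{t_0\})=0$ as $t\uparrow t_0$ by continuity from above, and the boundary term is bounded by $\|f\|\,|\mu|((t,t_0])\to0$. In either case $\mu\bast f$ is continuous at $t_0$, and $t_0$ was arbitrary.

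The step I expect to be the crux is controlling the boundary mass $\int_{(t,t_0]}\mu(ds)\,f(t_0-s)$ that is picked up as the upper limit of integration moves: in the limit this carries the atom $\mu(\{t_0\})$, and it is exactly this that obstructs left-continuity unless one of the two hypotheses is available to kill it — either the integrand vanishes there (case (i)) or $\mu$ has no such atom (case (ii)). Everything else is routine uniform-continuity bookkeeping together with the continuity of a finite measure along a monotone family of intervals.
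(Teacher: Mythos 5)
Your argument is correct, and it takes a somewhat different route from the paper's. The paper proves (i) by extending $f$ to all of $\BR$ by setting $f \equiv 0$ on $(-\infty,0)$ (a continuous extension precisely because $f(0)=0$) and then bounding $|(\mu \bast f)(t+h)-(\mu\bast f)(t)|$ by a single expression $\|\mu\|_{TV}\sup_\si|f(\si)-f(\si-h)|$, whose convergence to zero is just uniform continuity of the extended $f$ on compacts; the boundary mass picked up by the moving upper limit is silently absorbed into the sup because the extended $f$ vanishes at and below zero. For (ii) the paper subtracts $f(0)$, applies (i) to $g = f - f(0)$, and handles the remainder $\mu([0,t])f(0)$ via continuity of $t\mapsto\mu([0,t])$ when $\mu$ has no atoms. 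You instead split the increment explicitly into a bulk term (controlled by uniform continuity of $f$) and a boundary term $\int_{(t\wedge t_0,\, t\vee t_0]}$, and treat right- and left-continuity separately: the boundary term vanishes on the right for any finite measure (the interval shrinks to $\emptyset$), while on the left it carries the potential atom $|\mu|(\{t_0\})$ and must be killed either by the integrand being small there (case (i)) or by the absence of the atom (case (ii)). Both proofs are short and correct; yours makes the obstruction to left-continuity (the atom at the moving endpoint) explicit, and exhibits that right-continuity is free, which the paper's unified estimate leaves implicit, while the paper's extension-by-zero trick yields a one-line estimate without the right/left case split.
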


\begin{proof}
To prove (i), observe first that if $f(0) = 0$, then we can extend $f$ to a continuous function on $\BR$ by defining $f(t) = 0$ for $t < 0$. From \sef{eq:convo-meas-fun} we obtain
\begin{align*}
\big| \gb{\mu \bast f}(t+h) - \gb{\mu \bast f}(t) \big| 
&\le \int_{[0,\max\{t,t+h\}]} \abs{\mu}(ds)\, \babs{f(t+h-s) - f(t-s)}\\
&\le \|\mu\|_{TV} \sup_{0 \le \si \le \max\{t,t+h\}} \babs{f(\si) - f(\si-h)}.
\end{align*}
Since $f$ is continuous, for any $t \ge 0$ the right hand side converges to zero as $h \to 0$, showing that $\mu \bast f$ is continuous.

To prove (ii), we first write
\begin{align}\label{eq:basic-est}
\gb{\mu \bast f}(t) &= \int_{[0,t]} \mu(ds)f(t-s)\nonumber\\
&= \int_{[0,t]} \mu(ds)\gb{f(t-s) - f(0)} + \mu\gb{[0,t]}f(0).
\end{align}
If $g(s) = f(s) - f(0)$, then $g(0) = 0$ and
\[\int_{[0,t]} \mu(ds)\gb{f(t-s) - f(0)} = \int_{[0,t]} \mu(ds)g(t-s)\]
and by the first part it follows that this term is continuous. Since $\mu$ has no discrete part, the function $t \mapsto \mu\gb{[0,t]}f(0)$ is also continuous. This shows that $\mu \bast f$ is a bounded continuous function and the norm estimate follows from the corresponding estimate given in Theorem \ref{thm:convo-meas-Borel-fun}.
\end{proof}

\medskip

Let $\ga$ be a real number. For $\mu \in M_{loc}\gb{[0,\infty);\BC^{n \times n}}$ we define the local measure $\mu^\ga  \in M_{loc}\gb{[0,\infty);\BC^{n \times n}}$ by
\begin{equation}\label{def:scaled-meas}
\mu^\ga(E) = \int_{[0,T]} \chi_E(s)e^{-\ga s}\,\mu(ds),
\end{equation}
for $T$ large enough to guarantee that $E \subset [0,T]$ and where $\chi_E$ denotes the characteristic function of $E$. 

We continue with the existence of the resolvent $\rho$ of a complex Borel measure $\mu$ supported on $[0,\infty)$. See \cite[Theorem 4.1.5]{GLS90}.

\begin{theorem}\label{thm:resolvent}
Suppose that $\mu \in M_{loc}\gb{[0,\infty), \BC^{n \times n}}$. There exists a unique measure $\rho \in M_{loc}\gb{[0,\infty), \BC^{n \times n}}$ satisfying either one (and hence both) of the following identities
\begin{equation}\label{eq:resol-eqn-rho}
\rho - \mu \ast \rho = \mu = \rho - \rho \ast \mu 
\end{equation}
if and only if $\det\bigl[I - \mu(\{0\})\bigr] \not= 0$.

Furthermore, if there exists a positive real $\ga$ such that the measure $\mu^\ga$ is a bounded Borel measure, then there exists $\al$ with $\al \ge \ga$ such that $\rho^\al$ is a bounded Borel measure. Here $\mu^\ga$ and $\rho^\al$ are defined as in \sef{def:scaled-meas}.
\end{theorem}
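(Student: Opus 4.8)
The plan is to follow the classical treatment of \cite[Theorem 4.1.5]{GLS90}, making explicit the role of the exponential weighting \sef{def:scaled-meas}. First I would isolate the atom of $\mu$ at the origin: write $\mu = a_0\de_0 + \nu$ with $a_0 := \mu(\{0\})$ and $\nu(\{0\}) = 0$. Because $(\{0\}-s)_+$ equals $\{0\}$ for $s=0$ and is empty otherwise, \sef{eq:convo-meas} gives $(\mu\ast\eta)(\{0\}) = a_0\,\eta(\{0\})$ for every $\eta \in M_{loc}$. Evaluating \sef{eq:resol-eqn-rho} on $\{0\}$ then forces $(I-a_0)\rho(\{0\}) = a_0 = \rho(\{0\})(I-a_0)$, whence $(I-a_0)\gb{I+\rho(\{0\})} = I = \gb{I+\rho(\{0\})}(I-a_0)$; this settles the ``only if'' direction. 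Conversely, assuming $\det(I-a_0)\ne 0$, set $\si := (I-a_0)^{-1}\nu \in M_{loc}$ (still with $\si(\{0\})=0$); the left identity in \sef{eq:resol-eqn-rho} is then the fixed point equation $\rho = g + \si\ast\rho$ with $g := (I-a_0)^{-1}a_0\,\de_0 + \si$.

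The technical heart is the claim that $\|(\si_T)^{\ast k}\|_{TV}\to 0$ as $k\to\infty$ for every $T>0$, where $\si_T$ is the restriction of $\si$ to $[0,T]$ and $\si(\{0\})=0$. I would prove it by choosing $\de>0$ so small that the total variation of $\si$ on $(0,\de)$ is less than some fixed $\ep<1$, splitting $\si_T = \si_s+\si_b$ into the restrictions to $(0,\de)$ and to $[\de,T]$, and noting that in the expansion of $(\si_s+\si_b)^{\ast k}$ only the words containing at most $N := \lceil T/\de\rceil$ factors $\si_b$ carry mass on $[0,T]$; their total variation is at most $\sum_{j=0}^{N}\binom{k}{j}\ep^{k-j}M^{j}$ with $M := \|\si_b\|_{TV}$, and this tends to $0$ since $\ep^{k}$ beats any fixed power of $k$. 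Granting the claim, $\|\si_T^{\ast k_0}\|_{TV}<1$ for some $k_0$, hence $\sum_{k\ge0}\|\si_T^{\ast k}\|_{TV}<\infty$ on every $[0,T]$, so $\de_0+\sum_{k\ge1}\si^{\ast k}$ converges in $M_{loc}$ and $\rho := \gb{\de_0+\sum_{k\ge1}\si^{\ast k}}\ast g$ solves $\rho = g+\si\ast\rho$, i.e.\ the left identity of \sef{eq:resol-eqn-rho}. Uniqueness, and the equivalence of the two identities, follow from the same estimate: if $\theta$ is the difference of two solutions of the left identity (respectively $\theta := \rho-\mu-\rho\ast\mu$), a short computation gives $\theta = \mu\ast\theta$, hence $(I-a_0)\theta = \nu\ast\theta$ and $\theta = \si^{\ast k}\ast\theta$, so $\|\theta_T\|_{TV}\le\|\si_T^{\ast k}\|_{TV}\,\|\theta_T\|_{TV}$ forces $\theta_T = 0$ for every $T$.

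For the exponential bound I would use that $\mu\mapsto\mu^\ga$, defined in \sef{def:scaled-meas}, is a homomorphism of the convolution algebra: $(\mu\ast\nu)^\ga = \mu^\ga\ast\nu^\ga$ and $(\mu^\ga)^\be = \mu^{\ga+\be}$, the first being Theorem \ref{thm:convo-meas-prop}(ii) applied to $h(u)=\chi_E(u)e^{-\ga u}$. Applying $(\novar)^\ga$ to \sef{eq:resol-eqn-rho} shows that $\rho^\ga$ is the resolvent of $\mu^\ga$. Now suppose $\mu^\ga\in M\gb{[0,\infty)}$ and decompose $\mu^\ga = a_0\de_0+\nu^\ga$, the atom being unchanged and $\nu^\ga$ a bounded measure with $\nu^\ga(\{0\})=0$. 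Since $\|(\nu^\ga)^\be\|_{TV} = \int_{[0,\infty)} e^{-\be s}\,|\nu^\ga|(ds)\to |\nu^\ga|(\{0\})=0$ as $\be\to\infty$ by dominated convergence, I would fix $\be>0$ so large that $\|(I-a_0)^{-1}\|\,\|(\nu^\ga)^\be\|_{TV}<1$ and put $\al := \ga+\be$. Then $\mu^\al = a_0\de_0+\nu^\al$ with $(I-a_0)^{-1}\nu^\al$ of total variation $<1$, so the Neumann construction of the second paragraph, applied to $\mu^\al$, now converges in the \emph{bounded} Banach space $M\gb{[0,\infty)}$; by the uniqueness just established (legitimate since $\det(I-\mu^\al(\{0\}))=\det(I-a_0)\ne0$) this bounded measure coincides with $\rho^\al$, which is therefore a bounded Borel measure, with $\al\ge\ga$.

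The step I expect to be the real obstacle is the decay lemma $\|(\si_T)^{\ast k}\|_{TV}\to 0$: the intuition ``only finitely many non-small increments fit into $[0,T]$'' is transparent, but turning the word-expansion count into a rigorous total-variation estimate for convolutions of signed matrix-valued measures requires some care. Everything downstream --- the Neumann series, uniqueness, and the exponential rescaling --- is then routine, and in a pinch one can simply invoke \cite[Theorem 4.1.5]{GLS90} for it.
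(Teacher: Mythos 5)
Your proof is correct and follows the same broad blueprint as the paper's (which in turn follows \cite[Theorem 4.1.5]{GLS90}): peel off the atom at the origin, reduce to a kernel with $\nu(\{0\})=0$, build the resolvent via a Neumann series, and use the exponential rescaling $\mu\mapsto\mu^\ga$ for the growth bound. Where you genuinely diverge is in the local existence/uniqueness step. The paper does \emph{not} prove a decay estimate for the convolution powers $\|(\si_T)^{*k}\|_{TV}$ directly; instead it applies the rescaling $\mu\mapsto\mu^\ga$ a second time, choosing $\ga$ so large that $|\mu^\ga|([0,T])<1$ (possible by dominated convergence since $\mu(\{0\})=0$ after the reduction), makes $\rho\mapsto\mu+\mu\ast\rho$ an honest contraction on $M([0,T])$, and undoes the rescaling at the end via $\rho_T=\nu^{-\ga}$. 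This sidesteps precisely the step you flag as the ``real obstacle'' --- your finite-overlap counting of words in $(\si_s+\si_b)^{\ast k}$ --- at the cost of invoking the rescaling homomorphism one more time. Your word-count argument is sound (and is the classical Volterra-kernel proof), but it is indeed fiddlier to write cleanly for signed matrix-valued measures than the contraction-via-rescaling route, so the paper's choice is the more economical one given that the rescaling machinery has to be set up anyway for the exponential bound. Your uniqueness argument (reducing to the homogeneous equation $\theta=\mu\ast\theta$ and iterating) is also a little different from the paper's, which instead shows $\rho=\hat\rho$ directly by associativity $\rho=\mu+\rho\ast\mu=\mu+(\rho-\rho\ast\mu)\ast\hat\rho=\hat\rho$, assuming $\rho$ is a two-sided solution and $\hat\rho$ a left one; your version has the small advantage of also making explicit that a one-sided solution is automatically two-sided. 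Both routes are valid.
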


\begin{proof}
Suppose that there exists a measure $\rho$ such that $\rho - \mu \ast \rho = \mu$, then 
$\gb{\de_0 - \mu} \ast \gb{\de_0 + \rho} = \de_0$, where $\de_0$ denotes the Dirac measure with as its value the identity matrix at zero.
Therefore,
\[\bigl[I - \mu(\{0\})\bigr] \bigl[I+\rho(\{0\})\bigr] = I\]
and hence $\det\bigl[I - \mu(\{0\})\bigr] \not= 0$.

Next assume that $\det\bigl[I - \mu(\{0\})\bigr] \not= 0$. We first show that if $\rho$ exists such that \sef{eq:resol-eqn-rho} holds, then it is unique. Indeed, if there exist $\what\rho$ such that $\what\rho - \mu \ast \what\rho = \mu$ , then
\begin{align*}
\rho = \mu + \rho \ast \mu &= \mu + \rho \ast \gb{ \what\rho - \mu \ast \what\rho }\\
&= \mu + \rho \ast \what\rho - \gb{\rho \ast \mu}*\what\rho\\
&= \mu + \gb{\rho - \rho \ast \mu}\ast\what\rho\\
&= \mu + \mu \ast \what\rho = \what\rho.
\end{align*}

Because of the uniqueness of the solution $\rho \in M_{loc}\gb{[0,\infty), \BC^{n \times n}}$, it suffices to show that for each $T \in (0,\infty)$ there is a measure $\hat\rho \in M\gb{[0,T]}$ satisfying the resolvent equation on $[0,T]$:
\begin{equation}\label{eq:resol-eqn-rho-T}
\gb{\hat\rho - \mu \ast \hat\rho}_T = \mu_T = \gb{\hat\rho - \hat\rho \ast \mu}_T.
\end{equation}

Furthermore, if $\ga \in \BR$ and $\mu \in M_{loc}\gb{[0,\infty), \BC^{n \times n}}$ and $\nu \in M\gb{[0,T]}$ satisfies
\begin{equation}\label{eq:resol-eqn-rho-scaled}
\nu - \mu^\ga \ast \nu = \mu^\ga,
\end{equation}
then $\rho_T = \nu^{-\ga}$ satisfies \sef{eq:resol-eqn-rho-T}. Indeed
\begin{align*}
\mu^\ga \ast \nu\gb{[0,t]} &= \int_{[0,\infty)} e^{-\ga s}\mu(ds)\,\nu\gb{[0,t-s]}\\
&= e^{-\ga t}\int_{[0,\infty)}\mu(ds)\,\nu^{-\ga}\gb{[0,t-s]}\\
&= e^{-\ga t} \gb{\mu \ast \nu^{-\ga}}\gb{[0,t]}.
\end{align*}

\medskip

Fix $T > 0$ and assume at first that $\mu(\{0\}) = 0$. By replacing $\mu$ by $\mu^\ga$ with $\ga$ chosen appropriately, we can assume without loss of generality that
\begin{equation}\label{eq:contraction-estimate}
\babs{\mu}\gb{[0,T]} < 1.
\end{equation}
Using this fact, we have that the map 
\[\rho \mapsto \mu + \mu \ast \rho\]
defines a contraction on the Banach space $M\gb{[0,T]}$ for every $T > 0$. The Banach contraction principle implies that the restriction to $[0,T]$ of the solution $\rho$ of \sef{eq:resol-eqn-rho} is the unique fixed point of this map. Furthermore using the iteration method to approximate the fixed point, we have the following representation for $\rho$
\begin{equation}\label{eq:rep-resol-rho}
\rho = \sum_{j=1}^\infty \mu^{\ast j},
\end{equation}
where $\mu^{\ast j}$ denotes the $j$-times convolution of $\mu$ with itself.

Next assume that A=$\mu(\{0\}) \not= 0$. It follows from $ \det \bigl[I -A\bigr] \not = 0$ that we can rewrite the resolvent equation
\[\rho - \mu \ast \rho = \mu\]
as
\begin{equation}\label{eq:resol-eqn-rho2}
\rho = A(I-A)^{-1}\de + \nu + \nu \ast \rho,
\end{equation}
where
\begin{equation}\label{eq:loc-nu}
\nu = (I-A)^{-1}\gb{\mu - A\de}
\end{equation}
satisfies $\nu(\{0\}) \not= 0$. Also note from \sef{eq:resol-eqn-rho2} that $\rho\gb{\{0\}} = A(I-A)^{-1}$. Therefore it follows from representation \sef{eq:rep-resol-rho} with $\mu = \nu$ that in case  $A = \mu\gb{\{0\}} \not= 0$, we have the following representation for $\rho$
\begin{equation}\label{eq:rep-resol-rho3}
\rho = A(I-A)^{-1}\de + \sum_{j=1}^\infty \nu^{\ast j},
\end{equation}
where $\nu$ is given by \sef{eq:loc-nu}. This completes the proof of the first part of the theorem.

Finally, we prove the exponential estimate for the resolvent by modifying the above contraction argument.  If there exists a positive real $\ga$ such that the measure $\mu^\ga$ is a bounded Borel measure, then we can modify  \sef{eq:contraction-estimate} and replace $\mu$ by $\mu^\al$ with $\al \ge \ga$ chosen such that
\begin{equation}\label{eq:contraction-estimate-large}
\babs{\mu^\al}\gb{[0,\infty)} < 1.
\end{equation}
so that the map $\nu \mapsto \mu^\al + \mu^\al \ast \nu$ is a contraction in $M\gb{[0,\infty)}$. This proves that $\nu \in M\gb{[0,\infty)}$. Since $\rho^\al = \nu$ the proof of the theorem is complete.
\end{proof}

\medskip

To give the precise asymptotic behaviour of the resolvent $\rho$, i.e., the case that $\al = \ga$ in Theorem \ref{thm:resolvent}, we have to impose additional conditions on $\mu$, see Theorem \ref{thm:Gelfand}. We first need some preparations.

\medskip

The Laplace transform $\what\mu : \BC \to \BC^{n \times n}$ of a matrix-valued Borel measure $\mu$ on $[0,\infty)$ is given by 
\begin{equation}\label{eq:Laplace-m}
\what\mu(\la) = \int_{[0,\infty)} e^{-\la t}\,\mu(dt)
\end{equation}
and defined for those values of $\la \in \BC$ for which the integral converges absolutely.

The Laplace transform $\bar f : \BC \to \BC^{n \times n}$ of a vector-valued Borel function $f : [0,\infty) \to \BC^n$ is given by
\begin{equation}\label{eq:Laplace-f}
\bar f(\la) = \int_{[0,\infty)} e^{-\la t}f(t)\,dt
\end{equation}
and defined for those values of $\la \in \BC$ for which the integral converges absolutely.

If $\mu \in M_{loc}\gb{[0,\infty),\BC^{n \times n}}$ and $\what\mu(\la_0)$ exists for some $\la_0 \in \BC$, then $\what\mu(\la)$ is defined in the closed half plane $\Re \la \ge \Re \la_0$. Furthermore, if $f \in B\gb{[0,\infty),\BC^n}$, then
\[\gb{\clo{\mu \bast f}}\, (\la) = \what\mu(\la)\bar f(\la)\]
for all $\la \in \BC$ for which both $\what\mu(\la)$ and $\bar f(\la)$ are defined.

The following result, the so-called half-line Gel'fand theorem (see \cite[Theorem 4.4.3 and Corollary 4.4.7]{GLS90}), gives a precise estimate for the growth of the resolvent of $\mu$.

\begin{theorem}\label{thm:Gelfand} 
Suppose $\mu \in M_{loc}\gb{[0,\infty), \BC^{n \times n}}$ has no singular part and is such that $\mu^\ga$ is a bounded Borel measure. Let $\rho \in M_{loc}\gb{[0,\infty), \BC^{n \times n}}$ denote the unique solution of \sef{eq:resol-eqn-rho}. If
\begin{equation}\label{eq:Gelfand1}
\det \gb{I - \what\mu(z)} \not= 0\qquad\hbox{for } \Re z \ge \ga
\end{equation}
and
\begin{equation}\label{eq:Gelfand2}
\inf_{\Re z \ge \ga} \Big| \det\gb{I - \what\mu_d(z)} \Big| > 0,
\end{equation}
or combined in one condition
\begin{equation}\label{eq:Gelfand3}
\inf_{\Re z \ge \ga} \Big| \det\gb{I - \what\mu(z)} \Big| > 0,
\end{equation}
then $\rho^\ga$ is a bounded Borel measure.
\end{theorem}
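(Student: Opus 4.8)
\emph{Proof proposal.} This is the classical half-line Gel'fand theorem for the matrix measure algebra, and the plan is to reduce it to two standard invertibility results — the half-line Wiener/Paley--Wiener theorem for $L^1\gb{[0,\infty)}$ and its analogue for the algebra of discrete measures — in the spirit of \cite[Chapter~4]{GLS90}. First I would reduce to $\ga = 0$: by the rescaling identity recorded in the proof of Theorem~\ref{thm:resolvent}, $\rho^\ga$ is the resolvent of $\mu^\ga$, and since $\what{\mu^\ga}(w) = \what\mu(w + \ga)$ the hypotheses \sef{eq:Gelfand1}--\sef{eq:Gelfand3} for $\mu$ on $\{\Re z \ge \ga\}$ become the same hypotheses for $\mu^\ga$ on $\{\Re w \ge 0\}$; as rescaling also preserves absence of a singular part, it suffices to assume $\mu \in M\gb{[0,\infty);\BC^{n\times n}}$ is bounded with no singular part and to prove that $\rho$ is bounded.

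Next I would set up the algebraic framework. In the unital Banach algebra $\mathcal{A} := \BC\delta_0 \oplus M\gb{[0,\infty);\BC^{n\times n}}$ under convolution, \sef{eq:resol-eqn-rho} reads $\gb{\delta_0 - \mu}\ast\gb{\delta_0 + \rho} = \delta_0 = \gb{\delta_0 + \rho}\ast\gb{\delta_0 - \mu}$, so $\rho$ is a bounded measure precisely when $\delta_0 - \mu$ is invertible in $\mathcal{A}$. Passing to determinants over the \emph{commutative} scalar algebra $\mathcal{B} := \BC\delta_0 \oplus M\gb{[0,\infty);\BC}$, invertibility of $\delta_0 - \mu$ in $\mathcal{A}$ is equivalent to invertibility of $\det\gb{\delta_0 - \mu} = \delta_0 - \kappa$ in $\mathcal{B}$ (with $\gb{\delta_0 - \mu}^{-1}$ then recovered as a bounded matrix measure by the adjugate formula), where $\kappa$ has no singular part and satisfies $1 - \what\kappa(z) = \det\gb{I - \what\mu(z)}$, with discrete part $\kappa_d$ obeying $1 - \what{\kappa_d}(z) = \det\gb{I - \what{\mu_d}(z)}$.

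The core is the scalar invertibility of $\delta_0 - \kappa$, which I would split along $\kappa = \kappa_d + \kappa_{ac}$, $\kappa_{ac}(dt) = g_0(t)\,dt$ with $g_0 \in L^1$. Step one: invert $\delta_0 - \kappa_d$ in the closed subalgebra of discrete measures; its Gel'fand spectrum is the space of bounded semicharacters of the semigroup $\gb{[0,\infty),+}$, containing — besides the point evaluations $z \mapsto \what{\kappa_d}(z)$, $\Re z \ge 0$ — a copy of the Bohr compactification of the imaginary axis, and the uniform lower bound \sef{eq:Gelfand2} on $\babs{\det\gb{I - \what{\mu_d}(z)}}$ is exactly what keeps $\det\gb{I - \what{\mu_d}}$ bounded away from $0$ over this whole spectrum, yielding $\gb{\delta_0 - \kappa_d}^{-1} = \delta_0 + \tau_d$ with $\tau_d$ a bounded discrete measure. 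Step two: factor $\delta_0 - \kappa = \gb{\delta_0 - \kappa_d}\ast\gb{\delta_0 - h_0}$ with $h_0 := \gb{\delta_0 + \tau_d}\ast\kappa_{ac} \in L^1\gb{[0,\infty)}$, and invert $\delta_0 - h_0$ in $\BC\delta_0 \oplus L^1\gb{[0,\infty)}$ by the half-line Wiener theorem; its maximal ideal space is $\{\Re z \ge 0\}$ with one adjoined point at infinity where the Gel'fand transform of $c\delta_0 + f$ is $c$, and since $1 - \what{h_0}(z) = \gb{1 - \what{\kappa_d}(z)}^{-1}\gb{1 - \what\kappa(z)} = \det\gb{I - \what{\mu_d}(z)}^{-1}\det\gb{I - \what\mu(z)}$ is non-zero on $\{\Re z \ge 0\}$ by \sef{eq:Gelfand1}--\sef{eq:Gelfand2} and equals $1$ at infinity, the theorem gives $\gb{\delta_0 - h_0}^{-1} = \delta_0 + h$ with $h \in L^1$. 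Then $\det\gb{\delta_0 - \mu}^{-1} = \gb{\delta_0 + h}\ast\gb{\delta_0 + \tau_d}$ is a bounded measure, hence so is $\gb{\delta_0 - \mu}^{-1}$ and therefore $\rho$; unwinding the rescaling, $\rho^\ga \in M\gb{[0,\infty);\BC^{n\times n}}$.

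The rescaling, the determinant reduction, and the gluing of the two factors are routine. The hard part — and the reason each hypothesis appears — is the Gel'fand-theoretic identification of the maximal ideal spaces: for $\BC\delta_0 \oplus L^1\gb{[0,\infty)}$ this is the classical half-line Wiener theorem, comparatively clean; but for the discrete algebra the spectrum carries the Bohr compactification of $\BR$, which is precisely why \sef{eq:Gelfand2} must be a uniform infimum rather than mere non-vanishing. These spectral facts are exactly the content of \cite[Theorems~4.1.5 and 4.4.3 and Corollary~4.4.7]{GLS90}, which I would invoke at these two points rather than reprove.
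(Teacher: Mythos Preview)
The paper does not give a proof of this theorem at all: it is stated as the ``half-line Gel'fand theorem'' with a bare citation to \cite[Theorem 4.4.3 and Corollary 4.4.7]{GLS90}, and the text moves on immediately. Your proposal is therefore not competing with anything in the paper; it is a correct outline of precisely the argument that reference contains --- rescale to $\ga = 0$, pass to the commutative scalar algebra via the convolution determinant and adjugate, then split $\kappa = \kappa_d + \kappa_{ac}$ and invert the two factors by Gel'fand theory in, respectively, the discrete-measure algebra (where the Bohr-compactification part of the spectrum forces the uniform hypothesis \sef{eq:Gelfand2}) and $\BC\de_0 \oplus L^1\gb{[0,\infty)}$ (the half-line Wiener theorem, using \sef{eq:Gelfand1}). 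That is exactly the structure of \cite[\S4.4]{GLS90}, and you have identified correctly which step consumes which hypothesis.

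One small gap worth flagging: you treat only the pair \sef{eq:Gelfand1}--\sef{eq:Gelfand2} and do not address why the single condition \sef{eq:Gelfand3} suffices. The missing observation is that $\what{\mu_{ac}}(z) \to 0$ uniformly as $|z| \to \infty$ in $\{\Re z \ge 0\}$ (Riemann--Lebesgue plus a tail truncation), so $\det\gb{I - \what\mu(z)} - \det\gb{I - \what{\mu_d}(z)} \to 0$ there; combined with the almost-periodicity of $\what{\mu_d}$ on the imaginary axis, the uniform lower bound \sef{eq:Gelfand3} then forces \sef{eq:Gelfand2}, while \sef{eq:Gelfand1} is immediate. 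This is the content of \cite[Corollary 4.4.7]{GLS90}, which you already cite, so the omission is harmless --- but since the theorem statement explicitly offers \sef{eq:Gelfand3} as an alternative, a sentence acknowledging this reduction would make the sketch complete.
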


\medskip

Let $NBV_{loc}\gb{[0,\infty);\BC^{n}}$ denote the vector space of complex Borel functions $f : [0,\infty) \to \BC^{n}$ such that for every $T > 0$ the function $f_T : [0,\infty) \to \BC^{n}$ defined by
\[f_T(t) := 
\begin{cases}
f(t), &\mbox{when}\quad  0\le t \le T;\\
f(T), &\mbox{when}\quad  t \ge T.
\end{cases}
\]
belongs to $NBV\gb{[0,\infty)}$.

\medskip

We conclude this appendix summarizing the results developed in this section when applied to the renewal equation
\begin{equation}\label{eq:renewal1}
x(t) = \int_{[0,t]} \mu(ds)x(t-s) + f(t),\quad\hbox{for } t \ge 0,
\end{equation}
for various classes of forcing functions $f$.

The following theorem summarizes some relevant results \cite[Theorem 4.1.7]{GLS90}.

\begin{theorem}\label{thm:renewal}
Let $\mu \in M_{loc}\gb{[0,\infty),\BC^{n \times n}}$ with $\det\bigl[I - \mu(\{0\}\bigr] \not= 0$.
\begin{itemize}
\item[(i)] For every $f \in B_{loc}\gb{[0,\infty),\BC^n}$, the renewal equation \sef{eq:renewal1} has a unique solution  $x \in B_{loc}\gb{[0,\infty),\BC^n}$ given by
\[x = f + \rho \bast f,\]
where $\rho$ satisfies \sef{eq:resol-eqn-rho} and is given by \sef{eq:rep-resol-rho}. Furthermore, if $f$ is locally absolutely continuous, then the solution $x$ is locally absolutely continuous as well.
\item[(ii)] If $f \in NBV_{loc}\gb{[0,\infty),\BC^n}$,  then $x \in NBV_{loc}\gb{[0,\infty),\BC^n}$.
\item[(iii)] If $f \in C\gb{[0,\infty),\BC^n}$ and $f(0) = 0$, then $x \in C\gb{[0,\infty), \BC^n}$.
\item[(iv)] If the kernel $\mu$ has no discrete part and if $f \in C\gb{[0,\infty),\BC^n}$, then $x \in C\gb{[0,\infty),\BC^n}$.
\end{itemize}
\end{theorem}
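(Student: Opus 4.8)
The plan is to reduce all four assertions to properties of the resolvent measure $\rho$ furnished by Theorem~\ref{thm:resolvent}, working on a fixed bounded interval $[0,T]$ and then using that the restriction of a convolution product to $[0,T]$ depends only on the restrictions of its factors (Corollary~\ref{col:convo-meas-prop}) to pass back to $[0,\infty)$. Since $\det\bigl[I-\mu(\{0\})\bigr]\neq 0$, Theorem~\ref{thm:resolvent} provides a unique $\rho\in M_{loc}\gb{[0,\infty),\BC^{n\times n}}$ with
\[\rho-\mu\ast\rho=\mu=\rho-\rho\ast\mu,\]
equivalently $(\de_0-\mu)\ast(\de_0+\rho)=\de_0=(\de_0+\rho)\ast(\de_0-\mu)$, where $\de_0$ is the Dirac measure at $0$ carrying the identity matrix. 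This single object does all the work.

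For \textup{(i)} I would first verify that $x:=f+\rho\bast f$ solves \sef{eq:renewal1}: by associativity of the measure/function convolution (Corollary~\ref{col:convo-meas-prop}(ii)) together with the resolvent identity,
\[\mu\bast x=\mu\bast f+\mu\bast(\rho\bast f)=\mu\bast f+(\mu\ast\rho)\bast f=\mu\bast f+(\rho-\mu)\bast f=\rho\bast f,\]
so $x=f+\mu\bast x$; and $x\in B_{loc}$ since $\rho_T\bast f_T\in B\gb{[0,\infty)}$ for every $T$ by Theorem~\ref{thm:convo-meas-Borel-fun}. For uniqueness, if $x_1,x_2$ both solve \sef{eq:renewal1} then $z:=x_1-x_2$ satisfies $(\de_0-\mu)\bast z=0$; convolving with $\de_0+\rho$ and invoking associativity gives $z=\de_0\bast z=0$. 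The local absolute continuity of $x$ when $f$ is locally absolutely continuous I would obtain by a Lebesgue--Stieltjes integration by parts in $\rho\bast f$ followed by differentiation, using the Radon--Nikodym decomposition \sef{eq:decomp-meas} of $\rho$ --- on each $[0,T]$ the discrete part of $\rho$ contributes only a finite sum of shifted copies of $f$, and the absolutely continuous part contributes a function with locally integrable derivative.

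Parts \textup{(ii)}--\textup{(iv)} then follow at once from $x=f+\rho\bast f$ and the convolution theorems of this appendix, applied on each $[0,T]$ to $\rho_T$ and $f_T$. If $f\in NBV_{loc}$ then $\rho_T\bast f_T\in NBV\gb{[0,T]}$ by Theorem~\ref{thm:convo-meas-fun2}, which is \textup{(ii)}. If $f\in C$ with $f(0)=0$ then $\rho_T\bast f_T$ is continuous by Theorem~\ref{thm:basic-est}(i), which is \textup{(iii)}. For \textup{(iv)} I would first record the lemma that, when $\mu$ has no discrete part, neither does any $\mu^{\ast j}$ (a convolution of atomless measures is atomless); since $\rho=\sum_{j\ge1}\mu^{\ast j}$ with the series converging in total variation on each $[0,T]$ by \sef{eq:rep-resol-rho}, $\rho$ has no discrete part either, and Theorem~\ref{thm:basic-est}(ii) gives the continuity of $\rho_T\bast f_T$ for arbitrary continuous $f$.

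The passage from $[0,T]$ to $[0,\infty)$ is routine once everything is organised as above. The substantive points --- and the places to be careful --- are the associativity computation underlying $x=f+\mu\bast x$, the uniqueness argument via the two-sided resolvent identity, and, for \textup{(iv)}, the lemma that the resolvent inherits the absence of atoms from the kernel; the local-absolute-continuity refinement of \textup{(i)} is the only step needing a short extra computation with $L^1_{loc}$ kernels. I do not anticipate a genuine obstacle, since the statement is in essence a repackaging of the Volterra convolution theory of \cite{GLS90} in the notation of this appendix.
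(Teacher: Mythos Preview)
Your argument for existence and uniqueness in (i) and for (ii)--(iv) is correct and matches the paper's: the paper merely cites ``standard arguments'' where you spell out the associativity/resolvent computation, then invokes Theorems~\ref{thm:convo-meas-fun2} and \ref{thm:basic-est} for (ii)--(iii), and for (iv) deduces that $\rho$ is atomless from the resolvent identity $\rho=\mu+\mu\ast\rho$ and the formula for $(\mu\ast\nu)_d$, rather than from the series $\sum\mu^{\ast j}$ --- but these routes are interchangeable.

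The one genuine gap is in your sketch of the absolute-continuity refinement of (i). The Radon--Nikodym decomposition \sef{eq:decomp-meas} of $\rho$ has \emph{three} pieces, and you say nothing about the singular part $\rho_s$; nothing in ``integration by parts followed by differentiation'' shows that $\rho_s\bast f$ is absolutely continuous. (A secondary slip: the discrete part of $\rho$ on $[0,T]$ is in general a countable summable family of atoms, not a finite one.) The paper avoids decomposing $\rho$ entirely: writing a locally absolutely continuous $f$ as $f(t)=\int_0^t g$ with $g\in L^1_{loc}$, Fubini identifies the measure associated to $\rho\bast f$ with $\rho\ast(g\,dt)$, and the convolution of \emph{any} locally finite measure with an absolutely continuous one is absolutely continuous with respect to Lebesgue measure. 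This is both shorter and insensitive to the Lebesgue decomposition of $\rho$, so the singular part never needs separate treatment.
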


\begin{proof}
Standard arguments show that the solution of the renewal equation \sef{eq:renewal1} is given by $x = f + \rho \bast f$, where $\rho$ denotes the resolvent of $\mu$ given by 
Theorem \ref{thm:resolvent}. So (i) follows from Theorem \ref{thm:convo-meas-Borel-fun}. To prove (ii), first note that it follows from Theorem \ref{thm:convo-meas-fun2} that $x$ is locally of bounded variation. If $f$ is locally absolutely continuous, then $f$ is the integral of a locally $L^1$-function. Using the representation $x = f + \rho \bast f$ and Fubini's Theorem, we derive that $x$ is the integral of a locally $L^1$-function as well. Therefore it follows that $x$ is locally absolutely continuous. Furthermore, (iii) follows from Theorem \ref{thm:basic-est} (i). Finally, if $\mu, \nu \in M_{loc}\gb{[0,\infty)}$, then the discrete part of $\mu \ast \nu$ is given by the sum
\begin{equation}\label{eq:discrete-part}
\gb{\mu \ast \nu}_d = \sum_{k=1}^\infty \sum_{l=1}^\infty p_kq_l\de_{t_k+t_l}.
\end{equation}
In particular, we conclude that if either $\mu$ or $\nu$ has no discrete part, then the convolution $\mu \ast \nu$ also has no discrete part. In particular, if $\mu$ has no discrete part, then it follows from \sef{eq:resol-eqn-rho} that the resolvent $\rho$ has no discrete part. Thus (iv) follows from Theorem \ref{thm:basic-est} (ii)
\end{proof}

\medskip

If the measure $\mu$ has no singular part, then an application of Theorem \ref{thm:Gelfand} yields the following corollary.

\begin{corollary} \label{col:Gelfand} 
Suppose that $\mu \in M\gb{[0,\infty);\BC^{n \times n}}$ has no singular part and satisfies
\begin{equation}\label{eq:Gelfand2a}
\inf_{\Re z \ge 0} \Big| \det\gb{I - \hat\mu(z)} \Big| > 0.
\end{equation}
\begin{itemize}
\item[(i)] For every $f \in B\gb{[0,\infty),\BC^n}$, the renewal equation \sef{eq:renewal1} has a unique solution  $x \in B\gb{[0,\infty),\BC^n}$ given by
\[x = f + \rho \bast f,\]
where $\rho$ satisfies \sef{eq:resol-eqn-rho}. Furthermore, if $f$ is absolutely continuous, then the solution $x$ is absolutely continuous as well.
\item[(ii)] If $f \in NBV\gb{[0,\infty),\BC^n}$,  then $x \in NBV\gb{[0,\infty),\BC^n}$. 
\end{itemize}
\end{corollary}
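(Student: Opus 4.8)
The plan is to reduce both assertions to a single fact: under the stated hypotheses the resolvent $\rho$ of $\mu$ is not merely a local measure but a globally bounded Borel measure, $\rho \in M\gb{[0,\infty);\BC^{n \times n}}$. Once that is in hand, parts (i) and (ii) follow immediately from the representation $x = f + \rho \bast f$ together with the convolution estimates of the appendix.

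First I would check that Theorems \ref{thm:resolvent} and \ref{thm:Gelfand} apply. Since $\mu \in M\gb{[0,\infty);\BC^{n \times n}}$ is already bounded, the scaled measure $\mu^{0}$ of \sef{def:scaled-meas} coincides with $\mu$ and is a bounded Borel measure; by hypothesis $\mu$ has no singular part; and \sef{eq:Gelfand2a} is precisely \sef{eq:Gelfand3} with $\ga = 0$. Letting $\Re z \to \infty$ in $\what\mu(z) = \int_{[0,\infty)} e^{-zt}\,\mu(dt)$ and invoking dominated convergence gives $\what\mu(z) \to \mu(\{0\})$, so \sef{eq:Gelfand2a} forces $\det\bigl[I - \mu(\{0\})\bigr] \not= 0$. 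Hence Theorem \ref{thm:resolvent} produces the unique $\rho \in M_{loc}\gb{[0,\infty);\BC^{n \times n}}$ solving \sef{eq:resol-eqn-rho}, and Theorem \ref{thm:Gelfand}, applied with $\ga = 0$ so that $\rho^{0} = \rho$, upgrades this to $\rho \in M\gb{[0,\infty);\BC^{n \times n}}$.

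For (i): by Theorem \ref{thm:renewal}(i) the unique solution of \sef{eq:renewal1} in $B_{loc}$ is $x = f + \rho \bast f$; since $\rho$ is now a bounded measure and $f \in B\gb{[0,\infty),\BC^n}$, Theorem \ref{thm:convo-meas-Borel-fun} gives $\rho \bast f \in B\gb{[0,\infty),\BC^n}$ with $\|\rho \bast f\| \le \|\rho\|_{TV}\|f\|$, so $x \in B\gb{[0,\infty),\BC^n}$, and uniqueness in $B$ is inherited from uniqueness in $B_{loc}$. If $f$ is absolutely continuous, its associated measure $\mu_f$ is absolutely continuous, and by the computation \sef{eq:convo-meas-fun2} the measure associated to $\rho \bast f$ is $\rho \ast \mu_f$; since the convolution of a bounded measure with an absolutely continuous one is again absolutely continuous (the integrand in \sef{eq:convo-meas} vanishes for every Lebesgue-null $E$), $\rho \bast f$, and hence $x = f + \rho \bast f$, is absolutely continuous on all of $[0,\infty)$. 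For (ii): with $f \in NBV\gb{[0,\infty),\BC^n}$ and $\rho \in M\gb{[0,\infty);\BC^{n \times n}}$, Theorem \ref{thm:convo-meas-fun2} gives directly $\rho \bast f \in NBV\gb{[0,\infty),\BC^n}$ with $\|\rho \bast f\|_{TV} \le \|\rho\|_{TV}\|f\|_{TV}$, whence $x \in NBV\gb{[0,\infty),\BC^n}$.

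The step carrying all the content is the passage from the local resolvent of Theorem \ref{thm:resolvent} to a globally bounded resolvent via Theorem \ref{thm:Gelfand}; the rest is a mechanical application of the appendix's convolution lemmas, and I anticipate no real obstacle beyond keeping the normalizations straight (in particular that $f$, and therefore $x$, vanishes at the origin when the absolute-continuity clause is used).
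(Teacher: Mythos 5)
Your proposal is correct and follows exactly the paper's intended (but unwritten) route: in the paper the corollary is stated with only the preceding remark that an application of Theorem~\ref{thm:Gelfand} yields it, and your chain of reasoning --- verify $\det\bigl[I-\mu(\{0\})\bigr]\neq 0$ from \sef{eq:Gelfand2a} by letting $\Re z\to\infty$, invoke Theorem~\ref{thm:resolvent} for the local resolvent, upgrade $\rho$ to a bounded measure via Theorem~\ref{thm:Gelfand} with $\gamma=0$, then close (i) and (ii) with Theorems~\ref{thm:convo-meas-Borel-fun} and~\ref{thm:convo-meas-fun2} --- is exactly the missing argument. Your closing caveat about the normalization $f(0)=0$ in the absolute-continuity clause of (i) is well spotted and in fact points to an imprecision inherited from Theorem~\ref{thm:renewal}(i) itself: since ``no singular part'' still permits $\mu$, and hence $\rho$, to have atoms, an atom of $\rho$ at some $a>0$ produces a jump of size $f(0)$ in $\rho\bast f$ at $t=a$, so the conclusion genuinely requires either $f(0)=0$ or $\rho$ atomless; under the clean hypothesis $f(0)=0$ your measure-theoretic argument (via $\rho\ast\mu_f$ and absolute continuity being preserved under convolution with a bounded measure) is correct.
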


\section{The norming dual pair $(B,NBV)$}
\setcounter{equation}{0}

In the study of delay differential equations, the natural dual pair is given by
\begin{equation}\label{eq:mot-ex0}
Y = B\gb{[-1,0],\BR^n}\quad\mbox{and}\quad\DY = NBV\gb{[0,1],\BR^n}
\end{equation}
with the pairing
\begin{equation}\label{eq:mot-ex1}
\pa{\dy}{y} = \int_{[0,1]} \dy(d\si) \cdot y(-\si).
\end{equation}
Here $Y$ is provided with the supremum norm and $\DY$ with the total variation norm (see \sef{eq:bv2}).

In the study of renewal equations, the natural dual pair is given by 
\begin{equation}\label{eq:mot-ex0r}
Y = NBV\gb{[-1,0],\BR^n}\quad\mbox{and}\quad\DY = B\gb{[0,1],\BR^n}
\end{equation}
with the pairing
\begin{equation}\label{eq:mot-ex1r}
\pa{\dy}{y} = \int_{[-1,0]} y(d\si) \cdot \dy(-\si).
\end{equation}

Returning to \sef{eq:mot-ex0}--\sef{eq:mot-ex1}, we first make two trivial, yet useful, observations: fix $1 \le i \le n$ and $-1 \le \th \le 0$,
\medskip
\begin{equation}
 \int_{[0,1]} \dy(d\si) \cdot y(-\si) = y_i(\th),
 \end{equation}
if $\dy_j(\si) = 0$, $0 \le \si \le 1$, $j \not= i$, and $\dy_i(\si) = 0$ for $0 \le \si < -\th$ and $\dy_i(\si) = 1$ for $\si \ge -\th$, and similarly
 \begin{equation}
 \int_{[0,1]} \dy(d\si) \cdot y(-\si) = \dy_i(-\th),
 \end{equation}
if $y_j(-\si) = 0$, $0 \le \si \le 1$, $j \not= i$, and $y_i(-\si) = 1$ for $0 \le \si \le -\th$ and $y_i(-\si) = 0$ for $\si > -\th$. The point is that, consequently, in case of \sef{eq:mot-ex0}--\sef{eq:mot-ex1}, convergence in both $\gb{Y,\si(Y,\DY)}$ and $\gb{\DY,\si(\DY,Y)}$ entails pointwise convergence (in, respectively, $B\gb{[-1,0],\BR^n}$ and $NBV\gb{[0,1],\BR^n}$). 

In the first case, the dominated convergence theorem implies that, conversely, a bounded pointwise convergent sequence in $B\gb{[-1,0],\BR^n}$ converges in $\gb{Y,\si(Y,\DY)}$. For $NBV\gb{[0,1],\BR^n}$, this is not so clear. It is true that the pointwise limit of a sequence of functions of bounded variation is again of bounded variation (Helly's theorem), but there is no dominated convergence theorem for measures.

\medskip

The purpose of this appendix is to show that the dual pairs given, respectively, by \sef{eq:mot-ex0} and \sef{eq:mot-ex1} and by \sef{eq:mot-ex0r} and \sef{eq:mot-ex1r} are norming dual pairs suitable for twin perturbation, cf. Definition \ref{def:4.3}.

\begin{theorem}\label{thm:suit-twin-perI}
The dual pair given by \sef{eq:mot-ex0} and \sef{eq:mot-ex1} is a norming dual pair, i.e.,
\begin{align*}
\|y\| &= \sup \Bigl\{|\pa{\dy}{y}| \mid \dy \in \DY,\ \|\dy\| \le 1\,\Bigr\}\\
\|\dy\| &= \sup \Bigl\{|\pa{\dy}{y}| \mid y \in Y,\ \|y\| \le 1\,\Bigr\}
\end{align*}
such that \sef{eq:4.1} and \sef{eq:4.9} are satisfied, i.e.,
\begin{itemize}
\item[(i)] $(Y,\si(Y,\DY))$ is sequentially complete;
\item[(ii)] a linear map $(Y,\si(Y,\DY)) \to \BR$ is continuous if it is sequentially continuous.
\end{itemize}
\end{theorem}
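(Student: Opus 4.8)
The plan is to verify the three assertions in turn: the norming property, sequential completeness of $\gb{Y,\si(Y,\DY)}$, and the fact that sequential continuity implies continuity for linear functionals on $\gb{Y,\si(Y,\DY)}$. For the norming equalities, I would exploit the two ``trivial, yet useful'' observations recorded just before the theorem: since the indicator-type elements of $NBV\gb{[0,1],\BR^n}$ yield $\pa{\dy}{y} = y_i(\th)$, and these have total variation one, we immediately get $\sup\{|\pa{\dy}{y}| : \|\dy\| \le 1\} \ge \|y\|_\infty$; the reverse inequality is the boundedness estimate $|\pa{\dy}{y}| \le \|\dy\|_{TV}\|y\|_\infty$, which follows from the definition of the Lebesgue--Stieltjes integral. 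For the second equality, I would use that the total variation of $\dy \in NBV$ is realized (up to $\ep$) by a finite partition $0 = t_0 < \cdots < t_m = 1$; choosing the step function $y$ that takes, on the block corresponding to $[t_{k-1},t_k)$, a sign vector pointing in the direction of $\dy(t_k) - \dy(t_{k-1})$, one recovers $\sum_k |\dy(t_k) - \dy(t_{k-1})|$ as the value of the pairing, hence $\sup\{|\pa{\dy}{y}| : \|y\|_\infty \le 1\} \ge \|\dy\|_{TV} - \ep$; combined with the same bilinear bound in the other direction this gives equality.

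For sequential completeness of $\gb{Y,\si(Y,\DY)}$: by the first observation, $\si(Y,\DY)$-convergence of a sequence $\{y_m\}$ forces pointwise convergence on $[-1,0]$, so a $\si(Y,\DY)$-Cauchy sequence is pointwise Cauchy and has a pointwise limit $y$. One then needs (a) $y \in Y = B\gb{[-1,0],\BR^n}$, i.e. $y$ is bounded and Borel measurable, and (b) $y_m \to y$ in $\si(Y,\DY)$. For (a), a $\si(Y,\DY)$-Cauchy sequence is in particular weak-$*$ Cauchy as a sequence in $Y^\ast$'s predual direction, hence (Banach--Steinhaus applied in the norming dual pair, using that the norm on $Y$ is recovered as a supremum over the unit ball of $\DY$) norm-bounded, so $y$ is bounded; measurability of $y$ is the pointwise limit of measurable functions. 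For (b), I would test against an arbitrary $\dy \in \DY$: writing $\pa{\dy}{y_m} = \int_{[0,1]} \dy(d\si)\, y_m(-\si)$, the sequence $\si \mapsto y_m(-\si)$ is uniformly bounded and converges pointwise to $\si \mapsto y(-\si)$, so dominated convergence with respect to the finite measure $|\mu_{\dy}|$ gives $\pa{\dy}{y_m} \to \pa{\dy}{y}$. This is exactly the asymmetry flagged in the appendix: the argument works because $Y$ is the $B$-space and the measure lives on the $\DY$-side.

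For the implication ``sequentially continuous $\Rightarrow$ continuous'' for linear functionals $\ell : \gb{Y,\si(Y,\DY)} \to \BR$: a linear functional on a locally convex space is continuous iff it is bounded on a neighbourhood of $0$, and the $\si(Y,\DY)$-neighbourhoods of $0$ are generated by finitely many seminorms $y \mapsto |\pa{\dy_1}{y}|,\dots,|\pa{\dy_k}{y}|$. If $\ell$ is not continuous, it is unbounded on every such neighbourhood; the standard move is to build, for each $m$, an element $y_m \in Y$ with $|\pa{\dy_j}{y_m}| \le 1/m$ for $j = 1,\dots,m$ (using an enumeration of a countable separating subfamily, or a diagonal argument over all of $\DY$) yet $|\ell(y_m)| \ge m$; then $y_m \to 0$ in $\si(Y,\DY)$ but $\ell(y_m) \not\to 0$, contradicting sequential continuity. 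The one point that needs care — and is, I expect, the main obstacle — is whether such a sequence $y_m$ can actually be produced \emph{inside} $Y$: one must ensure the constructed elements are bounded measurable functions and that ``small against $\dy_1,\dots,\dy_m$'' can be arranged by scaling an element on which $\ell$ is large, which works precisely because $\ker \dy_1 \cap \cdots \cap \ker \dy_m$ has finite codimension and $\ell$ restricted to it, if nonzero, already furnishes a ray along which $\ell$ is unbounded while all the $\pa{\dy_j}{\novar}$ vanish; if $\ell$ vanishes on every such finite intersection then $\ell$ is a finite linear combination of the $\dy_j$'s and hence continuous. I would also remark that an alternative, slicker route is to invoke that $\gb{Y,\si(Y,\DY)}$ is the dual of the normed space $\DY$ equipped with its weak topology in a way that makes it a so-called \emph{Mazur space} / satisfies the Mazur property; but spelling out the direct argument above keeps the proof self-contained.
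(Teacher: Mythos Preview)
Your treatment of the norming property and of sequential completeness (your first two parts) is essentially the paper's proof: Dirac-type step functions in $\DY$ recover point values and hence $\|y\|$, a finite-partition/sign-function construction recovers $\|\dy\|_{TV}$, and for sequential completeness one combines pointwise convergence, the uniform boundedness principle (viewing the $y_m$ in $Y^{\diamond\ast}$), and dominated convergence against a fixed $\dy$. No issues there.

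The gap is in your argument for (ii). Your ``standard move'' produces, for each $m$, an element $y_m$ that is small against finitely many $\dy_1,\dots,\dy_m$ while $|\ell(y_m)| \ge m$. But to conclude $y_m \to 0$ in $\si(Y,\DY)$ you need $\pa{\dy}{y_m} \to 0$ for \emph{every} $\dy \in \DY$, and controlling an increasing finite list does not achieve this. The escape hatch you suggest --- ``an enumeration of a countable separating subfamily'' --- is not available: $\DY = NBV\gb{[0,1],\BR^n}$ contains no countable family that separates points of $Y = B\gb{[-1,0],\BR^n}$. Indeed, given any countable collection $\{\mu_k\}$ of finite Borel measures, form $\mu = \sum_k 2^{-k}|\mu_k|/(1+\|\mu_k\|_{TV})$; since $\mu$ is finite there are uncountably many $\th_0 \in [-1,0]$ with $\mu(\{\th_0\}) = 0$, and then $y = e_i\,\chi_{\{\th_0\}} \in Y$ is nonzero yet pairs to zero with every $\mu_k$. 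So neither a countable separating family nor a diagonal argument over $\DY$ can force $\si(Y,\DY)$-convergence of your sequence. (The scaling to achieve $|\ell(y_m)| \ge m$ also destroys any a priori norm bound on the $y_m$, which would be needed even if a countable convergence-determining family existed.) Your fallback observation --- that if $\ell$ vanishes on every finite intersection $\bigcap_j \ker \dy_j$ then $\ell$ is a finite linear combination of the $\dy_j$ --- goes in the wrong direction: when $\ell$ is discontinuous it fails to vanish on every such intersection, which is precisely the situation in which your sequence construction is supposed to work but does not.

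The paper takes a genuinely different route for (ii): it identifies $\DY$ as the \emph{$\si$-order continuous dual} of the Riesz space $Y = B\gb{[-1,0],\BR^n}$ (this is a measure-theoretic representation: a norm-bounded linear functional on $B$ is represented by a countably additive measure --- equivalently, an element of $NBV$ --- precisely when it is $\si$-order continuous). Then one only has to check that a $\si(Y,\DY)$-sequentially continuous $\ell$ is $\si$-order continuous. But order convergence of a sequence in $B$ is exactly bounded pointwise convergence, and the discussion preceding the theorem shows (via dominated convergence) that bounded pointwise convergence implies $\si(Y,\DY)$-convergence; hence sequential $\si(Y,\DY)$-continuity of $\ell$ gives $\si$-order continuity, and the representation theorem places $\ell$ in $\DY$. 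This bypasses entirely the need to manufacture a specific witness sequence.
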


Before we can prove the theorem we need to present some notions from the theory of Riesz spaces.

A Riesz space $Y$ is a real vector space equipped with a lattice structure, i.e., a partial ordering compatible with the vector space structure such that each pair of vectors $x,y \in Y$ has a supremum or least upper bound denoted by $\sup\{x,y\} \in Y$. For a given vector $y$ in a Riesz space, the absolute value $|y| \in Y$ is defined by $|y| = \sup\{y, -y\}$.

The Banach spaces  $Y = B\gb{[-1,0],\BR^n}$ and $Y = NBV\gb{[-1,0],\BR^n}$ are Riesz Banach spaces when the ordering is defined pointwise and componentwise, i.e.,  $f \le g$ whenever 
\[P_jf(\th) \le P_jg(\th)\quad\mbox{for each } \th \in [-1,0] \mbox{ and } 1 \le j \le n,\]
where $P_j : \BR^n \to \BR$ denotes the projection onto the $j^{th}$-coordinate of a $n$-vector. The corresponding absolute value function $|f| : [-1,0] \to \BR^n$ is defined componentwise by
\[P_j |f|(\th) := \sup \{f_j(\th), -f_j(\th)\}\quad\hbox{for } \th \in [-1,0]\hbox{ and } 1 \le j \le n.\]

A sequence $\{f_n\}$ in a Riesz space $Y$ is \textit{order bounded} from above if there is a $g \in Y$ such that $f_n \le g$. A sequence $\{f_n\}$ is called \textit{decreasing to zero} if $\inf_{n\ge 1} \{f_n\} = 0$ and $n \ge m$ implies $0 \le f_n \le f_m$. Furthermore, a sequence $\{f_n\}$ in a Riesz space $Y$ \textit{converges in order} to $f \in Y$ if there is a sequence $\{g_n\}$ in $Y$ that is decreasing to zero and such that
\begin{equation}\label{eq:order-convergence-def}
|f - f_n| \le g_n,\qquad\mbox{for all } n \ge 0.
\end{equation}

A linear functional $\La : Y \to \BR$ on a Riesz space $Y$ is \textit{$\si$-order continuous} if $\La(f_n) \to 0$ in $\BR$ for every sequence $\{f_n\}$ in $Y$ that converges to zero in order. The vector space of all $\si$-order continuous linear functionals is called the $\si$-order continuous dual of $Y$, cf. \cite[Definition 8.26]{AliBor06}.

The following result \cite[Theorem 14.5]{AliBor06} is an essential ingredient of the proof of Theorem \ref{thm:suit-twin-perI}.

\begin{theorem} \label{thm:sigma-order-dual}
The $\si$--order continuous dual of $B\gb{[-1,0],\BR^n}$ is represented by $NBV\gb{[0,1],\BR^n}$.
\end{theorem}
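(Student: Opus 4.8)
The statement is the special case of \cite[Theorem 14.5]{AliBor06} for the interval $[-1,0]$, and the plan is to reprove it directly, since the argument is short. First I would reduce to the scalar case: order convergence in the Riesz space $B\gb{[-1,0],\BR^n}=B\gb{[-1,0],\BR}^n$ is coordinatewise, so applying everything to the coordinate embeddings $\iota_j:B\gb{[-1,0],\BR}\to B\gb{[-1,0],\BR^n}$ (and noting $\Lambda(f)=\sum_j\Lambda(\iota_jP_jf)$) reduces matters to $n=1$, after which the coordinates are reassembled. Next I would translate the target space: by Theorem \ref{thm:bv2} together with the reflection $\si\mapsto-\si$, the space $NBV\gb{[0,1],\BR}$ is isometrically identified with the space $M\gb{[-1,0]}$ of bounded signed Borel measures on $[-1,0]$, an element $\ze$ corresponding to the measure $\mu$ for which $\int_{[0,1]}d\ze(\si)\,y(-\si)=\int_{[-1,0]}y\,d\mu$. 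So the scalar claim to establish is: a linear functional on $B\gb{[-1,0],\BR}$ is $\si$-order continuous if and only if it has the form $\Lambda_\mu:f\mapsto\int_{[-1,0]}f\,d\mu$ for some $\mu\in M\gb{[-1,0]}$, and then $\|\Lambda_\mu\|=\|\mu\|_{TV}$.

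For the sufficiency direction I would fix $\mu$ and note that every $f\in B$ is $\abs{\mu}$-integrable with $\babs{\int f\,d\mu}\le\|\mu\|_{TV}\|f\|$, so $\Lambda_\mu$ is norm-bounded. If $f_m\to0$ in order with witness $g_m\downarrow0$, then (since in $B$ the lattice infimum of a decreasing sequence is the pointwise one) $g_m(\th)\downarrow0$ for each $\th$, and $0\le g_m\le g_1\in B\subset L^1(\abs{\mu})$, so dominated convergence gives $\babs{\Lambda_\mu(f_m)}\le\int g_m\,d\abs{\mu}\to0$; hence $\Lambda_\mu$ is $\si$-order continuous. Injectivity of $\mu\mapsto\Lambda_\mu$ is immediate from $\Lambda_\mu(\chi_{[a,0]})=\mu\gb{[a,0]}$ and Theorem \ref{thm:bv2}.

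The substantive direction is necessity. Given a $\si$-order continuous $\Lambda$, I would first show it is norm-bounded by a sliding-hump argument: if not, pick $g_m$ with $\|g_m\|\le1$ and $\babs{\Lambda(g_m)}\ge4^m$; then $h_m:=2^{-m}g_m$ satisfies $\abs{h_m}\le2^{-m}\mathbbm{1}\downarrow0$, so $h_m\to0$ in order while $\babs{\Lambda(h_m)}\ge2^m$, a contradiction. Then I would set $\nu(E):=\Lambda(\chi_E)$ on Borel $E\subseteq[-1,0]$; this is finitely additive, and for a Borel partition $E=\Mcup_kE_k$ the functions $\chi_E-\sum_{k=1}^m\chi_{E_k}=\chi_{\Mcup_{k>m}E_k}$ decrease pointwise to $0$, hence to $0$ in order, so $\si$-order continuity yields $\sum_{k=1}^m\nu(E_k)\to\nu(E)$ (countable additivity), while choosing signs $\ep_k$ with $\ep_k\nu(E_k)=\abs{\nu(E_k)}$ and using $\|\sum_{k=1}^m\ep_k\chi_{E_k}\|\le1$ gives $\sum_{k=1}^m\abs{\nu(E_k)}=\Lambda\gb{\sum_{k=1}^m\ep_k\chi_{E_k}}\le\|\Lambda\|$, so $\abs{\nu}\gb{[-1,0]}\le\|\Lambda\|<\infty$. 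Thus $\nu\in M\gb{[-1,0]}$, and $\Lambda$ agrees with $\Lambda_\nu$ on simple functions. For general $f\in B$ I would pick simple $s_m\to f$ uniformly; then $\abs{f-s_m}\le\|f-s_m\|\mathbbm{1}\downarrow0$, so $s_m\to f$ in order and $\Lambda(f)=\lim\Lambda(s_m)=\lim\int s_m\,d\nu=\int f\,d\nu$. Translating $\nu$ back through the reflection and Theorem \ref{thm:bv2} produces the required $\ze\in NBV\gb{[0,1],\BR}$, and combining $\|\Lambda_\mu\|\le\|\mu\|_{TV}$ with $\|\nu\|_{TV}=\abs{\nu}\gb{[-1,0]}\le\|\Lambda\|$ gives the isometry; the vector-valued statement then follows coordinatewise.

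I expect the main obstacle to be conceptual rather than computational: one must make sure that the abstract $\si$-order continuity hypothesis is genuinely used where it is needed, namely (i) to force countable additivity of $\nu$ — this is exactly what excludes finitely-but-not-countably-additive elements of $B^\ast$ from the $\si$-order continuous dual — and (ii), via the sliding-hump estimate, to force $\abs{\nu}\gb{[-1,0]}<\infty$; everything else is routine measure theory together with Theorem \ref{thm:bv2}. The only mildly tedious bookkeeping I anticipate is the reflection and reindexing needed to phrase the result in terms of $NBV\gb{[0,1],\BR^n}$ and the pairing \sef{eq:mot-ex1}, and checking that the norm identifications are isometric with the conventions of Appendix B.
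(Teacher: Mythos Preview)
Your proposal is correct and follows the same core strategy as the paper: define the set function $\nu(E)=\Lambda(\chi_E)$, use $\sigma$-order continuity to upgrade finite to countable additivity, and use dominated convergence for the converse direction. The paper's proof invokes more abstract Riesz-space machinery---it uses that the norm dual of a Riesz Banach space is itself a Riesz Banach space, so that $|\Lambda|$ exists and $\|\Lambda\|=|\Lambda|({\bf 1})$, and derives bounded variation of $\mu_\Lambda$ from $\sum_i|\Lambda(\chi_{E_i})|\le\sum_i|\Lambda|(\chi_{E_i})=|\Lambda|({\bf 1})$---whereas you obtain both norm-boundedness (via the sliding-hump argument) and bounded variation (via sign choices) by direct elementary estimates. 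Your version is also more explicit in two places the paper leaves implicit: the passage from $\sigma$-order continuity to norm-boundedness, and the extension of $\Lambda=\Lambda_\nu$ from simple functions to all of $B$ via uniform approximation. One minor wrinkle: in the extension step you write $|f-s_m|\le\|f-s_m\|\mathbbm{1}\downarrow0$, but $\|f-s_m\|$ need not be monotone; take instead the witness $g_m:=\bigl(\sup_{k\ge m}\|f-s_k\|\bigr)\mathbbm{1}$, which is decreasing to zero and dominates $|f-s_m|$.
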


\begin{proof}
In the proof we use the fact that the norm dual of a Riesz Banach space is again a Riesz Banach space (cf, \cite[Theorem 9.27 and Theorem 14.2]{AliBor06}). So, in particular, if $\La$ is a bounded linear functional on  $B\gb{[-1,0],\BR^n}$, then it has an absolute value $\abs{\La}$ in the norm dual of $B\gb{[-1,0],\BR^n}$. Let ${\bf 1} \in B\gb{[-1,0],\BR^n}$ denote the function which is constant one in all components. Since the unit ball in $B\gb{[-1,0],\BR^n}$ coincides with the order interval $[-{\bf 1}, {\bf 1}]$, i.e.,
\[[-{\bf 1}, {\bf 1}] = \bigl\{ f \in B\gb{[-1,0],\BR^n} \mid -{\bf 1} \le f \le {\bf 1} \bigr\},\]
we have that if $\La$ is a bounded linear functional on  $B\gb{[-1,0],\BR^n}$, then
\begin{equation}\label{eq:riesz-norm1}
\|\La\| = \bigl \| |\La| \bigr\| = \sup_{f \in [-{\bf 1}, {\bf 1}]} \bigl| |\La|(f)\bigr| = |\La|({\bf 1}).
\end{equation}

Furthermore an order bounded sequence $f_n$ in $B\gb{[-1,0],\BR^n}$ converges in order to $f$ if and only if 
\begin{equation}\label{eq:order-convergence1}
f_n(x) \to f(x),\qquad\mbox{for all } x \in [-1,0].
\end{equation}
Indeed if for some $\ep > 0$ and $x \in [-1,0]$ we have that $|f(x) - f_n(x)| > \ep$, then $g_n \ge \ep \chi_{\{x\}}$, but $g_n$ is a sequence decreasing to zero and this is a contradiction.

\smallskip\noindent
Step 1. We first show that if $\La$ is a bounded linear functional on  $B\gb{[-1,0],\BR^n}$, then the set function $\mu_\La$ defined by
\begin{equation}\label{eq:def-meas1}
\mu_\La(A) = \La(\chi_A)\quad\hbox{ for any Borel set } A
\end{equation}
is a finitely additive signed measure of bounded variation. 

Indeed from the linearity of $\La$ it is clear that $\mu_\La$ is a finitely additive real-valued set function. To see that $\mu_\La$ is of bounded variation, let $\{E_1,\ldots,E_n\}$ be a partition of $[-1,0]$, then it follows from \sef{eq:riesz-norm1}
\begin{align*}
\sum_{i=1}^n |\mu_\La(E_i)| &= \sum_{i=1}^n \babs{\La(\chi_{E_i})} \le \sum_{i=1}^n |\La| (\chi_{E_i})\\
&= \babs{\La}\gb{\sum_{i=1}^n \chi_{E_i}} = \babs{\La}({\bf 1}) = \|\La\|
\end{align*}
which implies that $\mu_\La$ is of bounded variation. 

As a side remark we mention that the norm dual of  $B\gb{[-1,0],\BR^n}$ is actually represented by the Riesz Banach space of all finitely additive signed measures of bounded variation (cf. \cite[Theorem 14.4]{AliBor06}).

\smallskip\noindent
Step 2.
We next show that $\mu_\La$ is a Borel measure if and only if $\La$ is a $\si$-order continuous linear functional. Assume first that $\La$ is $\si$-order continuous and let $\{E_i\}$ be a pairwise disjoint sequence of Borel measurable sets. Put 
\[E= \Mcup_{i=1}^\infty E_i\quad\hbox{and}\quad F_n = \Mcup_{i=1}^n E_i\] 
and note from \sef{eq:order-convergence1} that $\chi_{F_n}$ converges in order to $\chi_E$. Since $\La$ is $\si$-order continuous, it follows that
\[\sum_{i=1}^n \mu_{\La}(E_i) = \La(\chi_{F_n}) \to \La(\chi_E) = \mu_{\La}(E),\]
which shows that $\mu_{\La}$ is $\si$-additive.

Conversely, assume that $\mu_\La$ is a complex Borel measure. Let $f_n$ be a sequence that converges to zero in order in $B\gb{[-1,0],\BR^n}$ . This implies that $f_n$ is order bounded and it follows from \sef{eq:order-convergence1} that  $f_n \to 0$ pointwise. Thus the Lebesgue dominated convergence theorem implies that
\[\La(f_n) = \int_{[-1,0]} f_n\,d\mu_\La \to 0,\]
proving that $\La$ is $\si$-order continuous.
\end{proof}

\medskip\noindent
{\sl Proof of Theorem $\ref{thm:suit-twin-perI}$.} 
The proof consists of three parts.

\smallskip
\textsc{Part I}.
In this part we prove that $(Y,\DY)$ is a norming dual pair. From Theorem \ref{thm:convo-meas-Borel-fun} that it follows that for every $\dy \in \DY$ and $y \in Y$
\begin{equation}\label{eq:mot-ex2}
\babs{ \int_{[0,1]} \dy(d\si) \cdot y(-\si) } \le \|\dy\|\,\|y\|.
\end{equation}
By considering step functions for $\dy$, i.e., Dirac point measures by Theorem \ref{thm:bv2}, we obtain
\begin{equation}\label{eq:mot-ex5}
\|y\| = \sup_{x \in [-1,0]} |y(x)| = \sup\bigl\{\abs{\pa{\dy}{y}} \mid \dy \in \DY,\ \|\dy\| \le 1 \bigr\}.
\end{equation}
On the other hand, fix $\dy \in \DY$ and let $\mu = \mu_{\dy}$ be the corresponding Borel measure according to Theorem \ref{thm:bv2}.

If $\SP = \{E_j\}_{j=1}^n$ is a partition of $[-1,0]$ into finitely many, pairwise disjoint, measurable sets $E_j$, then
\begin{equation}\label{eq:mot-ex6}
y_\SP = \sum_{j=1}^n {\rm sgn}\, \mu(-E_j)\chi_{E_j}
\end{equation}
is a bounded Borel function on $[-1,0]$ with norm $\| y_\SP \| \le 1$. Furthermore,
\begin{equation}\label{eq:mot-ex7}
\pa{\dy}{y_\SP} = \sum_{j = 1}^n \abs{\mu(-E_j)},
\end{equation}
and taking the supremum over all such finite partitions $\SP$ of $[-1,0]$ we arrive at
\begin{equation}\label{eq:mot-ex8}
\|\dy\| = \sup\bigl\{\abs{\pa{\dy}{y_\SP}} \mid \SP \hbox{ a finite partition of } [-1,0] \bigr\}.
\end{equation}
This shows that the pair $(Y,\DY)$ is a norming dual pair.

\smallskip
\textsc{Part II}.
In this part we prove that $(Y,\si(Y,\DY))$ is sequentially complete. Let $\{y_n\}$ be a Cauchy sequence in $(Y,\si(Y,\DY))$. Since step functions belong to $\DY$ it follows that $\{y_n(x)\}$ is, for every $x \in [-1,0]$, a Cauchy sequence in $\BR$. Since $\BR$ is complete, we have that
\[\lim_{n \to \infty} y_n(x) \ \hbox{ exists pointwise for } x \in [-1,0].\]
The pointwise limit of measurable functions is measurable, so it only remains to check the uniform boundedness of the sequence.
From the Cauchy property, it follows that the sequence $\{y_n\}$ is bounded in $(Y,\si(Y,\DY))$, i.e., 
\[\sup_n \abs{\pa{\dy}{y_n}} < \infty\quad\hbox{for any } \dy \in \DY\] 
and by considering the sequence $\{y_n\}$ in $Y$ as a sequence in $Y^{\diamond\ast}$, the uniform boundedness principle implies that
\[\sup_{n \ge 1} \|y_n\| \ \hbox{ is bounded}.\]
Therefore the sequence $\{y_n\}$ is bounded in the supremum norm and hence the pointwise limit defines a bounded Borel function. 

This shows that $(Y,\si(Y,\DY))$ is sequentially complete.

\smallskip
\textsc{Part III}. In this part we prove that a linear map $(Y,\si(Y,\DY)) \to \BR$ is continuous if it is sequentially continuous.
Let $\La : (Y,\si(Y,\DY)) \to \BR$ be a sequentially continuous linear map. An application of Theorem \ref{thm:sigma-order-dual} shows that in order to prove that $\La$ belongs to $\DY$ it suffices to prove that $\La$ is $\si$-order continuous. 

Let $\{y_n\}$ a sequence in $Y$ that converges to zero in order. To prove that $\La(y_n) \to 0$ we first observe that if $y_n \to 0$ in order then because of \sef{eq:order-convergence1} $y_n$ converges pointwise to zero. Hence $y_n \to 0$ in $\gb{Y,\si(Y,\DY)}$ (see the discussion in the paragraph before Theorem \ref{thm:suit-twin-perI}). Since $\La$ is sequentially continuous it follows that $\La(y_n) \to 0$. This proves that $\La$ is $\si$-order continuous. Thus it follows from the characterization of $\DY$ in Theorem \ref{thm:sigma-order-dual} that $\La$ belongs to $\DY$. This completes the proof that $\La$ is continuous if it is sequentially continuous in $(Y,\si(Y,\DY))$.
\ \QED

\medskip\noindent

Since reflection $[0,1] \ni t \mapsto -t \in [-1,0]$ induces an isometric isomorphism, it follows from Theorem \ref{thm:suit-twin-perI} that  $B\gb{[0,1],\BR^n}$ and $NBV\gb{[-1,0],\BR^n}$ form a norming dual pair as well. Furthermore note that, according to the definition, $(Y,\DY)$ is a norming dual pair if and only if $(\DY,Y)$ is a norming dual pair. Therefore, we also have the following corollary to Theorem \ref{thm:suit-twin-perI}.

\begin{theorem}\label{thm:suit-twin-perII}
The dual pair given by \sef{eq:mot-ex0r} and \sef{eq:mot-ex1r} is a norming dual pair such that \sef{eq:4.5} and \sef{eq:4.8} hold, i.e.,
\begin{itemize}
\item[(i)] a linear map $(\DY,\si(\DY,Y)) \to \BR$ is continuous if it is sequentially continuous.
\item[(ii)] $(\DY,\si(\DY,Y))$ is sequentially complete;
\end{itemize}
\end{theorem}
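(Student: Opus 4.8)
The plan is to deduce the statement from Theorem~\ref{thm:suit-twin-perI} by transporting it along the reflection $t\mapsto -t$ and then exchanging the roles of the two spaces, exactly as the paragraph preceding the theorem indicates. So this is, genuinely, a corollary, and the work is entirely bookkeeping.

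First I would make the reflection isomorphisms precise. The map $y\mapsto y(-\novar)$ is a surjective linear isometry of $B\gb{[-1,0],\BR^n}$ onto $B\gb{[0,1],\BR^n}$ for the supremum norm, and there is a corresponding surjective linear isometry of $NBV\gb{[0,1],\BR^n}$ onto $NBV\gb{[-1,0],\BR^n}$ for the total variation norm once one accounts for the normalisation conventions at the endpoints; via Theorem~\ref{thm:bv2} this is simply the reflection $\mu\mapsto \mu(-\novar)$ at the level of Borel measures, whose total variation is reflection-invariant. A one-line change of variable $\si\mapsto-\si$ in the Lebesgue--Stieltjes integrals shows that these two isometries intertwine the pairing \sef{eq:mot-ex1} with the pairing obtained from it by reflection. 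Hence the pair consisting of $B\gb{[0,1],\BR^n}$ and $NBV\gb{[-1,0],\BR^n}$, equipped with its norms and the reflected pairing, is isometrically isomorphic to the pair of Theorem~\ref{thm:suit-twin-perI}, and therefore inherits every conclusion of that theorem.

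Next I would invoke the symmetry built into the definition: $(Z_1,Z_2)$ is a norming dual pair with pairing $\pa{\novar}{\novar}$ precisely when $(Z_2,Z_1)$ is one with the flipped pairing, and in that case $\si(Z_2,Z_1)$ plays the role of the ``$Y$-side'' weak topology of the reversed pair. Reversing the pair from the previous step yields $NBV\gb{[-1,0],\BR^n}$ paired with $B\gb{[0,1],\BR^n}$, which with the induced pairing is exactly \sef{eq:mot-ex0r}--\sef{eq:mot-ex1r}. Under this reversal, conclusion (i) of Theorem~\ref{thm:suit-twin-perI} --- sequential completeness of $\gb{Y,\si(Y,\DY)}$ --- becomes sequential completeness of $\gb{\DY,\si(\DY,Y)}$, which is \sef{eq:4.8}; and conclusion (ii) --- sequential continuity implies continuity for linear maps on $\gb{Y,\si(Y,\DY)}$ --- becomes the same assertion for linear maps on $\gb{\DY,\si(\DY,Y)}$, which is \sef{eq:4.5}. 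Together with the fact that we still have a norming dual pair, this is precisely the assertion of the theorem.

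The only point that requires a moment of care --- and is the closest thing to an ``obstacle'' --- is checking in the first step that the reflection really does carry $NBV\gb{[0,1],\BR^n}$ onto $NBV\gb{[-1,0],\BR^n}$ isometrically once the (left- versus right-endpoint) normalisation is fixed, and that it intertwines \sef{eq:mot-ex1} with the reflected pairing. Both reduce to elementary changes of variable in Lebesgue--Stieltjes integrals, using Theorem~\ref{thm:bv2} to move freely between $NBV$ functions and Borel measures on a compact interval.
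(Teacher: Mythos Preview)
Your proposal is correct and follows exactly the approach the paper takes: the paragraph preceding the theorem deduces it as a corollary of Theorem~\ref{thm:suit-twin-perI} by first applying the reflection $t\mapsto -t$ to obtain an isometrically isomorphic norming dual pair, and then observing that the definition of norming dual pair is symmetric in its two arguments, so that the roles of $Y$ and $\DY$ may be interchanged. Your write-up is in fact more detailed than the paper's, spelling out explicitly how conditions \sef{eq:4.1} and \sef{eq:4.9} become \sef{eq:4.8} and \sef{eq:4.5} under the swap, and flagging the one point (compatibility of the reflection with the $NBV$ normalisation conventions) that merits a moment's thought.
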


\medskip

Note that if the dual pair is given by \sef{eq:mot-ex0r} and \sef{eq:mot-ex1r}, then the weak topology $\si(Y,\DY)$ on $Y$ is strictly stronger than the weak$^\ast$ topology on $Y$ as can be seen from the fact that for every $f \in C\gb{[0,1];\BR^n}$
\begin{equation}\label{eq:weak*-top-prop}
\pa{f}{\de_{x_n}} = f(x_n) \to f(x) = \pa{f}{\de_{x}}\qquad \mbox{ as}\quad n \to \infty
\end{equation}
and hence $\de_{x_n} \to \de_{x}$ in the weak$^\ast$ topology on $Y$ if $x_n \to x$ in $[-1,0]$, whereas $\de_{x_n} \not\to \de_x$ in $\si(Y,\DY)$ since \sef{eq:weak*-top-prop} does not hold for every $f \in B\gb{[0,1];\BR^n}$.

\medskip
We end this appendix with some more detailed information about norming dual pairs and their topologies. Given a norming dual pair $(Y,\DY)$, we call a topology $\tau$ on $Y$ {\it consistent} (with the duality) if $\DY$ is the dual space of $(Y,\tau)$. By the Mackey-Arens theorem \cite[Theorem 5.112]{AliBor06}, a consistent topology $\tau$ is finer than the weak topology $\si(Y,\DY)$ and coarser than the Mackey topology $\tau(Y,\DY)$, the finest topology on $Y$ that preserves the continuous dual. Note that the Mackey topology  $\tau(Y,\DY)$ allows the largest collection of continuous functions on $Y$ and all consistent topologies have the same bounded sets \cite[Theorem 6.30]{AliBor06}. 

Furthermore, if $\DY = Y^\ast$, then the Mackey topology $\tau(Y,Y^\ast)$ on $Y$ corresponds to the norm topology on $Y$, cf. \cite[Corollary 6.23]{AliBor06}.

\medskip

For the dual pair given by \sef{eq:mot-ex0} and \sef{eq:mot-ex1}, the topological space $(Y,\tau(Y,\DY))$ has been studied in \cite{BS96,Schaefer89} and plays an important role in the theory of Markov processes, cf. \cite[Chapter 19]{AliBor06} and \cite{Kraaij16}. 
 
A topological space $(Y,\tau(Y,\DY))$ is called {\sl semi-bornological} whenever full and sequential continuity of its linear forms is equivalent, cf. \cite[IV.3, p. 131]{Schaefer86}.

The following result \cite[Proposition E.2.4]{BS96} shows that \sef{eq:4.1} and \sef{eq:4.9} hold as well with respect to the Mackey topology. 

\begin{theorem}\label{thm:P12} 
Let 
\[Y = B\gb{[-1,0],\BR^n}\quad\mbox{and}\quad\DY = NBV\gb{[0,1],\BR^n}.\]
The topological space $(Y,\tau(Y,\DY))$ is semi-bornological and $\tau(Y,\DY)$-sequentially complete.
\end{theorem}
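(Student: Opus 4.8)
The plan is to derive both assertions from a single \textbf{mixed-topology lemma}: \emph{a norm-bounded sequence $\{y_n\}$ in $Y=B\gb{[-1,0],\BR^n}$ that converges to $0$ in $\si(Y,\DY)$ also converges to $0$ in the Mackey topology $\tau(Y,\DY)$.} By the Mackey--Arens description (\cite[Theorem~5.112]{AliBor06}), $\tau(Y,\DY)$ is the topology of uniform convergence on the absolutely convex $\si(\DY,Y)$-compact subsets $K$ of $\DY\cong M\gb{[0,1],\BR^n}$, so the lemma amounts to showing $\sup_{\dy\in K}\babs{\pa{\dy}{y_n}}\to 0$ for every such $K$. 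Here one first notes that a $\si(Y,\DY)$-null sequence is automatically norm-bounded (each $\pa{\dy}{y_n}$ is Cauchy, hence bounded in $\BR$, so the uniform boundedness principle applies exactly as in Part~II of the proof of Theorem~\ref{thm:suit-twin-perI}) and, as in the discussion preceding that theorem, is precisely a bounded sequence converging to $0$ pointwise. The uniformity over $K$ is the crux and the step I expect to be the main obstacle: it rests on the fact that $\si(\DY,Y)$-compact sets of measures are ``uniformly countably additive'' (a Dieudonn\'e--Grothendieck / strict-topology phenomenon), which, combined with pointwise convergence and the dominated convergence theorem, forces the convergence to be uniform on $K$. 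For a complete argument I would quote the strict/Mackey-topology analysis of \cite{BS96} (the statement being essentially \cite[Proposition~E.2.4]{BS96}) and \cite{Schaefer89}, rather than reproduce it here.

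Granting the lemma, semi-bornologicity follows quickly. Full $\tau(Y,\DY)$-continuity of a linear form trivially implies sequential continuity, so only the converse matters. Let $\La:\gb{Y,\tau(Y,\DY)}\to\BR$ be linear and $\tau(Y,\DY)$-sequentially continuous. Since every $\si(Y,\DY)$-null sequence is norm-bounded (as just noted) and $\si(Y,\DY)\subseteq\tau(Y,\DY)$, the lemma shows that every $\si(Y,\DY)$-null sequence is $\tau(Y,\DY)$-null; hence $\La$ is $\si(Y,\DY)$-sequentially continuous. By Part~III of the proof of Theorem~\ref{thm:suit-twin-perI}, $\La$ is then $\si(Y,\DY)$-continuous, i.e.\ $\La\in\DY$, and since $\DY$ is the dual of every consistent topology, $\La$ is $\tau(Y,\DY)$-continuous. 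Thus full and sequential continuity of linear forms on $\gb{Y,\tau(Y,\DY)}$ coincide.

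For $\tau(Y,\DY)$-sequential completeness, take a $\tau(Y,\DY)$-Cauchy sequence $\{y_n\}$. As $\si(Y,\DY)\subseteq\tau(Y,\DY)$ it is $\si(Y,\DY)$-Cauchy, hence $\si(Y,\DY)$-convergent to some $y\in Y$ by Part~II of the proof of Theorem~\ref{thm:suit-twin-perI}. Then $\{y_n-y\}$ tends to $0$ in $\si(Y,\DY)$, is therefore norm-bounded, and the mixed-topology lemma yields $y_n-y\to 0$ in $\tau(Y,\DY)$, i.e.\ $y_n\to y$ in $\tau(Y,\DY)$. Hence every $\tau(Y,\DY)$-Cauchy sequence converges, which is the claim.
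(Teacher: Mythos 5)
Your argument is correct as far as it goes, and it is instructive to note what it adds relative to the paper: the paper offers no proof at all for this theorem, merely citing \cite[Proposition~E.2.4]{BS96} as the source. What you have done is isolate a clean organizing principle --- the \emph{mixed-topology lemma} that a norm-bounded $\si(Y,\DY)$-null sequence is automatically $\tau(Y,\DY)$-null --- and show how both semi-bornologicity and $\tau$-sequential completeness drop out of it by combining with Parts~II and III of the proof of Theorem~\ref{thm:suit-twin-perI}. Those two reductions are checked correctly: the uniform boundedness principle gives automatic norm-boundedness of weakly null (resp.\ Cauchy) sequences; the lemma upgrades weak convergence to Mackey convergence; semi-bornologicity then follows because a $\tau$-sequentially continuous functional becomes $\si$-sequentially continuous, hence lies in $\DY$ by Part~III and is $\tau$-continuous by consistency of the topology; and $\tau$-sequential completeness follows because a $\tau$-Cauchy sequence is $\si$-Cauchy, hence $\si$-convergent by Part~II, and the lemma promotes the convergence to $\tau$. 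What you have \emph{not} done --- and you say so explicitly --- is prove the lemma itself, whose crux is the uniformity over an absolutely convex $\si(\DY,Y)$-compact set $K\subset M\gb{[0,1],\BR^n}$; this is where the Dieudonn\'e--Grothendieck-type uniform countable additivity of $\si(\DY,Y)$-compact families of measures enters, and you defer it to the same references the paper cites. So your proposal neither duplicates nor contradicts the paper: it supplies a missing derivation layer while outsourcing the same hard analytic fact. The only stylistic nit is that the remark ``and $\si(Y,\DY)\subseteq\tau(Y,\DY)$'' in the semi-bornologicity paragraph is redundant --- the lemma already delivers $\tau$-nullity directly --- but this does not affect correctness.
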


\medskip

In this paper we have formulated our assumptions with respect to the weak topology $\si(Y,\DY)$, but we could have formulated  \sef{eq:4.1} and \sef{eq:4.9} or  \sef{eq:4.5} and \sef{eq:4.8} with respect to any consistent topology and hence, in particular, with respect to the Mackey topology $\tau(Y,\DY)$. This yields, strictly speaking, stronger results. But we feel that the formulation in terms of the weak topology is easier to digest by people working with delay equations.

\end{appendices}

\end{document}